\documentclass[a4paper, 12pt]{amsart}

\usepackage{amsmath}
\usepackage{amssymb}
\usepackage{ascmac}
\usepackage{amsthm}
\usepackage{color}

\setcounter{tocdepth}{1}

\oddsidemargin=-0.1in
\evensidemargin=-0.1in
\topmargin=-0.3in
\textwidth=6.5in
\textheight=9.7in

\makeatletter

\@addtoreset{equation}{section}
\makeatother 

\theoremstyle{plain}
\newtheorem{thm}{Theorem}[section]
\newtheorem*{thm*}{Theorem}
\newtheorem{prop}[thm]{Proposition}
\newtheorem{lem}[thm]{Lemma}

\theoremstyle{definition}

\theoremstyle{remark}
\newtheorem{rem}[thm]{Remark}

\newcommand{\Var}{\operatorname{Var}}
\newcommand{\Ric}{\operatorname{Ric}}

\newcommand{\Div}{\operatorname{div}}

\newcommand{\tr}{\operatorname{tr}}

\newcommand{\Cn}{H_n}

\newcommand{\IR}{\mathbb{R}}

\newcommand{\NN}{\mathcal{N}}

\renewcommand{\SS}{\mathcal{S}}

\newcommand{\WW}{\mathcal{W}}
\newcommand{\HH}{\mathcal{H}}
\newcommand{\eps}{\varepsilon}
\newcommand{\la}{\lambda}

\newcommand{\ov}[1]{\overline{#1}}

\usepackage{latexsym}

\makeatletter
\@namedef{subjclassname@2020}{%
  \textup{2020} Mathematics Subject Classification}
\makeatother

\title[Almost splitting and quantitative stratification]{Almost splitting and quantitative stratification\\ for super Ricci flow}

\author{Keita Kunikawa}
\address{Faculty of Science and Technology, Tokushima University, 2-1 Minamijosanjima-cho, Tokushima, 770-8506, Japan}
\email{kunikawa@tokushima-u.ac.jp}

\author{Yohei Sakurai}
\address{Department of Mathematics, Saitama University, 255 Shimo-Okubo, Sakura-ku, Saitama-City, Saitama, 338-8570, Japan}
\email{ysakurai@rimath.saitama-u.ac.jp}

\subjclass[2020]{Primary 53E20; Secondary 58J35}
\keywords{Super Ricci flow; M\"uller quantity; Almost splitting; Quantitative stratification}
\date{September 4, 2025}

\begin{document}
\maketitle

\begin{abstract}
The aim of this paper is to study almost rigidity properties of super Ricci flow whose M\"uller quantity is non-negative.
We conclude almost splitting and quantitative stratification theorems that have been established by Bamler \cite{B3} for Ricci flow.
As a byproduct,
we obtain an almost constancy for a certain integral quantity concerning scalar curvature at an almost selfsimilar point,
which is new even for Ricci flow.
\end{abstract}

\section{Introduction}

\subsection{Background}
Let $(M,(g_t)_{t\in I})$ be a compact manifold equipped with a time-dependent Riemannian metric.
Such a time-dependent manifold is called \textit{Ricci flow} when
\begin{equation*}
\partial_{t}g= -2 \Ric,
\end{equation*}
which has been introduced by Hamilton \cite{H}.
It is well-known that
Perelman \cite{P} has vastly developed the Ricci flow theory in three dimensional case,
and utilized it for the resolution of the Poincar\'e and geometrization conjectures.
A supersolution to the Ricci flow is called \textit{super Ricci flow};
namely,
$(M,(g_t)_{t\in I})$ is called super Ricci flow when
\begin{equation*}
\partial_{t}g\geq -2 \Ric,
\end{equation*}
which has been introduced by McCann-Topping \cite{MT} in view of the relation between Ricci flow and mass transport.
Super Ricci flow can be viewed as an interpolation between Ricci flow and (static) manifolds of non-negative Ricci curvature.
Actually,
it has been examined in the literature of the geometry of metric measure spaces with a lower Ricci curvature bound developed by Sturm \cite{S1}, \cite{S2}, Lott-Villani \cite{LV}, Ambrosio-Gigli-Savar\'{e} \cite{AGS}, Gigli \cite{G} and so on (see the work of Sturm \cite{S}, and also subsequent works \cite{K}, \cite{KS1}, \cite{KS2}).  

One of the central problems in Ricci flow theory is to classify singularity models at the first singular time,
and it has been already resolved in low dimensional cases.
In two dimensional case,
the classification can be derived from classical works by Chow \cite{C} and Hamilton \cite{H2}.
In three dimensional case,
it has been completed by Perelman \cite{P} and Brendle \cite{B}.
To deal with high dimensional singularity models,
which may be non-smooth (see \cite{A}),
Bamler \cite{B1}, \cite{B2}, \cite{B3} has established a convergence theory for super Ricci flow within the framework of \textit{metric flows} and their \textit{$\mathbb{F}$-convergence}.
Based on fundamental results in \cite{B1},
he has proved a precompactness theorem for super Ricci flow in \cite{B2}.
Furthermore,
he has concluded regularity properties of non-collapsed $\mathbb{F}$-limits of Ricci flows in \cite{B3},
where the non-collapsed condition has been described in terms of the Nash entropy.
The key ingredient was to study almost rigidity phenomena concerning the monotonicity of the Nash entropy,
which is similar to the structure theory of Gromov-Hausdorff limits of manifolds with a lower Ricci curvature bound due to Cheeger-Colding \cite{CC}, \cite{CC1}, \cite{CC2}, \cite{CC3}, Colding-Naber \cite{CN} and so on.

\subsection{Aim and setting}

We are now concerned with regularity properties of $\mathbb{F}$-limits of super Ricci flows (not necessarily Ricci flow).
In this article,
we aim to generalize almost splitting and quantitative stratification theorems obtained in \cite[Part 2]{B3} for super Ricci flow whose M\"uller quantity is non-negative.
We recall that
for a vector field $V$,
the \textit{M\"uller quantity} is defined as follows (see \cite[Definition 1.3]{M1}):
\begin{equation*}
\mathcal{D}(V):=\partial_{t} \HH-\Delta \HH-2| h |^2+4\Div h(V)-2 \langle \nabla \HH,V \rangle+2\Ric(V,V)-2h(V,V),
\end{equation*}
where
\begin{equation*}
h:=-\frac{1}{2} \partial_t g,\quad \HH:=\tr h.
\end{equation*}
Notice that
this quantity vanishes for Ricci flow by the evolution formula for scalar curvature and the contracted second Bianchi identity.
The non-negativity of the M\"uller quantity brings various benefits such as a lower bound of the (generalized) scalar curvature, Harnack estimate and the monotonicity of entropies (see Lemmas \ref{lem:scal}, \ref{lem:Nash} and Theorem \ref{thm:Harnack} below).
There are several examples of super Ricci flow whose M\"uller quantity is non-negative (see \cite[Section 2]{M1}, \cite[Section 7]{FZ}):
\begin{enumerate}\setlength{\itemsep}{+1.0mm}
\item Static manifolds of non-negative Ricci curvature;
\item Ricci flow;
\item Ricci flow coupled with heat equation, called \textit{List flow} (\cite{L});
\item Ricci flow coupled with harmonic map heat flow, called \textit{M\"uller flow} (\cite{M2});
\item mean curvature flow for spacelike hypersurfaces in Lorentzian manifold of non-negative sectional curvature;
\item (scaled) twisted K\"{a}hler-Ricci flow.
\end{enumerate}

In \cite{KuS},
the authors have extended some results by Bamler-Zhang \cite{BZ} concerning geometric analysis for Ricci flow under a scalar curvature bound to this object.

\subsection{Results and organization}

In Sections \ref{sec:Preliminaries}, \ref{sec:SRF}, \ref{sec:Nash entropy}, \ref{sec:vol_heat}, \ref{sec:noncollapse},
we produce several estimates for the proof of our main results.
In Section \ref{sec:Preliminaries},
we recall basic facts on the heat kernel.
In Section \ref{sec:SRF},
we collect known results for super Ricci flow.
In Section \ref{sec:Nash entropy},
we examine monotonicity properties for the Nash entropy.
In Section \ref{sec:vol_heat},
we deduce volume and heat kernel estimates.
In Section \ref{sec:noncollapse},
we show some estimates under a non-collapsed condition for the Nash entropy. 

In Section \ref{sec:almost_ss},
we will study almost selfsimilar points.
Recall that
the essence of the regularity theory in \cite{B3} was to analyze the structure of \textit{tangent flows} at each point in an $\mathbb{F}$-limit,
which are blowup limits for parabolic rescaling with respect to the $\mathbb{F}$-convergence.
It has been proved that
every tangent flow has constant Nash entropy,
and it must be a \textit{metric soliton},
which is a synthetic notion of shrinking Ricci soliton.
From this point of view,
on smooth Ricci flow,
the notion of almost selfsimilar point has been introduced such that it characterizes the almost constancy of Nash entropy.
The investigation of such points has led to various consequences for tangent flows.

We formulate almost selfsimilar points in our general setting by observing examples and adding a term for the M\"uller quantity.
As one of our results,
we obtain an almost constancy for a certain integral quantity concerning scalar curvature (Proposition \ref{prop:almost soliton scal}).
We emphasize that this result is new even for Ricci flow.
In \cite{B3},
an almost monotonicity for that quantity has been derived along Ricci flow (see \cite[Proposition 7.9]{B3}).
The refinement of calculations in \cite{B3} enables us to point out that
it may be extended to our general setting,
and the reverse direction also holds.

Sections \ref{sec:almost_st}, \ref{sec:almost_split}, \ref{sec:stratification} are devoted to the proof of our main theorems.
In Section \ref{sec:almost_st},
we show an almost static cone splitting theorem (Theorem \ref{thm:almost static}). 
In Section \ref{sec:almost_split},
we prove an almost splitting theorem (Theorem \ref{thm:main almost splitting}). 
In Section \ref{sec:stratification},
we conclude a quantitative stratification theorem (Theorem \ref{thm:stratification}).

\section{Preliminaries}\label{sec:Preliminaries}
In this section,
we recall some basic notions.
In what follows,
let $(M,(g_t)_{t\in I})$ be an $n$-dimensional compact manifold with a time-dependent metric.

\subsection{Convention and notation}\label{sec:Basics}
For positive constants $a,b,c,d,\dots$,
we denote by $C_{a,b,c,d,\dots}$ positive constants depending only on $a,b,c,d,\dots$ and the dimension $n$ without distinction,
where we stress that
the dependence on the dimension $n$ are omitted.
When we specify constants,
we put numbers such as $C_{1,a,b,c,d,\dots},C_{2,a,b,c,d,\dots}$. 
When we assert that ``there exists $\ov C_{a,b,c,d,\dots}$ such that if $C \leq \ov C_{a,b,c,d,\dots}$",
we abbreviate ``if $C \leq \ov C_{a,b,c,d,\dots}$" for short.
In the same manner,
for a lower bound,
we say ``if $C \geq \underline{C}_{a,b,c,d,\dots}$" instead of ``if $C \leq \ov C_{a,b,c,d,\dots}$".

Let $d_t$ and $m_t$ be the Riemannian distance and Riemannian volume measure with respect to $g_t$,
respectively.
For $x\in M, t\in I$ and $r>0$,
let $B(x,t,r)$ denote the open ball of radius $r$ centered at $x$ with respect to $g_t$.
Note that $\partial_t(d m_t)=-\HH\,dm_t$.
The non-negativity of the M\"uller quantity (i.e., $\mathcal{D}(V)\geq 0$ for all vector fields $V$) is expressed by $\mathcal{D}\geq 0$.

At the beginning of the proof of each statement,
we carry out the parallel translation for time and the parabolic rescaling.
Recall that
for $r>0$,
the parabolic rescaling (at time $0$) is given by $\bar{g}_s:=r^{-2}\,g_{r^2 s}=r^{-2}g_t$.
Note that
the super Ricci flow and the non-negativity of the M\"uller quantity are preserved under this rescaling (cf. \cite[Remark 2.2]{KuS}).

We notice the following lower bound for $\HH$,
which is a direct consequence of the maximum principle (see \cite[Lemma 3.2]{FZ}):
\begin{lem}[\cite{FZ}]\label{lem:scal}
Assume $\mathcal{D}\geq 0$.
If $\HH (\cdot, t_0) \geq -A$ for $A>0$, then for all $t\in [t_0,\infty) \cap I$,
\begin{equation*}
\HH (\cdot, t) \geq -\frac{n}2 \frac{A}{(n/2) + A (t - t_0)}.
\end{equation*}
\end{lem}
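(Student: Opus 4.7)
The plan is to extract a scalar differential inequality for $\HH$ from the assumption $\mathcal{D}\geq 0$, and then apply an ODE-comparison via the parabolic maximum principle.

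\smallskip

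\noindent\textbf{Step 1: Reduce to a differential inequality.}
Setting $V=0$ in the definition of the M\"uller quantity, all the $V$-dependent terms vanish, leaving
\begin{equation*}
\mathcal{D}(0)=\partial_{t}\HH-\Delta\HH-2|h|^2.
\end{equation*}
Hence $\mathcal{D}\geq 0$ yields $\partial_{t}\HH\geq\Delta\HH+2|h|^2$ pointwise on $M\times I$. Next, since $h$ is a symmetric $2$-tensor on an $n$-dimensional manifold with trace $\HH$, the Cauchy--Schwarz inequality gives $|h|^2\geq \tfrac{1}{n}(\tr h)^2=\tfrac{1}{n}\HH^2$. Combining these,
\begin{equation*}
\partial_{t}\HH\geq\Delta\HH+\frac{2}{n}\HH^2.
\end{equation*}

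\smallskip

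\noindent\textbf{Step 2: ODE comparison.}
Consider the ODE $\phi'=\tfrac{2}{n}\phi^2$ with $\phi(t_0)=-A$. Separating variables and integrating, the unique solution is
\begin{equation*}
\phi(t)=-\frac{n}{2}\frac{A}{(n/2)+A(t-t_0)},
\end{equation*}
which is precisely the proposed lower bound. Note that for $t\geq t_0$ the denominator stays $\geq n/2>0$, so $\phi$ is smooth and bounded on $[t_0,\infty)\cap I$.

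\smallskip

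\noindent\textbf{Step 3: Maximum principle.}
Set $w:=\HH-\phi$, so that $w(\cdot,t_0)\geq 0$. Subtracting the ODE from the PDE inequality yields
\begin{equation*}
\partial_t w\geq \Delta w+\frac{2}{n}(\HH+\phi)\,w.
\end{equation*}
Since $M$ is compact and the coefficient $\tfrac{2}{n}(\HH+\phi)$ is locally bounded on any subinterval where $\HH$ is smooth, the standard parabolic maximum principle (examining a first time and point at which $w$ would hit $0$ from above) forces $w\geq 0$ on $[t_0,\infty)\cap I$, which is exactly the claimed inequality.

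\smallskip

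The only non-routine point is Step~1: recognizing that $V=0$ is the right choice to extract a scalar inequality from the M\"uller quantity. After that, the trace Cauchy--Schwarz trick and the ODE comparison are standard, and compactness of $M$ removes any difficulty in applying the maximum principle.
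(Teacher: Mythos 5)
Your proof is correct and follows the exact route the paper itself indicates when it calls the lemma ``a direct consequence of the maximum principle'' and defers to \cite[Lemma 3.2]{FZ}: take $V=0$ to get $\square\HH \geq 2|h|^2 \geq \tfrac{2}{n}\HH^2$, solve the comparison ODE, and apply the parabolic maximum principle on compact $M$. The algebra in Steps~2--3 checks out, and the coefficient $\tfrac{2}{n}(\HH+\phi)$ is indeed bounded on each compact time slab because $\HH$ is continuous on $M\times[t_0,T]$.
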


\begin{rem}\label{rem:scal2}
The following also holds (see \cite[Lemma 3.2]{FZ}):
Assume $\mathcal{D}\geq 0$.
If $t_0\in I$,
then for all $t\in [t_0,\infty) \cap I$, we have
\begin{equation*}
\HH (\cdot, t) \geq -\frac{n}{2(t-t_0)},\quad \min_M \HH(\cdot, t)\geq \min_M \HH(\cdot,t_0).
\end{equation*}
\end{rem}

\subsection{Heat kernels}\label{sec:heat kernel}
The \textit{heat operator} and the \textit{conjugate heat operator} are defined by
\begin{equation*}
\square:=\partial_t -\Delta,\quad \square^*:=-\partial_t-\Delta + \HH.
\end{equation*}
We recall the following Duhamel principle (see e.g., \cite{CCGG}):
For any $u,v \in C^{\infty}(M\times I)$,
\begin{equation}\label{eq:conjugate integration}
\frac{d}{dt}\int_M \,uv\,dm_t=\int_M\,(\square u) v\, dm_t-\int_M\, u (\square^* v)\,dm_t.
\end{equation}
For $x,y \in M$ and $s,t\in I$ with $s< t$,
we denote by $G(x,t;y,s)$ the \textit{heat kernel} (see e.g., \cite{CCGG}, \cite{G});
namely,
for a fixed $(y,s) \in M \times I$,
it solves
\begin{equation*}
 (\partial_t - \Delta_x ) G(\cdot,\cdot;y,s) = 0,\quad \lim_{t \searrow s} G(\cdot,t;y,s)=\delta_y.
\end{equation*}
Notice that
$G(x,t;\cdot,\cdot)$ is the fundamental solution to the conjugate heat equation;
namely,
for any $(x,t) \in M \times I$,
\begin{equation*}
(-\partial_s -\Delta_y +\HH)G(x,t;\cdot,\cdot) = 0,\quad \lim_{s \nearrow t} G(x,t;\cdot, s) = \delta_x.
\end{equation*}
The reproduction formula (or semigroup property)
\begin{equation}\label{eq:semigroup}
G(x,t;y,s)=\int_{M}\,G(x,t; \cdot, \sigma)G(\cdot,\sigma; y,s)\,dm_{\sigma}
\end{equation}
holds for any $\sigma \in (s,t)$.

For $(x_0,t_0)\in M\times I$,
the \textit{conjugate heat kernel measure} $\nu_{(x_0,t_0)}=(\nu_{(x_0,t_0);t})_{t \in (-\infty,t_0] \cap I}$ is defined by
\begin{equation*}
\nu_{(x_0,t_0);t}:=G(x_0,t_0;\cdot,t)\,m_{t},\quad \nu_{(x_0,t_0);t_0} := \delta_{x_0}
\end{equation*}
for $t<t_0$,
which are probability measures.

\begin{rem}
We frequently set $\nu:=\nu_{(x_0,t_0)}$ or $\nu^0:=\nu_{(x_0,t_0)}$.
In that case,
without mentioning,
we use the notation $\nu_t:=\nu_{(x_0,t_0);t}$ or $\nu^0_t:=\nu_{(x_0,t_0);t}$,
respectively.
\end{rem}

We define $\tau:=t_0-t$, which is called the \textit{parameter}.
The \textit{density} $f \in C^\infty ( M \times ( (-\infty,t_0)\cap I))$ is determined by
\begin{equation*}
G(x_0,t_0;\cdot,t) = (4\pi \tau)^{-n/2} e^{-f(\cdot,t)},
\end{equation*}
which can be written as
\begin{equation}\label{eq:potential}
f(x,t)=-\log G(x_0,t_0;x,t)-\frac{n}{2}\log \tau-\frac{n}{2}\log 4\pi.
\end{equation}
The density enjoys
\begin{equation}\label{eq:potential_enjoy}
- \partial_t f = \Delta f - |\nabla f|^2 + \HH - \frac{n}{2\tau}. 
\end{equation}
The following can be derived from \eqref{eq:conjugate integration} and integration by parts (cf. \cite[Lemma 4.4]{B3}):
\begin{equation}\label{eq:conjugate integration2}
\frac{d}{dt}\int_M\,u\,d\nu_{(x_0,t_0);t}=\int_M\,\square u\,d\nu_{(x_0,t_0);t},\quad \int_M \,\Div (V)\,d\nu_{(x_0,t_0);t}=\int_{M}\,\langle V,\nabla f \rangle\,d\nu_{(x_0,t_0);t}
\end{equation}
for all $u\in C^{\infty}(M\times I)$ and vector fields $V$.

\begin{rem}
We do not always introduce the notations of the parameter and the density when it is clear from the context.
\end{rem}

We define
\begin{equation}\label{eq:Harnack notation}
\Psi:=h + \nabla^2 f - \frac1{2\tau} g,\quad w:=\tau (2\Delta f - |\nabla f|^2 + \HH ) + f -n.
\end{equation}
We possess the following Perelman type Harnack estimate (see \cite{P}, \cite[Theorems 1.1, 1.2]{CGT}):
\begin{thm}[\cite{P}, \cite{CGT}]\label{thm:Harnack}
We set $u:=G(x_0,t_0; \cdot,\cdot)$.
Then we have
\begin{equation*}
\square^* (wu) = - 2 \tau \left| \Psi \right|^2 u-\tau \,u \,\mathcal{D}(\nabla f).
\end{equation*}
Moreover,
if $\mathcal{D}\geq 0$,
then $w\leq 0$.
\end{thm}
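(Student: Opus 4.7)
The identity is a Perelman-type sum-of-squares computation, tailored so that the non-Ricci contribution is absorbed into $\mathcal{D}(\nabla f)$; the inequality $w\le 0$ then follows from a parabolic maximum principle applied via pairing with a forward heat kernel. First I would reduce $\square^{*}(wu)$ to a first-order drift operator acting on $w$ alone. Writing $u=(4\pi\tau)^{-n/2}e^{-f}$ gives $\nabla u=-u\,\nabla f$, and since $\square^{*}u=0$, a short expansion of $\square^{*}(wu)=-\partial_{t}(wu)-\Delta(wu)+\HH wu$ yields
\[
\square^{*}(wu)=u\bigl(-\partial_{t}w-\Delta w+2\langle\nabla f,\nabla w\rangle\bigr),
\]
so it suffices to prove $L^{*}w:=-\partial_{t}w-\Delta w+2\langle\nabla f,\nabla w\rangle=-2\tau|\Psi|^{2}-\tau\mathcal{D}(\nabla f)$.

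Next I would compute $L^{*}w$ term by term for $w=\tau(2\Delta f-|\nabla f|^{2}+\HH)+f-n$, using the conjugate heat equation \eqref{eq:potential_enjoy} for $f$; the variations $\partial_{t}|\nabla f|^{2}=2h(\nabla f,\nabla f)+2\langle\nabla f,\nabla\partial_{t}f\rangle$ and $\partial_{t}\Delta f=2\langle h,\Hess f\rangle+\Delta\partial_{t}f+2\Div h(\nabla f)-\langle\nabla\HH,\nabla f\rangle$ arising from $\partial_{t}g=-2h$; the Bochner formula $\Delta|\nabla f|^{2}=2|\Hess f|^{2}+2\langle\nabla f,\nabla\Delta f\rangle+2\Ric(\nabla f,\nabla f)$; and the M\"uller identity with $V=\nabla f$, namely
\[
\partial_{t}\HH-\Delta\HH=2|h|^{2}-4\Div h(\nabla f)+2\langle\nabla\HH,\nabla f\rangle-2\Ric(\nabla f,\nabla f)+2h(\nabla f,\nabla f)+\mathcal{D}(\nabla f).
\]
After replacing $\partial_{t}f$ by its expression from \eqref{eq:potential_enjoy} inside $\Delta\partial_{t}f$ and $\langle\nabla f,\nabla\partial_{t}f\rangle$, the $\Div h(\nabla f)$, $\langle\nabla\HH,\nabla f\rangle$, and $h(\nabla f,\nabla f)$ cross-terms cancel pairwise, while the $\Ric(\nabla f,\nabla f)$ produced by Bochner exactly absorbs its counterpart from the M\"uller identity. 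The remainder reorganizes into $L^{*}w=2(\Delta f+\HH)-n/(2\tau)-2\tau|h+\Hess f|^{2}-\tau\mathcal{D}(\nabla f)$; expanding $|\Psi|^{2}=|h+\Hess f|^{2}-\tau^{-1}(\HH+\Delta f)+n/(4\tau^{2})$ then produces exactly the claimed right-hand side.

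For the inequality under $\mathcal{D}\ge 0$, the identity gives $\square^{*}(wu)\le 0$. I would test this against a forward heat kernel $K=G(\cdot,\cdot;x_{1},t_{1})$ with $(x_{1},t_{1})$ arbitrary and $t_{1}<t_{0}$. By \eqref{eq:conjugate integration} and $\square K=0$,
\[
\frac{d}{ds}\int_{M}(wu)K\,dm_{s}=-\int_{M}K\,\square^{*}(wu)\,dm_{s}\ge 0,
\]
so $s\mapsto \int_{M}(wu)K\,dm_{s}$ is non-decreasing on $(t_{1},t_{0})$. As $s\nearrow t_{0}$, the conjugate heat kernel measure $u\,dm_{s}$ concentrates at $x_{0}$ and short-time asymptotics for $G$ give $w(x_{0},s)\to 0$, so the integral tends to $K(x_{0},t_{0})\cdot 0=0$; monotonicity then forces the integral to be $\le 0$ throughout. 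Letting $t_{1}\nearrow s$ so that $K(\cdot,s)$ concentrates at $x_{1}$, we deduce $w(x_{1},s)u(x_{1},s)\le 0$, and since $u>0$ we conclude $w\le 0$ on $M\times I$.

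The main obstacle is the algebraic bookkeeping in the sum-of-squares step, where the precise matching between the time-derivative terms from $\partial_{t}\Delta f$, $\partial_{t}|\nabla f|^{2}$, $\partial_{t}\HH$ and the spatial Laplacian and drift contributions is what forces every curvature and divergence cross-term to cancel except the residual $-\tau\mathcal{D}(\nabla f)$. As a sanity check, in the Ricci flow case $h=\Ric$ and the contracted second Bianchi identity give $2\Div h(\nabla f)=\langle\nabla\HH,\nabla f\rangle$, so $\mathcal{D}\equiv 0$ and the identity recovers Perelman's original Harnack identity.
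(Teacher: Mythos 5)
The paper does not prove this theorem; it cites it to Perelman [P] and Cao--Guo--Tran [CGT]. Your proposal reconstructs the standard argument from those references, and the main body is correct: the reduction $\square^*(wu)=u\bigl(-\partial_t w-\Delta w+2\langle\nabla f,\nabla w\rangle\bigr)$ is right, and I checked that after substituting the conjugate heat equation for $f$, the Bochner formula, the variation formulas for $\Delta f$ and $|\nabla f|^2$ under $\partial_t g=-2h$, and the M\"uller identity with $V=\nabla f$, the cross-terms do cancel to leave $-2\tau|h+\Hess f|^2-\tau\,\mathcal{D}(\nabla f)$ plus $2(\Delta f+\HH)-n/(2\tau)$, which combines with the $|\Psi|^2$ expansion to give the stated identity. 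The maximum principle via $\frac{d}{ds}\int_M(wu)K\,dm_s\ge 0$ and the delta-concentration of $K$ at $(x_1,s)$ is also the right structure.

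The one point that needs more care is the boundary condition at $s\nearrow t_0$. You assert that the integral tends to $K(x_0,t_0)\cdot w(x_0,s)$ with $w(x_0,s)\to 0$, but $w$ is not a priori controlled pointwise at the base point (its summands individually blow up like $1/\tau$). What is actually used in [P] and [CGT] is that $\int_M w\,d\nu_{(x_0,t_0);s}=\WW_{(x_0,t_0)}(\tau)\to 0$ as $\tau\to 0$, together with a localization argument: $K$ is bounded and Lipschitz near $(x_0,t_0)$, the conjugate heat kernel measure concentrates with $\Var(\delta_{z},\nu_s)\le \Cn\tau$ by Proposition \ref{prop:center}, and the short-time heat kernel expansion controls $w$ in an $L^2(d\nu_s)$ sense; combining these shows $\int_M wK\,d\nu_s\to 0$. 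So the conclusion is correct but the stated justification (``$w(x_0,s)\to 0$'') is not the right statement to prove it; you should replace it with the vanishing of the pointed $\WW$-entropy plus the concentration estimate. With that repair the argument is complete and matches the cited sources.
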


\subsection{Wasserstein distance}

Let $(X,d)$ be a complete separable metric space,
and let $\mu_1, \mu_2$ be two Borel probability measures.
We denote by $W_1(\mu_1, \mu_2)$ the \textit{$L^1$-Wasserstein distance} between them.
The canonical formulation of the Wasserstein distance is based on a coupling method.
We would rather make use of the following characterization called the \textit{Kantorovich-Rubinstein duality} (see e.g., \cite[Theorem 5.10]{V}):
\begin{equation}\label{eq:W1}
W_1(\mu_1, \mu_2)=\sup_{\phi}  \bigg( \int_X \phi \, d\mu_2  - \int_X \phi \, d\mu_1 \bigg), 
\end{equation}
where the supremum is taken over all bounded $1$-Lipschitz functions $\phi:X \to \mathbb{R}$.
We also denote by $\Var (\mu_1, \mu_2)$ the \textit{variance} defined by
\begin{equation*}
\Var (\mu_1, \mu_2) := \int_X \int_X d^2(x_1, x_2) \, d\mu_1(x_1) d\mu_2 (x_2).
\end{equation*}

We recall the following (see e.g., \cite[Lemma 2.8]{B2}):
\begin{lem}[\cite{B2}]\label{lem:W-V}
For any two Borel probability measures $\mu_1,\mu_2$ on $(X,d)$,
we have
\begin{equation*}
W_1(\mu_1, \mu_2) \leq \sqrt{\Var(\mu_1, \mu_2)}.
\end{equation*}
\end{lem}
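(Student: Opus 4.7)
The plan is to argue directly from the Kantorovich--Rubinstein duality \eqref{eq:W1} and reduce the bound to a single application of the Cauchy--Schwarz inequality with respect to the product measure $\mu_1 \otimes \mu_2$. The whole proof should fit in a few lines, so the challenge is conceptual (picking the right representation) rather than technical.

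First I would fix an arbitrary bounded $1$-Lipschitz function $\phi:X\to\IR$ and rewrite the difference of its integrals as a double integral against the product measure:
\begin{equation*}
\int_X \phi\,d\mu_2 - \int_X \phi\,d\mu_1 = \int_X\!\int_X \bigl(\phi(x_2)-\phi(x_1)\bigr)\,d\mu_1(x_1)\,d\mu_2(x_2),
\end{equation*}
which is legitimate because $\phi$ is bounded and both measures are probability measures. The $1$-Lipschitz condition then yields the pointwise estimate $\phi(x_2)-\phi(x_1)\leq d(x_1,x_2)$, whence
\begin{equation*}
\int_X \phi\,d\mu_2 - \int_X \phi\,d\mu_1 \leq \int_X\!\int_X d(x_1,x_2)\,d\mu_1(x_1)\,d\mu_2(x_2).
\end{equation*}

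Next I would apply the Cauchy--Schwarz inequality to the function $d(x_1,x_2)$ viewed as an element of $L^2(\mu_1\otimes\mu_2)$, using that $\mu_1\otimes\mu_2$ is a probability measure on $X\times X$:
\begin{equation*}
\int_X\!\int_X d(x_1,x_2)\,d\mu_1(x_1)\,d\mu_2(x_2) \leq \left( \int_X\!\int_X d^2(x_1,x_2)\,d\mu_1(x_1)\,d\mu_2(x_2) \right)^{1/2} = \sqrt{\Var(\mu_1,\mu_2)}.
\end{equation*}
Since the resulting upper bound is independent of $\phi$, taking the supremum over all bounded $1$-Lipschitz functions and invoking \eqref{eq:W1} delivers the desired inequality.

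The only potential subtlety is making sure the double integral representation is well-defined; if $\Var(\mu_1,\mu_2)=\infty$ the claim is vacuous, and otherwise $d\in L^2(\mu_1\otimes\mu_2)\subset L^1(\mu_1\otimes\mu_2)$ by finiteness of the total mass, so Fubini applies without issue. I do not anticipate any genuine obstacle here.
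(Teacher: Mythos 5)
Your proof is correct. Note, however, that the paper does not reprove this lemma; it simply cites \cite[Lemma 2.8]{B2}, and the standard argument there goes through the coupling (primal) formulation of $W_1$ rather than the Kantorovich--Rubinstein duality. Concretely, the product measure $\mu_1\otimes\mu_2$ is an admissible coupling, so $W_1(\mu_1,\mu_2)\leq \int_X\int_X d(x_1,x_2)\,d\mu_1(x_1)\,d\mu_2(x_2)$, and the Cauchy--Schwarz inequality (equivalently, Jensen) then gives the bound $\sqrt{\Var(\mu_1,\mu_2)}$. Your route via duality reaches the same double integral by expanding $\int\phi\,d\mu_2-\int\phi\,d\mu_1$ against $\mu_1\otimes\mu_2$ and using the $1$-Lipschitz bound pointwise; it is perfectly valid, and since the paper has already recorded the duality formula \eqref{eq:W1} it is a natural choice here, but the coupling argument is a step shorter because it does not require introducing the test function $\phi$ at all. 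Your remark on well-definedness is also fine: when $\Var(\mu_1,\mu_2)<\infty$ the integrand $d$ lies in $L^2\subset L^1$ of the product probability measure, so Fubini applies, and when $\Var(\mu_1,\mu_2)=\infty$ the inequality is vacuous.
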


\section{Super Ricci flow}\label{sec:SRF}
In this section,
we summarize several facts on super Ricci flow.

\subsection{Wasserstein monotonicity and concentration bounds}\label{sec:Concentration bounds}

We first recall the following monotonicity property for the Wasserstein distance (see \cite[Lemma 2.7]{B1}):
\begin{prop}[\cite{B1}]\label{prop:monotonicity_W1}
Let $(M,(g_t)_{t\in I})$ be a super Ricci flow.
Let $(x_0,t_0),(x_1,t_1)\in M\times I$.
Then $W_1(\nu_{(x_0,t_0); t}, \nu_{(x_1, t_1); t} )$ is non-decreasing in $t\in (-\infty,\min\{t_0,t_1\}]\cap I$.
\end{prop}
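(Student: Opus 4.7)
The plan is to combine the Kantorovich-Rubinstein duality \eqref{eq:W1} with a forward heat-flow extension of the test function. Fix $s<s'\leq \min\{t_0,t_1\}$ and abbreviate $\nu^0:=\nu_{(x_0,t_0)}$, $\nu^1:=\nu_{(x_1,t_1)}$. By \eqref{eq:W1}, it is enough to show
\begin{equation*}
\int_M \phi\,d\nu^1_s - \int_M \phi\,d\nu^0_s \leq W_1(\nu^0_{s'}, \nu^1_{s'})
\end{equation*}
for every smooth bounded function $\phi:M\to \IR$ that is $1$-Lipschitz with respect to $g_s$. I would extend $\phi$ forward by letting $u:M\times [s,s']\to \IR$ solve the heat equation $\square u = 0$ with $u(\cdot,s)=\phi$, and then use $u(\cdot,s')$ as a competitor in the dual representation at time $s'$.

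Two ingredients go into this: a pairing identity and preservation of the Lipschitz constant. For the first, \eqref{eq:conjugate integration2} gives
\begin{equation*}
\frac{d}{dt}\int_M u\,d\nu^i_t = \int_M \square u\,d\nu^i_t = 0 \qquad (i=0,1),
\end{equation*}
so $\int_M \phi\,d\nu^i_s = \int_M u(\cdot,s')\,d\nu^i_{s'}$ for $i=0,1$. Granted that $u(\cdot,s')$ is $1$-Lipschitz with respect to $g_{s'}$, \eqref{eq:W1} applied at time $s'$ yields the desired inequality immediately.

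The substantive step, and the only place where the super Ricci flow hypothesis enters, is this Lipschitz preservation. My expectation is that the Bochner identity together with $\partial_t u = \Delta u$ yields, after cancellation of the $2\langle \nabla u,\nabla \Delta u\rangle$ cross terms,
\begin{equation*}
\square |\nabla u|^2 = (\partial_t g^{ij})\,u_i u_j - 2|\nabla^2 u|^2 - 2\Ric(\nabla u,\nabla u).
\end{equation*}
Since $\partial_t g \geq -2\Ric$ forces $\partial_t g^{ij}\leq 2R^{ij}$, the first term is controlled by $2\Ric(\nabla u,\nabla u)$, so $\square |\nabla u|^2 \leq -2|\nabla^2 u|^2 \leq 0$, and the parabolic maximum principle on the compact manifold $M$ gives $\sup_M |\nabla u(\cdot,s')|^2 \leq \sup_M |\nabla u(\cdot,s)|^2 \leq 1$. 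Restricting to smooth bounded $1$-Lipschitz test functions in \eqref{eq:W1} is harmless by mollification, or by the instantaneous smoothing of the heat flow; the only real work is this Bochner computation, and it reduces cleanly to the super Ricci flow inequality.
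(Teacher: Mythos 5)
Your proof is correct and is essentially the standard argument given in \cite[Lemma 2.7]{B1}, which the paper cites without reproducing: duality plus forward heat-flow extension of the test function, with the $1$-Lipschitz bound preserved via the Bochner computation $\square|\nabla u|^2 \leq -2|\nabla^2 u|^2 \leq 0$ under super Ricci flow (this same computation also appears in the paper's proof of Proposition \ref{prop:Poincare}). The only cosmetic caveat is the endpoint $t=\min\{t_0,t_1\}$ where the conjugate heat kernel measure degenerates to a Dirac mass, but this is handled by continuity as $t\nearrow\min\{t_0,t_1\}$.
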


For $(x_0,t_0) \in M \times I$,
a point $(z,t) \in M \times ((-\infty,t_0] \cap I)$ is said to be a \textit{$($$\Cn$-$)$center} if
\begin{equation*}
\Cn:=\frac{(n-1)\pi^2}{2} + 4,\quad \Var(\delta_z, \nu_{(x_0,t_0);t}) \leq \Cn (t_0-t).
\end{equation*}

For the existence of centers,
we have the following (see \cite[Proposition 3.12]{B1}):
\begin{prop}[\cite{B1}]\label{prop:center}
Let $(M,(g_t)_{t\in I})$ be a super Ricci flow.
Let $(x_0,t_0) \in M \times I$.
Then for every $t \in (-\infty,t_0] \cap I$,
there is $z \in M$ such that $(z,t)$ is a center of $(x_0, t_0)$.
\end{prop}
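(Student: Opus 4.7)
The plan is to produce the center $z$ by an averaging argument that reduces matters to a self-variance bound for the conjugate heat kernel measure. For each fixed $t \in (-\infty, t_0] \cap I$, the map $z \mapsto \Var(\delta_z, \nu_{(x_0,t_0);t})$ is continuous on the compact manifold $M$, so it attains its minimum at some $z^* \in M$. By Fubini applied to the nonnegative symmetric integrand $d_t^2(x,y)$,
\begin{equation*}
\int_M \Var(\delta_z, \nu_{(x_0,t_0);t}) \, d\nu_{(x_0,t_0);t}(z) = \Var(\nu_{(x_0,t_0);t}, \nu_{(x_0,t_0);t}),
\end{equation*}
so the minimum value is no larger than this average. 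Thus the proposition follows once one proves the self-variance bound
\begin{equation*}
\Var(\nu_{(x_0,t_0);t}, \nu_{(x_0,t_0);t}) \leq \Cn (t_0 - t).
\end{equation*}

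For the self-variance bound, the crucial input is the Perelman-type Harnack estimate of Theorem \ref{thm:Harnack}. Since $\mathcal{D} \geq 0$, one has $w \leq 0$, which pointwise translates into Gaussian-type upper bounds on the heat kernel density $f$ (together with control of $\nabla f$ and $\Delta f$ via the definitions in \eqref{eq:Harnack notation}). Coupling these with the conjugate-heat evolution identities \eqref{eq:conjugate integration2} and the Kantorovich--Rubinstein characterization \eqref{eq:W1}, one tests $\nu_t$ against $1$-Lipschitz functions, in particular distance functions $d_t(z^*,\cdot)$ for $z^* \in \supp \nu_t$, to obtain an $L^2$ concentration estimate of the form $\int (d_t(z^*, \cdot) - \bar d)^2 d\nu_t \leq C(t_0-t)$. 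Combining this with Lemma \ref{lem:W-V} and a Cauchy--Schwarz expansion of $d_t^2(x,y) \leq 2d_t^2(x,z^*) + 2d_t^2(z^*,y)$ then controls $\Var(\nu_t,\nu_t)$ itself.

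The principal obstacle is tracking the explicit constant $\Cn = (n-1)\pi^2/2 + 4$ rather than an unspecified dimensional constant. The $(n-1)\pi^2/2$ part is sharp and must come from a careful Myers-type integration along geodesic paths against the density $f$, using the Harnack inequality to trade the negative contribution of $w$ for curvature along the path; the $+4$ absorbs the constant from the Cauchy--Schwarz step and from passage between $W_1$ and $\sqrt{\Var}$. The passage from Ricci flow to super Ricci flow with $\mathcal{D} \geq 0$ introduces no structural difficulty, because the only Ricci-flow-specific step of the original argument, namely the identity $w \leq 0$, is precisely what Theorem \ref{thm:Harnack} provides in this broader setting.
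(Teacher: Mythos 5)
Your reduction via averaging is correct and is exactly how the literature organizes this: for any fixed $t$, since
\begin{equation*}
\int_M \Var(\delta_z, \nu_{(x_0,t_0);t}) \, d\nu_{(x_0,t_0);t}(z) = \Var(\nu_{(x_0,t_0);t}, \nu_{(x_0,t_0);t}),
\end{equation*}
a minimizer $z^*$ of the continuous function $z \mapsto \Var(\delta_z, \nu_{(x_0,t_0);t})$ on the compact manifold $M$ automatically satisfies $\Var(\delta_{z^*}, \nu_{(x_0,t_0);t}) \leq \Var(\nu_{(x_0,t_0);t}, \nu_{(x_0,t_0);t})$, so the proposition follows once the self-variance bound $\Var(\nu_{(x_0,t_0);t}, \nu_{(x_0,t_0);t}) \leq H_n(t_0 - t)$ is established. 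This matches the argument the paper points to in its remark following Theorem \ref{thm:gradient_estimate} (citing \cite[Proposition 3.25]{B2}).

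The gap is in your proposed proof of the self-variance bound, which does not work as sketched. First, Proposition \ref{prop:center} is stated for an arbitrary super Ricci flow with no assumption $\mathcal{D} \geq 0$, whereas Theorem \ref{thm:Harnack} gives $w \leq 0$ only under $\mathcal{D} \geq 0$; the Perelman-type Harnack estimate is simply not available under the hypotheses at hand, so it cannot be ``the crucial input.'' Second, even when $\mathcal{D} \geq 0$ (or on Ricci flow itself), this is not where the variance bound comes from. The self-variance bound $\Var(\nu_t, \nu_t) \leq H_n(t_0 - t)$ is the $(x_1,t_1)=(x_2,t_2)=(x_0,t_0)$ specialization of the $H_n$-concentration property, which is proved in \cite[Corollary 3.8]{B1} from the sharp gradient estimate (Theorem \ref{thm:gradient_estimate} here): one evolves a pair of heat-flow test functions related to $1$-Lipschitz data via the functions $\Phi_T$, and the monotonicity of the $\Phi$-Lipschitz property along the flow is what forces the variance to grow backward in time at rate at most $H_n$. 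This argument requires only the super Ricci flow inequality and never touches $w$, $f$, $\nabla^2 f$, or the conjugate heat equation identities. Correspondingly, your attribution of the constant — a ``Myers-type integration along geodesic paths against the density $f$'' plus a Cauchy--Schwarz term — does not reflect the actual computation; the $(n-1)\pi^2/2$ arises from the dimensional constant in the gradient/variance inequality for $\Phi$-type profiles, not from any geodesic integration against $f$. The final paragraph compounds the error by asserting that the ``only Ricci-flow-specific step'' is $w\leq 0$, when in fact the only input the argument needs is Theorem \ref{thm:gradient_estimate}, which already holds for super Ricci flows without $\mathcal{D}\geq 0$.
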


We possess the following concentration bounds (see \cite[Proposition 3.13, Theorem 3.14]{B1}):
\begin{lem}[\cite{B1}]\label{lem:concentration1}
Let $(x_0,t_0) \in M \times I$,
and let $(z,t)$ be a center of $(x_0, t_0)$.
Then for every $R > 1$ we have
\begin{equation*}
\nu_{(x_0,t_0);t} \left(  B(z, t, \sqrt{R \Cn (t_0 - t)} ) \right) \geq 1- \frac{1}{R}.
\end{equation*}
\end{lem}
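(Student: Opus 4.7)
The plan is a direct Chebyshev/Markov inequality argument applied to the squared distance from the center, using only the defining variance bound of a $\Cn$-center.

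First I would unwind the definition: since $(z,t)$ is a $\Cn$-center of $(x_0,t_0)$, by definition of the variance we have
\begin{equation*}
\int_M d_t^2(z,y)\,d\nu_{(x_0,t_0);t}(y) = \Var(\delta_z, \nu_{(x_0,t_0);t}) \leq \Cn (t_0-t),
\end{equation*}
where the first equality comes from the product integral collapsing against the Dirac mass at $z$.

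Next I would apply Markov's inequality to the nonnegative function $y \mapsto d_t^2(z,y)$ at threshold $r^2$ with $r := \sqrt{R\Cn(t_0-t)}$. This yields
\begin{equation*}
\nu_{(x_0,t_0);t}\bigl(\{y \in M : d_t(z,y) \geq r\}\bigr) \leq \frac{1}{r^2}\int_M d_t^2(z,y)\,d\nu_{(x_0,t_0);t}(y) \leq \frac{\Cn(t_0-t)}{R\Cn(t_0-t)} = \frac{1}{R}.
\end{equation*}
Taking complements gives exactly $\nu_{(x_0,t_0);t}(B(z,t,r)) \geq 1 - 1/R$, which is the claim.

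There is essentially no obstacle here: the statement is a one-line consequence of Chebyshev's inequality once the variance bound at a center is written out, and the hypothesis $R > 1$ is present only to ensure that the stated lower bound $1-1/R$ is nontrivial (positive). No flow-theoretic input beyond the definition of a $\Cn$-center is used, so Proposition \ref{prop:center}, Proposition \ref{prop:monotonicity_W1}, and any properties of $g_t$ itself play no role in this particular step.
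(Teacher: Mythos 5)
Your proof is correct and is essentially the same Chebyshev/Markov argument the paper relies on (and implicitly records in the Remark following the lemma, i.e.\ \eqref{eq:elementary concentration}); the paper simply cites Bamler for the statement rather than writing the two-line argument out. Nothing to add.
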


\begin{rem}
We also use Lemma \ref{lem:concentration1} in the following form:
For all $r>0$ we have
\begin{equation}\label{eq:elementary concentration}
\nu_{(x_0,t_0);t} \left( M \setminus B(z, t, r) \right)\leq \frac{\Var(\nu_{(x_0,t_0);t},\delta_z)}{r^2}\leq \frac{\Cn (t_0-t)}{r^2}.
\end{equation}
\end{rem}

\begin{prop}[\cite{B1}]\label{prop:concentration2}
Let $(M,(g_t)_{t\in I})$ be a super Ricci flow.
Let $(x_0,t_0) \in M \times I$,
and let $(z,t)$ be a center of $(x_0, t_0)$.
Then for all $r> 0$ we have
\begin{equation*} 
 \nu_{(x_0,t_0);t} \left( M \setminus B(z, t, r) \right)\leq 2 \exp \left( {- \frac{\left(r - \sqrt{2\Cn (t_0-t)} \right)_+^2}{8(t_0-t)}  }\right). 
\end{equation*}
\end{prop}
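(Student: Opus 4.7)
The plan is to derive the Gaussian decay via a Chernoff-type bound on the moment-generating function of the $1$-Lipschitz function $\phi(\cdot) := d_t(\cdot, z)$. After translating and parabolically rescaling, I may assume $t_0 = 0$ and $t = -1$, so that $\tau = 1$. When $r \leq \sqrt{2\Cn}$, the right-hand side of the desired inequality is at least $2$ and the bound is vacuous; hence I assume $r > \sqrt{2\Cn}$. For any $\lambda > 0$, Markov's inequality gives
\begin{equation*}
\nu_{(x_0, 0); -1}\bigl(\{\phi \geq r\}\bigr) \leq e^{-\lambda r} \, M(\lambda), \qquad M(\lambda) := \int_M e^{\lambda \phi} \, d\nu_{(x_0, 0); -1},
\end{equation*}
so everything reduces to an exponential moment bound on $M(\lambda)$.

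The target is $M(\lambda) \leq 2 \exp\bigl(\lambda \sqrt{2\Cn} + 2\lambda^2\bigr)$ for $\lambda$ in a suitable range. My approach is to set $u(y, s) := \int_M e^{\lambda \phi(y')} \, G(y, s; y', -1) \, dm_{-1}(y')$ for $s \in [-1, 0]$, so that $\square u = 0$ and $u(x_0, 0) = M(\lambda)$. The Wasserstein monotonicity (Proposition \ref{prop:monotonicity_W1}) gives $W_1(\nu_{(y_1, s); -1}, \nu_{(y_2, s); -1}) \leq d_s(y_1, y_2)$; combined with the Kantorovich-Rubinstein duality \eqref{eq:W1} applied to Lipschitz truncations of $e^{\lambda \phi}$, this turns the $g_{-1}$-Lipschitz regularity of $\phi$ into Lipschitz control of each slice $u(\cdot, s)$. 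I would then combine this with the variance estimate $\Var(\delta_z, \nu_{(x_0, 0); -1}) \leq \Cn$ and the semigroup identity \eqref{eq:semigroup}: iterating the Chebyshev bound of Lemma \ref{lem:concentration1} at intermediate time slices $s_k \in (-1, 0)$, using centers of $\nu_{(x_0, 0); s_k}$ together with Lemma \ref{lem:W-V}, yields sharp control on the dyadic annuli $B(z, -1, 2^{k+1}\sqrt{2\Cn}) \setminus B(z, -1, 2^{k}\sqrt{2\Cn})$, and summing a geometric series produces the required MGF bound.

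With the MGF estimate in hand, choosing $\lambda = (r - \sqrt{2\Cn})/4$ in the Chernoff inequality gives
\begin{equation*}
e^{-\lambda r} M(\lambda) \leq 2 \exp\!\left(-\tfrac{1}{8}(r - \sqrt{2\Cn})^2\right),
\end{equation*}
which is exactly the claimed bound.

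The hard part is the MGF estimate: the variance bound from the center definition only produces the polynomial Chebyshev decay of Lemma \ref{lem:concentration1}, and upgrading to a true exponential moment bound requires more structure. The super Ricci flow hypothesis enters here solely through Proposition \ref{prop:monotonicity_W1}, which simultaneously delivers the slice-wise Lipschitz control of $u$ and the variance bounds at the intermediate times $s_k$. This mirrors the strategy of Bamler \cite{B1} for Ricci flow, transplanted to the super Ricci flow setting by appealing only to the Wasserstein monotonicity together with Lemmas \ref{lem:W-V} and \ref{lem:concentration1}.
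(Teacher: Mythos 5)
The paper does not prove this statement; it cites \cite{B1}. Your Chernoff scaffolding is fine: after rescaling, the reduction to $r>\sqrt{2\Cn}$, the Markov bound $\nu(\{\phi\geq r\})\leq e^{-\lambda r}M(\lambda)$, and the final optimization $\lambda=(r-\sqrt{2\Cn})/4$ are all correct, \emph{provided} the MGF bound $M(\lambda)\leq 2\exp(\lambda\sqrt{2\Cn}+2\lambda^2)$ holds. The gap is in the derivation of that MGF bound. First, the Kantorovich--Rubinstein duality \eqref{eq:W1} applies to bounded $1$-Lipschitz test functions, but $e^{\lambda\phi}$ has unbounded slope; its Lipschitz truncations carry Lipschitz constants of order $\lambda e^{\lambda R}$ near a truncation level $R$, so they give no uniform slice-wise Lipschitz control on your $u(\cdot,s)$. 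Second, and more fundamentally, iterating the variance (Chebyshev) bound of Lemma~\ref{lem:concentration1} over intermediate time slices only yields polynomial decay $\lesssim R^{-2}$ on each dyadic annulus; bounded second moments plus $W_1$-contraction do not by themselves upgrade to exponential moments, so the proposed geometric series $\sum_k e^{\lambda 2^{k+1}\sqrt{2\Cn}}\nu(\text{annulus }k)$ diverges rather than summing to a sub-Gaussian MGF.

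The missing step is to heat-flow $\phi$ itself rather than $e^{\lambda\phi}$. Put $\psi(y,s):=\int_M d_t(y',z)\,d\nu_{(y,s);t}(y')$ for $s\geq t$, so $\square\psi=0$ and $\psi(\cdot,t)=d_t(\cdot,z)$. Applying \eqref{eq:W1} to the bounded $1$-Lipschitz integrand $d_t(\cdot,z)$ together with Proposition~\ref{prop:monotonicity_W1} gives $|\nabla\psi(\cdot,s)|\leq 1$ for all $s$; this is where the Wasserstein monotonicity you correctly identified actually enters. Now $v:=e^{\lambda\psi}$ satisfies $\square v=-\lambda^2 e^{\lambda\psi}|\nabla\psi|^2\geq-\lambda^2 v$, hence $\square(e^{\lambda^2 s}v)\geq 0$, and by \eqref{eq:conjugate integration2} the map $s\mapsto e^{\lambda^2 s}\int_M v\,d\nu_{(x_0,t_0);s}$ is non-decreasing. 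Comparing $s=t$ and $s=t_0$, and using $\psi(x_0,t_0)\leq\sqrt{\Var(\delta_z,\nu_{(x_0,t_0);t})}\leq\sqrt{\Cn(t_0-t)}$ (Cauchy--Schwarz and the center definition), gives $M(\lambda)\leq\exp\bigl(\lambda^2(t_0-t)+\lambda\sqrt{\Cn(t_0-t)}\bigr)$; Chernoff then yields $\exp\bigl(-(r-\sqrt{\Cn(t_0-t)})_+^2/(4(t_0-t))\bigr)$, which is even sharper than the claim. The decisive ingredient is this pointwise PDE inequality for $e^{\lambda\psi}$ driven by the uniform gradient bound on $\psi$; iterated Chebyshev cannot substitute for it.
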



\subsection{Analytic bounds}\label{sec:Gradient estimates}

Let $\Phi : \IR \to (0,1)$ be a function defined by
\begin{equation}\label{eq:Gaussain}
\Phi(x):=\int^{x}_{-\infty}\,(4\pi)^{-1/2} e^{-y^2/4}\,dy.
\end{equation}
For $t>0$,
let $\Phi_t : \IR \to (0,1)$ stand for a function defined by $\Phi_t (x):=\Phi (t^{-1/2} x)$,
which is the solution to the one dimensional heat equation such that its initial condition is the Heaviside function.
We have the following sharp gradient estimate (see \cite[Theorem 4.1]{B1}):
\begin{thm}[\cite{B1}]\label{thm:gradient_estimate}
Let $(M,(g_t)_{t\in I})$ be a super Ricci flow.
For $[t_0, t_1] \subset I$,
let $u \in C^\infty (M \times  [t_0, t_1])$ be a solution to the heat equation.
Assume that
$u$ only takes values in $(0,1)$,
and $| \nabla (\Phi_T^{-1} ( u (\cdot , t_0) ))| \leq 1$ for $T>0$.
Then $| \nabla (\Phi^{-1}_{t - t_0+T} ( u(\cdot, t) ))| \leq 1$ for all $t \in [t_0, t_1]$.
\end{thm}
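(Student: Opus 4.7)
The plan is to linearise the Heaviside-normalisation and then run a straightforward parabolic maximum principle on $w:=|\nabla v|^2$, with the super Ricci flow hypothesis used precisely to kill the Bochner/Ricci term.

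First, I would introduce the normalised function. Set $\tau:=t-t_0+T>0$ and
\[
v(x,t):=\Phi_{\tau}^{-1}(u(x,t)),\qquad \text{so that}\qquad u(x,t)=\Phi\bigl(\tau^{-1/2}v(x,t)\bigr).
\]
Since $u$ is smooth and takes values in $(0,1)$ on the compact domain $M\times[t_0,t_1]$, both $u$ and $1-u$ are bounded away from $0$, so $v$ is smooth and uniformly bounded together with its derivatives. Using $\Phi'(y)=(4\pi)^{-1/2}e^{-y^2/4}$ and the crucial identity $\Phi''(y)=-\tfrac{y}{2}\Phi'(y)$, I would plug $u=\Phi(\tau^{-1/2}v)$ into $\partial_t u=\Delta u$. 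The $\Phi'$-factor cancels, and after multiplication by $\tau^{1/2}$ one arrives at the nonlinear PDE
\[
\partial_t v - \Delta v \;=\; \frac{v}{2\tau}\bigl(1-|\nabla v|^2\bigr).
\]

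Next, I would compute the evolution of $w:=|\nabla v|^2$. Writing $\partial_t g^{ij}=2h^{ij}$ and applying the Bochner formula,
\[
\partial_t w-\Delta w \;=\; 2(h-\Ric)(\nabla v,\nabla v)+2\langle\nabla v,\nabla(\partial_t v-\Delta v)\rangle -2|\nabla^2 v|^2.
\]
The super Ricci flow assumption $\partial_t g\geq -2\Ric$ is exactly $h\leq \Ric$, so the first term is $\leq 0$. Substituting the PDE for $v$ and expanding $\nabla(\partial_t v-\Delta v)=\tfrac{1-w}{2\tau}\nabla v-\tfrac{v}{2\tau}\nabla w$ yields
\[
\partial_t w-\Delta w+\frac{v}{\tau}\langle\nabla v,\nabla w\rangle \;\leq\; \frac{w(1-w)}{\tau}-2|\nabla^2 v|^2.
\]

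Finally, I would apply the parabolic maximum principle to $\tilde w:=w-1$ on the compact manifold $M\times[t_0,t_1]$. The hypothesis $|\nabla(\Phi_T^{-1}(u(\cdot,t_0)))|\leq 1$ is exactly $\tilde w(\cdot,t_0)\leq 0$. If $\tilde w$ attained a positive maximum at some $(x_*,t_*)$ with $t_*>t_0$, then $\nabla w=0$, $\Delta w\leq 0$, $\partial_t w\geq 0$ there, and the inequality would give
\[
0\;\leq\;\frac{w(1-w)}{\tau}\bigg|_{(x_*,t_*)}<0,
\]
a contradiction. Hence $w\leq 1$ throughout $M\times[t_0,t_1]$, which is the desired estimate.

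The main obstacle is conceptual rather than technical: recognising that the correct Cole–Hopf-style substitution is $v=\Phi_\tau^{-1}(u)$ with the time-dependent bandwidth $\tau=t-t_0+T$, so that the extra $-v/(2\tau)$ term from differentiating $\tau^{-1/2}$ combines cleanly with the Bochner derivative of $|\nabla v|^2$ to produce the good sign $w(1-w)/\tau$. Once this substitution is in place, the only analytic input is the sign condition $h\leq \Ric$, which is precisely the super Ricci flow hypothesis; no further curvature assumption is needed, and the non-negativity of the Müller quantity plays no role here.
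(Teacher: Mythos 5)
Your proof is correct, and it follows essentially the same route as Bamler's original argument in \cite[Theorem 4.1]{B1}, which this paper cites without reproducing: set $v=\Phi_\tau^{-1}(u)$ with $\tau=t-t_0+T$, derive the PDE $\partial_t v-\Delta v=\frac{v}{2\tau}(1-|\nabla v|^2)$, observe that the super Ricci flow condition $h\leq\Ric$ kills the Bochner term, and conclude via the maximum principle on $|\nabla v|^2$.
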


\begin{rem}
Due to Theorem \ref{thm:gradient_estimate},
every super Ricci flow can be regarded as a metric flow in the sense of \cite[Definition 3.1]{B2}.
Note that
Proposition \ref{prop:monotonicity_W1} holds in the framework of metric flows (see \cite[Proposition 3.16]{B2}).
Furthermore,
every super Ricci flow satisfies a certain concentration property (see \cite[Corollary 3.8]{B1}).
In particular,
it can be regarded as an $H_n$-concentrated metric flow in the sense of \cite[Definition 3.21]{B2},
which is a crucial object in the compactness theory of metric flows (see \cite[Theorem 7.4, Corollary 7.5]{B2}).
Proposition \ref{prop:center} immediately follows from the $H_n$-concentration property (see \cite[Proposition 3.25]{B2}).
\end{rem}

We next present the following heat kernel estimates (see \cite[Proposition 4.2]{B1}):
\begin{prop}[\cite{B1}]\label{prop:G_bound1}
Let $(M,(g_t)_{t\in I})$ be a super Ricci flow.
Let $x\in M$ and $[s,t] \subset I$.
Then for every $p\in [1,\infty)$ we have
\begin{equation}\label{eq:G_bound11}
 (t-s)^{p/2} \int_M \left( \frac{|\nabla_x G(x,t; \cdot, s)|}{G(x,t; \cdot, s)} \right)^p d\nu_{(x,t);s} \leq C_{p};
\end{equation}
moreover,
for every Borel subset $\Omega \subset M$ we have
\begin{equation}\label{eq:G_bound13}
 (t-s)^{p/2} \int_\Omega \left( \frac{|\nabla_x G(x,t; \cdot, s)|}{G(x,t; \cdot, s)} \right)^p d\nu_{(x,t);s} \leq C_{p}\, \nu_{(x,t);s}(\Omega)(-\log (\nu_{(x,t);s}(\Omega)/2))^{p/2}. 
\end{equation}
Also,
for every $v \in T_x M$ with $|v|_t = 1$,
\begin{equation}\label{eq:G_bound12}
(t-s) \int_M \left( \frac{\partial_{v} G(x,t; \cdot, s)}{G(x,t; \cdot, s)} \right)^2 d\nu_{(x,t);s} \leq \frac12.
\end{equation}
\end{prop}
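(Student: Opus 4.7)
The plan is to derive all three estimates from the sharp gradient estimate, Theorem~\ref{thm:gradient_estimate}, via a duality between forward heat flows of Gaussian-composed Lipschitz functions and the conjugate heat kernel measures $\nu_{(x,t);s}$. Given a test function $F : M \to \IR$, let $u(\cdot,\sigma)$ for $\sigma \in [s,t]$ be the solution of the forward heat equation with $u(\cdot,s) = F$. Using the semigroup property \eqref{eq:semigroup}, one has the identities
\begin{equation*}
u(x,t) = \int_M F \, d\nu_{(x,t);s}, \qquad \partial_v u(x,t) = \int_M F(y) \, \frac{\partial_v G(x,t;y,s)}{G(x,t;y,s)} \, d\nu_{(x,t);s}(y),
\end{equation*}
so that any pointwise bound on $\partial_v u(x,t)$ transfers to an integral bound for the \emph{score} $\nabla_x G/G$ paired against $F$.

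For the sharp estimate \eqref{eq:G_bound12}, I would choose a 1-Lipschitz function $\phi$ on $(M,d_s)$ and a parameter $T>0$, and set $F:=\Phi_T(\phi)$, so $\Phi_T^{-1}(u(\cdot,s))=\phi$ has gradient at most one. Theorem~\ref{thm:gradient_estimate} then forces $\Phi_{T+(t-s)}^{-1}(u(\cdot,t))$ to remain 1-Lipschitz at time $t$. Differentiating this composition at $x$ in direction $v$ yields
\begin{equation*}
|\partial_v u(x,t)| \leq \Phi'_{T+(t-s)}\!\bigl(\Phi_{T+(t-s)}^{-1}(u(x,t))\bigr) \leq \bigl(4\pi(T+(t-s))\bigr)^{-1/2}.
\end{equation*}
Sending $T\searrow 0$ produces Heaviside-type bounds of the form $|\int_{\{\phi>0\}}(\partial_v\log G)\,d\nu| \leq (4\pi(t-s))^{-1/2}\exp(-\tfrac{1}{4}(\Phi^{-1}(p))^2)$ with $p := \nu_{(x,t);s}(\{\phi>0\})$. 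A layer-cake integration over the super-level sets of the score --- approximated by 1-Lipschitz functions on $(M,d_s)$ --- combined with the Gaussian tail of $\Phi'(\Phi^{-1}(\cdot))$ then extracts the sharp $L^2$ bound with constant $1/2$.

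Estimate \eqref{eq:G_bound11} for $p=2$ follows at once by summing \eqref{eq:G_bound12} over an orthonormal basis of $T_x M$, yielding $C_2 = n/2$. For general $p \in [1,\infty)$, the same duality applied with test functions adapted to the $p$-th power of the score produces higher moments of $|\nabla_x\log G|$, which combine via Minkowski or H\"older into the uniform constant $C_p$. The subset refinement \eqref{eq:G_bound13} arises by choosing 1-Lipschitz cutoffs adapted to $\Omega$: when $\nu_{(x,t);s}(\Omega)$ is small, $\Phi^{-1}(\nu_{(x,t);s}(\Omega)/2)$ has magnitude comparable to $\sqrt{-\log \nu_{(x,t);s}(\Omega)}$, and the Gaussian weight in $\Phi'$ reproduces exactly the stated logarithmic factor $(-\log(\nu_{(x,t);s}(\Omega)/2))^{p/2}$.

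The main difficulty is the sharp constant $1/2$ in \eqref{eq:G_bound12}. Soft arguments relying only on Proposition~\ref{prop:monotonicity_W1} give at best an $L^1$ bound with a non-optimal constant; to obtain the Euclidean-sharp value one must use the sharp gradient estimate Theorem~\ref{thm:gradient_estimate} and carefully justify the limit $T\searrow 0$ together with optimization over Lipschitz $\phi$, mirroring the Euclidean model where the standard Gaussian heat kernel saturates the inequality. A secondary technicality is controlling the error in the Heaviside limit uniformly in $\phi$, which requires the smooth dependence of the heat kernel on $(x,t)$ available in the super Ricci flow setting.
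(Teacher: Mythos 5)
The paper cites \cite{B1} without proof, and your sketch correctly identifies Bamler's strategy: realize $\int_\Omega (\nabla_x G/G)\,d\nu_{(x,t);s}$ as $\nabla_x h(x,t)$ where $h(y,t'):=\nu_{(y,t');s}(\Omega)$ is a bounded heat flow, then apply the sharp gradient estimate Theorem~\ref{thm:gradient_estimate}. However, there are genuine gaps. The detour through initial data $\Phi_T(\phi)$ with $\phi$ $1$-Lipschitz, and the later ``super-level sets of the score approximated by $1$-Lipschitz functions'' and ``$1$-Lipschitz cutoffs adapted to $\Omega$'', are both unnecessary and misleading: super-level sets of the score $\nabla_x G/G$ are not level sets of Lipschitz functions, and a Lipschitz mollification of a general Borel $\Omega$ does not localize to $\Omega$. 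What is actually needed is the pointwise bound
\begin{equation*}
\left|\,\int_\Omega \frac{\nabla_x G(x,t;\cdot,s)}{G(x,t;\cdot,s)}\,d\nu_{(x,t);s}\,\right| \leq (4\pi(t-s))^{-1/2}\exp\!\left(-\tfrac14\bigl(\Phi^{-1}(\nu_{(x,t);s}(\Omega))\bigr)^2\right)
\end{equation*}
for \emph{every} Borel $\Omega$, obtained by applying Theorem~\ref{thm:gradient_estimate} directly to the heat flow of $\chi_\Omega$ (with a $T\searrow 0$ limit that requires a separate justification but has nothing to do with Lipschitz data).

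Secondly, deriving \eqref{eq:G_bound11} for general $p$ ``via Minkowski or H\"older'' cannot work: those inequalities pass from higher to lower moments, and $L^p$ control for $p>2$ cannot be extracted from the $L^2$ estimate alone. The correct route is that the pointwise bound above, applied to super-level sets, dominates the upper quantile function of $\partial_v\log G$ under $\nu_{(x,t);s}$ by $a\mapsto C\sqrt{(-\log a)/(t-s)}$; integrating the $p$-th power of this quantile against $da$ yields the uniform $C_p$, and restricting the same layer-cake computation to $\Omega$ produces the $(-\log(\nu_{(x,t);s}(\Omega)/2))^{p/2}$ factor of \eqref{eq:G_bound13}. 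Finally, the passage from the pointwise bound to the sharp constant $1/2$ in \eqref{eq:G_bound12} is a genuine comparison argument --- the inequality is saturated in the Euclidean Gaussian model, and one must show that the resulting integral inequality on partial sums of the quantile function implies second-moment domination --- which your sketch asserts but does not supply.
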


\begin{rem}\label{rem:G_bound}
In \eqref{eq:G_bound11} and \eqref{eq:G_bound13}, one can choose $C_{2} = n/2$.
\end{rem}

We have the following Poincar\'e inequality (see \cite[Theorem 1.5]{HaN}, \cite[Theorem 11.1]{B1}, and also \cite[Theorem 1.10]{HN}):
\begin{prop}[\cite{HaN}, \cite{B1}]\label{prop:Poincare}
Let $(M,(g_t)_{t\in I})$ be a super Ricci flow.
Let $(x_0,t_0) \in M \times I$.
Let $\tau>0$ satisfy $[t_0 - \tau,t_0] \subset I$.
Then for any $u \in C^1(M)$ with $\int_M u \, d\nu_{(x_0,t_0);t_0-\tau}=0$ and $p \in [1,\infty)$,
\begin{equation*}
\int_M |u|^p d\nu_{(x_0,t_0);t_0-\tau} \leq C_p \,\tau^{p/2}\, \int_M |\nabla u|^p d\nu_{(x_0,t_0);t_0-\tau}.
\end{equation*}
\end{prop}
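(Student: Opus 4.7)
The plan is to adapt the heat-flow scheme of Hein-Naber \cite{HaN}/Bamler \cite{B1}, verifying that the super Ricci flow inequality suffices in place of the full Ricci flow equation. After time translation and parabolic rescaling, assume $t_0 - \tau = 0$ and $t_0 = \tau$. Let $v \in C^\infty(M \times [0,\tau])$ solve the forward heat equation $\square v = 0$ with $v(\cdot,0) = u$. By the first identity in \eqref{eq:conjugate integration2}, $t \mapsto \int_M v\, d\nu_{(x_0,\tau);t}$ is constant on $[0,\tau]$; combining the mean-zero hypothesis at $t = 0$ with $\nu_{(x_0,\tau);\tau} = \delta_{x_0}$ gives $v(x_0,\tau) = 0$.

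The core is the case $p = 2$. Setting $\phi(t) := \int_M v^2\, d\nu_{(x_0,\tau);t}$ and using $\square(v^2) = -2|\nabla v|^2$ in \eqref{eq:conjugate integration2}, one has $\phi'(t) = -2\int_M |\nabla v|^2\, d\nu_t$; integrating from $0$ to $\tau$ and invoking $\phi(\tau) = v(x_0,\tau)^2 = 0$ produces
\begin{equation*}
\int_M u^2\, d\nu_0 \,=\, 2\int_0^\tau \int_M |\nabla v|^2\, d\nu_t\, dt.
\end{equation*}
The decisive step is the gradient estimate: the Bochner identity for time-dependent metrics (using $\partial_t g = -2h$) yields
\begin{equation*}
\square|\nabla v|^2 \,=\, -2|\nabla^2 v|^2 - 2(\Ric - h)(\nabla v,\nabla v),
\end{equation*}
and the super Ricci flow condition $h \leq \Ric$ renders both terms non-positive. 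Hence $t \mapsto \int_M |\nabla v|^2\, d\nu_t$ is non-increasing and bounded above by $\int_M |\nabla u|^2\, d\nu_0$, which gives the $p = 2$ inequality with $C_2 = 2$.

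For general $p \in [1,\infty)$, one extends the same scheme with $|v|^p$ in place of $v^2$. For $p \geq 2$ we have $\square(|v|^p) = -p(p-1)|v|^{p-2}|\nabla v|^2$, and H\"older's inequality with exponents $p/(p-2)$ and $p/2$ reduces matters to the non-increase of $\int_M |v|^p\, d\nu_t$ and of $\int_M |\nabla v|^p\, d\nu_t$; the latter follows from $|\nabla v|^p = (|\nabla v|^2)^{p/2}$ together with the $p = 2$ monotonicity. For $p \in [1,2)$ the result can be deduced from the logarithmic Sobolev inequality proven by the same heat-flow scheme, via Herbst's argument giving Gaussian concentration and hence all $L^p$ moment bounds. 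The principal obstacle throughout is verifying that the Bochner-based monotonicity of $\int_M |\nabla v|^2\, d\nu_t$ used for Ricci flow survives the relaxation to super Ricci flow; once the sign $\Ric - h \geq 0$ is invoked, the standard argument is unaffected. In particular, the M\"uller-quantity hypothesis $\mathcal{D} \geq 0$ is not required for this proposition.
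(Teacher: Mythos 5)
Your $p=2$ argument is correct and is indeed the Haslhofer--Naber scheme the paper invokes, and the self-improvement to $p\geq 2$ via H\"older together with the non-increase of $\int|v|^p\,d\nu_t$ and $\int|\nabla v|^p\,d\nu_t$ (the latter because $\zeta\mapsto\zeta^{p/2}$ is convex increasing for $p\geq 2$, so $\square|\nabla v|^p\leq 0$ follows from $\square|\nabla v|^2\leq 0$) is sound. You are also right that $\mathcal{D}\geq 0$ is not needed here.

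The gap is the case $p\in[1,2)$, which is precisely the case the paper actually needs ($C_1=\sqrt{\pi}$ is used repeatedly). Herbst's argument turns a log-Sobolev inequality into Gaussian concentration for Lipschitz functions, i.e.\ bounds of the form $\int|u-\bar u|^p\,d\nu_0\leq C_p\,\tau^{p/2}\,\|\nabla u\|_{L^\infty}^p$. That has $\|\nabla u\|_{\infty}$ on the right, not $\|\nabla u\|_{L^p(\nu_0)}$, and since $\|\nabla u\|_{L^p}\leq\|\nabla u\|_{L^\infty}$ this is strictly weaker than the claimed inequality; no standard interpolation recovers the $(p,p)$-Poincar\'e for $p<2$ from log-Sobolev or from the $L^2$-Poincar\'e. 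The paper therefore proceeds in the opposite direction: it establishes $p=1$ first and then obtains general $p$ from $p=1$ following the proof of \cite[Theorem~11.1]{B1}. The essential new ingredient you are missing is that one must go beyond $\square|\nabla v|^2\leq 0$ and use Bochner \emph{together with the Kato inequality} to get
\begin{equation*}
\square|\nabla v|^2 = 2h(\nabla v,\nabla v)-2\Ric(\nabla v,\nabla v)-2|\nabla^2 v|^2 \leq -\frac{|\nabla|\nabla v|^2|^2}{2|\nabla v|^2},
\end{equation*}
hence $\square|\nabla v|\leq 0$ and $t\mapsto\int_M|\nabla v|\,d\nu_t$ is non-increasing. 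Bamler's $p=1$ argument then runs a duality $\int_M|u|\,d\nu_0=\sup_{|\phi|\leq 1}\int_M u\phi\,d\nu_0$, pairs $v$ with the heat flow $\psi$ started at $\phi$, uses the Duhamel identity $\frac{d}{dt}\int v\psi\,d\nu_t=-2\int\langle\nabla v,\nabla\psi\rangle\,d\nu_t$, bounds $\|\nabla\psi(\cdot,t)\|_{\infty}$ by the sharp gradient estimate (Theorem~\ref{thm:gradient_estimate}), and invokes the monotonicity of $\int|\nabla v|\,d\nu_t$. Without this $p=1$ step, or a correct substitute, your proposal does not cover $p\in[1,2)$.
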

\begin{proof}
Bamler \cite{B1} has stated this assertion only for Ricci flow,
but the argument also works for super Ricci flow without any changes.
For $p=2$,
the desired inequality has been obtained by Haslhofer-Naber \cite{HaN} (see \cite[Theorem 1.5]{HaN}).
For $p=1$,
by noticing the following point,
one can prove the desired one along the lines of the proof of \cite[Theorem 11.1]{B1}:
If $\square u=0$,
then the Bochner formula and the Kato inequality imply
\begin{equation*}
\square |\nabla u|^2=2h(\nabla u,\nabla u)-2\Ric(\nabla u,\nabla u)-2|\nabla^2 u|^2\leq -\frac{|\nabla |\nabla u|^2|^2}{2|\nabla u|^2};
\end{equation*}
in particular,
$\square |\nabla u|\leq 0$.
For general $p$,
the desired one follows from the assertion for $p=1$ together with the same argument of the proof of \cite[Theorem 11.1]{B1}.
\end{proof}

\begin{rem}\label{rem:Poincare}
In Proposition \ref{prop:Poincare}, we may choose $C_1 = \sqrt{\pi}$ and $C_2 = 2$.
\end{rem}

\section{Nash entropy}\label{sec:Nash entropy}
In this section,
we examine basic properties of the Nash entropy.

\subsection{Entropy monotonicity}

Fix a base point $(x_0, t_0) \in M \times I$.
For $\tau > 0$ with $[t_0 - \tau, t_0] \subset I$,
the \textit{pointed Nash-entropy} is defined by
\begin{equation*}
\NN_{(x_0, t_0)} (\tau):= \int_M \,f\,d\nu_{(x_0,t_0);t_0-\tau} - \frac{n}2.
\end{equation*}
We set $\NN_{(x_0, t_0)} (0) := 0$ such that $\NN_{(x_0,t_0)}(\tau)$ is continuous in $\tau$ (cf. \cite[Proposition 5.2]{B1}, \cite{GPT}).
The \textit{pointed $\mathcal{W}$-entropy} is defined by
\begin{equation*}
\WW_{(x_0,t_0)}(\tau):=\int_M \left( \tau (|\nabla f|^2 + \HH) + f - n \right) d\nu_{(x_0,t_0);t_0-\tau}.
\end{equation*}
We recall the following monotonicity properties (cf. \cite[Proposition 5.2]{B1}, \cite[Theorem 3.1]{Hu}, \cite[Theorem 5.2]{GPT}, \cite[Lemma 3.1]{FZ}):
\begin{lem}\label{lem:Nash} 
Assume $\mathcal{D}\geq 0$.
Let $(x_0, t_0) \in M \times I$.
If $\tau > 0$ with $[t_0 - \tau, t_0] \subset I$, then
\begin{align}\label{eq:Nash2}
 &\frac{d}{d\tau} \left(\tau \NN_{(x_0, t_0)} (\tau)\right) = \WW_{(x_0,t_0)}(\tau) \leq 0,\\ \label{eq:Nash3}
 &\frac{d^2}{d\tau^2} \left( \tau \NN_{(x_0, t_0)} (\tau) \right) =  - \tau \int_M \left(2\left| \Psi \right|^2+\mathcal{D}(\nabla f) \right) d\nu_{(x_0,t_0);t_0 - \tau} \leq 0,\\ \label{eq:Nash3.5}
 &\frac{d}{d\tau}\NN_{(x_0, t_0)} (\tau) \leq 0,\\ \label{eq:Nash rem}
 &\WW_{(x_0,t_0)}(\tau) \leq \NN_{(x_0, t_0)} (\tau),
\end{align}
where $\Psi$ is defined as \eqref{eq:Harnack notation}.
\end{lem}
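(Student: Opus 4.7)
The strategy is to prove \eqref{eq:Nash2} first by a direct Duhamel-type computation, then obtain \eqref{eq:Nash3} by differentiating once more and invoking Theorem \ref{thm:Harnack}, and finally extract \eqref{eq:Nash rem} and \eqref{eq:Nash3.5} purely from concavity of $F(\tau):=\tau\NN_{(x_0,t_0)}(\tau)$ together with the normalization $F(0)=0$. The central observation is that the integration-by-parts identity in \eqref{eq:conjugate integration2} applied to $V=\nabla f$ produces $\int_M \Delta f\, d\nu = \int_M |\nabla f|^2\, d\nu$, so that $\WW_{(x_0,t_0)}(\tau)$ coincides with $\int_M w\, d\nu_{t_0-\tau}$ for the Harnack quantity $w$ from \eqref{eq:Harnack notation}; this is what links the entropy formalism to Theorem \ref{thm:Harnack}.

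For \eqref{eq:Nash2}, I would differentiate $F(\tau)=\tau\int_M f\,d\nu_{t_0-\tau}-n\tau/2$ in $\tau$, use \eqref{eq:conjugate integration2} together with $t=t_0-\tau$ to convert $\frac{d}{d\tau}\int_M f\,d\nu_{t_0-\tau}$ into $-\int_M \square f\,d\nu$, and then substitute $\square f$ via \eqref{eq:potential_enjoy}. The divergence identity just recalled turns $\int\Delta f\,d\nu$ into $\int|\nabla f|^2\,d\nu$, and a direct bookkeeping yields $F'(\tau)=\WW_{(x_0,t_0)}(\tau)$. The same identity rewrites $\WW_{(x_0,t_0)}(\tau)$ as $\int_M w\,d\nu_{t_0-\tau}$; since $\mathcal{D}\geq 0$ forces $w\leq 0$ by Theorem \ref{thm:Harnack}, we conclude $\WW_{(x_0,t_0)}(\tau)\leq 0$.

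For \eqref{eq:Nash3}, I differentiate $\WW_{(x_0,t_0)}(\tau)=\int_M w\,d\nu_{t_0-\tau}$ once more. Setting $G:=G(x_0,t_0;\cdot,\cdot)$ and applying \eqref{eq:conjugate integration} to the pair $(1,wG)$ gives $\frac{d}{dt}\int_M w\,d\nu_t = -\int_M \square^*(wG)\,dm_t$, and the identity for $\square^*(wG)$ in Theorem \ref{thm:Harnack} evaluates the right-hand side as $\int_M\bigl(2\tau|\Psi|^2 + \tau\,\mathcal D(\nabla f)\bigr)\,d\nu_t$. Reverting to $d/d\tau$ via $\tau=t_0-t$ yields the equality in \eqref{eq:Nash3}, and non-positivity follows from $|\Psi|^2\geq 0$ and $\mathcal D\geq 0$. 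Concavity of $F$ combined with $F(0)=0$ and the tangent-line inequality at $\tau$ then yields $\tau F'(\tau)\leq F(\tau)$, which is precisely \eqref{eq:Nash rem}; dividing by $\tau^2$ and using $\frac{d}{d\tau}\NN_{(x_0,t_0)}(\tau) = (\WW_{(x_0,t_0)}(\tau)-\NN_{(x_0,t_0)}(\tau))/\tau$ gives \eqref{eq:Nash3.5}. The main computational hurdle is the step in \eqref{eq:Nash3}: one must carefully pair the Harnack identity of Theorem \ref{thm:Harnack} with the conjugate heat flow through \eqref{eq:conjugate integration} so as to convert $\frac{d}{dt}\int w\,d\nu$ into an integrand manifestly controlled by $|\Psi|^2$ and the M\"uller quantity $\mathcal D(\nabla f)$.
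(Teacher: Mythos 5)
Your proof is correct and follows exactly the route that the paper defers to by citation: compute $F'(\tau)=\WW(\tau)$ via \eqref{eq:conjugate integration2}, \eqref{eq:potential_enjoy} and the divergence identity $\int_M\Delta f\,d\nu=\int_M|\nabla f|^2\,d\nu$; compute $F''$ by pairing \eqref{eq:conjugate integration} with the $\square^*(wu)$ formula of Theorem~\ref{thm:Harnack}; and then read off \eqref{eq:Nash rem} and \eqref{eq:Nash3.5} from concavity of $F=\tau\NN$ together with $F(0)=0$, exactly as in \cite[Proposition 5.2]{B1}. Your identification $\WW_{(x_0,t_0)}(\tau)=\int_M w\,d\nu_{t_0-\tau}$ and the tangent-line argument at $\sigma=0$ are both correct, so this fills in precisely the details the paper points to in \cite{Hu}, \cite{GPT}, \cite{FZ}, \cite{B1}.
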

\begin{proof}
The formulas \eqref{eq:Nash2} and \eqref{eq:Nash3} are well-known (see e.g., \cite[Theorem 3.1]{Hu}, \cite[Theorem 5.2]{GPT}, \cite[Lemma 3.1]{FZ}).
Furthermore,
\eqref{eq:Nash3.5} and \eqref{eq:Nash rem} can be derived from the same calculation as in the proof of \cite[Proposition 5.2]{B1} together with \eqref{eq:Nash2} and \eqref{eq:Nash3}.
\end{proof}

We also see the following:
\begin{lem}\label{lem:Nash_more} 
Assume $\mathcal{D}\geq 0$.
Let $(x_0, t_0) \in M \times I$.
Let $\tau_1,\tau_2>0$ satisfy $\tau_1 \leq \tau_2$ and $[t_0 - \tau_2, t_0] \subset I$.
For $A>0$,
we assume $\HH(\cdot,t_0-\tau_2) \geq -A$.
Then we have
\begin{equation*}
 \NN_{(x_0, t_0)} (\tau_1)-\left((\tau_2-\tau_1)A+\frac{n}{2}\log\left( \frac{\tau_2}{\tau_1} \right)\right) \leq \NN_{(x_0, t_0)} (\tau_2). 
\end{equation*}
\end{lem}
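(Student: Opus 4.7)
The plan is to express $\NN_{(x_0,t_0)}(\tau_1)-\NN_{(x_0,t_0)}(\tau_2)$ as the integral of $-\frac{d}{d\tau}\NN_{(x_0,t_0)}(\tau)$ over $[\tau_1,\tau_2]$, and then estimate the integrand by means of the identity \eqref{eq:Nash2} together with the forward lower bound for $\HH$ coming from Remark \ref{rem:scal2}. Throughout, let $f$ and $\nu:=\nu_{(x_0,t_0)}$ be the density and conjugate heat kernel measure associated with the base point.

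\medskip

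First, from \eqref{eq:Nash2} I would rewrite
\[
\frac{d}{d\tau}\NN_{(x_0,t_0)}(\tau)
= \frac{1}{\tau}\bigl(\WW_{(x_0,t_0)}(\tau)-\NN_{(x_0,t_0)}(\tau)\bigr),
\]
so that
\[
\NN_{(x_0,t_0)}(\tau_1)-\NN_{(x_0,t_0)}(\tau_2)
= \int_{\tau_1}^{\tau_2}\frac{\NN_{(x_0,t_0)}(\tau)-\WW_{(x_0,t_0)}(\tau)}{\tau}\,d\tau.
\]
Unpacking the definitions of $\NN_{(x_0,t_0)}$ and $\WW_{(x_0,t_0)}$, the numerator becomes
\[
\NN_{(x_0,t_0)}(\tau)-\WW_{(x_0,t_0)}(\tau)
=\frac{n}{2}-\tau\int_M\bigl(|\nabla f|^2+\HH\bigr)\,d\nu_{t_0-\tau}.
\]

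\medskip

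Second, I would bound this from above. The term $|\nabla f|^2\geq 0$ contributes non-negatively to the subtraction and can simply be dropped. For the scalar-type term, the assumption $\HH(\cdot,t_0-\tau_2)\geq -A$ combined with Remark \ref{rem:scal2} (which says $\min_M \HH(\cdot,\cdot)$ is non-decreasing in $t$ under $\mathcal{D}\geq 0$) propagates forward in time to give $\HH(\cdot,t_0-\tau)\geq -A$ for every $\tau\in[\tau_1,\tau_2]$. Consequently
\[
\NN_{(x_0,t_0)}(\tau)-\WW_{(x_0,t_0)}(\tau)\leq \frac{n}{2}+\tau A.
\]

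\medskip

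Finally, plugging this into the integral representation and evaluating the elementary integral yields
\[
\NN_{(x_0,t_0)}(\tau_1)-\NN_{(x_0,t_0)}(\tau_2)
\leq \int_{\tau_1}^{\tau_2}\Bigl(\frac{n}{2\tau}+A\Bigr)\,d\tau
= \frac{n}{2}\log\!\Bigl(\frac{\tau_2}{\tau_1}\Bigr)+(\tau_2-\tau_1)A,
\]
which is exactly the claimed inequality. I do not anticipate a serious obstacle; the only subtle point is recognizing that \eqref{eq:Nash2} is a cleaner starting point than differentiating $\NN_{(x_0,t_0)}$ directly, since it converts the problem into a lower bound on $\WW_{(x_0,t_0)}-\NN_{(x_0,t_0)}$ that is controlled pointwise (in $\tau$) by a one-sided scalar curvature bound.
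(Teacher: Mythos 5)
Your proof is correct and takes essentially the same route as the paper: both start from \eqref{eq:Nash2}, expand $\WW_{(x_0,t_0)}-\NN_{(x_0,t_0)}$ using the definitions, drop the non-negative $|\nabla f|^2$ term, use the scalar bound $\HH\geq -A$ propagated forward via Remark~\ref{rem:scal2}, and integrate the resulting differential inequality for $\frac{d}{d\tau}\NN_{(x_0,t_0)}(\tau)$. The only cosmetic difference is that you explicitly invoke Remark~\ref{rem:scal2} for the forward propagation of the lower bound on $\HH$, a step the paper uses implicitly.
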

\begin{proof}
From \eqref{eq:Nash2} we deduce
\begin{align*}
 \frac{d}{d\tau} \left(\tau \NN_{(x_0, t_0)}(\tau) \right) =\WW_{(x_0,t_0)}(\tau)
&= \int_M \tau \left( |\nabla f|^2 + \HH  \right)d\nu_{(x_0,t_0);t_0 - \tau} + \NN_{(x_0, t_0)}(\tau) - \frac{n}2 \\
&\geq -\tau A + \NN_{(x_0, t_0)}(\tau) - \frac{n}2,
\end{align*}
and hence
\begin{equation*}
\frac{d}{d\tau} \NN_{(x_0, t_0)} (\tau)\geq -\left(A+ \frac{n}{2\tau}\right).
\end{equation*}
This implies
\begin{equation*}
 \NN_{(x_0, t_0)} (\tau_2) - \NN_{(x_0, t_0)}(\tau_1)\geq -\int^{\tau_2}_{\tau_1} \,\left(A+ \frac{n}{2\tau}\right)\, d\tau= -\left((\tau_2-\tau_1)A+\frac{n}{2}\log\left( \frac{\tau_2}{\tau_1} \right)\right).
\end{equation*}
We complete the proof.
\end{proof}

Lemma \ref{lem:Nash} and Proposition \ref{prop:Poincare} yield the following (cf. \cite[Proposition 5.13]{B1}):
\begin{lem}\label{lem:Nash basic}
Let $(M,(g_t)_{t\in I})$ be a super Ricci flow with $\mathcal{D}\geq 0$.
Let $(x_0, t_0) \in M \times I$.
For $A>0$,
we assume $\HH(\cdot, t_0 - \tau) \geq -A$.
Then we have
\begin{align}\label{eq:Nash basic1}
 &\int_M \tau ( |\nabla f|^2 + \HH) d\nu_{(x_0,t_0);t_0-\tau} \leq \frac{n}2,\\ \label{eq:Nash basic2}
&\int_M  \left( f - \NN_{(x_0, t_0)} (\tau) - \frac{n}2 \right)^2 d\nu_{(x_0,t_0);t_0-\tau} \leq n+ 2 A\tau.
\end{align}
\end{lem}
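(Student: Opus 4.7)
The plan is to derive the first bound directly from the monotonicity of the Nash entropy, and then feed that estimate into the Poincaré inequality of Proposition \ref{prop:Poincare} to obtain the second.

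For \eqref{eq:Nash basic1}, I would start from the definition of $\WW_{(x_0,t_0)}(\tau)$ and the identity
\[
\WW_{(x_0,t_0)}(\tau) = \int_M \tau(|\nabla f|^2 + \HH)\,d\nu_{(x_0,t_0);t_0-\tau} + \NN_{(x_0,t_0)}(\tau) - \frac{n}{2},
\]
which uses only that $\int_M f\,d\nu_{(x_0,t_0);t_0-\tau} = \NN_{(x_0,t_0)}(\tau) + n/2$. The comparison \eqref{eq:Nash rem} in Lemma \ref{lem:Nash}, namely $\WW_{(x_0,t_0)}(\tau) \leq \NN_{(x_0,t_0)}(\tau)$ (which required $\mathcal{D}\geq 0$), then rearranges exactly to \eqref{eq:Nash basic1}. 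Note that here the lower scalar curvature bound $A$ is not even used.

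For \eqref{eq:Nash basic2}, I would set $u := f - \NN_{(x_0,t_0)}(\tau) - n/2$, so that $\int_M u\,d\nu_{(x_0,t_0);t_0-\tau} = 0$ by the definition of $\NN_{(x_0,t_0)}(\tau)$. Applying Proposition \ref{prop:Poincare} with $p=2$ (and $C_2 = 2$ from Remark \ref{rem:Poincare}) gives
\[
\int_M u^2\,d\nu_{(x_0,t_0);t_0-\tau} \leq 2\tau \int_M |\nabla f|^2\,d\nu_{(x_0,t_0);t_0-\tau}.
\]
To bound the right-hand side, I would use the hypothesis $\HH(\cdot, t_0-\tau) \geq -A$ to write $|\nabla f|^2 \leq (|\nabla f|^2 + \HH) + A$, integrate, and then apply \eqref{eq:Nash basic1} to the first term. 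This yields $\int_M |\nabla f|^2\,d\nu \leq n/(2\tau) + A$, and multiplying by $2\tau$ gives exactly $n + 2A\tau$.

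There is no substantial obstacle here; both inequalities are essentially reformulations of Lemma \ref{lem:Nash} together with the Poincaré inequality. The only point that requires care is tracking constants (using in particular $C_2 = 2$ in Remark \ref{rem:Poincare}) and noting that the lower scalar curvature bound is invoked only in the second estimate, where one must split $|\nabla f|^2 = (|\nabla f|^2 + \HH) - \HH$ and use $-\HH \leq A$ in order to apply \eqref{eq:Nash basic1} cleanly.
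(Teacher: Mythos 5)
Your proposal is correct and follows essentially the same route as the paper: for \eqref{eq:Nash basic1} you use $\WW_{(x_0,t_0)}(\tau) \leq \NN_{(x_0,t_0)}(\tau)$ (which, via \eqref{eq:Nash2}, is precisely the condition $\frac{d}{d\tau}\NN_{(x_0,t_0)}(\tau) \leq 0$ that the paper invokes), and for \eqref{eq:Nash basic2} you apply the $p=2$ Poincar\'e inequality to the mean-zero function and then bound $\int_M |\nabla f|^2\,d\nu$ using \eqref{eq:Nash basic1} and $-\HH \leq A$, exactly as in the paper. Your side remark that the lower bound on $\HH$ is only needed in the second estimate is also accurate.
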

\begin{proof}
By the same calculation as in the proof of \cite[Proposition 5.13]{B1},
and by \eqref{eq:Nash2}, \eqref{eq:Nash3.5},
\begin{align*}
 \int_M \tau ( |\nabla f|^2 + \HH ) d\nu_{(x_0,t_0);t_0 - \tau}&= \frac{n}2 + \tau  \frac{d}{d\tau} \NN_{(x_0, t_0)} (\tau) \leq \frac{n}2,
\end{align*}
which proves \eqref{eq:Nash basic1}.
Proposition \ref{prop:Poincare} with $p=2$ together with 
\begin{equation*}
\int_M  \left( f - \NN_{(x_0, t_0)} (\tau) - \frac{n}2 \right) d\nu_{(x_0,t_0);t_0-\tau} = 0
\end{equation*}
and
\begin{equation*}
2\tau \int_M |\nabla f|^2 d\nu_{(x_0,t_0);t_0 - \tau} \leq n - 2\tau \int_{M} \HH \, d\nu_{(x_0,t_0);t_0 - \tau}   \leq  n + 2A \tau
\end{equation*}
leads us to \eqref{eq:Nash basic2} (see also Remark \ref{rem:Poincare}).
We arrive at the desired estimates.
\end{proof}

\subsection{Derivative estimates}
For a fixed $s\in I$,
we define
\begin{equation*}
\NN_s (x, t) := \NN_{(x, t)} (t-s)
\end{equation*}
on $M \times ( (s, \infty) \cap I)$.
We have the following derivative estimates (cf. \cite[Theorem 5.9]{B1}):
\begin{lem}\label{lem:Nash thm}
Let $(M,(g_t)_{t\in I})$ be a super Ricci flow with $\mathcal{D}\geq 0$.
Let $s \in I$.
For $A>0$,
we assume $\HH (\cdot, s) \geq -A$.
Then on $M \times ( (s, \infty) \cap I)$ we have
\begin{align}\label{eq:Nash thm gradient}
 &|\nabla \NN_s |\leq \left( \frac{n}{2(t-s)}+A \right)^{1/2},\\ \label{eq:Nash thm heat}
 &- \frac{n}{2(t-s)} \leq  \square \NN_s \leq 0. 
\end{align}
\end{lem}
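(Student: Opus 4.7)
The plan is to regard $\NN_s(x,t)$ as a function of $(x,t)$ via the heat-kernel representation
$$\NN_s(x,t) = -\int_M G\log G\, dm_s - \frac{n}{2}\log(4\pi(t-s)) - \frac{n}{2},$$
with $G = G(x,t;\cdot,s)$, which follows from the density formula \eqref{eq:potential}, and then to differentiate it directly in $(x,t)$. Since $G(\cdot,\cdot;y,s)$ solves the forward heat equation in $(x,t)$ and $M$ is compact, all exchanges of $\partial_t$, $\nabla_x$, $\Delta_x$ with the $y$-integration are legitimate; the structural input from the super Ricci flow/$\mathcal{D}\geq 0$ hypothesis enters only through Proposition \ref{prop:G_bound1} and Lemma \ref{lem:Nash basic}.

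For \eqref{eq:Nash thm heat}, I will compute $\square\NN_s$ directly. A pointwise computation (for fixed $y$) using $\partial_t G = \Delta_x G$ and the identity $\Delta_x(G\log G) = (1+\log G)\Delta_x G + |\nabla_x G|^2/G$ gives $\square(G\log G) = -|\nabla_x G|^2/G$, so commuting $\square$ past the $y$-integration yields $\square\!\left(\int_M G\log G\, dm_s\right) = -\int_M |\nabla_x G|^2/G\, dm_s$. Combined with $\square\!\left(-\tfrac{n}{2}\log(4\pi(t-s))-\tfrac{n}{2}\right) = -\tfrac{n}{2(t-s)}$, this gives
$$\square\NN_s = \int_M \left(\frac{|\nabla_x G|}{G}\right)^{\!2} d\nu_{(x,t);s} - \frac{n}{2(t-s)}.$$
The lower bound in \eqref{eq:Nash thm heat} is immediate from non-negativity of the integrand, and the upper bound follows from \eqref{eq:G_bound11} with $p=2$ and $C_2 = n/2$ (Remark \ref{rem:G_bound}).

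For \eqref{eq:Nash thm gradient}, fix a unit tangent vector $v \in T_x M$. Differentiating the representation in the direction $v$, using $\int_M \partial_v G\, dm_s = 0$ (from $\int_M G\, dm_s = 1$) and $\log G = -f - \tfrac{n}{2}\log(4\pi(t-s))$, I obtain
$$\partial_v \NN_s = \int_M f\,\partial_v G\, dm_s = \int_M \left(f - \NN_s - \tfrac{n}{2}\right)\frac{\partial_v G}{G}\, d\nu_{(x,t);s},$$
where recentering by the constant $\NN_s + \tfrac{n}{2}$ is permitted since $\int_M \partial_v G\, dm_s = 0$. Cauchy-Schwarz combined with the $L^2$-bound \eqref{eq:Nash basic2} for the recentered density and the sharp directional bound \eqref{eq:G_bound12} for $\partial_v G/G$ gives $|\partial_v \NN_s|^2 \leq (n + 2A(t-s))\cdot 1/(2(t-s)) = n/(2(t-s)) + A$, and taking the supremum over unit $v$ yields \eqref{eq:Nash thm gradient}. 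The only real care is to recenter $f$ before applying Cauchy-Schwarz (so that \eqref{eq:Nash basic2} applies) and to use the directional bound \eqref{eq:G_bound12} rather than the full gradient bound \eqref{eq:G_bound11} (to avoid a superfluous dimensional factor); this bookkeeping is the delicate point I expect to be the main practical issue.
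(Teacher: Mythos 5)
Your proposal is correct and follows essentially the same route as the paper: you arrive at the two identities
\begin{equation*}
\partial_v\NN_s = \int_M \frac{\partial_v G}{G}\Bigl(f - \NN_s - \tfrac{n}{2}\Bigr)\,d\nu_{(x,t);s},\qquad
\square\NN_s = \int_M\Bigl(\frac{|\nabla_x G|}{G}\Bigr)^2 d\nu_{(x,t);s} - \frac{n}{2(t-s)},
\end{equation*}
and then close by Cauchy-Schwarz with \eqref{eq:G_bound12}, \eqref{eq:Nash basic2} and by \eqref{eq:G_bound11}, exactly as the paper does. The only difference is cosmetic: the paper imports these identities by citing the corresponding computation in Bamler's work, whereas you rederive them from the heat-kernel representation of $\NN_s$; your bookkeeping (recentering $f$ before Cauchy-Schwarz, using the directional rather than full-gradient bound) matches the paper's.
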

\begin{proof}
We may assume $s = 0$.
Fix a base point $(x,t) \in M \times ( (0, \infty) \cap I)$.
For $v \in T_x M$ with $|v|_t = 1$,
the same calculation as in the proof of \cite[Theorem 5.9]{B1} tells us that
\begin{align*}
 &\quad\,\, \partial_{v} \NN_0 (x,t)\\
&= \int_M  \left(\frac{\partial_{v}G(x,t;y,0)}{G(x,t;y,0)}\right) \left( f (y,0) - \NN_0 (x,t) - \frac{n}2 \right) d\nu_{(x,t);0} (y) \\
&\leq \left( \int_M \left(  \frac{\partial_{v} G(x,t;y,0)}{G(x,t;y,0)} \right)^2 d\nu_{(x,t);0}(y) \right)^{1/2} \left( \int_M  \left( f - \NN_0 (x,t) - \frac{n}2 \right)^2 d\nu_{(x,t);0}(y) \right)^{1/2};
\end{align*}
in particular,
\eqref{eq:G_bound12} and \eqref{eq:Nash basic2} imply
\begin{equation*}
|\nabla \NN_0|^2(x,t) \leq \frac{1}{2t} (n + 2A t),
\end{equation*}
which is the desired gradient estimate \eqref{eq:Nash thm gradient} (see also Remark \ref{rem:scal2}).
Similarly,
we possess
\begin{align*}
\square \NN_0 (x,t)= \int_M \left( \frac{|\nabla_x G(x,t;y,0)|}{G(x,t;y,0)}  \right)^2 d\nu_{(x,t);0}(y) - \frac{n}{2t},
\end{align*}
and \eqref{eq:G_bound11} with $p=2$ leads us to \eqref{eq:Nash thm heat} (see also Remark \ref{rem:G_bound}).
\end{proof}

We conclude the following (cf. \cite[Corollary 5.11]{B1}):
\begin{lem}\label{lem:Nash cor}
Let $(M,(g_t)_{t\in I})$ be a super Ricci flow with $\mathcal{D}\geq 0$.
For a fixed $s\in I$,
let $(x_1,t_1),(x_2,t_2)\in M\times ( (s, \infty) \cap I)$.
For $A>0$,
we assume $\HH (\cdot, s) \geq -A$.
Then for every $t\in (s,\min\{t_1,t_2\}]$ we have
\begin{equation*}
 \NN_s(x_1, t_1) - \NN_s(x_2, t_2)  \leq \left( \frac{n}{2(t-s)} +  A \right)^{1/2}   W_1(\nu_{(x_1, t_1);t}, \nu_{(x_2, t_2);t})    + \frac{n}2 \log \left( \frac{t_2-s}{t-s} \right) . 
\end{equation*}
\end{lem}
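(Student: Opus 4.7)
The plan is to interpolate between $(x_1,t_1)$ and $(x_2,t_2)$ through a common intermediate time $t$, combining the two-sided parabolic bounds on $\NN_s$ from Lemma \ref{lem:Nash thm} with the Kantorovich-Rubinstein duality \eqref{eq:W1}. As in the proof of Lemma \ref{lem:Nash thm}, I may assume $s=0$ after a time translation.

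First I would push $(x_1,t_1)$ back to time $t$: since $\square\NN_s\leq 0$ by \eqref{eq:Nash thm heat}, the evolution identity \eqref{eq:conjugate integration2} implies that the map $t'\mapsto \int_M \NN_s(\cdot,t')\,d\nu_{(x_1,t_1);t'}$ is non-increasing on $[t,t_1]$. Using $\nu_{(x_1,t_1);t_1}=\delta_{x_1}$ to evaluate at $t'=t_1$ yields
\[
\NN_s(x_1,t_1)\leq \int_M \NN_s(\cdot,t)\,d\nu_{(x_1,t_1);t}.
\]
Next, for $(x_2,t_2)$ I would invoke the lower heat bound $\square\NN_s\geq -n/(2(t'-s))$ from \eqref{eq:Nash thm heat} together with \eqref{eq:conjugate integration2}, which gives a differential inequality in $t'$; integrating from $t$ to $t_2$ produces
\[
\int_M \NN_s(\cdot,t)\,d\nu_{(x_2,t_2);t}\leq \NN_s(x_2,t_2)+\frac{n}{2}\log\left(\frac{t_2-s}{t-s}\right),
\]
which is precisely where the logarithmic term in the statement originates.

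Finally, to control the difference of the two intermediate integrals I use the sharp gradient estimate \eqref{eq:Nash thm gradient}: on the time slice $(M,g_t)$ the function $\NN_s(\cdot,t)$ is $L$-Lipschitz with $L:=(n/(2(t-s))+A)^{1/2}$, and it is bounded because $M$ is compact. Applying the Kantorovich-Rubinstein formula \eqref{eq:W1} to $L^{-1}\NN_s(\cdot,t)$ gives
\[
\int_M \NN_s(\cdot,t)\,d\nu_{(x_1,t_1);t}-\int_M \NN_s(\cdot,t)\,d\nu_{(x_2,t_2);t}\leq L\,W_1\!\left(\nu_{(x_1,t_1);t},\nu_{(x_2,t_2);t}\right).
\]
Summing the three displays yields the desired inequality.

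There is no serious conceptual obstacle once Lemma \ref{lem:Nash thm} is in hand; the proof is essentially a careful chaining of the three inputs, each of which produces exactly one term on the right-hand side of the claimed bound. The one point requiring care is that the test function in \eqref{eq:W1} must be bounded and $1$-Lipschitz, and both conditions are automatic from compactness of $M$ and the gradient estimate \eqref{eq:Nash thm gradient}.
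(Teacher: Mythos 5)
Your proof is correct and follows essentially the same route as the paper: the two-sided bounds you derive from \eqref{eq:Nash thm heat} via \eqref{eq:conjugate integration2} are exactly the paper's inequality \eqref{eq:Nash cor1}, and the Wasserstein step via \eqref{eq:Nash thm gradient} and \eqref{eq:W1} is the paper's \eqref{eq:Nash cor2}. The only cosmetic difference is that the paper states both halves of \eqref{eq:Nash cor1} at once whereas you derive them separately for $(x_1,t_1)$ and $(x_2,t_2)$.
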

\begin{proof}
We may assume $s = 0$.
For $i=1,2$, we set $\nu^{i}:=\nu_{(x_i,t_i)}$.
In virtue of \eqref{eq:Nash thm heat},
we see
\begin{equation}\label{eq:Nash cor1}
 \NN_0 (x_i, t_i) \leq \int_M \NN_0 (\cdot, t) d\nu^i_{t} \leq \NN_0 (x_i, t_i) + \frac{n}2  \log \left( \frac{t_i}{t} \right). 
\end{equation}
On the other hand,
due to \eqref{eq:Nash thm gradient} and \eqref{eq:W1},
\begin{equation}\label{eq:Nash cor2}
 \left| \int_M \NN_0 (\cdot, t) d\nu^2_{t} - \int_M \NN_0 (\cdot, t) d\nu^1_{t} \right| \leq \left( \frac{n}{2t} + A \right)^{1/2} W_1( \nu^1_{t}, \nu^2_{t} ).
\end{equation}
The desired estimate follows from combining \eqref{eq:Nash cor1} and \eqref{eq:Nash cor2}.
\end{proof}

We will use Lemma \ref{lem:Nash cor} in the following form:
\begin{lem}\label{lem:Nash cor_rem}
Let $(M,(g_t)_{t\in I})$ be a super Ricci flow with $\mathcal{D}\geq 0$.
For a fixed $s\in I$,
let $(x_0,t_0)\in M\times ( (s, \infty) \cap I)$.
For $t\in  (s, t_0]$,
we assume that
$(z,t)$ is a center of $(x_0,t_0)$.
For $A>0$,
we further assume $\HH (\cdot, s) \geq -A$.
Then on $M$, we have
\begin{align*}
- \NN_s(\cdot, t)\leq -\NN_s(x_0,t_0)+\left( \frac{n}{2(t-s)} +  A\right)^{1/2}   \sqrt{H_n(t_0-t)}+\left( \frac{n}{2(t-s)}+A \right)^{1/2}d_{t}(z,\cdot).
\end{align*}
\end{lem}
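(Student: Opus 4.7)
The plan is to apply Lemma \ref{lem:Nash cor} with $(x_1,t_1)=(x_0,t_0)$ and $(x_2,t_2)=(x,t)$ for an arbitrary $x \in M$, and then replace the Wasserstein distance by the metric distance through the chosen center $z$.

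First I would invoke Lemma \ref{lem:Nash cor} with the time parameter set to $t$ itself, which is legal because $t \leq \min\{t_0,t\} = t$. This is the key observation: choosing the parameter equal to the smaller of the two base times makes the logarithmic correction $\tfrac{n}{2}\log\bigl(\tfrac{t-s}{t-s}\bigr)$ vanish. What remains is
\begin{equation*}
 \NN_s(x_0,t_0)-\NN_s(x,t) \leq \left(\frac{n}{2(t-s)}+A\right)^{1/2}\, W_1\bigl(\nu_{(x_0,t_0);t},\,\nu_{(x,t);t}\bigr).
\end{equation*}

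Next, I would use that $\nu_{(x,t);t} = \delta_x$ and estimate the Wasserstein distance by inserting the center $z$:
\begin{equation*}
 W_1\bigl(\nu_{(x_0,t_0);t},\delta_x\bigr) \leq W_1\bigl(\nu_{(x_0,t_0);t},\delta_z\bigr) + W_1(\delta_z,\delta_x) \leq \sqrt{\Var(\nu_{(x_0,t_0);t},\delta_z)} + d_t(z,x),
\end{equation*}
where the second inequality combines the triangle inequality for $W_1$ with Lemma \ref{lem:W-V}. Since $(z,t)$ is a center of $(x_0,t_0)$, the variance is at most $\Cn (t_0-t)$, so the right-hand side is at most $\sqrt{\Cn(t_0-t)}+d_t(z,x)$.

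Putting these two steps together yields the claim for every $x \in M$. There is no real obstacle: the only subtle point is the choice of parameter $t$ in Lemma \ref{lem:Nash cor}, which ensures the logarithmic term drops out and leaves a bound depending only on $d_t(z,\cdot)$ and the concentration radius $\sqrt{\Cn(t_0-t)}$.
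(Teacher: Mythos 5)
Your proof is correct and uses essentially the same ingredients as the paper's, merely reorganized: the paper applies Lemma~\ref{lem:Nash cor} at the center $(z,t)$, bounds the resulting $W_1$ by the variance via Lemma~\ref{lem:W-V}, and then separately propagates from $z$ to a general point using the gradient bound~\eqref{eq:Nash thm gradient}, whereas you apply Lemma~\ref{lem:Nash cor} directly at $(x,t)$ and obtain both pieces at once through the $W_1$-triangle inequality $W_1(\nu_{(x_0,t_0);t},\delta_x)\leq W_1(\nu_{(x_0,t_0);t},\delta_z)+d_t(z,x)$. Since Lemma~\ref{lem:Nash cor} is itself derived from that gradient bound via Kantorovich duality, the two routes are equivalent and yield identical constants.
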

\begin{proof}
We may assume $s=0$.
Lemmas \ref{lem:W-V} and \ref{lem:Nash cor} yield
\begin{align}\label{eq:Nash cor_rem1}
 \NN_0(x_0, t_0) - \NN_0(z,t)  &\leq \left( \frac{n}{2t} +  A \right)^{1/2}   W_1(\delta_{z},\nu_{(x_0, t_0);t})\\ \notag
 &\leq \left( \frac{n}{2t} +  A \right)^{1/2}   \sqrt{\Var(\delta_z, \nu_{(x_0, t_0);t})}\leq \left( \frac{n}{2t} +  A\right)^{1/2}   \sqrt{H_n(t_0-t)}.
\end{align}
Further,
\eqref{eq:Nash thm gradient} and \eqref{eq:Nash cor_rem1} lead us to 
\begin{align*}
- \NN_0(\cdot, t)&\leq -\NN_0(z,t)+\left( \frac{n}{2t}+A \right)^{1/2}d_{t}(z,\cdot)\\
                      &\leq -\NN_0(x_0,t_0)+\left( \frac{n}{2t} + A\right)^{1/2}   \sqrt{H_n(t_0-t)}+\left( \frac{n}{2t}+A \right)^{1/2}d_{t}(z,\cdot)
\end{align*}
on $M$.
This proves the lemma.
\end{proof}

We also use Lemma \ref{lem:Nash cor} in the following form:
\begin{lem}\label{lem:useful}
Let $(M,(g_t)_{t\in I})$ be a super Ricci flow with $\mathcal{D}\geq 0$.
Let $(x_0,t_0),(x_1,t_1)\in M\times I$ with $t_0\leq t_1$.
For $r>0,\alpha\in (0,1)$,
we assume $[t_0-2\alpha^{-1} r^2,t_0] \subset I$ and $0\leq t_1-t_0\leq \alpha^{-1} r^2$.
For $A>0$,
we assume $\HH (\cdot, t_0-2\alpha^{-1} r^2) \geq -A\,r^{-2}$.
For $D>0$,
we assume $W_1(\nu_{(x_0,t_0);s_0},\nu_{(x_1,t_1);s_0})\leq D r$
for some $s_0\in [t_0-\alpha^{-1} r^2,t_0-\alpha r^2]$.
Then we have
\begin{equation*}
\NN_{(x_1, t_1)} (r^2)\geq \NN_{(x_0,t_0)}(r^2)-C_{D,\alpha,A}.
\end{equation*}
\end{lem}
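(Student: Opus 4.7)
The plan is to apply Lemma \ref{lem:Nash cor} at the Wasserstein-comparison time $t = s_0$ with a carefully chosen common base time $s$, and then convert the resulting bound on $\NN_s(x_i, t_i) = \NN_{(x_i, t_i)}(t_i - s)$ back to a bound involving the $r^2$-parameter Nash entropies $\NN_{(x_i, t_i)}(r^2)$ by invoking the monotonicity results of Lemmas \ref{lem:Nash} and \ref{lem:Nash_more}.

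Concretely, I would set $s := t_0 - 2\alpha^{-1} r^2$. The hypothesis $[t_0 - 2\alpha^{-1} r^2, t_0] \subset I$ ensures $[s,t_0] \subset I$, and the scalar curvature assumption gives $\HH(\cdot, s) \geq -A r^{-2}$. Since $s_0 \in [t_0 - \alpha^{-1} r^2, t_0 - \alpha r^2]$, a direct computation yields $s_0 - s \in [\alpha^{-1} r^2, 2\alpha^{-1} r^2]$ and $t_1 - s \in [2\alpha^{-1} r^2, 3\alpha^{-1} r^2]$, so in particular $1 \leq (t_1 - s)/(s_0 - s) \leq 3$. Applying Lemma \ref{lem:Nash cor} at $t = s_0$ (with the roles $(x_1, t_1) \leftrightarrow (x_0, t_0)$, $(x_2, t_2) \leftrightarrow (x_1, t_1)$, and with the scalar-curvature constant taken as $Ar^{-2}$), the prefactor satisfies
\begin{equation*}
\left(\tfrac{n}{2(s_0 - s)} + A r^{-2}\right)^{1/2} \leq \left(\tfrac{n\alpha}{2} + A\right)^{1/2} r^{-1},
\end{equation*}
which exactly absorbs the factor $Dr$ from $W_1(\nu_{(x_0, t_0); s_0}, \nu_{(x_1, t_1); s_0}) \leq Dr$, while the logarithmic error term is at most $\frac{n}{2}\log 3$. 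This produces
\begin{equation*}
\NN_s(x_0, t_0) - \NN_s(x_1, t_1) \leq C_{D, \alpha, A}.
\end{equation*}

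To conclude, I would translate the two sides to the target parameter $r^2$. For the point $(x_0, t_0)$, since $\NN_s(x_0, t_0) = \NN_{(x_0, t_0)}(2\alpha^{-1} r^2)$, Lemma \ref{lem:Nash_more} with $\tau_1 = r^2$, $\tau_2 = 2\alpha^{-1} r^2$, and $A r^{-2}$ in place of $A$ yields $\NN_s(x_0, t_0) \geq \NN_{(x_0, t_0)}(r^2) - C_{\alpha, A}$. For $(x_1, t_1)$, because $t_1 - s \geq 2\alpha^{-1} r^2 \geq r^2$, the monotonicity \eqref{eq:Nash3.5} gives $\NN_s(x_1, t_1) = \NN_{(x_1, t_1)}(t_1 - s) \leq \NN_{(x_1, t_1)}(r^2)$ for free. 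Chaining the three inequalities produces the claim. No step is expected to be a serious obstacle; the delicate point is purely the choice of $s$, which must be far enough below $s_0$ that the heat-kernel gradient prefactor in Lemma \ref{lem:Nash cor} is of order $r^{-1}$ (not divergent), yet close enough that the interval $[s, t_0]$ still fits inside $I$ and the scalar-curvature bound at the left endpoint is the one supplied by hypothesis.
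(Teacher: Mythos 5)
Your proof is correct and follows essentially the same route as the paper's: apply Lemma \ref{lem:Nash_more} to shift the parameter at $(x_0,t_0)$ from $r^2$ to $2\alpha^{-1}r^2$, invoke Lemma \ref{lem:Nash cor} with base time $s = t_0 - 2\alpha^{-1}r^2$ to compare $\NN_s(x_0,t_0)$ and $\NN_s(x_1,t_1)$ via the Wasserstein hypothesis, and then use the monotonicity \eqref{eq:Nash3.5} to pass back to parameter $r^2$ at $(x_1,t_1)$. The only difference is cosmetic: you evaluate Lemma \ref{lem:Nash cor} directly at $t = s_0$, whereas the paper evaluates it at $t = t_0 - \alpha^{-1}r^2$ and then pushes the $W_1$ bound down from $s_0$ via Proposition \ref{prop:monotonicity_W1}, so your version avoids one citation but the two arguments are otherwise identical.
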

\begin{proof}
We may assume $r=1$.
By Lemma \ref{lem:Nash_more},
we have
\begin{equation}\label{eq:useful1}
\NN_{(x_0, t_0)} (2\alpha^{-1})\geq \NN_{(x_0, t_0)} (1)-C_{\alpha,A}. 
\end{equation}
Furthermore,
by Lemma \ref{lem:Nash cor} and Proposition \ref{prop:monotonicity_W1},
\begin{align}\label{eq:useful2}
&\quad \,\,\NN_{(x_0,t_0)}(2\alpha^{-1})-\NN_{(x_1,t_1)}(t_1-t_0+2\alpha^{-1})\\ \notag
&=\NN_{t_0-2\alpha^{-1}}(x_0,t_0)-\NN_{t_0-2\alpha^{-1}}(x_1,t_1)\\ \notag
&\leq \left( \frac{n}{2\alpha^{-1}} +A\right)^{1/2}W_1(\nu_{(x_0,t_0);t_0-\alpha^{-1}},\nu_{(x_1,t_1);t_0-\alpha^{-1}})+\frac{n}{2}\log 3\\ \notag
&\leq \left( \frac{n}{2\alpha^{-1}} +A\right)^{1/2}W_1(\nu_{(x_0,t_0);s_0},\nu_{(x_1,t_1);s_0})+\frac{n}{2}\log 3\leq C_{D,\alpha,A}.
\end{align}
From \eqref{eq:Nash3.5}, \eqref{eq:useful1} and \eqref{eq:useful2},
we derive
\begin{equation*}
\NN_{(x_1, t_1)} (1)\geq \NN_{(x_1,t_1)}(t_1-t_0+2\alpha^{-1}) \geq \NN_{(x_0,t_0)}(2\alpha^{-1})-C_{D,\alpha,A}\geq \NN_{(x_0,t_0)}(1)-C_{D,\alpha,A}.
\end{equation*}
This completes the proof.
\end{proof}

\section{Volume and heat kernel estimates}\label{sec:vol_heat}

In this section,
we derive several volume and heat kernel estimates.

\subsection{Lower volume estimates}

We begin with the following (cf. \cite[Theorem 6.2]{B1}):
\begin{prop}\label{prop:lower volume}
Let $(M,(g_t)_{t\in I})$ be a super Ricci flow with $\mathcal{D}\geq 0$.
Let $(x_0,t_0)\in M\times I$.
For $r>0$,
we assume $[t_0-r^2, t_0] \subset I$.
For $A>0$,
we assume $\HH(\cdot, t_0-r^2) \geq -A\,r^{-2}$.
Let $(z,t_0-r^2) \in M \times I$ be a center of $(x_0,t_0)$.
Then we have
\begin{equation*}
m_{t_0-r^2}\left(B(z,t_0-r^2, \sqrt{2 \Cn} r)\right) \geq C_{A}\, \exp ( \NN_{(x_0,t_0)}(r^2)) r^n.
\end{equation*}
\end{prop}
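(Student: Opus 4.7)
\medskip
\noindent\textbf{Proof plan.} After a time translation and the parabolic rescaling $\bar{g}_s:=r^{-2}g_{r^2 s}$, which preserves both the super Ricci flow condition and $\mathcal{D}\geq 0$, I may assume $r=1$ and $t_0-r^2=0$, so the hypothesis becomes $\HH(\cdot,0)\geq -A$ and $(z,0)$ is a center of $(x_0,1)$. Write $\nu_0:=\nu_{(x_0,1);0}=G(x_0,1;\cdot,0)\,dm_0$ with density $f(\cdot,0)=-\log G(x_0,1;\cdot,0)-\tfrac{n}{2}\log 4\pi$, and set $B:=B(z,0,\sqrt{2H_n})$. The goal is to show $m_0(B)\geq C_A\exp\bigl(\NN_{(x_0,1)}(1)\bigr)$; the full statement then follows by reverting the rescaling.

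\smallskip
\noindent\textbf{Step 1 (concentration).} From the center property and the elementary Chebyshev inequality \eqref{eq:elementary concentration} applied with radius $\sqrt{2H_n}$,
\begin{equation*}
\nu_0(B)\;\geq\; 1-\frac{H_n\cdot 1}{2H_n}\;=\;\tfrac12.
\end{equation*}

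\smallskip
\noindent\textbf{Step 2 (Jensen reversal).} Apply Jensen's inequality to the concave function $\log$ with respect to the probability measure $\nu_0|_B/\nu_0(B)$ and the positive integrand $1/G$:
\begin{equation*}
\log\!\frac{m_0(B)}{\nu_0(B)}
\;=\;\log\!\int_B \frac{1}{G}\,\frac{d\nu_0}{\nu_0(B)}
\;\geq\;\frac{1}{\nu_0(B)}\int_B \bigl(-\log G\bigr)\,d\nu_0
\;=\;\frac{1}{\nu_0(B)}\int_B f\,d\nu_0+\tfrac{n}{2}\log 4\pi.
\end{equation*}
This inequality is the heart of the argument: it transfers an upper bound for $\int_B f\,d\nu_0$'s deficit into a lower bound for $m_0(B)$.

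\smallskip
\noindent\textbf{Step 3 ($L^{2}$-control of the density).} Write $\bar f:=f-\NN_{(x_0,1)}(1)-\tfrac{n}{2}$, which has $\nu_0$-mean zero. By Lemma \ref{lem:Nash basic} (this is where $\HH(\cdot,0)\geq -A$ enters),
\begin{equation*}
\int_M \bar f^{\,2}\,d\nu_0\;\leq\; n+2A.
\end{equation*}
Cauchy--Schwarz and $\nu_0(B)\geq 1/2$ then give
\begin{equation*}
\left|\int_B \bar f\,d\nu_0\right|\;\leq\;\sqrt{\nu_0(B)}\,\sqrt{n+2A},
\qquad
\frac{1}{\nu_0(B)}\int_B f\,d\nu_0\;\geq\;\NN_{(x_0,1)}(1)+\tfrac{n}{2}-\sqrt{2(n+2A)}.
\end{equation*}

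\smallskip
\noindent\textbf{Step 4 (combine and undo the rescaling).} Plug the bound from Step 3 into Step 2 and use $\nu_0(B)\geq 1/2$ to obtain
\begin{equation*}
\log m_0(B)\;\geq\;\NN_{(x_0,1)}(1)-\log 2-\sqrt{2(n+2A)}+\tfrac{n}{2}+\tfrac{n}{2}\log 4\pi\;\geq\;\NN_{(x_0,1)}(1)-C_A,
\end{equation*}
whence $m_0(B)\geq C_A\exp\bigl(\NN_{(x_0,1)}(1)\bigr)$. Reinstating the rescaling multiplies $m_0(B)$ by $r^n$ and leaves $\NN_{(x_0,t_0)}(r^2)$ invariant, yielding the claim.

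\smallskip
The main technical hurdle is the Jensen reversal in Step 2, which is easy to misapply: one has to integrate $1/G$ against the weighted probability measure $\nu_0|_B/\nu_0(B)$ (rather than $m_0|_B/m_0(B)$) in order to get a \emph{lower}, not upper, bound on $m_0(B)$. Everything else is routine given the $L^2$-concentration of $f$ from Lemma \ref{lem:Nash basic} and the variance bound defining a center.
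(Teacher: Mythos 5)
Your proof is correct and follows essentially the same route as the paper: both use the concentration bound to get $\nu_0(B)\geq 1/2$, the variance estimate from Lemma \ref{lem:Nash basic} to control $\int_B f\,d\nu_0$, and a Jensen inequality to convert this into a lower bound on $m_0(B)$. Your Jensen step, phrased via the concavity of $\log$ applied to $1/G$ against $\nu_0|_B/\nu_0(B)$, is algebraically identical to the paper's version phrased via the convexity of $x\mapsto x\log x$ applied to $u:=G/\nu_0(B)$ against $m_0|_B/m_0(B)$.
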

\begin{proof}
We may assume $t_0 = 1$ and $r=1$.
Let $\nu:=\nu_{(x_0,1)}$.
By virtue of Lemma \ref{lem:concentration1},
\begin{equation}\label{eq:lower volume1}
 \nu_{0} ( B)  \geq \frac12,
\end{equation}
where $B:=B(z,0, \sqrt{2\Cn} )$.
Lemma \ref{lem:Nash basic} tells us that
\begin{equation}\label{eq:lower volume2}
\int_M \left| f - \NN_{(x_0,1)}(1) - \frac{n}2 \right| d\nu_{0} \leq \left( \int_M \left( f - \NN_{(x_0,1)}(1) - \frac{n}2 \right)^2 d\nu_{0} \right)^{1/2}  \leq (n+2A)^{1/2}.
\end{equation}
Combining \eqref{eq:lower volume1} and \eqref{eq:lower volume2},
we conclude
\begin{align*}
\frac1{\nu_{0} (B)} \int_B f \, d\nu_{0} &\geq \NN_{(x_0,1)}(1) + \frac{n}2 - \frac1{\nu_{0} (B)} \int_B \left| f - \NN_{(x_0,1)}(1) - \frac{n}2 \right| d\nu_{0}\\
&\geq  \NN_{(x_0,1)}(1) + \frac{n}2 - 2(n+ 2 A)^{1/2}.
\end{align*}
We set $u := (4\pi )^{n/2} e^{-f} / \nu_{0} (B)$.
Since $\int_B u \, dm_{0} = 1$,
\eqref{eq:lower volume1} tells us that
\begin{align*}
 \int_B u \log u \, dm_{0} &= - \frac1{\nu_0 (B)} \int_B f \, d\nu_0 - \log \nu_0 (B) + \frac{n}2 \log 4\pi \\
  &\leq  -\NN_{(x_0,1)}(1) -  \frac{n}2 + 2(n+ 2 A)^{1/2} + \log 2 + \frac{n}2 \log (4\pi). 
\end{align*}
By the Jensen inequality,
\begin{equation*}
\log \left( \frac1{m_0(B)} \int_B  u \, dm_{0} \right) \frac1{m_0(B)} \int_B  u \, dm_{0} \leq \frac1{m_0(B)} \int_B  u \log u \, dm_{0},
\end{equation*}
and thus
\begin{equation*}
- \log  m_0(B) \leq  \int_B  u \log u \, dm_{0} \leq - \NN_{(x_0,1)}(1) + C + 2 (n+ 2 A)^{1/2}.
\end{equation*}
We complete the proof.
\end{proof}

\subsection{Heat kernel estimates}

We next show the following (cf. \cite[Theorem 7.1]{B1}):
\begin{prop}\label{prop:HK}
Let $(M,(g_t)_{t\in I})$ be a super Ricci flow with $\mathcal{D}\geq 0$.
Let $(x_0,t_0)\in M\times I$,
and let $[t,t_0] \subset I$.
For $A>0$,
we assume $\HH(\cdot,t) \geq -A(t_0-t)^{-1}$.
Then on $M$,
we have
\begin{equation*}
G(x_0,t_0;\cdot,t) \leq \frac{C_{A}}{(t_0-t)^{n/2}} \exp ( - \NN_{(x_0,t_0)}(t_0-t) ).
\end{equation*}
\end{prop}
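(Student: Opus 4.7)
The plan is to follow the proof of \cite[Theorem 7.1]{B1} for Ricci flow, replacing Perelman's Harnack estimate by Theorem \ref{thm:Harnack}, which remains valid here thanks to $\mathcal{D}\geq 0$. By translating the time variable and applying the parabolic rescaling, we may assume $t_0=1$ and $t=0$, so that $\tau=1$ and the hypothesis reads $\HH(\cdot,0)\geq -A$. Writing $\NN:=\NN_{(x_0,1)}(1)$ for brevity, the goal becomes $G(x_0,1;\cdot,0)\leq C_A\exp(-\NN)$ on $M$, or equivalently, by \eqref{eq:potential}, a uniform pointwise lower bound $f(\cdot,0)\geq \NN-C_A$ on the density.

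The proof combines four ingredients. First, by Proposition \ref{prop:center} we pick a center $(z,0)$ of $(x_0,1)$. Second, Lemma \ref{lem:concentration1} gives $\nu_{(x_0,1);0}(B)\geq 1/2$ for $B:=B(z,0,\sqrt{2\Cn})$. Third, Proposition \ref{prop:lower volume} yields $m_0(B)\geq C_A\exp(\NN)$. These together produce at least one point $y_0\in B$ at which
\begin{equation*}
G(x_0,1;y_0,0)\leq \frac{1}{m_0(B)}\int_B G(x_0,1;\cdot,0)\,dm_0 \leq \frac{1}{m_0(B)}\leq C_A^{-1}\exp(-\NN),
\end{equation*}
so that $f(y_0,0)\geq \NN-C_A'$. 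Fourth, Theorem \ref{thm:Harnack} provides $w\leq 0$ pointwise on $M$, and more generally $\square^*(wu)\leq 0$ when $\mathcal{D}\geq 0$, where $u=G(x_0,1;\cdot,\cdot)$.

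The main obstacle is promoting the ``good point'' estimate at $y_0$ into a \emph{uniform} pointwise lower bound on $f$ over all of $M$. The pointwise Harnack $w\leq 0$, read as $\tau(2\Delta f-|\nabla f|^2+\HH)+f-n\leq 0$, is an \emph{upper} bound on $f$, so it cannot be used directly to propagate lower bounds. Following \cite[Section 7]{B1}, one applies a maximum-principle argument to a carefully chosen auxiliary quantity built from $G$, $f$, and $\NN$, exploiting the favorable sign $\square^*(wu)\leq 0$ from Theorem \ref{thm:Harnack}. The gradient estimate \eqref{eq:G_bound12}, the $L^2$-concentration of $f$ around $\NN+n/2$ from Lemma \ref{lem:Nash basic}, and the refined concentration bound of Proposition \ref{prop:concentration2} together control the oscillation of $f$ relative to $\NN$ on the bulk of the $\nu_{(x_0,1);0}$-mass, allowing the single-point bound at $y_0$ to be upgraded to the desired uniform bound on $M$. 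Once $f(\cdot,0)\geq \NN-C_A$ is established, the heat kernel estimate follows at once from \eqref{eq:potential}.
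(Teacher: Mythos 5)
Your setup is on the right track, and you correctly identify the core obstacle — that the single ``good point'' estimate must be promoted to a pointwise bound on all of $M$ — but the mechanism you gesture at to close this gap is not the one the paper uses, and it is not clear it would work.

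First, a factual mismatch: Theorem~\ref{thm:Harnack} (i.e.\ $w\leq 0$ and $\square^*(wu)\leq 0$) plays no role in the paper's proof of Proposition~\ref{prop:HK}. You yourself observe that $w\leq 0$ gives an \emph{upper} bound on $f$ and hence cannot propagate lower bounds, and that observation is correct; but the conclusion should be that this ingredient is simply not used here, rather than that it is used via some auxiliary quantity. Similarly, the $L^2$ gradient bound \eqref{eq:G_bound12} and Lemma~\ref{lem:Nash basic} do not enter this proof directly. The role of $\mathcal D\geq 0$ in Proposition~\ref{prop:HK} is channeled through Lemma~\ref{lem:scal} (persistence of the $\HH$ lower bound), Proposition~\ref{prop:lower volume}, and Lemma~\ref{lem:Nash cor_rem} — not through the Harnack inequality.

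Second, and more substantively, the paper's argument is a two-variable bootstrap, not a one-time-slice maximum principle. One fixes $y$ and sets $u:=G(\cdot,\cdot;y,0)$, then assumes the bound $u(x,s)\le L\,s^{-n/2}\exp(-\NN_0(x,s))$ holds for \emph{all} $(x,s)\in M\times(0,1]$ and shows it self-improves to $L/2$ at $s=1$. Under the bootstrap assumption one first derives a \emph{pointwise} gradient bound $|\nabla u|(x,s)\le C_A L\exp(-\NN_0(x,s))$ via the Bochner formula for $v=(s-\tfrac12)|\nabla u|^2+u^2$, where $\square v\leq 0$ uses the super Ricci flow condition $\Ric-h\geq 0$ (not $w\leq 0$). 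One then splits $u(x,1)=\int_B u\,d\nu_{t_1}+\int_{M\setminus B}u\,d\nu_{t_1}$ at an intermediate time $t_1=1-\xi^2$: the ball term is controlled by the gradient bound, mass conservation, and Proposition~\ref{prop:lower volume}; the tail term is controlled by the bootstrap assumption, Lemma~\ref{lem:Nash cor_rem}, and the concentration estimate Proposition~\ref{prop:concentration2}. Choosing $\xi$ small and $L$ large closes the loop. Your sketch stops precisely where this iteration begins, replaces it with an unspecified ``maximum-principle argument,'' and cites ingredients (Theorem~\ref{thm:Harnack}, \eqref{eq:G_bound12}, Lemma~\ref{lem:Nash basic}) that do not actually drive the argument — so as written there is a genuine gap at the crucial step.
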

\begin{proof}
We may assume $t = 0$ and $t_0 = 1$.
For a fixed $y \in M$,
we set $u := G(\cdot, \cdot; y,0)$.
By the same argument as in the proof of \cite[Theorem 7.1]{B1},
it suffices to show the following (see also \cite[Theorem 26.25]{CCGG}):
If $L \geq \underline{L}_{A}$,
and if for all $(x,s) \in M \times (0,1]$ we have
\begin{equation}\label{eq:GboundW}
 u(x,s) \leq \frac{L}{s^{n/2}} \exp ( - \NN_0 (x,s)),
\end{equation}
then for all $x\in M$ we have
\begin{equation}\label{eq:GboundW_desired}
 u(x,1) \leq \frac{L}{2} \exp ( - \NN_0 (x,1)).
\end{equation}

First,
assuming \eqref{eq:GboundW},
we derive a gradient bound
\begin{equation}\label{eq:HK3}
 |\nabla u|(x,s) \leq C_{A} L \exp \left({ -  \NN_0 (x,s ) } \right)
\end{equation}
for all $(x,s) \in M \times [3/4, 1]$.
We set
\begin{equation*}
v := \left( s - \frac{1}{2} \right) |\nabla u|^2 + u^2.
\end{equation*}
Due to the Bochner formula,
\begin{equation*}
\square v = -|\nabla u|^2 - 2\left( s - \frac{1}{2} \right) |\nabla^2 u|^2-2\left( s-\frac{1}{2} \right)(\Ric -h)(\nabla u,\nabla u) \leq 0
\end{equation*}
for every $s \in [1/2 ,1]$.
Using \eqref{eq:GboundW},
we have
\begin{align}\label{eq:HK1}
\left( s - \frac{1}{2} \right) |\nabla u|^2 (x,s) \leq v(x,s) &\leq \int_M v(\cdot, 1/2 )  d\nu_{(x,s);1/2}= \int_M u^2(\cdot , 1/2) d\nu_{(x,s);1/2}\\ \notag
&\leq 2^{n}L^2 \int_M \exp \left({ - 2 \NN_0 (\cdot,1/2 ) } \right) d\nu_{(x,s);1/2}
\end{align}
for every $(x,s) \in M \times (1/2, 1]$.
Let $(z,1/2)$ be a center of $(x,s)$ (see Proposition \ref{prop:center}).
In view of Lemma \ref{lem:Nash cor_rem},
we have
\begin{equation}\label{eq:HK2}
- \NN_0 (\cdot,1/2) \leq  -\NN_0 (x,s) + C_{A}\, (d_{1/2} (z, \cdot)+1)
\end{equation}
on $M$.
From \eqref{eq:HK2}, the co-area formula and Proposition \ref{prop:concentration2},
we derive
\begin{align*} 
&\quad\,\,  \int_M  \exp \left({ - 2 \NN_0 (\cdot, 1/2 ) } \right) d\nu_{(x,s);1/2}\\
 &\leq C_{A} \exp ({ - 2 \NN_0 (x,s ) } )  \int_M \exp (C_{A} d_{1/2} (z, \cdot)) d\nu_{(x,s);1/2} \\
&\leq C_{A} \exp ({ - 2 \NN_0 (x,s ) } ) \left( \int_0^\infty e^{C_{A} r}\,\nu_{(x,s);1/2}(M \setminus B(z,1/2,r))\, dr + 1 \right) \\
&\leq C_{A} \exp ({ - 2 \NN_0 (x,s ) } )\left( \int_0^\infty e^{C_{A} r} \exp \left({ - \frac{\left(r - \sqrt{2\Cn (t-1/2)} \right)_+^2}{8 (t - 1/2)}}\right) dr +1 \right) \\
&= C_{A} \exp ({ - 2 \NN_0 (x,s ) } ) \left( \sqrt{t- 1/2} \int_0^\infty e^{C_{A} \sqrt{t- 1/2} \, r} \exp \left({ - \frac{\left(r - \sqrt{2\Cn} \right)_+^2}{8}}\right) dr +1 \right)\\
&\leq C_{A} \exp ({ - 2 \NN_0 (x,s ) } ).
\end{align*}
Combining this with \eqref{eq:HK1} implies \eqref{eq:HK3}.

Let $(x,1) \in M \times I$,
and set $\nu:=\nu_{(x,1)}$.
For $\xi \in (0, 1/2]$,
we put $t_1 := 1- \xi^2$,
and let $(z_1, t_1)$ be a center of $(x,1)$.
We set $B := B(z_1, t_1, \sqrt{100 \Cn \xi^2})$.
By Lemma \ref{lem:Nash cor_rem},
if $\xi \leq \ov\xi_{A}$, then
\begin{equation}\label{eq:HK4}
  - \NN_0 (\cdot, t_1) \leq  - \NN_0 (x,1)+C_{A} d_{t_1} (z_1, \cdot) + C_{A} \sqrt{\Cn \xi^2} \leq - \NN_0 (x,1)+C_{A} + \log 2
\end{equation}
on $B$;
in particular,
\eqref{eq:HK3} tells us that
on $B$,
\begin{equation}\label{eq:HK3.5}
|\nabla u| (\cdot, t_1) \leq C_{A} L \exp ( - \NN_0(x,1) ).
\end{equation}
Now,
we write
\begin{equation}\label{eq:HK5}
u(x,1) = \int_B u \,d\nu_{t_1} + \int_{M \setminus B} u  \,d\nu_{t_1}
\end{equation}
with the help of \eqref{eq:semigroup}.
We first estimate the first term in \eqref{eq:HK5}.
Since
\begin{equation*}
\frac{d}{ds} \int_M u \, dm_s = - \int_M \HH u\, dm_s \leq  A\int_M u \, dm_s
\end{equation*}
for all $s \in (0,1]$, we possess
\begin{equation}\label{eq:int u}
 \int_M u \, dm_{t_1} \leq e^{A}.
\end{equation}
By Proposition \ref{prop:lower volume} and \eqref{eq:Nash3.5} we have
\begin{equation}\label{eq:HK6}
m_{t_1}(B) \geq C_{A} \exp (\NN_{(x,1)} ( \xi^2 ) ) \xi^n \geq C_{A} \exp ( \NN_0 (x,1) ) \xi^n. 
\end{equation}
In virtue of \eqref{eq:HK3.5},
for all $x_1, x_2 \in B$ we have
\begin{equation}\label{eq:HK6.5}
u (x_1, t_1)   \leq u(x_2, t_1) +  C_{A}  L  \exp ( - \NN_0 (x,1) ) \xi.
\end{equation}
Integrating \eqref{eq:HK6.5} over $B$ with respect to $x_2$,
and using \eqref{eq:int u} and \eqref{eq:HK6} imply
\begin{equation*}
 u(x_1, t_1)  \leq  \frac1{ m_{t_1}(B)} \int_B u \, dm_{t_1} + C_{A} L \exp ( - \NN_0 (x,1) ) \xi \leq C_{A}( C_{A} \xi^{-n} + L \xi ) \exp ( - \NN_0 (x,1)). 
\end{equation*}
It follows that
\begin{equation}\label{eq:HK7}
 \int_B u \, d\nu_{t_1} \leq C_{A}( C_{A} \xi^{-n} + L \xi ) \exp ( - \NN_0 (x,1)). 
\end{equation}

We next estimate the second term in \eqref{eq:HK5}.
By \eqref{eq:GboundW}, \eqref{eq:HK4}, an inequality $e^s \leq 1 + \xi e^{s/\xi}$, Lemma \ref{lem:concentration1}, Proposition \ref{prop:concentration2} and the co-area formula,
if $\xi \leq \ov\xi_{A}$, then
\begin{align}\label{eq:HK8}
 \int_{M \setminus B} u \, d\nu_{t_1} 
&\leq 2 L \int_{M \setminus B} \exp ( - \NN_0 (\cdot, t_1) ) d\nu_{t_1}\\ \notag
&\leq 4 L \exp ( - \NN_0 (x, 1) ) \int_{M \setminus B} \exp \left( C_{A} d_{t_1} (z_1, \cdot) \right) d\nu_{t_1} \\ \notag
&\leq 4 L \exp ( - \NN_0 (x, 1) )  \int_{M \setminus B} \left( 1 + \xi \exp \left( C_{A} \xi^{-1} d_{t_1} (z_1, \cdot) \right) \right) d\nu_{t_1} \\ \notag
&\leq 4L \exp ( - \NN_0 (x, 1) )\left( \frac{1}{100} +  \xi  \int_{M}\exp \left( C_{A}  \xi^{-1} d_{t_1} (z_1, \cdot) \right) d\nu_{t_1} \right) \\ \notag
&\leq 4L \exp ( - \NN_0 (x, 1) )\left( \frac{1}{100} +  \xi + C_{A}   \int_0^\infty e^{C_{A} r/\xi}  \nu_{t_1}(M \setminus B(z_1, t_1, r))\, dr \right)\\ \notag
&\leq 4L \exp ( - \NN_0 (x, 1) )\left( \frac{1}{40} + C_{A}   \int_0^\infty e^{C_{A} r/\xi}  \exp \left({- \frac{ ( r - \sqrt{2\Cn \xi^2})_+^2}{8 \xi^2} }\right)  dr \right) \\ \notag
&= 4L \exp ( - \NN_0 (x, 1) )\left( \frac{1}{40} + C_{A}  \xi  \int_0^\infty e^{C_{A} r}  \exp \left({- \frac{ ( r - \sqrt{2\Cn} )_+^2}{8 } }\right)  dr \right)  \\ \notag
&\leq \left( \frac1{10} + C_{A} \xi \right) L \exp ( - \NN_0 (x, 1) ).
\end{align}
Combining \eqref{eq:HK5}, \eqref{eq:HK7} and \eqref{eq:HK8} yields that
if $\xi \leq \ov\xi_{A}$ and $L \geq \underline{L}_{A,\xi}$, then
\begin{equation*}
u(x,1) \leq \left(C_{A}C_{A\xi^2}  \xi^{-n} + \frac{L}{10} +C_{A}L \xi  \right)  \exp ( - \NN_0 (x,1) )\leq \frac{L}{2}\exp ( - \NN_0 (x,1) ).
\end{equation*}
This proves the desired assertion \eqref{eq:GboundW_desired}.
\end{proof}

We also have the following gradient estimate (cf. \cite[Theorem 7.5]{B1}):
\begin{prop}\label{prop:HKK}
Let $(M,(g_t)_{t\in I})$ be a super Ricci flow with $\mathcal{D}\geq 0$.
Let $(x,t)\in M\times I$,
and let $[s,t] \subset I$.
For $A>0$,
we assume $\HH(\cdot,s) \geq -A(t-s)^{-1}$.
Then on $M$, we have
\begin{equation*}
\frac{|\nabla_{x} G|(x,t;\cdot,s)}{G(x,t;\cdot,s)} \leq \frac{C_{A}}{(t-s)^{1/2}} \sqrt{ \log \left( \frac{C_{A} \exp (- \NN_{(x,t)}(t-s))}{(t-s)^{n/2} G(x,t;\cdot,s)} \right) }. 
\end{equation*}
\end{prop}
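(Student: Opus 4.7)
The plan is to adapt Bamler's proof of \cite[Theorem 7.5]{B1} to the super Ricci flow setting, combining a Hamilton-type gradient estimate for positive solutions of the forward heat equation with the pointwise heat kernel upper bound of Proposition \ref{prop:HK}. After a time translation and parabolic rescaling, we may assume $s=0$ and $t=1$, so the hypothesis reads $\HH(\cdot,0)\geq -A$; by Lemma \ref{lem:scal}, $\HH(\cdot,t')\geq -C_{A}$ for all $t'\in[0,1]$. Fix $y\in M$ and set $u(x',t'):=G(x',t';y,0)$ on $M\times(0,1]$; then $u>0$ solves $\square u=0$, and the desired inequality is $|\nabla\log u|^{2}(x,1)\leq C_{A}\log(L/u(x,1))$ for a suitable $L$.

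Writing $\phi:=\log u$, we have $\square\phi=-|\nabla\phi|^{2}$. The Bochner formula on a time-dependent metric, combined with the super Ricci flow condition $\Ric\geq h$ (exactly as already used in the proof of Proposition \ref{prop:Poincare}), gives the key differential inequality
$$\square|\nabla\phi|^{2}\leq-2|\nabla^{2}\phi|^{2}-2\langle\nabla\phi,\nabla|\nabla\phi|^{2}\rangle.$$
This is the only place the super Ricci flow hypothesis enters the differential-inequality step; it replaces the Ricci flow Bochner identity in \cite{B1}. From here, a Hamilton-type maximum principle argument is applied on the cylinder $M\times[1/2,1]$ to a quantity of the form $F:=t'|\nabla\psi|^{2}-c\psi$, where $\psi:=\log(L/u)\geq 0$ and $c>0$ is chosen suitably; at any interior maximum the relation $\nabla F=0$ lets one substitute $\nabla|\nabla\psi|^{2}=(c/t')\nabla\psi$ into the Bochner-type inequality, producing $\square F\leq 0$ and hence $F(x,1)\leq\sup_{M\times\{1/2\}}F$. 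Choosing $L$ to be a fixed multiple of the pointwise bound $C_{A}(1)^{-n/2}\exp(-\NN_{(x,1)}(1))$ from Proposition \ref{prop:HK} and unwinding the rescaling then yields the claim.

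The main obstacle is justifying that this choice of $L$ dominates $u$ on the entire spacetime cylinder $M\times[1/2,1]$, not merely at the base point $(x,1)$: the quantity $\NN_{(x,1)}(1)$ to which $L$ is calibrated is a property of the specific base point, whereas Hamilton's maximum principle needs a uniform upper bound. Following Bamler, this is resolved by the parabolic maximum principle (which says $\sup_{M}u(\cdot,t')$ is non-increasing in $t'$), so that it suffices to dominate $u$ at $t'=1/2$; a pointwise application of Proposition \ref{prop:HK} at each $(x',1/2)$ together with the near-Lipschitz control of the Nash entropy from Lemma \ref{lem:Nash cor} (used to compare $\NN_{(x',1/2)}(1/2)$ with $\NN_{(x,1)}(1)$) upgrades the pointwise bound to a uniform one at the price of enlarging $C_{A}$. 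Since the super Ricci flow hypothesis has already been absorbed into both Proposition \ref{prop:HK} and the Bochner-type inequality above, no further modifications to Bamler's argument are required.
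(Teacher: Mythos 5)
The paper's proof (which is essentially Bamler's actual proof of \cite[Theorem 7.5]{B1}) does not use a Hamilton-type maximum principle at all; it is an integral argument built on the semigroup identity $G(x,1;y,0)=\int_M G(x,1;\cdot,1/2)\,u\,dm_{1/2}$ with $u:=G(\cdot,1/2;y,0)$, the $L^p$-moment bounds \eqref{eq:G_bound13} for $|\nabla_x G|/G$, an $L^2$-bound $\int_M u^2\,d\nu_{(x,1);1/2}\leq C_A\exp(-2\NN_0(x,1))$ obtained from Proposition~\ref{prop:HK}, Lemma~\ref{lem:Nash cor_rem} and the concentration bound Proposition~\ref{prop:concentration2}, and a level-set (rearrangement) argument. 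So your plan is not an adaptation of Bamler's proof but a genuinely different, more classical route.

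Unfortunately that route has a gap at exactly the point you yourself flag. For the maximum principle applied to $F:=t'|\nabla\psi|^2-c\psi$ on $M\times[1/2,1]$ you need $\psi=\log(L/u)\geq 0$ on the whole cylinder, i.e.\ $u\leq L$ everywhere there; and to reproduce the stated conclusion you must take $L\leq C_A\exp(-\NN_{(x,1)}(1))$. But Proposition~\ref{prop:HK}, applied at a general point $(x',1/2)$, gives $u(x',1/2)\leq C_A\exp(-\NN_{(x',1/2)}(1/2))$ with the Nash entropy at the \emph{variable} base point $(x',1/2)$, and the only comparison available, Lemma~\ref{lem:Nash cor_rem}, reads $-\NN_0(\cdot,1/2)\leq -\NN_0(x,1)+C_A\bigl(d_{1/2}(z,\cdot)+1\bigr)$. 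This degrades linearly in distance; it produces an upper bound on $u(\cdot,1/2)$ that grows exponentially with $d_{1/2}(z,\cdot)$, not a uniform one. Since Proposition~\ref{prop:HKK} makes no non-collapsing assumption ($\NN$ is not bounded below), $\sup_M u(\cdot,1/2)$ can be arbitrarily larger than $\exp(-\NN_{(x,1)}(1))$ while $\HH\geq -A$ still holds, and the smallest admissible $L$ would smuggle in a dependence on the diameter or global geometry that the proposition's constant $C_A$ is not allowed to carry. The paper's integral argument avoids this precisely because the exponential-in-distance growth of the pointwise bound is harmless after integrating against $\nu_{(x,1);1/2}$: Proposition~\ref{prop:concentration2} supplies the matching Gaussian decay. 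There is no pointwise analogue of this cancellation, so the Hamilton-type argument cannot deliver the claimed $(A,n)$-dependent constant.
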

\begin{proof}
We may assume $s = 0$ and $t = 1$.
Set $\nu:=\nu_{(x,1)}$.
Let $(z,1/2)$ be a center of $(x,1)$ (see Proposition \ref{prop:center}).
For a fixed $y\in M$,
we set $u := G(\cdot,1/2;y,0)$.
By Lemma \ref{lem:Nash cor_rem}, we have
\begin{equation*}
 - \NN_0 (\cdot , 1/2) \leq - \NN_0 (x,1) + C_{A} \left( d_{1/2} (z, \cdot) + 1 \right).
\end{equation*}
From Proposition \ref{prop:HK}, it follows that
\begin{equation*}
u  \leq C_{A} \exp ( - \NN_0 (\cdot, 1/2) ) \leq  C_{A} \exp ( - \NN_0 (x, 1) ) \exp \left( C_{A} d_{1/2} (z, \cdot) \right).
\end{equation*}
Using the co-area formula and Proposition \ref{prop:concentration2}, we obtain
\begin{align}\label{eq:HKK2}
 \int_M u^2 d\nu_{1/2} &\leq C_{A} \exp ( -2 \NN_0 (x, 1) ) \int_M  \exp \left( C_{A} d_{1/2} (z, \cdot) \right) d\nu_{1/2} \\ \notag
&\leq C_{A} \exp ( -2 \NN_0 (x, 1) ) \left( \int_0^\infty  e^{C_{A}r} \nu_{1/2}(M \setminus B(z,{1/2},r)) \, dr + 1 \right)\\ \notag
&\leq C_{A} \exp ( -2 \NN_0 (x, 1) ) \left( \int_0^\infty  e^{C_{A}r} \exp \left({- \frac{ (r - \sqrt{2\Cn} )_+^2}{8}  }\right) dr +1 \right) \\ \notag
&\leq C_{A} \exp ( -2 \NN_0 (x, 1) ).
\end{align}

By Proposition \ref{prop:HK} we have
\begin{equation*}
G(x,1;y,0) \leq \frac{C_{1,A}}{2} \exp ( -\NN_0 (x,1) ).
\end{equation*}
Put
\begin{equation*}
a :=\left( \frac{G(x,1;y,0)}{C_{1,A} \exp (-\NN_0 (x,1))}\right)^2. 
\end{equation*}
Let $b \geq 0$ satisfy the following property (cf. \cite[Claim 4.6]{B1}):
There is $\Omega \subset M$ with $\nu_1(\Omega)=a$ such that
\begin{equation*}
\left\{ \frac{|\nabla_x G|(x,1;\cdot, 1/2)}{G(x,1; \cdot, 1/2)} > b \right\} \subset \Omega \subset  \left\{ \frac{|\nabla_x G|(x,1;\cdot, 1/2)}{G(x,1; \cdot, 1/2)} \geq b \right\}.
\end{equation*}
By \eqref{eq:G_bound13} in Proposition \ref{prop:G_bound1} with $p=1$,
\begin{equation*}
a b \leq \int_{ \Omega }  \frac{|\nabla_x G|(x,1;\cdot, 1/2)}{G(x,1; \cdot, 1/2)} d\nu_{1/2} \leq C\nu_1(\Omega) (-\log \nu_1(\Omega))^{1/2} =  C a (-\log a)^{1/2} ,
\end{equation*}
and hence $b \leq C (-\log a)^{1/2}$.
By \eqref{eq:semigroup}, \eqref{eq:HKK2}, and \eqref{eq:G_bound13} with $p=2$,
we conclude
\begin{align*}
|\nabla_x G|(x,1;y,0) 
&\leq \int_M \frac{|\nabla_x G|(x,1;\cdot, 1/2)}{G(x,1; \cdot, 1/2)} u \, d\nu_{1/2} \\
&= \int_{\Omega} \frac{|\nabla_x G|(x,1;\cdot, 1/2)}{G(x,1; \cdot, 1/2)} u \, d\nu_{1/2} + \int_{M \setminus \Omega} \frac{|\nabla_x G|(x,1;\cdot, 1/2)}{G(x,1; \cdot, 1/2)} u\, d\nu_{1/2} \\
&\leq \left( \int_{\Omega} \left( \frac{|\nabla_x G|(x,1;\cdot, 1/2)}{G(x,1; \cdot, 1/2)} \right)^{2} d\nu_1 \right)^{1/2} \left( \int_{\Omega} u^2 d\nu_{1/2} \right)^{1/2} + b \int_M  u \, d\nu_{1/2} \\
&\leq C_{A}\exp(-\NN_0 (x,1)) \left( {- a \log a } \right)^{1/2}  + C(-\log a)^{1/2} G(x,1;y,0) \\
&\leq C_{A} (- \log a)^{1/2} G(x,1;y,0). 
\end{align*}
We arrive at the desired estimate.
\end{proof}

\subsection{Upper volume estimates}

We further present the following (cf. \cite[Theorem 8.1]{B1}):
\begin{prop}\label{prop:upper volume}
Let $(M,(g_t)_{t\in I})$ be a super Ricci flow with $\mathcal{D}\geq 0$.
Let $(x_0,t_0)\in M\times I$.
For $r>0$,
we assume $[t_0 -r^2, t_0] \subset I$.
For $A>0$,
we assume $\HH(\cdot, t_0-r^2) \geq -Ar^{-2}$.
Then for every $R \geq 1$ we have
\begin{equation*}
m_{t_0}(B(x_0,t_0,Rr)) \leq C_{A} \exp ( \NN_{(x_0,t_0)}(r^2))  \exp ( C_{A}  R^2 ) r^n.
\end{equation*}
\end{prop}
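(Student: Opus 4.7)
After translating in time and parabolic rescaling, one may assume $t_0=1$ and $r=1$; write $\NN:=\NN_{(x_0,1)}(1)$. The task is then to bound $m_1(B(x_0,1,R))$ by $C_A e^{\NN} e^{C_A R^2}$. My strategy is to pick a fixed source point $y_0\in M$ at which the conjugate heat kernel $G(x_0,1;y_0,0)$ is large, to propagate a pointwise lower bound for $v(x):=G(x,1;y_0,0)$ from $x_0$ to all of $B(x_0,1,R)$ using the gradient estimate of Proposition \ref{prop:HKK}, and then to compare the resulting pointwise lower bound with a global integral bound $\int_M v\,dm_1\leq C_A$.

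For the selection of $y_0$, I will use the concentration of the density $f(\cdot,0)=-\log G(x_0,1;\cdot,0)-(n/2)\log 4\pi$ around its $\nu_0$-mean $\NN+n/2$. The second-moment estimate \eqref{eq:Nash basic2} of Lemma \ref{lem:Nash basic}, combined with Chebyshev's inequality, yields that the sublevel set $S:=\{f(\cdot,0)\leq \NN+n/2+C_A\}$ has $\nu_0(S)\geq 1/2$, and is in particular nonempty. Any $y_0\in S$ then satisfies $v(x_0)\geq C_A^{-1} e^{-\NN}$.

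To propagate this bound, I first apply Lemma \ref{lem:Nash cor} with $s=0$ and $t=t_1=t_2=1$: the logarithmic term vanishes and, since $W_1(\delta_{x_0},\delta_x)=d_1(x_0,x)$, I obtain
\[
\NN-\NN_{(x,1)}(1)\leq C_A R \qquad \text{for all } x\in B(x_0,1,R).
\]
Substituting this into Proposition \ref{prop:HKK} and setting $\psi:=-\log v-\NN$ produces a gradient inequality of the form $|\nabla \psi|\leq C_A\sqrt{\psi + C_A R}$. Integrating this differential inequality along a minimizing $g_1$-geodesic from $x_0$, with initial value $\psi(x_0)\leq C_A$ provided by the previous step, yields $\psi(x)\leq C_A R^2$, equivalently
\[
v(x)\geq e^{-\NN}\, e^{-C_A R^2} \qquad \text{on } B(x_0,1,R).
\]

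Finally, $v$ solves the forward heat equation in $(x,t)$, while $\partial_t(dm_t)=-\HH\,dm_t$ with $\HH\geq -A$ on $[0,1]$ by Lemma \ref{lem:scal}. Hence $\frac{d}{dt}\int_M v\,dm_t=-\int_M \HH\,v\,dm_t\leq A\int_M v\,dm_t$, and since $\int_M v\,dm_t\to 1$ as $t\searrow 0$, Gr\"onwall's inequality yields $\int_M v\,dm_1\leq e^A\leq C_A$. Combining this with the pointwise lower bound for $v$ over $B(x_0,1,R)$ delivers the desired estimate. I expect the main technical obstacle to be the ODE integration in the third paragraph: the argument of the square root in Proposition \ref{prop:HKK} depends on $x$ through $\NN_{(x,1)}(1)$, and this must be replaced by the constant $\NN$ via Lemma \ref{lem:Nash cor} before a clean comparison can be carried out; the squaring of the integrated gradient bound is precisely what produces the Gaussian factor $e^{C_A R^2}$.
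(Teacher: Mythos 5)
Your proof is correct and follows essentially the same route as the paper's: pick a source $y_0$ where the conjugate heat kernel is $\geq C_A^{-1}e^{-\NN}$ (the paper does this via a simple Jensen/mean-value argument on the entropy formula, you via the second-moment bound \eqref{eq:Nash basic2} and Chebyshev, which is equivalent), control $\NN_{(\cdot,1)}(1)$ on $B(x_0,1,R)$ via the Nash-entropy Lipschitz bound, integrate the gradient inequality of Proposition \ref{prop:HKK} to get the pointwise lower bound $v\geq e^{-\NN}e^{-C_AR^2}$, and close by the Gr\"onwall bound on $\int_M v\,dm_t$. The minor organizational differences (Lemma \ref{lem:Nash cor} in place of Lemma \ref{lem:Nash thm}, and your substitution $\psi=-\log v-\NN$ instead of the paper's auxiliary function $v=\sqrt{\log(\cdot)}$) are cosmetic.
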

\begin{proof}
We may assume $t_0=0$ and $r =1$.
By \eqref{eq:potential} we see
\begin{equation*}
\NN_{(x_0,0)}(1) =  - \int_M \left( \log G(x_0,0;\cdot, -1) \right) G(x_0,0; \cdot, -1) dm_{-1}  - \frac{n}2 \log 4\pi - \frac{n}2.
\end{equation*}
Hence,
there exists $y \in M$ such that
\begin{equation*}
\log G(x_0,0;y, -1) \geq - \NN_{(x_0,0)}(1) - \frac{n}2 \log 4\pi- \frac{n}2;
\end{equation*}
in particular,
\begin{equation}\label{eq:upper volume3}
G(x_0,0; y, -1) \geq C \exp ( - \NN_{(x_0,0)}(1)  ). 
\end{equation}
Set $u := G(\cdot, 0; y;-1)$.
By Lemma \ref{lem:Nash thm} we have
\begin{equation}\label{eq:upper volume2}
 - \NN_{(\cdot, 0)} (1)=-\NN_{-1}(\cdot,0)\leq -\NN_{-1}(x_0,0)+\left( \frac{n}{2}+A  \right)^{1/2}d_0(x_0,\cdot) \leq - \NN_{(x_0,0)} (1) + C_{1,A} R
\end{equation}
on $B(x_0,0,R)$.
By Proposition \ref{prop:HKK} and \eqref{eq:upper volume2},
\begin{equation}\label{eq:upper volume2.5}
\frac{|\nabla u|}{u} \leq C_{A} \sqrt{ \log \left( \frac{C_{1,A} \exp ( - \NN_{(\cdot,0)}(1))}{u} \right)}\leq C_{A} \sqrt{ \log \left( \frac{C_{1,A} \exp ( - \NN_{(x_0,0)}(1) + C_{1,A} R)}{u} \right)}
\end{equation}
on $B(x_0,0,R)$.
We define
\begin{equation*}
v := \sqrt{ \log \left( \frac{C_{1,A} \exp ( - \NN_{(x_0,0)}(1) + C_{1,A} R)}{u} \right)}.
\end{equation*}
By \eqref{eq:upper volume3} and \eqref{eq:upper volume2.5},
we obtain
\begin{equation*}
v(x_0) \leq C_{A}\sqrt{R},\quad |\nabla v| \leq C_{A}
\end{equation*}
on $B(x_0,0,R)$.
This implies $v \leq C_{A}\sqrt{R} + C_{A} R \leq C_{A}R$ on $B(x_0,0,R)$, and hence
\begin{equation}\label{eq:upper volume5}
 u \geq C_{A}  \exp (- C_{A} R^2) \exp ( - \NN_{(x_0,0)}(1))
\end{equation}
on $B(x_0,0,R)$.
Since
\begin{equation*}
\frac{d}{dt} \int_M G(\cdot,t;y, -1) dm_t = - \int_M \HH \, G(\cdot,t;y, -1) dm_t \leq A\int_M G(\cdot,t;y, -1) dm_t,
\end{equation*}
the inequality \eqref{eq:upper volume5} leads us to
\begin{equation*} 
C_{A} \exp (- C_{A} R^2) \exp ( - \NN_{(x_0,0)}(1)) m_0(B(x_0,0,R)) \leq \int_{B(x_0,0,R)} u \, dm_0 \leq e^{A}. 
\end{equation*}
This completes the proof.
\end{proof}

\section{Non-collapsed case}\label{sec:noncollapse}
Here,
we show some estimates under a non-collapsed condition for the Nash entropy.

\subsection{Heat kernel measure comparison}

We first produce the following heat kernel measure comparison between different base points (cf. \cite[Proposition 8.1]{B3}):
\begin{prop}\label{prop:inheriting}
For $A>0$,
there is $\mathfrak{C}_{A}>0$ such that the following holds:
Let $(M,(g_t)_{t\in I})$ be a super Ricci flow with $\mathcal{D}\geq 0$.
Let $(x_0,t_0),(x_1,t_1)\in M\times I$,
and let $s, t \in I$ satisfy $s < t \leq \min\{t_0,t_1\}$.
For $\kappa>0$, we assume $\NN_{(x_0, t_0)}(t_0 - s) \geq - \kappa$.
For $D>0$ and $\theta_0,\theta_1 \in (-\infty,1)$ with $\theta_0<\theta_1$,
we assume
\begin{align*}
&\HH (\cdot, s) \geq - A (t - s)^{-1},\quad W_1(\nu_{(x_0, t_0);t}, \nu_{(x_1,t_1);t} ) \leq D \sqrt{t - s},\\ \notag
&t_0 - t \leq \mathfrak{C}_{A} \, \frac{\theta_1-\theta_0}{1-\theta_1}( t - s),\quad   t_1 - t \leq D^2 (t - s). 
\end{align*}
Then we have
\begin{equation*}
e^{\theta_0 f_0}\nu_{(x_0,t_0); s} \leq C_{\kappa,D,\theta_0,\theta_1, A}\, e^{\theta_1 f_1} \nu_{(x_1, t_1); s}.
\end{equation*}
\end{prop}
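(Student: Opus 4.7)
The plan is to reduce the measure inequality to a pointwise comparison of the densities $f_0$ and $f_1$, and then to verify this comparison using the heat-kernel and concentration estimates already established. The strategy follows the template of \cite[Proposition 8.1]{B3}, using the tools we have developed for super Ricci flow.

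\emph{First}, I parabolically rescale and time-translate to normalize $s=0$ and $t-s=1$, so $t=1$ and $\tau_i:=t_i-s=t_i$. The constraints force $\tau_i\in[1,1+C_{D,\theta_0,\theta_1,A}]$. Writing the conjugate heat kernel measures via the densities $G(x_i,t_i;y,0)=(4\pi\tau_i)^{-n/2}e^{-f_i(y,0)}$ and absorbing the bounded factor $(\tau_0/\tau_1)^{n/2}$ into the constant, the claim is equivalent to
\[
(1-\theta_1)f_1(y,0)\leq(1-\theta_0)f_0(y,0)+\log C_{\kappa,D,\theta_0,\theta_1,A}
\]
for every $y\in M$. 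Rewriting this as
\[
(1-\theta_1)\bigl(f_1(y,0)-f_0(y,0)\bigr)\leq(\theta_1-\theta_0)f_0(y,0)+\log C_{\kappa,D,\theta_0,\theta_1,A},
\]
we see that the positive slack $(\theta_1-\theta_0)f_0$ on the right is exactly what is available to absorb Gaussian-type errors in the comparison of $f_1$ and $f_0$.

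\emph{Second}, I compare $f_0$ and $f_1$ through the semigroup identity
\[
G(x_i,t_i;y,0)=\int_M G(z,1;y,0)\,d\nu_{(x_i,t_i);1}(z)
\]
and the Wasserstein bound $W_1(\nu_{(x_0,t_0);1},\nu_{(x_1,t_1);1})\leq D$. The key ingredient is the spatial gradient estimate of Proposition \ref{prop:HKK}, which yields
\[
|\nabla_z\log G(z,1;y,0)|\leq C_A\sqrt{\log\bigl(C_A e^{-\NN_{(z,1)}(1)}\bigr)-\log\bigl(G(z,1;y,0)\bigr)-\tfrac{n}{2}\log(4\pi)}.
\]
Using Lemmas \ref{lem:Nash_more}, \ref{lem:Nash cor} and \ref{lem:useful}, together with the assumed $\NN_{(x_0,t_0)}(t_0)\geq -\kappa$, the Nash entropies $\NN_{(z,1)}(1)$ are uniformly bounded below on the support of each $\nu_{(x_i,t_i);1}$, so the gradient bound is of order $C_{\kappa,D,\theta_0,\theta_1,A}(1+f_i(y,0))^{1/2}$ on the relevant regions. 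Dualizing this Lipschitz control via the Kantorovich-Rubinstein identity \eqref{eq:W1}, and handling the tails of the $\nu_{(x_i,t_i);1}$ with the Gaussian concentration of Proposition \ref{prop:concentration2} and the variance bound of Lemma \ref{lem:concentration1}, yields
\[
f_1(y,0)-f_0(y,0)\leq C_{\kappa,D,\theta_0,\theta_1,A}\bigl(1+f_0(y,0)\bigr)^{1/2}+C_{\kappa,D,\theta_0,\theta_1,A}.
\]
The elementary inequality $C\sqrt{1+x}\leq\varepsilon x+C^2/(4\varepsilon)+C$ with $\varepsilon=(\theta_1-\theta_0)/(2(1-\theta_1))$ converts this $\sqrt{f_0}$ error into a linear error with coefficient small enough to be absorbed by $(\theta_1-\theta_0)f_0$ on the right side of the reformulated claim. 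The role of the constant $\mathfrak{C}_A$ in the hypothesis $t_0-t\leq\mathfrak{C}_A(\theta_1-\theta_0)/(1-\theta_1)(t-s)$ is precisely to ensure that the further correction produced when shifting the base point $(x_0,t_0)$ back to time $1$ (again estimated through Lemmas \ref{lem:Nash_more} and \ref{lem:useful}) does not exceed this same slack.

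\emph{The main obstacle} is that the gradient bound in Proposition \ref{prop:HKK} degenerates where $G\to 0$, so the function $\phi_y(z):=\log G(z,1;y,0)$ is neither bounded nor globally Lipschitz on $M$, and the Kantorovich-Rubinstein duality cannot be applied naively. The technical heart of the argument is therefore a truncation scheme: one truncates $\phi_y$ at a level $-L$ (chosen in terms of $y$ and the parameters), applies the duality to the Lipschitz-truncated function, and controls the discarded tail using the Gaussian concentration of $\nu_{(x_i,t_i);1}$ around a common center (whose distance is in turn bounded by $W_1\leq D$ plus two variance terms of order $\sqrt{H_n\,(t_i-1)}$). Optimizing the truncation level is where the precise dependence of $\mathfrak{C}_A$ on $(\theta_1-\theta_0)/(1-\theta_1)$ appears. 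Once this truncation is carried out, the combination of steps above delivers the asserted pointwise inequality with a constant of the claimed form.
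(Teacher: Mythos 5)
Your reduction to the pointwise inequality $(1-\theta_1)f_1(y,0)\leq(1-\theta_0)f_0(y,0)+\log C$ is correct and matches the paper's, and you correctly identify the central obstacle: the gradient bound from Proposition~\ref{prop:HKK} degenerates where $G$ is small, so $\log G(\cdot,1;y,0)$ is not globally Lipschitz and Kantorovich--Rubinstein duality cannot be applied to it directly. But the solution you propose (truncate $\phi_y=\log G$ at a level $-L$, dualize the truncation, control the tails, optimize $L$) is not the mechanism the paper uses, and as sketched it has real gaps.

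The paper's way past the obstacle is a change of variable, not a truncation. After normalizing $u:=G(\cdot,1;y,0)$ by a multiple of $\exp(\NN_0(\cdot,1))$ to form $v$, Proposition~\ref{prop:HKK} together with Lemma~\ref{lem:Nash thm} gives $|\nabla v|/v\leq C_A\sqrt{-\log v}$, so that $\varphi:=\sqrt{-\log v}$ is \emph{globally} $C_{2,A}$-Lipschitz. This is the key technical step you are missing: the square root converts a gradient bound that blows up like $\sqrt{-\log G}$ into a uniform Lipschitz bound, removing the degeneracy entirely. With $\varphi$ Lipschitz, the paper lower-bounds $\varphi$ on $M$ in terms of $\varphi(x)$ for $x$ in a concentration ball of $\nu^1_1$, performs a shell decomposition $\sum_j\exp(-(\varphi(x)-C_{2,A}j-\cdots)_+^2)e^{C_{1,A}j}\nu^0_1(M\setminus B(z_0,1,j-1))$, and uses the sharpened Gaussian decay \eqref{eq:inheriting100} (this is exactly where $\mathfrak{C}_A=(4C_{2,A})^{-2}$ and the hypothesis $t_0-t\leq\mathfrak{C}_A\frac{\theta_1-\theta_0}{1-\theta_1}(t-s)$ enter, via the Young inequality) to obtain $\int_M v\,\exp(C_{1,A}d_1(z_0,\cdot))\,d\nu^0_1\leq C\,v(x)^\lambda$ with $\lambda=(1-\theta_1)/(1-\theta_0)$, and finally $\int u\,d\nu^0_1\leq C(\int u\,d\nu^1_1)^\lambda$, i.e.\ $G(x_0,t_0;y,0)\leq C\,G(x_1,t_1;y,0)^\lambda$.

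Concretely, the gaps in your version are: (i) the intermediate inequality $f_1(y,0)-f_0(y,0)\leq C(1+f_0(y,0))^{1/2}+C$ is asserted but not derived, and it is not a consequence of the tools you cite --- the $W_1$ bound controls $|\int\phi\,d\nu^0_1-\int\phi\,d\nu^1_1|$ only for Lipschitz $\phi$, while what you need is a bound on $|\log\int G\,d\nu^0_1-\log\int G\,d\nu^1_1|$, and the Jensen gap between $\log\int$ and $\int\log$ is not addressed; (ii) the claim that $\NN_{(z,1)}(1)$ is uniformly bounded below on the support of $\nu^i_1$ is false as stated (the support is all of $M$, and $-\NN_0(\cdot,1)$ grows linearly in distance from the center by Lemma~\ref{lem:Nash cor_rem}), which is precisely why the paper needs the shell decomposition; (iii) the truncation scheme is never carried out, and it is unclear it would produce the multiplicative structure $\int u\,d\nu^0\leq C(\int u\,d\nu^1)^\lambda$ that encodes the $\theta_0,\theta_1$-dependence, rather than a merely additive error. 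Replacing your truncation plan by the square-root Lipschitz trick, and replacing the KR-duality step by the weighted integral estimate and Young inequality, would repair the argument.
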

\begin{proof}
We may assume $s = 0$ and $t = 1$.
For $i =0,1$,
let $\nu^i := \nu_{(x_i, t_i)}$.
In view of Lemma \ref{lem:scal},
\begin{equation}\label{eq:inheriting1}
\HH \geq - A,\quad W_1(\nu^0_{1}, \nu^1_{1} ) \leq D,\quad t_0-1 \leq \mathfrak{C}_{A} \frac{\theta_1-\theta_0}{1-\theta_1},\quad  t_1-1 \leq D^2
\end{equation}
on $M \times (I \cap [0, \infty))$ (see also Remark \ref{rem:scal2}).
For a fixed $y \in M$,
we set $u := G(\cdot, 1; y,0)$.
By Propositions \ref{prop:HK} and \ref{prop:HKK},
we possess
\begin{equation}\label{eq:inheriting51}
 u \leq \frac{C_{1,A}}{10} \exp ( - \NN_0 (\cdot,1) ), \quad \frac{|\nabla u|}{u}  \leq C_{A} \sqrt{ \log \left( \frac{C_{1,A} \exp (- \NN_0 (\cdot, 1))}{ u} \right)}.
\end{equation}
We further set $v :=C^{-1}_{1,A} u \, \exp ( \NN_0 (\cdot, 1))$.
Note that
\eqref{eq:Nash thm gradient} and \eqref{eq:inheriting51} lead to
\begin{equation*}\label{eq:inheriting6}
 \frac{|\nabla v|}{v} \leq \frac{|\nabla u|}{u} + |\nabla \NN_0 (\cdot, 1)|\leq C_{A} \sqrt{ - \log v} + C_{A} \leq C_{A} \sqrt{- \log v};
\end{equation*}
in particular,
for $\varphi:=\sqrt{- \log v}$,
we have
\begin{equation}\label{eq:inheriting7}
 |\nabla \varphi| \leq C_{2,A}.
\end{equation}
We define $\mathfrak{C}_{A}:=(8C_{2,A})^{-2}$.
For $i =0,1$,
let $(z_i,1)$ be a center of $(x_i, t_i)$ (Proposition \ref{prop:center}).
Lemma \ref{lem:W-V} and \eqref{eq:inheriting1} imply
\begin{align}\label{eq:inheriting3}
 d_1 (z_0, z_1) &\leq W_1(\delta_{z_0}, \nu^0_1) + W_1(\nu^0_1, \nu^1_1) + W_1(\nu^1_1, \delta_{z_1})\\ \notag
&\leq \sqrt{\Cn (t_0 - 1)} + D + \sqrt{\Cn (t_1 - 1)} \leq C_{D}. 
\end{align}
Also,
due to Lemma \ref{lem:concentration1},
we possess
\begin{equation}\label{eq:inheriting12}
\nu^1_1 ( B) \geq \frac{1}{2},
\end{equation}
where $B:= B(z_1, 1, \sqrt{2\Cn (t_1 - 1)})$.

We now write $\lambda :=(1-\theta_1)/(1-\theta_0)$.
For a fixed $x \in B$,
\eqref{eq:inheriting7} and \eqref{eq:inheriting3} tell us that
\begin{align*}
 \varphi(\cdot)&\geq \varphi(x)  - C_{2,A} d_1 (x,\cdot)\\
 &\geq   \varphi(x) - C_{2,A} d_1 (z_0,\cdot) - C_{2,A} d_1 (z_0, z_1) - C_{2,A} \sqrt{2 \Cn (t_1 - 1)} \\
&\geq \varphi(x) - C_{2,A} (d_1 (z_0,\cdot)+C_{D}) - C_{D,A}
\end{align*}
on $M$,
and we obtain
\begin{align}\label{eq:inheriting10}
 &\quad\,\, \int_M v  \exp ( C_{1,A} d_1 (z_0, \cdot) ) \, d\nu^0_1\\ \notag
&= \sum_{j=1}^\infty \int_{B(z_0, 1,  j) \setminus B(z_0, 1,  j-1)} (\phi \circ \varphi)  \exp ( C_{1,A} d_1 (z_0, \cdot) ) \, d\nu^0_1 \\ \notag
&\leq \sum_{j=1}^\infty \phi \left( (\varphi(x) - C_{2,A} j - C_{D,A})_+ \right) \exp (C_{1,A} j) \nu^0_1 ( M \setminus B(z_0, 1, j-1)),
\end{align}
where $\phi(\zeta):=\exp (- \zeta^2)$.
By Proposition \ref{prop:concentration2}, \eqref{eq:inheriting1} and the Young inequality,
for every $r>0$,
\begin{align}\label{eq:inheriting100}
\nu^0_1 \left( M \setminus B(z_0, 1, r) \right) &\leq 2 \exp \left( - \frac{\left(r - \sqrt{2 \Cn (t_0 - 1)} \right)_+^2}{8(t_0 - 1)}   \right) \\ \notag
&\leq C \exp \left( - \frac{r^2}{16(t_0 - 1)}  \right)\leq C \exp \left({ - \frac{C^2_{2,A} \la}{1-\la}  r^2 } \right).
\end{align}
By \eqref{eq:inheriting10}, \eqref{eq:inheriting100}, 
the same calculation as in the proof of \cite[Proposition 8.1]{B3} tells us that
\begin{equation}\label{eq:inheriting9}
 \int_M v \exp ( C_{1,A} d_1 (z_0, \cdot) )  \, d\nu^0_1 \leq C_{D,\theta_0,\theta_1,A} \, v(x)^\la.
\end{equation}

Combining \eqref{eq:inheriting12} and \eqref{eq:inheriting9},
we obtain
\begin{align*}
 \int_M v \exp ( C_{1,A} d_1 (z_0, \cdot) )  \, d\nu^0_1 &\leq \frac{C_{D,\theta_0,\theta_1,A}}{\nu^1_1(B)} \int_{B}\, v^\la d\nu^{1}_1\\
 &\leq C_{D,\theta_0,\theta_1,A} \int_{M}\, v^\la d\nu^{1}_1\leq C_{D,\theta_0,\theta_1,A} \left(\int_{M}\, v d\nu^{1}_1 \right)^{\lambda}.
\end{align*}
By Lemma \ref{lem:Nash cor_rem},
we see
\begin{align*}\label{eq:inheriting2}
0 \leq -\NN_0 (\cdot,1) &\leq -\NN_0 (x_0,t_0)+\left( \frac{n}{2} + A\right)^{1/2}   \sqrt{H_n(t_0-1)}+\left( \frac{n}{2} + A\right)^{1/2}d_1(z_0,\cdot)\\
&\leq C_{\kappa,\theta_0,\theta_1,A}(d_1(z_0,\cdot)+1).
\end{align*}
It follows that
\begin{equation*}
 \int_M v \exp ( - \NN_0 (\cdot, 1) ) \, d\nu^0_1 \leq C_{\kappa,D,\theta_0,\theta_1, A} \left( \int_M v \exp ( - \NN_0 (\cdot, 1) )  \, d\nu^1_1 \right)^{\lambda};
\end{equation*}
in particular, \eqref{eq:semigroup} leads us to
\begin{equation*}
\int_M u \, d\nu^0_1 \leq C_{\kappa,D,\theta_0,\theta_1, A} \left( \int_M u \, d\nu^1_1 \right)^{\lambda},\quad G (x_0, t_0; y,0) \leq C_{\kappa,D,\theta_0,\theta_1, A} \left( G(x_1, t_1; y,0) \right)^{\lambda}.
\end{equation*}
Thus,
we complete the proof.
\end{proof}

\subsection{Integral estimates}\label{sec:Integral bounds}
In this subsection,
we deduce an integral estimate.
We first notice that
densities are bounded from below under a non-collapsed condition.
\begin{lem}\label{lem:lower poteintial}
Let $(M,(g_t)_{t\in I})$ be a super Ricci flow with $\mathcal{D}\geq 0$.
Let $(x_0,t_0)\in M\times I$.
For $\alpha\in (0,1)$ and $A>0$,
we assume $\HH(\cdot,t_0- \alpha^{-1} r^2)\geq -A\,r^{-2}$.
For $\kappa>0$,
we also assume $\NN_{(x_0,t_0)} (r^2) \geq - \kappa$.
Then on $M\times [t_0-\alpha^{-1} r^2,t_0)$,
we have
\begin{equation*}
f\geq -C_{\kappa,\alpha,A},\quad f^2 \leq C_{\kappa,\alpha,A} + C \tau^2 ( |\nabla^2 f|^2 + |\nabla f|^4 + |h|^2). 
\end{equation*}
\end{lem}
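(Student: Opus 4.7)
The plan is to handle the two inequalities separately, both via direct appeals to tools already set up in the excerpt. The natural first move is to normalize $t_0 = 0$, $r = 1$ by parabolic rescaling, so the hypotheses become $\HH(\cdot, -\alpha^{-1}) \geq -A$ and $\NN_{(x_0,0)}(1) \geq -\kappa$, and I would keep track of how the constants accumulate.

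For the lower bound on $f$, I would first use Remark \ref{rem:scal2} to extend the scalar curvature bound forward in time, obtaining $\HH(\cdot, t) \geq -A$ for every $t \in [-\alpha^{-1}, 0)$. Next I would establish the two-sided control $-C_{\kappa,\alpha,A} \leq \NN_{(x_0,0)}(\tau) \leq 0$ for all $\tau \in (0,\alpha^{-1}]$: for $\tau \leq 1$ the monotonicity \eqref{eq:Nash3.5} yields $\NN(\tau) \geq \NN(1) \geq -\kappa$, while for $\tau \in [1,\alpha^{-1}]$ Lemma \ref{lem:Nash_more} (with $\tau_1 = 1$, $\tau_2 = \tau$) gives $\NN(\tau) \geq -\kappa - (\alpha^{-1}-1)A - (n/2)\log\alpha^{-1}$. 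Finally, Proposition \ref{prop:HK} (applicable since $\HH(\cdot,t) \geq -A \geq -A\alpha^{-1}\tau^{-1}$) delivers $G(x_0,0;\cdot,t) \leq C_{A,\alpha}\,\tau^{-n/2}\exp(-\NN_{(x_0,0)}(\tau))$, and taking $-\log$ in the defining formula \eqref{eq:potential} produces the desired estimate $f \geq -C_{\kappa,\alpha,A}$.

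For the quadratic bound, the key input is the Perelman-type Harnack estimate of Theorem \ref{thm:Harnack}: since $\mathcal{D} \geq 0$, the function $w = \tau(2\Delta f - |\nabla f|^2 + \HH) + f - n$ is non-positive. Solving for $f$ and using the pointwise algebraic inequalities $|\Delta f| \leq \sqrt{n}\,|\nabla^2 f|$ and $|\HH| = |\tr h| \leq \sqrt{n}\,|h|$ gives a one-sided control
\begin{equation*}
f \leq n + 2\sqrt{n}\,\tau |\nabla^2 f| + \tau|\nabla f|^2 + \sqrt{n}\,\tau |h|.
\end{equation*}
Squaring and applying $(a+b+c+d)^2 \leq 4(a^2+b^2+c^2+d^2)$ yields $f_+^2 \leq C + C\tau^2(|\nabla^2 f|^2 + |\nabla f|^4 + |h|^2)$, while the first part gives $f_-^2 \leq C_{\kappa,\alpha,A}$. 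Adding these completes the proof.

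Neither step is deep; the only mild bookkeeping issue is verifying that the constants one feeds into Lemma \ref{lem:Nash_more} and Proposition \ref{prop:HK} are indeed the right multiples of $A$ after the time interval is rescaled, so that the final constants depend only on $\kappa, \alpha, A$ (and the dimension). No essential obstacle is foreseen.
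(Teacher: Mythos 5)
Your proof is correct and follows essentially the same route as the paper: Proposition \ref{prop:HK} combined with \eqref{eq:potential}, Lemma \ref{lem:Nash_more}, and \eqref{eq:Nash3.5} for the lower bound on $f$, and the sign of $w$ from Theorem \ref{thm:Harnack} together with elementary pointwise algebra for the quadratic bound. The only differences are cosmetic (citing Remark \ref{rem:scal2} rather than Lemma \ref{lem:scal} for the forward-in-time scalar bound, and splitting $f$ into $f_\pm$ before squaring rather than squaring the two-sided inequality directly), neither of which changes the substance.
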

\begin{proof}
We may assume $t_0=0$ and $r = 1$.
In view of Lemma \ref{lem:scal},
we possess $\HH(\cdot,t)\geq -(A\alpha^{-1})\tau^{-1}$ for $t\in [-\alpha^{-1},0)$.
Hence,
by Proposition \ref{prop:HK},
we have
\begin{equation*}
G(x_0,0;\cdot,t) \leq \frac{C_{\alpha,A}}{\tau^{n/2}} \exp ( - \NN_{(x_0,0)}(\tau) ).
\end{equation*}
Therefore,
from \eqref{eq:potential} we conclude
\begin{equation*}
f(\cdot,t)=-\log G(x_0,0;\cdot,t)-\frac{n}{2}\log \tau-\frac{n}{2}\log 4\pi\geq -\log C_{A}+\NN_{(x_0,0)}(\tau)-\frac{n}{2}\log 4\pi.
\end{equation*}
By Lemma \ref{lem:Nash_more} and \eqref{eq:Nash3.5}, it holds that $\NN_{(x_0,0)}(\tau)\geq -C_{\kappa,\alpha,A}$;
in particular,
the lower bound of $f$ follows.
Due to Theorem \ref{thm:Harnack},
we obtain
\begin{equation*}
 - C_{\kappa,\alpha,A} \leq f = w - \tau (2 \Delta f - |\nabla f|^2 + \HH) +n\leq   - \tau (2 \Delta f - |\nabla f|^2 + \HH) +n, 
\end{equation*}
where $w$ is defined as \eqref{eq:Harnack notation}.
Therefore,
\begin{equation*}\label{eq:integral15}
 f^2 \leq C_{\kappa,\alpha,A} + C \tau^2 ( |\nabla^2 f|^2 + |\nabla f|^4 + \HH^2)\leq C_{\kappa,\alpha,A} + C \tau^2 ( |\nabla^2 f|^2 + |\nabla f|^4 + |h|^2). 
\end{equation*}
We complete the proof.
\end{proof}

We also verify the following (cf. \cite[Proposition 6.5]{B3}):
\begin{lem}\label{lem:integral2}
Let $(M,(g_t)_{t\in I})$ satisfy $\mathcal{D}\geq 0$.
Let $(x_0,t_0)\in M\times I$.
For $A>0$,
we assume $\HH \geq -A$.
Then for all $t \in I \cap (-\infty, t_0)$ and $\theta \in [0, 1/2]$ we have
\begin{equation*}
\int_M e^{\theta f} d\nu_{(x_0,t_0);t} \leq e^{(n+\tau A) \theta}.
\end{equation*}
\end{lem}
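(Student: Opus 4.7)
The plan is to view $\psi(\theta) := \int_M e^{\theta f}\, d\nu_{(x_0,t_0);t}$ as a function of the parameter $\theta$, holding $t$ and hence $\tau = t_0 - t$ fixed. Since $\psi(0) = 1$ and
\[
(\log\psi)'(\theta) = \frac{1}{\psi(\theta)}\int_M f\, e^{\theta f}\, d\nu_{(x_0,t_0);t},
\]
it suffices to establish the differential inequality $(\log\psi)'(\theta) \leq n + \tau A$ on $[0, 1/2]$ and integrate from $\theta = 0$ to obtain $\log\psi(\theta) \leq (n+\tau A)\theta$.

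The key input will be the Harnack estimate of Theorem \ref{thm:Harnack}: under $\mathcal{D}\geq 0$ one has $w \leq 0$, that is
\[
f \leq n - \tau\bigl(2\Delta f - |\nabla f|^2 + \HH\bigr).
\]
Multiplying by $e^{\theta f}$ and integrating against $d\nu_{(x_0,t_0);t}$, I would handle the $\Delta f$ term by applying the integration-by-parts identity \eqref{eq:conjugate integration2} to the vector field $V := e^{\theta f}\nabla f$, which yields
\[
\int_M (\Delta f)\, e^{\theta f}\, d\nu_{(x_0,t_0);t} = (1-\theta)\int_M |\nabla f|^2\, e^{\theta f}\, d\nu_{(x_0,t_0);t}.
\]
Collecting terms then gives
\[
\int_M f\, e^{\theta f}\, d\nu_{(x_0,t_0);t} \leq n\,\psi(\theta) + \tau(2\theta - 1)\int_M |\nabla f|^2\, e^{\theta f}\, d\nu_{(x_0,t_0);t} - \tau \int_M \HH\, e^{\theta f}\, d\nu_{(x_0,t_0);t}.
\]

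For $\theta \in [0, 1/2]$ the coefficient $2\theta - 1$ is non-positive, so the gradient term may be dropped since $|\nabla f|^2 \geq 0$; together with the hypothesis $\HH \geq -A$ this produces $\int_M f\, e^{\theta f}\, d\nu_{(x_0,t_0);t} \leq (n+\tau A)\psi(\theta)$, which is exactly the claimed bound on $(\log\psi)'(\theta)$. Integrating on $[0,\theta]$ and exponentiating finishes the proof.

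The only subtle point is the decision to differentiate in $\theta$ rather than in $\tau$. A $\tau$-derivative of $\psi$ simultaneously entangles $\int \HH\, e^{\theta f}\, d\nu$ with $\int |\nabla f|^2 e^{\theta f}\, d\nu$ and is awkward to close, whereas the $\theta$-derivative combined with the integrated Harnack cleanly exploits the sign condition $2\theta - 1 \leq 0$ available precisely on the stated range, reducing the problem to a one-dimensional ODE in $\theta$.
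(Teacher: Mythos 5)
Your proof is correct and follows essentially the same route as the paper: differentiate $\int_M e^{\theta f}\,d\nu$ in $\theta$, substitute the Harnack bound $w\le 0$ for $f$, integrate by parts (via \eqref{eq:conjugate integration2}) to turn the $\Delta f$ term into a $(1-\theta)|\nabla f|^2$ term, drop the resulting $\tau(2\theta-1)|\nabla f|^2$ term using $\theta\le 1/2$, and absorb $-\tau\HH\le\tau A$. The paper closes the resulting differential inequality by Gronwall on $\psi$ rather than on $\log\psi$; that is a cosmetic difference only.
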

\begin{proof}
Theorem \ref{thm:Harnack} yields
\begin{align*}
 \frac{d}{d\theta} \int_M e^{\theta f} d\nu_{(x_0,t_0);t} &= \int_M f e^{\theta f} d\nu_{(x_0,t_0);t}\\
 &\leq  \int_M \left( \tau (-2\Delta f + |\nabla f|^2 - \HH ) + n \right) e^{\theta f} d\nu_{(x_0,t_0);t} \\
&\leq  \int_M \left(  \tau (-2\Delta f + |\nabla f|^2  ) + n + \tau A  \right) (4\pi \tau)^{-n/2} e^{-(1-\theta) f} dm_t \\
&=  \int_M \left(  \tau (2\theta-1) |\nabla f|^2   + n + \tau A  \right) (4\pi \tau)^{-n/2} e^{-(1-\theta) f} dm_t  \\
&\leq (n+\tau A)  \int_M e^{\theta f} d\nu_{(x_0,t_0);t}.
\end{align*}
Integrating this over $\theta$ implies the lemma.
\end{proof}

We further see the following:
\begin{lem}\label{lem:integral3}
Let $(x_0,t_0)\in M\times I$.
Then for all $t \in I \cap (-\infty, t_0)$ and $\theta \in [0,1/4]$ we have
\begin{equation*}\label{eq:integral14}
 \int_M |\nabla f|^4 e^{\theta f} d\nu_{(x_0,t_0);t} \leq C  \int_M | \nabla^2 f |^2  e^{\theta f} d\nu_{(x_0,t_0);t}. 
\end{equation*}
\end{lem}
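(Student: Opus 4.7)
The plan is to turn $|\nabla f|^4$ into an expression involving $|\nabla^2 f|^2$ via integration by parts, using the explicit form of the measure $d\nu_{(x_0,t_0);t} = (4\pi\tau)^{-n/2} e^{-f}\,dm_t$. Concretely, for $\theta \in [0,1/4]$ one has
\begin{equation*}
e^{\theta f}\,d\nu_{(x_0,t_0);t} = (4\pi\tau)^{-n/2}\, e^{(\theta-1)f}\,dm_t,
\end{equation*}
so that the crucial algebraic identity $e^{(\theta-1)f}\,\nabla f = (\theta-1)^{-1}\,\nabla\bigl(e^{(\theta-1)f}\bigr)$ lets one move a derivative off of $\nabla f$ at the cost of dividing by the non-zero factor $\theta - 1$. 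This is the same shift-the-weight trick that produces reverse Poincar\'e / Caccioppoli bounds, and is the reason the restriction $\theta \leq 1/4$ (in particular $\theta \neq 1$) appears.

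The first step is to write $|\nabla f|^4 = |\nabla f|^2 \langle \nabla f, \nabla f\rangle$, substitute the identity above, and integrate by parts on the fixed compact slice $M \times \{t\}$ (no boundary term, $f$ smooth since $G > 0$). This yields
\begin{equation*}
\int_M |\nabla f|^4 e^{\theta f}\,d\nu_{(x_0,t_0);t} = \frac{1}{1-\theta}\int_M \Div\bigl(|\nabla f|^2 \nabla f\bigr)\, e^{\theta f}\,d\nu_{(x_0,t_0);t}.
\end{equation*}
Expanding $\Div(|\nabla f|^2 \nabla f) = 2\,\nabla^2 f(\nabla f,\nabla f) + |\nabla f|^2 \Delta f$ and using $|\Delta f| \leq \sqrt{n}\,|\nabla^2 f|$ together with Cauchy--Schwarz produces the pointwise bound $|\Div(|\nabla f|^2\nabla f)| \leq C\,|\nabla^2 f|\,|\nabla f|^2$. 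Since $\theta \in [0,1/4]$ keeps $1/(1-\theta)$ bounded by $4/3$, one arrives at
\begin{equation*}
\int_M |\nabla f|^4 e^{\theta f}\,d\nu_{(x_0,t_0);t} \leq C_1 \int_M |\nabla^2 f|\,|\nabla f|^2\, e^{\theta f}\,d\nu_{(x_0,t_0);t}.
\end{equation*}

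The second (and final) step is Young's inequality $C_1|\nabla^2 f||\nabla f|^2 \leq \tfrac{1}{2}|\nabla f|^4 + \tfrac{1}{2}C_1^2 |\nabla^2 f|^2$, which lets one absorb the $|\nabla f|^4$ contribution back into the left-hand side. Because $M$ is compact and $f$ is smooth on the time slice $M \times \{t\}$, the integral $\int_M |\nabla f|^4 e^{\theta f}\,d\nu_{(x_0,t_0);t}$ is a priori finite, so the absorption is legitimate and yields the claim with an explicit dimensional constant $C$. The only potentially tricky point is simply making sure of this finiteness and of the validity of the integration by parts; both are immediate from compactness of $M$ and smoothness of $f$ on the slice $t < t_0$, so there is no real obstacle.
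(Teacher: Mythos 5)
Your proposal is correct and follows essentially the same route as the paper: rewrite the weight $e^{\theta f}\,d\nu$ as a multiple of $e^{-(1-\theta)f}\,dm_t$, convert $|\nabla f|^2\langle\nabla f,\nabla f\rangle$ via $(1-\theta)\nabla f\,e^{-(1-\theta)f}=-\nabla(e^{-(1-\theta)f})$, integrate by parts to produce $\Div(|\nabla f|^2\nabla f)=2\nabla^2 f(\nabla f,\nabla f)+|\nabla f|^2\Delta f$, then Cauchy--Schwarz and Young to absorb the $|\nabla f|^4$ term, using $\theta\le 1/4$ to keep $1-\theta$ bounded away from zero. The paper's displayed computation is precisely this argument, so no substantive difference.
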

\begin{proof}
We have
\begin{align*}
(1-\theta) \int_M |\nabla f|^4 e^{\theta f} d\nu_{(x_0,t_0);t} 
&=(1-\theta) (4\pi \tau)^{-n/2} \int_M |\nabla f|^2 \langle \nabla f, \nabla f e^{-(1-\theta ) f} \rangle dm_t \\
&= (4\pi \tau)^{-n/2} \int_M \left(  2 \langle \nabla^2 f, d f \otimes d f\rangle  + |\nabla f|^2 \Delta f \right) e^{-(1-\theta ) f} dm_t \\
&\leq C (4\pi \tau)^{-n/2} \int_M | \nabla^2 f | |\nabla f|^2  e^{-(1-\theta ) f} dm_t \\
&\leq C  \int_M | \nabla^2 f |^2  e^{\theta f} d\nu_{(x_0,t_0);t} +\frac1{2}  \int_M  |\nabla f|^4  e^{\theta f} d\nu_{(x_0,t_0);t}.
\end{align*}
This yields the desired one.
\end{proof}

Based on the above lemmas,
we prove the following (cf. \cite[Proposition 6.2]{B3}):
\begin{prop}\label{prop:integral1}
If $\theta \in [0, \ov\theta]$,
then the following holds:
Let $(M,(g_t)_{t\in I})$ be a super Ricci flow with $\mathcal{D}\geq 0$.
Let $(x_0,t_0)\in M\times I$.
For $\alpha \in (0,1)$ and $A>0$,
we assume $\HH(\cdot,t_0-\alpha^{-1} r^2)\geq -A\,r^{-2}$.
For $\kappa>0$,
we assume $\NN_{(x_0,t_0)} (r^2) \geq - \kappa$.
Then we have
\begin{equation*}\label{eq:integral11}
 \int_{t_0-\alpha^{-1} r^2}^{t_0-\alpha r^2} \int_M \left(\tau( |h|^2 + |\nabla^2 f|^2+  |\nabla f|^4) +\tau^{-1} f^2 \right)  e^{2\theta f} d\nu_{(x_0,t_0);t} dt \leq C_{\kappa,\alpha,A}.
\end{equation*}
\end{prop}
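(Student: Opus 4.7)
The plan is to reduce the integrand to $\tau(|h|^{2}+|\nabla^{2}f|^{2})$, and then bound the reduced quantity by combining the Perelman-type Harnack identity (Theorem \ref{thm:Harnack}), the Bochner formula, and the trace component of $\mathcal{D}\geq 0$. First I would parabolically rescale so that $t_{0}=0$ and $r=1$; Lemma \ref{lem:scal} then propagates $\HH(\cdot,-\alpha^{-1})\geq -A$ to $\HH\geq -C_{\alpha,A}$ on $[-\alpha^{-1},0]$. For $\theta\leq 1/4$, Lemma \ref{lem:integral3} absorbs the $|\nabla f|^{4}$ integral into the $|\nabla^{2}f|^{2}$ integral, and Lemma \ref{lem:lower poteintial} combined with Lemma \ref{lem:integral2} and the finiteness of $\int_{-\alpha^{-1}}^{-\alpha}\tau^{-1}\,dt$ reduces the $\tau^{-1}f^{2}$ integral to those of $\tau(|\nabla^{2}f|^{2}+|\nabla f|^{4}+|h|^{2})$ plus a constant $C_{\kappa,\alpha,A}$. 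The task thus reduces to bounding
\[
\mathcal{I}:=\int_{-\alpha^{-1}}^{-\alpha}\int_{M}\tau(|h|^{2}+|\nabla^{2}f|^{2})\,e^{2\theta f}\,d\nu_{t}\,dt
\]
by $C_{\kappa,\alpha,A}$.

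For the coupled quantity $|h+\nabla^{2}f|^{2}$, I would apply \eqref{eq:conjugate integration} with $\phi:=e^{2\theta f}$ and $wu$, where $u:=G(x_{0},0;\cdot,\cdot)$. Theorem \ref{thm:Harnack} together with $\mathcal{D}\geq 0$ yields
\[
\frac{d}{dt}\int_{M} w\phi\,d\nu_{t}\;\geq\; 2\int_{M}\tau|\Psi|^{2}\phi\,d\nu_{t} + \int_{M} w\,\square\phi\,d\nu_{t},
\]
so integrating over $[-\alpha^{-1},-\alpha]$ bounds $\int\!\int\tau|\Psi|^{2}e^{2\theta f}\,d\nu_{t}\,dt$ by two time boundary values of $\int w\phi\,d\nu_{t}$ plus the error $\int\!\int w\,\square\phi\,d\nu_{t}\,dt$. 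I would estimate the boundary values using \eqref{eq:Harnack notation} to expand $w$ in $f,|\nabla f|^{2},\Delta f,\HH$, integrating by parts via \eqref{eq:conjugate integration2}, and appealing to Lemmas \ref{lem:integral2} and \ref{lem:lower poteintial}; the error term is treated similarly after computing $\square\phi$ from \eqref{eq:potential_enjoy}, at the cost of a small multiple of $\mathcal{I}$. The identity $|\Psi|^{2}=|h+\nabla^{2}f|^{2}+n/(4\tau^{2})-\tau^{-1}(\HH+\Delta f)$ then converts this into a bound on $\int\!\int\tau|h+\nabla^{2}f|^{2}\,e^{2\theta f}\,d\nu_{t}\,dt$.

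To split this coupled bound into separate bounds on $\tau|h|^{2}$ and $\tau|\nabla^{2}f|^{2}$, I would add two further ingredients. The $V=0$ case of $\mathcal{D}\geq 0$ reads $\square\HH\geq 2|h|^{2}$; testing against $\tau e^{2\theta f}\,d\nu_{t}$ and using \eqref{eq:conjugate integration2} turns the left side into time boundary values of $\int\tau\HH e^{2\theta f}\,d\nu_{t}$ (controlled by $\HH\geq -C_{\alpha,A}$ and Lemma \ref{lem:integral2}) plus errors quadratic in $|\nabla f|$. For $|\nabla^{2}f|^{2}$, I would use the Bochner identity $\tfrac{1}{2}\Delta|\nabla f|^{2}=|\nabla^{2}f|^{2}+\langle\nabla\Delta f,\nabla f\rangle+\Ric(\nabla f,\nabla f)$, test against $\tau e^{2\theta f}\,d\nu_{t}$, integrate by parts twice via \eqref{eq:conjugate integration2}, and exploit $\Ric\geq h$ (from the super Ricci flow condition) to replace $-\Ric(\nabla f,\nabla f)$ by $-h(\nabla f,\nabla f)\leq\eps|h|^{2}+\eps^{-1}|\nabla f|^{4}$, after which Young and Lemma \ref{lem:integral3} absorb the resulting cross terms.

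The main obstacle I expect is closing the absorption loop: each of the three arguments produces reabsorbable fractions of $\mathcal{I}$ via cross terms and via the $w\,\square\phi$ error term, plus terms in $\int\!\int\tau|\nabla f|^{4}\,e^{2\theta f}\,d\nu_{t}\,dt$ that Lemma \ref{lem:integral3} recycles back into $\mathcal{I}$. Closing the inequality requires taking $\theta$ below a universal threshold $\ov\theta$ so that every reabsorbable coefficient stays $\leq 1/4$, and carefully tracking the $(1-c\theta)$ factors produced whenever $e^{2\theta f}\Delta f$ is integrated by parts via \eqref{eq:conjugate integration2}. Once the bookkeeping is complete, a single absorption step yields $\mathcal{I}\leq C_{\kappa,\alpha,A}$ and the proposition follows.
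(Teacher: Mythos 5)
Your overall architecture is essentially the paper's: the paper also treats $\frac{d}{dt}\int w\,e^{\theta f}\,d\nu_t$ via Theorem \ref{thm:Harnack} to extract $\int\!\int\tau|\Psi|^2 e^{\theta f}\,d\nu_t\,dt$, and in parallel treats $\frac{d}{dt}\int\tau \HH\,e^{\theta f}\,d\nu_t$ using $\square\HH = 2|h|^2 + \mathcal{D}(0)$ to extract $\int\!\int\tau|h|^2 e^{\theta f}\,d\nu_t\,dt$, then applies Lemmas \ref{lem:lower poteintial}, \ref{lem:integral2}, \ref{lem:integral3} for the remaining terms. Where you diverge is in how $|\nabla^2 f|^2$ is extracted, and there your plan breaks.

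The Bochner step does not work. After testing
\[
|\nabla^2 f|^2 = \tfrac12\Delta|\nabla f|^2 - \langle\nabla\Delta f,\nabla f\rangle - \Ric(\nabla f,\nabla f)
\]
against $e^{2\theta f}\,d\nu_t$ and integrating by parts via \eqref{eq:conjugate integration2}, the middle term produces $+\int(\Delta f)^2 e^{2\theta f}\,d\nu_t$ on the right-hand side (since $\int \langle\nabla\Delta f,\nabla f\rangle e^{2\theta f}\,d\nu_t = (1-2\theta)\int\Delta f\,|\nabla f|^2 e^{2\theta f}\,d\nu_t - \int(\Delta f)^2 e^{2\theta f}\,d\nu_t$). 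Because $(\Delta f)^2 \le n\,|\nabla^2 f|^2$ but not $(\Delta f)^2 \ll |\nabla^2 f|^2$, this term is of the \emph{same order} as the quantity you are trying to bound, not an $\eps$-fraction of $\mathcal{I}$, and it cannot be absorbed no matter how small $\theta$ is. So the Bochner identity by itself gives no a priori control of $\int\tau|\nabla^2 f|^2 e^{2\theta f}\,d\nu_t$; it merely re-expresses one second-order quantity in terms of another.

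The step is also unnecessary. Once you have $\int\!\int\tau|\Psi|^2 e^{2\theta f}$ and $\int\!\int\tau|h|^2 e^{2\theta f}$, the pointwise inequalities $|\nabla^2 f - \tfrac{1}{2\tau}g| = |\Psi - h| \le |\Psi| + |h|$ and $|\nabla^2 f|^2 \le 2|\nabla^2 f - \tfrac{1}{2\tau}g|^2 + \tfrac{n}{2\tau^2}$ already give $\tau|\nabla^2 f|^2 \lesssim \tau|\Psi|^2 + \tau|h|^2 + \tau^{-1}$. This is exactly the algebraic chain in the paper's inequality \eqref{eq:integral25}, which bundles the Harnack output and the $\square\HH$ output into $2\tau|\Psi|^2 + 2\tau|h|^2 - \HH \ge \tfrac{\tau}{4}|\nabla^2 f|^2 + \tfrac{\tau}{2}|h|^2 - \tfrac{5n}{8\tau}$. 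Replace your Bochner step with this triangle-inequality observation and the rest of your sketch lines up with the paper's proof (which then invokes the time-cutoff device from \cite[Proposition 6.2]{B3} to convert the differential inequality into the stated spacetime integral bound).

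Two smaller points: (i) in the $\square\HH$ computation you will encounter $\langle\nabla\HH,\nabla f\rangle$ cross terms; these must be integrated by parts via \eqref{eq:conjugate integration2}, converting them into $\HH\Delta f$ and $\HH|\nabla f|^2$ terms as in the paper's display for $\frac{d}{dt}\int\tau\HH\,e^{\theta f}\,d\nu_t$, otherwise they are not controllable; (ii) the statement has weight $e^{2\theta f}$ while the paper's proof writes $e^{\theta f}$, but this is only a relabelling of the universal threshold $\ov\theta$ and is not a discrepancy.
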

\begin{proof}
We may assume $t_0=0$ and $r = 1$. 
Set $\nu:=\nu_{(x_0,0)}$ and $u:=G(x_0,0;\cdot,\cdot)$.
We define $\Psi$ and $w$ as \eqref{eq:Harnack notation}.
In virtue of \eqref{eq:conjugate integration}, \eqref{eq:potential_enjoy} and Theorem \ref{thm:Harnack},
it holds that
\begin{align*}
&\quad\,\, \frac{d}{dt} \int_M w e^{\theta f} d\nu_t= \int_M \left\{ ( \square e^{\theta f}) w u - e^{\theta f} \square^* (wu) \right\} dm_t \\ \notag
&=  \int_M \left(2\tau  \left|\Psi \right|^2+\tau \mathcal{D}(\nabla f)-\theta  \left\{ \tau^{-1} \left( w-f+\frac{n}{2}   \right) + \theta |\nabla f|^2 \right\} w\right) e^{\theta f} d\nu_t \\ \notag
&\geq  \int_M \left(   2\tau  \left|\Psi \right|^2-\theta \tau^{-1} \left( w-f+\frac{n}{2}   \right)w   \right) e^{\theta f} d\nu_t \\ \notag
&\geq  \int_M   2\tau  \left|\Psi \right|^2  e^{\theta f} d\nu_t-\theta \tau^{-1} \int_M \left| w-f+\frac{n}{2}   \right| |w| e^{\theta f} d\nu_t \\ \notag
&\geq  \int_M   2\tau  \left|\Psi \right|^2  e^{\theta f} d\nu_t-C\theta \tau^{-1} \int_M (w^2+f^2+1) e^{\theta f} d\nu_t \\ \notag
&\geq  \int_M   2\tau  \left|\Psi \right|^2  e^{\theta f} d\nu_t-C\theta \tau^{-1} \int_M (\tau^2((\Delta f)^2+|\nabla f|^4+\HH^2)+f^2+1) e^{\theta f} d\nu_t\\ \notag
&\geq  \int_M   2\tau  \left|\Psi \right|^2  e^{\theta f} d\nu_t-C\theta \tau^{-1} \int_M (\tau^2(|\nabla^2 f|^2+|\nabla f|^4+|h|^2)+f^2+1) e^{\theta f} d\nu_t.
\end{align*}
Also, if $\theta \in [0,1]$,
then \eqref{eq:conjugate integration2} and \eqref{eq:potential_enjoy} yield
\begin{align*}
&\quad\,\, \frac{d}{dt} \int_M \tau \HH \,e^{\theta f} d\nu_t=\int_M \square(\tau \HH e^{\theta f}) \, d\nu_t\\
&=\int_M (2 \tau |h|^2+\tau \mathcal{D}(0) - \HH) e^{\theta f}d\nu_t-\theta\int_M  \HH\left(w-f+\frac{n}{2} \right)  e^{\theta f}d\nu_t\\
&\quad +\theta (\theta-2) \tau\int_M  \HH|\nabla f|^2 e^{\theta f}d\nu_t+2\theta \tau\int_M  \HH \Delta f  e^{\theta f}d\nu_t\\
&\geq \int_M (2 \tau |h|^2 - \HH) e^{\theta f}d\nu_t-C\theta \int_M  \left\{ \tau \HH^2+\tau^{-1}\left(w^2+f^2+1 \right) \right\} e^{\theta f}d\nu_t\\
&\quad -C|\theta-2| \theta \tau\int_M  (H^2 +|\nabla f|^4) e^{\theta f}d\nu_t-C\theta \tau\int_M ( \HH^2 +(\Delta f)^2 ) e^{\theta f}d\nu_t\\
&\geq \int_M (2 \tau |h|^2 - \HH) e^{\theta f}d\nu_t-C\theta \int_M  \left\{ \tau \HH^2+\tau^{-1}\left(w^2+f^2+1 \right) \right\} e^{\theta f}d\nu_t\\
&\quad -C|\theta-2| \theta \tau\int_M  (\HH^2 +|\nabla f|^4) e^{\theta f}d\nu_t-C\theta \tau\int_M ( \HH^2 +(\Delta f)^2 ) e^{\theta f}d\nu_t\\
&\geq \int_M (2 \tau |h|^2 - \HH) e^{\theta f}d\nu_t-C\theta \tau^{-1}  \int_M (\tau^2((\Delta f)^2+|\nabla f|^4+\HH^2)+f^2+1) e^{\theta f} d\nu_t\\
&\geq \int_M (2 \tau |h|^2 - \HH) e^{\theta f}d\nu_t-C\theta \tau^{-1}  \int_M (\tau^2(|\nabla^2 f|^2+|\nabla f|^4+|h|^2)+f^2+1) e^{\theta f} d\nu_t.
\end{align*}
Therefore,
if $\theta \in [0,1]$,
then
\begin{align}\label{eq:integral17}
\frac{d}{dt} \int_M (w+\tau \HH) e^{\theta f} d\nu_t &\geq   \int_M \left(2\tau  \left|\Psi \right|^2+2 \tau |h|^2 - \HH \right) e^{\theta f} d\nu_t\\ \notag
&\quad-C\theta \tau^{-1}  \int_M (\tau^2(|\nabla^2 f|^2+|\nabla f|^4+|h|^2)+f^2+1) e^{\theta f} d\nu_t.
\end{align}
Note that
\begin{align*}
&\left|\nabla^2 f-\frac{1}{2\tau}g\right|^2=|\Psi-h|^2\leq 4|\Psi|^2+2|h|^2,\\
&|\HH|\leq \sqrt{n}|h|\leq \frac{\tau}{2}|h|^2+\frac{n}{2\tau},\\
&|\nabla^2 f|^2\leq 2\left|\nabla^2 f-\frac{1}{2\tau}g\right|^2+\frac{n}{2\tau^2};
\end{align*}
in particular,
\begin{align}\label{eq:integral25}
2\tau  \left|\Psi \right|^2+2 \tau |h|^2 - \HH 
&\geq 2\tau \left( \frac{1}{4}\left|\nabla^2 f-\frac{n}{2\tau}\right|^2-\frac{1}{2}|h|^2  \right)+2 \tau |h|^2 - \left(\frac{\tau}{2}|h|^2+\frac{n}{2\tau}\right)\\ \notag
&\geq 2\tau \left(\frac{1}{8}|\nabla^2 f|^2-\frac{1}{2}|h|^2-\frac{n}{16 \tau^2}\right)+2\tau|h|^2-\left(\frac{\tau}{2}|h|^2+\frac{n}{2\tau}\right)\\ \notag
&= \frac{\tau}{4}|\nabla^2 f|^2+\frac{\tau}{2}|h|^2-\frac{5n}{8 \tau}.
\end{align}
Combining \eqref{eq:integral17}, \eqref{eq:integral25} and Lemmas \ref{lem:lower poteintial}, \ref{lem:integral3} yields that for $\theta \in  [0,1/4]$,
\begin{align*}
&\quad\,\, \frac{d}{dt} \int_M (w+\tau \HH) e^{\theta f} d\nu_t\\
&\geq  \int_M \left\{ \tau \left( \frac{1}{4}-C\theta \right)|\nabla^2 f|^2+\tau \left( \frac{1}{2}-C \theta  \right)|h|^2 -\tau^{-1}\left(  \frac{5n}{8}+C_{\kappa,\alpha,A}\theta  \right) \right\}e^{\theta f} d\nu_t.
\end{align*}
In particular,
Lemma \ref{lem:integral2} implies that for $\theta \in [0,\ov\theta]$,
\begin{align*}
\frac{d}{dt} \int_M (w+\tau \HH) e^{\theta f} d\nu_t&\geq   \frac{\tau}{8}\int_M  (|\nabla^2 f|^2+|h|^2) e^{\theta f} d\nu_t-C_{\kappa,\alpha,A} \tau^{-1}  \int_M e^{\theta f} d\nu_t\\
&\geq   \frac{\tau}{8}\int_M  (|\nabla^2 f|^2+|h|^2) e^{\theta f} d\nu_t-C_{\kappa,\alpha,A} \tau^{-1}  e^{(n+\tau\, A)\theta}.
\end{align*}
Once we obtain this estimate,
we can prove the desired one by the same cutoff argument on time as in the proof of \cite[Proposition 6.2]{B3} together with Lemmas \ref{lem:lower poteintial} and \ref{lem:integral3}.
\end{proof}

\section{Almost selfsimilar points}\label{sec:almost_ss}

For $\eps\in (0,1),r>0$,
we say that
a point $(x_0,t_0)\in M \times I$ is \textit{$(\eps, r)$-selfsimilar} if we have:
\begin{enumerate}\setlength{\itemsep}{+1.0mm}
\item $[t_0 - \eps^{-1} r^2, t_0 ] \subset I$;
\item we have
\begin{equation}\label{eq:almost selfsimilar1}
 \int_{t_0 - \eps^{-1} r^2}^{t_0 - \eps r^2} \int_M \tau \left( \left| h + \nabla^2 f - \frac1{2\tau} g \right|^2+\frac{1}{2}\mathcal{D}(\nabla f) \right) d\nu_{(x_0, t_0); t} dt \leq \eps;
\end{equation}
\item for all $t \in [t_0 - \eps^{-1} r^2, t_0 - \eps r^2]$, we have
\begin{equation}\label{eq:almost selfsimilar2}
\int_M \left| \tau (2\Delta f - |\nabla f|^2 + \HH) + f - n - \mathrm{N} \right| d\nu_{(x_0, t_0); t} \leq \eps
\end{equation}
for $\mathrm{N}:=\NN_{(x_0, t_0)} (r^2)$;
\item on $M \times [t_0 - \eps^{-1} r^2, t_0 - \eps r^2]$, we have
\begin{equation}\label{eq:almost selfsimilar3}
\HH \geq -\eps r^{-2}.
\end{equation}
\end{enumerate}
In this section,
we investigate various properties of almost selfsimilar points.

\begin{rem}\label{rem:example}
We discuss the validity of the formulation of the almost selfsimilarity by taking Ricci flow coupled with harmonic map heat flow as an example.
Let $(M,(g_t)_{t\in I},(\varphi_t)_{t\in I})$ be a compact manifold equipped with time-dependent Riemannian metric and map from $M$ to another Riemannian manifold.
Such a time-dependent manifold is called \textit{Ricci flow coupled with harmonic map heat flow} or \textit{M\"uller flow} when
\begin{equation}\label{eq:Muller}
\begin{cases}
  \partial_{t}g=-2\Ric+4 d\varphi\otimes d\varphi,\\
 \partial_t \varphi=\mathsf{t}( \varphi),
                                                   \end{cases}
\end{equation}
which has been introduced by M\"uller \cite{M2}.
Here $\mathsf{t}( \varphi)$ is the tension field of $\varphi$.
In the special case where $(\varphi_t)_{t\in I}$ is function,
\eqref{eq:Muller} is called \textit{Ricci flow coupled with heat equation} or \textit{List flow} (\cite{L}).
For this flow,
we have the following (see e.g., \cite[Section 2]{M1}):
\begin{equation}\label{eq:List property}
h=\Ric-2d\varphi\otimes d\varphi,\quad \mathcal{D}(V)=4\left|\mathsf{t}(\varphi)-d\varphi(V) \right|^2.
\end{equation}
Also,
the soliton identity on $(M,g,\varphi)$, which characterizes selfsimilar solutions of \eqref{eq:Muller}, is given as follows (see \cite[Lemma 2.2]{M2}):
\begin{equation}\label{eq:soliton}
\begin{cases}
  (\Ric-2d\varphi\otimes d\varphi)+\nabla^2 f=Kg,\\
 \mathsf{t}(\varphi)-d\varphi(\nabla f)=0 
                                                   \end{cases}
\end{equation}
for some $f\in C^{\infty}(M)$ and $K\in \mathbb{R}$.
The first equation in \eqref{eq:soliton} corresponds to the selfsimilarity of metric $(g_t)_{t\in I}$,
and the second one does that of map $(\varphi_t)_{t\in I}$.
In view of \eqref{eq:List property} and \eqref{eq:soliton},
for the flow \eqref{eq:Muller},
the first term of the integrand in \eqref{eq:almost selfsimilar1} expresses the almost selfsimilarity of metric,
and the second term does that of map (see also \cite[(2.5)--(2.9)]{M2}, \cite[Subsection 4.7]{B3} for \eqref{eq:almost selfsimilar2}).
\end{rem}

\subsection{Characterization}\label{sec:Integral estimates}
In this subsection,
we prove that
the almost selfsimilarity can be characterized by the almost constancy of the Nash entropy.
We first prepare the following lemma (cf. \cite[Lemma 7.10]{B3}):
\begin{lem}\label{lem:integral soliton}
For $\kappa>0,\eps \in (0,1)$,
if $\zeta \leq \ov\zeta_{\kappa, \eps}$, then the following holds:
Let $(M,(g_t)_{t\in I})$ be a super Ricci flow with $\mathcal{D}\geq 0$.
Let $(x_0,t_0) \in M \times I$.
For $r>0$,
we assume $[t_0 - \zeta^{-1} r^2, t_0] \subset I$.
We assume $\NN_{(x_0, t_0)} (r^2) \geq - \kappa$.
If
\begin{equation}\label{eq:integral soliton_ass}
| \WW_{(x_0, t_0)} (\tau) - \mathrm{N} | \leq \zeta
\end{equation}
for all $\tau \in [\zeta r^2, \zeta^{-1} r^2]$,
then
\begin{equation*}
\int_M \left| w - \mathrm{N} \right| d\nu_{(x_0,t_0);t} \leq \eps
\end{equation*}
for all $t\in [t_0-\eps^{-1}r^2,t_0-\eps^{-1} r^2]$, where $w$ is defined as \eqref{eq:Harnack notation} and $\mathrm{N}:=\NN_{(x_0,t_0)} (r^2)$.
\end{lem}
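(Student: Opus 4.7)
I would begin by parabolic rescaling to $r = 1$, $t_0 = 0$. Since $\int_M w\, d\nu_{-\tau} = \WW_{(x_0, t_0)}(\tau)$ (an integration-by-parts identity against $d\nu$, noting that $\int \Delta f\, d\nu = \int |\nabla f|^2\, d\nu$), the triangle inequality
\begin{equation*}
\int_M |w - \mathrm{N}|\, d\nu_{-\tau} \le \int_M |w - \WW(\tau)|\, d\nu_{-\tau} + |\WW(\tau) - \mathrm{N}|
\end{equation*}
reduces the statement, as soon as $\zeta \le \eps/2$, to bounding the first summand by $\eps/2$ for all $\tau \in [\eps, \eps^{-1}]$. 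Note that Lemma~\ref{lem:scal}, combined with $\NN \ge -\kappa$ and Lemma~\ref{lem:Nash_more}, produces a uniform scalar lower bound $A = A(\kappa)$ on the relevant time interval, which will enter the constants.

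\textbf{Spacetime smallness of the soliton defect.} The formula \eqref{eq:Nash3} reads
\begin{equation*}
\WW'(\tau) = -\tau \int_M \bigl(2|\Psi|^2 + \mathcal{D}(\nabla f)\bigr)\, d\nu_{-\tau} \le 0,
\end{equation*}
the inequality coming from $\mathcal{D}\ge 0$. Integrating over $[\zeta, \zeta^{-1}]$ and invoking the hypothesis $|\WW - \mathrm{N}| \le \zeta$ at both endpoints,
\begin{equation*}
\int_\zeta^{\zeta^{-1}} \tau \int_M \bigl(2|\Psi|^2 + \mathcal{D}(\nabla f)\bigr)\, d\nu_{-\tau}\, d\tau \;=\; \WW(\zeta) - \WW(\zeta^{-1}) \;\le\; 2\zeta.
\end{equation*}
Thus the full soliton defect has arbitrarily small spacetime $L^1$ mass. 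This is the main input.

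\textbf{From spacetime smallness to pointwise-in-time $L^1$ deviation.} I would combine the $L^1$-Poincar\'e inequality of Proposition~\ref{prop:Poincare} (with constant $\sqrt{\pi}$ from Remark~\ref{rem:Poincare}),
\begin{equation*}
\int_M |w - \WW(\tau)|\, d\nu_{-\tau} \;\le\; \sqrt{\pi\tau}\int_M |\nabla w|\, d\nu_{-\tau},
\end{equation*}
with a pointwise identity that expresses $\nabla w$ in terms of $\Psi$. Such an identity is obtained by contracting the second Bianchi identity against $\Psi$ exactly as in the derivation of Theorem~\ref{thm:Harnack}; in the super Ricci flow setting it takes the schematic form $\nabla w = 2\tau\bigl(\Div\Psi - \Psi(\nabla f)\bigr) + (\mathcal{D}\text{-correction})$. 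Inserting this into the Poincar\'e bound and applying Cauchy--Schwarz with the weighted $L^2$-bound on $|\nabla f|$ from Lemma~\ref{lem:Nash basic} and the $|h|$, $|\nabla^2 f|$ bounds of Proposition~\ref{prop:integral1} gives a spacetime estimate
\begin{equation*}
\int_\eps^{\eps^{-1}} \left(\int_M |w - \WW(\tau)|\, d\nu_{-\tau}\right)^2 d\tau \;\le\; C_{\kappa,\eps}\, \zeta.
\end{equation*}

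\textbf{Upgrading to every $\tau$.} The hardest step, and in my view the main obstacle, is passing from this spacetime-in-$\tau$ estimate to a \emph{uniform} pointwise-in-$\tau$ bound on $[\eps, \eps^{-1}]$. For this I would compute the evolution of $F(t) := \int_M (w - \mathrm{N})^2\, d\nu_t$ via \eqref{eq:conjugate integration2}, using the pointwise identity $\square w = 2\tau|\Psi|^2 + \tau\mathcal{D}(\nabla f) - 2\Delta w + 2\nabla f\cdot\nabla w$ that drops out of Theorem~\ref{thm:Harnack} (rewriting $\square^*(wu) = -u\,\square w - 2\Div(u\nabla w)$ and using $\nabla u/u = -\nabla f$), and then apply a Gronwall-type comparison together with the monotonicity of $\WW$ under $\mathcal{D}\geq 0$ to propagate the smallness of $F$ from a good time-slice extracted in the previous step to the entire interval. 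A final Cauchy--Schwarz in $\nu_{-\tau}$ recovers the $L^1$-deviation, and choosing $\zeta$ small enough in terms of $\kappa$ and $\eps$ yields the conclusion.
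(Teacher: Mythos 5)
Your opening reductions are fine: rescaling, the identity $\int_M w\,d\nu_{t_0-\tau}=\WW_{(x_0,t_0)}(\tau)$, and the observation that $\WW(\zeta r^2)-\WW(\zeta^{-1}r^2)=\int\!\!\int \tau(2|\Psi|^2+\mathcal{D}(\nabla f))\,d\nu\,dt\le 2\zeta$ are all correct and in the spirit of the paper. The difficulty is in the two steps that follow, and both contain genuine gaps.

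\textbf{Derivative loss in the Poincar\'e step.} The $L^1$-Poincar\'e inequality reduces $\int_M|w-\WW(\tau)|\,d\nu_{-\tau}$ to $\sqrt{\pi\tau}\int_M|\nabla w|\,d\nu_{-\tau}$, but the identity $\nabla w=2\tau(\Div\Psi-\Psi(\nabla f))$ (plus a $\mathcal{D}$-correction, which you would also need to verify for super Ricci flow) contains the term $\Div\Psi$, i.e.\ $\nabla h$ and $\nabla^3 f$. Neither the almost-selfsimilarity hypothesis nor Proposition~\ref{prop:integral1} controls any of these: you only have spacetime $L^2$ bounds on $|\Psi|$, $|h|$, $|\nabla^2 f|$, $|\nabla f|^2$ — one derivative short. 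The absolute value (or the square, if you switch to $L^2$-Poincar\'e) blocks integration by parts, so there is no way to trade the extra derivative away. Consequently the spacetime estimate $\int_\eps^{\eps^{-1}}(\int_M|w-\WW|\,d\nu_{-\tau})^2\,d\tau\le C_{\kappa,\eps}\zeta$ does not follow from the listed ingredients.

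\textbf{Gronwall step.} Even granting a good time slice, the evolution you compute is $F'(t)=\frac{d}{dt}\int_M(w-\mathrm{N})^2\,d\nu_t=2\int_M(w-\mathrm{N})\,\tau\bigl(2|\Psi|^2+\mathcal{D}(\nabla f)\bigr)\,d\nu_t$, which has no Gronwall structure: the spacetime smallness of $\int\!\!\int\tau(2|\Psi|^2+\mathcal{D})$ cannot be paired with the unbounded factor $(w-\mathrm{N})$. There is no $|F'|\lesssim F$ inequality, nor any a priori bound on $(w-\mathrm{N})^2\tau(2|\Psi|^2+\mathcal{D})$.

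The paper's proof sidesteps both issues by dualizing. It introduces an arbitrary bounded heat-equation solution $v$ and observes, directly from Theorem~\ref{thm:Harnack} ($\square^*(wu)\le 0$) and $|v|\le 1$, that $\bigl|\int_M v(w-\mathrm{N})\,d\nu_{-\tau_1}-\int_M v(w-\mathrm{N})\,d\nu_{-\tau_2}\bigr|\le|\WW(\tau_1)-\WW(\tau_2)|\le 2\zeta$ — no derivatives of $w$ or $\Psi$ are required, only the sign of $\square^*(wu)$. It then picks a good slice $\tau_0\in[\zeta,2\zeta]$ where Proposition~\ref{prop:integral1} and Lemma~\ref{lem:lower poteintial} give the mere \emph{boundedness} $\int_M(w-\mathrm{N})^2\,d\nu_{-\tau_0}\le C_\kappa$, while the $L^2$-Poincar\'e inequality is applied to $v$ (not to $w$), together with the gradient estimate from Theorem~\ref{thm:gradient_estimate}, to get $\int_M|v-a|^2\,d\nu_{-\tau_0}\le C\zeta$ with $a=\int v\,d\nu_{-\tau_0}$. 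Cauchy--Schwarz then bounds $|\int v(w-\mathrm{N})\,d\nu_{-\tau_0}|$ by $\zeta+C_\kappa\zeta^{1/2}$, which propagates to all $\tau\in[\eps,\eps^{-1}]$ by the almost-monotonicity, and duality over $v$ finishes. The key missing idea in your sketch is precisely this: apply Poincar\'e to the test function $v$, whose gradient is controlled, rather than to $w$, whose gradient is not.
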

\begin{proof}
We may assume $t_0=0$ and $r = 1$.
We set $\nu:=\nu_{(x_0,0)}$ and $u :=G(x_0,0;\cdot,\cdot)$.
Let $v \in C^\infty (M \times [-\zeta^{-1}, 0])$ be a solution to the heat equation with $|v(\cdot, -\zeta^{-1}) | \leq 1$.
In view of the maximum principle,
we see $|v| \leq 1$ on $M \times [-\zeta^{-1}, 0]$.
Due to \eqref{eq:conjugate integration} and Theorem \ref{thm:Harnack},
\begin{align*}
 \frac{d}{dt} \int_M v (w - \mathrm{N} ) u \, dm_t&=  -\int_M v \square^* (w u) dm_t \\
&\geq \int_M  \square^* (w u) dm_t = -\frac{d}{dt} \int_M w u \, dm_t = \frac{d}{dt} \WW_{(x_0, 0)} (|t|).
\end{align*}
From \eqref{eq:integral soliton_ass},
it follows that
\begin{align*}
 \int_M  v (w - \mathrm{N} )\, d\nu_{-\tau_1} &\leq \int_M v (w - \mathrm{N} ) \, d\nu_{-\tau_2} - \WW_{(x_0, 0)} (\tau_2) +  \WW_{(x_0, 0)} (\tau_1)\\ \notag
& \leq  \int_M v (w - \mathrm{N} ) d\nu_{-\tau_2} + 2\zeta
\end{align*}
for every $\tau_1,\tau_2 \in [\zeta, \zeta^{-1}]$,
and hence
\begin{equation}\label{eq:integral soliton2}
\left|  \int_M  v (w - \mathrm{N} )\, d\nu_{-\tau_1} - \int_M  v (w - \mathrm{N} )\, d\nu_{-\tau_2}    \right|\leq 2\zeta.
\end{equation}

From Lemma \ref{lem:lower poteintial} and Proposition \ref{prop:integral1},
we deduce
\begin{equation*}
\int_{-2\zeta}^{-\zeta} \int_M \left( \tau^2 ( |h|^2+|\nabla^2 f|^2 +  |\nabla f|^4) + f^2 \right) d\nu_t dt \leq C_\kappa \zeta;
\end{equation*}
in particular,
there exists $\tau_0 \in [\zeta, 2\zeta]$ such that
\begin{equation*}
\int_M \left( \tau_0^2 ( |h|^2+|\nabla^2 f|^2 +  |\nabla f|^4 ) + f^2 \right) d\nu_{-\tau_0} \leq C_\kappa.
\end{equation*}
It follows that
\begin{equation}\label{eq:integral soliton4}
 \int_{M} (w-\mathrm{N} )^2 d\nu_{-\tau_0}\leq C_\kappa + \int_{M} \left( \tau_0^2 ( |\nabla^2 f|^2 +  |\nabla f|^4 +  \HH^2 )+ f^2 \right) d\nu_{-\tau_0} \leq C_\kappa. 
 \end{equation}
Thanks to Proposition \ref{prop:Poincare} with $p=2$ and Theorem \ref{thm:gradient_estimate},
we also possess
\begin{equation}\label{eq:integral soliton3}
\int_M |v - a|^2 \, d\nu_{-\tau_0} \leq 2 \tau_0 \int_M |\nabla v|^2 d\nu_{-\tau_0} \leq C \tau_0 \leq C \zeta,
\end{equation}
where $a\in [-1,1]$ is defined as $a := \int_M v \, d\nu_{-\tau_0}$.
 Therefore, \eqref{eq:integral soliton3} and \eqref{eq:integral soliton4} lead us to
 \begin{align*}
\left| \int_{M} v (w-\mathrm{N} ) d\nu_{-\tau_0} \right| &\leq \left| a \int_M (w-\mathrm{N} ) d\nu_{-\tau_0} \right|  + \int_{M} |v - a | |w-\mathrm{N} |   d\nu_{-\tau_0} \\
&\leq  \left| \int_M w\, d\nu_{-\tau_0} - \mathrm{N}  \right| + \left( \int_{M} |v - a|^2  d\nu_{-\tau_0} \right)^{1/2} \left( \int_{M} |w-\mathrm{N} |^2   d\nu_{-\tau_0} \right)^{1/2} \\
&\leq | \WW_{(x_0, 0)} (\tau_0) - \mathrm{N}  | + C_\kappa \,\zeta^{1/2}
\leq \zeta + C_\kappa \zeta^{1/2}.
\end{align*}
Combining this with \eqref{eq:integral soliton2} tells us that
if $\zeta \leq \ov\zeta_{\kappa,\eps}$,
then for all $\tau \in [\eps,\eps^{-1}]$ we have
\begin{equation*}
 \left|\int_M  v (w - \mathrm{N}) \,d\nu_{-\tau} \right| \leq \eps.
\end{equation*}
Since $v$ is arbitrarily,
we complete the proof.
\end{proof}

We provide the following characterization (cf. \cite[Proposition 7.1]{B3}):
\begin{prop}\label{prop:Nash almost}
For $\kappa>0,\eps \in (0,1)$,
if $\delta \leq \ov\delta_{\kappa, \eps}$, then the following holds:
Let $(M,(g_t)_{t\in I})$ be a super Ricci flow with $\mathcal{D}\geq 0$.
Let $(x_0, t_0) \in M \times I$.
For $r > 0$, we assume $[t_0 - \delta^{-1} r^2, t_0] \subset I$ and $\NN_{(x_0, t_0)} ( r^2) \geq - \kappa$.
If
\begin{equation}\label{eq:Nash almost first}
\NN_{(x_0, t_0)} (\delta^{-1} r^2) \geq \NN_{(x_0, t_0)} (\delta r^2) - \delta,
\end{equation}
then $(x_0, t_0)$ is $(\eps, r)$-selfsimilar.
Vice versa, if $(x_0, t_0)$ is $(\delta, r)$-selfsimilar, then for all $\tau_1, \tau_2 \in [ \eps r^2,  \eps^{-1} r^2]$ we have
\begin{equation}\label{eq:Nash almost}
 | \NN_{(x_0, t_0)} (\tau_1) - \NN_{(x_0, t_0)} (\tau_2) | \leq \eps. 
\end{equation}
\end{prop}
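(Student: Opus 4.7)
By parabolic rescaling I reduce to $t_0 = 0$ and $r = 1$, and abbreviate $\NN(\tau) := \NN_{(x_0, 0)}(\tau)$, $\WW(\tau) := \WW_{(x_0, 0)}(\tau)$, $\mathrm{N} := \NN(1)$.

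For the forward direction, the hypothesis $\NN(\delta^{-1}) \geq \NN(\delta) - \delta$ combined with the monotonicity \eqref{eq:Nash3.5} forces $\NN$ to decrease by at most $\delta$ on $[\delta, \delta^{-1}]$, so $|\NN(\tau) - \mathrm{N}| \leq \delta$ throughout that interval. The key step is to upgrade this to pointwise control of the slope $\WW(\tau) = (\tau \NN(\tau))'$, using the concavity of $\tau \NN$ from \eqref{eq:Nash3}. Comparing $\WW(\tau)$ with the secant slope of $\tau \NN$ on $[\delta, \tau]$ yields $\WW(\tau) \leq \NN(\tau)$, and on $[\tau, \delta^{-1}]$ yields $\WW(\tau) \geq \NN(\tau) - \delta^{-1}(\NN(\tau) - \NN(\delta^{-1}))/(\delta^{-1} - \tau)$; restricting to $\tau \in [\delta, \delta^{-1}/2]$ so the denominator is at least $\delta^{-1}/2$, I conclude $|\WW(\tau) - \mathrm{N}| \leq 3\delta$ on this subinterval. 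With this pointwise control, I then verify all four conditions of $(\eps, 1)$-selfsimilarity: condition (1) is immediate for $\delta \leq \eps$; condition (4) follows from Remark \ref{rem:scal2} applied at base time $-\delta^{-1}$, requiring $\delta \leq 2\eps/(n+2)$; condition (2), equivalent via the identity from \eqref{eq:Nash3} and a change of variables to $\WW(\eps) - \WW(\eps^{-1}) \leq 2\eps$, is implied by the pointwise bound when $\delta \leq \eps/3$; and condition (3) is supplied by Lemma \ref{lem:integral soliton} with $\zeta := \ov\zeta_{\kappa, \eps}$, whose hypothesis $|\WW(\tau) - \mathrm{N}| \leq \zeta$ on $[\zeta, \zeta^{-1}]$ holds once $\delta \leq \zeta/3$.

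For the reverse direction, I assume $(x_0, 0)$ is $(\delta, 1)$-selfsimilar. The identity $\int_M w \, d\nu_{-\tau} = \WW(\tau)$, obtained from integration by parts in the definition of $w$, combined with \eqref{eq:almost selfsimilar2} yields $|\WW(\tau) - \mathrm{N}| \leq \delta$ for every $\tau \in [\delta, \delta^{-1}]$. Integrating $(\tau \NN(\tau))' = \WW(\tau)$ from $\sigma = 1$ to $\sigma = \tau$,
\begin{equation*}
\tau \NN(\tau) - \mathrm{N} = (\tau - 1)\mathrm{N} + \int_1^\tau (\WW(\sigma) - \mathrm{N}) \, d\sigma,
\end{equation*}
so $|\NN(\tau) - \mathrm{N}| \leq \delta |\tau - 1|/\tau \leq \delta/\eps$ for every $\tau \in [\eps, \eps^{-1}]$. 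Choosing $\delta \leq \eps^2/2$ then gives \eqref{eq:Nash almost}.

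The main obstacle is the forward direction's conversion of a one-sided difference bound on $\NN$ into pointwise control of its derivative $\WW$. The concavity of $\tau \NN$ and the attendant secant/chord inequalities furnish exactly this, but only on the shrunken interval $[\delta, \delta^{-1}/2]$, forcing $\delta$ to be chosen substantially smaller than $\eps$. Once pointwise near-constancy of $\WW$ is secured on $[\zeta, \zeta^{-1}]$, the remaining delicate point --- condition (3) of the selfsimilarity requires $\int_M |w - \mathrm{N}| \, d\nu_{-\tau} \leq \eps$ pointwise in $\tau$, not merely on average --- is precisely what Lemma \ref{lem:integral soliton} provides.
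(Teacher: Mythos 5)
Your proof is correct, and it takes a somewhat different route from the paper's in two places. For the lower bound on $\WW_{(x_0,t_0)}$ in the forward direction, the paper uses the secant-average inequality from concavity to get $\WW(\tau) \geq \frac{\delta^{-1}}{\delta^{-1}-\tau}\NN(\delta^{-1})$ (dropping the term $-\tau\NN(\tau) \geq 0$), then invokes $\mathrm{N} \geq -\kappa$ to absorb the factor $\frac{\delta^{-1}}{\delta^{-1}-\zeta^{-1}}$; you instead retain $\NN(\tau)$ in the secant identity, bound $\NN(\tau)-\NN(\delta^{-1}) \leq \delta$ from the hypothesis, and restrict to $\tau \leq \delta^{-1}/2$ so that $\delta^{-1}/(\delta^{-1}-\tau) \leq 2$, yielding $|\WW(\tau)-\mathrm{N}| \leq 3\delta$ on $[\delta,\delta^{-1}/2]$ with no $\kappa$-dependence. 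For the reverse direction, the paper again uses a secant-average chain $\mathrm{N}+\delta \geq \WW(\delta) \geq \frac{1}{\tau-\delta}\int_\delta^\tau \WW \geq \frac{\eps}{\eps-\delta}\NN(\tau)$, which requires $\NN \leq 0$ and $\mathrm{N} \geq -\kappa$ to finish; you integrate $\frac{d}{d\tau}(\tau\NN(\tau)) = \WW(\tau)$ from $1$ to $\tau$ to obtain the exact identity $\tau(\NN(\tau)-\mathrm{N}) = \int_1^\tau(\WW(\sigma)-\mathrm{N})\,d\sigma$, giving $|\NN(\tau)-\mathrm{N}| \leq \delta|\tau-1|/\tau \leq \delta/\eps$ directly. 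The upshot is that in your argument the $\kappa$-dependence of $\ov\delta_{\kappa,\eps}$ enters only through the application of Lemma \ref{lem:integral soliton} (for condition (3) of selfsimilarity), which is a minor but genuine clarification of where the non-collapsing hypothesis is actually needed; the overall architecture (near-constancy of $\NN$ $\Rightarrow$ near-constancy of $\WW$ via concavity $\Rightarrow$ Lemma \ref{lem:integral soliton}, and integration of $\WW$ for the converse) is the same as the paper's.
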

\begin{proof}
We may assume $t_0 = 0$ and $r = 1$.
Set $\mathrm{N}:=\NN_{(x_0,0)} (1)$.
We first assume \eqref{eq:Nash almost first}.
By \eqref{eq:Nash3.5}, \eqref{eq:Nash rem} and \eqref{eq:Nash almost first},
\begin{equation}\label{eq:Nash almost1}
|\NN_{(x_0, 0)} (\tau) - \mathrm{N}| \leq \delta,\quad  \WW_{(x_0, 0)} (\tau)  \leq \NN_{(x_0, 0)}(\tau) \leq \mathrm{N} + \delta
\end{equation}
for all $\tau \in [\delta,\delta^{-1}]$.
Let $\zeta \in ( \delta, 1)$.
By \eqref{eq:Nash3}, \eqref{eq:Nash2} and \eqref{eq:Nash almost1},
if $\delta \leq \ov\delta_{\kappa,\zeta}$,
then 
\begin{align}\label{eq:Nash almost2}
 \WW_{(x_0, 0)} (\tau) &\geq \frac1{\delta^{-1} - \tau} \int^{\delta^{-1}}_{\tau} \WW_{(x_0, 0)} (\sigma) d\sigma=  \frac1{\delta^{-1} - \tau} \left( \delta^{-1} \NN_{(x_0, 0)} (\delta^{-1}) - \tau \NN_{(x_0, 0)} (\tau) \right) \\ \notag
&\geq  \frac{\delta^{-1}}{\delta^{-1} - \tau}  \NN_{(x_0, 0)} (\delta^{-1})\geq \frac{\delta^{-1}}{\delta^{-1} - \zeta^{-1}}  (\mathrm{N} - \delta)\geq \mathrm{N} - \zeta
\end{align}
for all $\tau \in [\zeta,\zeta^{-1}]$;
in particular, $| \WW_{(x_0, 0)} (\tau)-\mathrm{N}|\leq \zeta$ for all $\tau \in [\zeta,\zeta^{-1}]$.
By Lemma \ref{lem:integral soliton},
if $\zeta \leq \ov\zeta_{\kappa, \eps}$,
then we arrive at \eqref{eq:almost selfsimilar2}.
Secondly,
\eqref{eq:Nash3} yields
\begin{equation*}
\int_{ - \eps^{-1}}^{- \eps } \int_M \left(2\tau \left| \Psi \right|^2+\tau \mathcal{D}(\nabla f) \right)d\nu_{(x_0,0);t} dt= \WW_{(x_0, 0)} (\eps) -  \WW_{(x_0, 0)} (\eps^{-1}) \leq \eps,
\end{equation*}
where $\Psi$ is defined as \eqref{eq:Harnack notation}.
Hence, \eqref{eq:almost selfsimilar1}.
Finally,
\eqref{eq:almost selfsimilar3} is a consequence of Lemma \ref{lem:scal}.

We next show \eqref{eq:Nash almost}.
Assume that $(x_0, 0)$ is $(\delta, 1)$-selfsimilar.
Integrating \eqref{eq:almost selfsimilar2},
we have
\begin{equation}\label{eq:Nash almost4}
 |\WW_{(x_0, 0)} (\tau) - \mathrm{N} | \leq \delta
\end{equation}
for all $\tau \in [\delta, \delta^{-1}]$.
By \eqref{eq:Nash rem} and \eqref{eq:Nash almost4}, we see
\begin{equation*}
\NN_{(x_0, 0)} (\tau) \geq \WW_{(x_0, 0)} (\tau) \geq \mathrm{N} - \delta
\end{equation*}
for all $\tau \in [\delta, \delta^{-1}]$.
Furthermore,
\eqref{eq:Nash3}, \eqref{eq:Nash2} and \eqref{eq:Nash almost4},
\begin{align*}
\mathrm{N}+\delta \geq \WW_{(x_0, 0)} (\delta) &\geq \frac1{ \tau -\delta} \int_{\delta}^{\tau} \WW_{(x_0, 0)} (\sigma) d\sigma\\
&=  \frac{1}{\tau - \delta}  \left(\tau\NN_{(x_0, 0)} (\tau)-\delta \NN_{(x_0,0)}(\delta) \right)\geq \frac{\eps}{\eps - \delta}\NN_{(x_0, 0)} (\tau)
\end{align*}
for all $\tau \in [\eps,\eps^{-1}]$.
Therefore,
if $\delta \leq \ov\delta_{\kappa, \eps}$,
then $| \NN_{(x_0,0)} (\tau) - \mathrm{N} | \leq \eps/2$ for all $\tau \in [\eps,\eps^{-1}]$,
and this proves \eqref{eq:Nash almost}.
\end{proof}

\subsection{Improved selfsimilarity}\label{sec:Integral estimates}

We have the following (cf. \cite[Proposition 7.3]{B3}):
\begin{prop}\label{prop:almost soliton estimate}
For $\kappa>0,\eps \in (0,1)$,
if $\theta \in [0, \ov\theta],\delta \leq \ov\delta_{\kappa, \eps}$,
then the following holds:
Let $(M,(g_t)_{t\in I})$ be a super Ricci flow with $\mathcal{D}\geq 0$.
Let $(x_0, t_0) \in M \times I$.
For $r > 0$,
we assume that $(x_0, t_0)$ is $(\delta, r)$-selfsimilar, and $\NN_{(x_0, t_0)} ( r^2) \geq - \kappa$.
Then we have
\begin{align*}
& \int_{t_0 - \eps^{-1} r^2}^{t_0 - \eps r^2} \int_M \tau \left| h + \nabla^2 f - \frac1{2\tau} g \right|^2 e^{\theta f} d\nu_{(x_0,t_0);t} dt \leq \eps, \\ 
&r^{-2} \int_{t_0 - \eps^{-1} r^2}^{t_0 - \eps r^2} \int_M  \left| \tau ( - |\nabla f|^2 + \Delta f) + f - \frac{n}2 - \mathrm{N}  \right| e^{\theta f} d\nu_{(x_0,t_0);t} dt \leq \eps, \\ 
&r^{-2} \int_{t_0 - \eps^{-1} r^2}^{t_0 - \eps r^2} \int_M  \left| \square (\tau f) + \frac{n}2 + \mathrm{N}  \right| e^{\theta f} d\nu_{(x_0,t_0);t} dt \leq \eps,\\
&r^{-2} \int_{t_0 - \eps^{-1} r^2}^{t_0 - \eps r^2} \int_M  \left|{ - \tau ( |\nabla f|^2 +\HH) + f  - \mathrm{N} } \right| e^{\theta f} d\nu_{(x_0,t_0);t} dt \leq \eps,
\end{align*}
where $\mathrm{N}  := \NN_{(x_0,t_0)} (r^2)$.
\end{prop}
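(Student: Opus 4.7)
The plan is to derive each of the four estimates by Cauchy--Schwarz interpolation between the unweighted smallness furnished by the $(\delta,r)$-selfsimilar hypothesis and the weighted $L^{2}$ boundedness furnished by Proposition \ref{prop:integral1} together with Lemma \ref{lem:integral2}. After parabolic rescaling I may assume $t_{0}=0$ and $r=1$; I write $\nu:=\nu_{(x_{0},0)}$ and $I:=[-\eps^{-1},-\eps]$, and apply Proposition \ref{prop:integral1} with the parameter choices $\alpha=\eps$ and $A=1$ (legitimate since $\delta\leq 1$), so that the constants produced depend only on $\kappa$ and $\eps$.

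For (1), Cauchy--Schwarz yields
\begin{equation*}
\int_{I}\!\int_{M}\tau|\Psi|^{2}e^{\theta f}\,d\nu_{t}\,dt\leq\left(\int_{I}\!\int_{M}\tau|\Psi|^{2}\,d\nu_{t}\,dt\right)^{1/2}\left(\int_{I}\!\int_{M}\tau|\Psi|^{2}e^{2\theta f}\,d\nu_{t}\,dt\right)^{1/2}.
\end{equation*}
The first factor is $\leq\sqrt{\delta}$ by \eqref{eq:almost selfsimilar1} and the non-negativity of $\mathcal{D}(\nabla f)$. Via the pointwise bound $|\Psi|^{2}\leq C(|h|^{2}+|\nabla^{2}f|^{2}+\tau^{-2})$, Proposition \ref{prop:integral1} and Lemma \ref{lem:integral2}, the second factor is bounded by some $C_{\kappa,\eps}$. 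Requiring $\delta\leq\ov\delta_{\kappa,\eps}$ then gives (1).

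For (3), I use the identity $w=-\square(\tau f)-n/2$, derived from \eqref{eq:potential_enjoy} by direct computation; the integrand in (3) thus equals $|w-\mathrm{N}|$. Since \eqref{eq:almost selfsimilar2} gives $\int_{M}|w-\mathrm{N}|\,d\nu_{t}\leq\delta$ for each $t\in I$, one has $\int_{I}\!\int_{M}|w-\mathrm{N}|\,d\nu_{t}\,dt\leq\delta\eps^{-1}$. A Cauchy--Schwarz decomposition analogous to the one above reduces matters to bounding $\int_{I}\!\int_{M}|w-\mathrm{N}|e^{2\theta f}\,d\nu_{t}\,dt$, which a further Cauchy--Schwarz combined with the pointwise estimate
\begin{equation*}
(w-\mathrm{N})^{2}\leq C\bigl(\tau^{2}(|h|^{2}+|\nabla^{2}f|^{2}+|\nabla f|^{4})+f^{2}+\kappa^{2}+1\bigr)
\end{equation*}
controls via Proposition \ref{prop:integral1} and Lemmas \ref{lem:lower poteintial}, \ref{lem:integral2}; again shrinking $\delta$ suffices.

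Finally, for (2) and (4), direct computation using \eqref{eq:potential_enjoy} yields
\begin{equation*}
F_{2}=(w-\mathrm{N})-\tau\tr\Psi,\qquad F_{4}=F_{2}-\tau\tr\Psi,
\end{equation*}
where $F_{2}$ and $F_{4}$ denote the integrands in (2) and (4) and $\tr\Psi=\HH+\Delta f-n/(2\tau)$. Hence, modulo (3), both estimates reduce to showing $\int_{I}\!\int_{M}\tau|\tr\Psi|\,e^{\theta f}\,d\nu_{t}\,dt\leq\eps$. The main obstacle here is that $\tau$ is not small (indeed $\tau\leq\eps^{-1}$ on $I$), so the Cauchy--Schwarz pairing
\begin{equation*}
\int_{I}\!\int_{M}\tau|\Psi|e^{\theta f}\,d\nu_{t}\,dt\leq\left(\int_{I}\!\int_{M}\tau|\Psi|^{2}e^{\theta f}\,d\nu_{t}\,dt\right)^{1/2}\left(\int_{I}\!\int_{M}\tau e^{\theta f}\,d\nu_{t}\,dt\right)^{1/2}
\end{equation*}
produces only a bound of order $\sqrt{\eps'}\cdot\eps^{-1}$ with $\eps'$ the bound in (1); this is remedied by tightening (1), i.e.\ by requiring $\delta\leq\ov\delta_{\kappa,\eps}$ small enough that $\eps'\leq\eps^{4}$, whereupon the product is $\leq\eps$ and (2), (4) follow.
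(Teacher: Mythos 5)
Your argument is correct, and it is essentially the argument the paper is gesturing at when it invokes ``the calculation technique stated in the proof of \cite[Proposition 7.3]{B3} together with Proposition \ref{prop:integral1}'': interpolate by Cauchy--Schwarz between the unweighted $L^1$-smallness coming from the $(\delta,r)$-selfsimilar hypothesis and the weighted $L^2$-boundedness supplied by Proposition \ref{prop:integral1} and Lemma \ref{lem:integral2}. The algebraic identities you use are all correct: $w=-\square(\tau f)-n/2$ does follow from \eqref{eq:potential_enjoy}, $\tr\Psi=\HH+\Delta f-n/(2\tau)$, and the decompositions $F_{2}=(w-\mathrm{N})-\tau\tr\Psi$, $F_{4}=F_{2}-\tau\tr\Psi$ are right. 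The key observation — that the loss of a factor $\tau\leq\eps^{-1}$ in the $\tau|\tr\Psi|$ term forces you to run (1) at a sharper threshold $\eps'\sim\eps^{4}$, which is admissible because Proposition \ref{prop:integral1}'s constant is uniform in $\delta$ — is exactly the point where one has to be careful, and you handle it correctly.

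Two small cosmetic remarks. In your displayed Cauchy--Schwarz for the last step you wrote $|\Psi|$ where you mean $|\tr\Psi|$ (the bound $(\tr\Psi)^{2}\leq n|\Psi|^{2}$ is what connects back to (1)). Also, when invoking Lemma \ref{lem:integral2} to bound $\int_{M}e^{2\theta f}\,d\nu_{t}$, it is cleaner to take $A=\delta$ (or $A=\eps$), which is legitimate from \eqref{eq:almost selfsimilar3}: then $e^{(n+\tau A)2\theta}\leq e^{(n+1)2\theta}$ is a dimensional constant for $\tau\leq\eps^{-1}$, whereas with $A=1$ you pick up a factor $e^{C\theta/\eps}$ that has to be absorbed into the tightening of (1); your argument still closes either way, but the former choice keeps the bookkeeping transparent.
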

\begin{proof}
We can prove this assertion by using the calculation technique stated in the proof of \cite[Proposition 7.3]{B3} together with Proposition \ref{prop:integral1}.
\end{proof}

\subsection{Almost constancy}\label{sec:almost_monotone}

In this subsection,
we prove an almost constancy property for an integral quantity.
To do so,
let us show the following formula:
\begin{lem}\label{lem:formula}
Let $(x_0, t_0) \in M \times I$.
Then for all $t \in I \cap (-\infty, t_0)$ we have
\begin{align*}
\frac{d}{dt} \int_M \tau \HH \, d\nu_{(x_0,t_0);t} &=2 \int_M \tau \left\langle \Psi, h+\nabla^2 f- df \otimes df \right\rangle d\nu_{(x_0,t_0);t}\\
&\quad +2 \int_M  \tau (\tr \Psi)(|\nabla f|^2-\Delta f)  d\nu_{(x_0,t_0);t}+\int_M \tau \mathcal{D}(\nabla f)  d\nu_{(x_0,t_0);t},
\end{align*}
where $\Psi$ is defined as \eqref{eq:Harnack notation}.
\end{lem}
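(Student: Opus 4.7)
The starting point is to convert the time derivative into an integral involving the heat operator via the second identity in \eqref{eq:conjugate integration2}, namely
\[
\frac{d}{dt}\int_M \tau\HH\,d\nu_{(x_0,t_0);t}=\int_M \square(\tau\HH)\,d\nu_{(x_0,t_0);t}.
\]
Since $\nabla\tau=0$ and $\square\tau=-1$, the Leibniz rule reduces the integrand to $-\HH+\tau(\partial_t\HH-\Delta\HH)$. The key geometric input is then the very definition of the M\"uller quantity, applied with $V=\nabla f$, which rearranges to
\[
\tau(\partial_t\HH-\Delta\HH)=\tau\mathcal{D}(\nabla f)+2\tau|h|^2-4\tau\Div h(\nabla f)+2\tau\langle\nabla\HH,\nabla f\rangle-2\tau\Ric(\nabla f,\nabla f)+2\tau h(\nabla f,\nabla f).
\]

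Next I would remove every divergence contribution by appealing to the weighted integration-by-parts identity in \eqref{eq:conjugate integration2}. Applied to the vector field $h(\nabla f,\cdot)^\sharp$ this gives $\int_M\Div h(\nabla f)\,d\nu_t=\int_M h(\nabla f,\nabla f)\,d\nu_t-\int_M\langle h,\nabla^2 f\rangle\,d\nu_t$, and applied to $\HH\nabla f$ it gives $\int_M\langle\nabla\HH,\nabla f\rangle d\nu_t=\int_M\HH|\nabla f|^2 d\nu_t-\int_M\HH\Delta f\,d\nu_t$. To rewrite the Ricci term I would invoke the Bochner identity $\tfrac12\Delta|\nabla f|^2=|\nabla^2 f|^2+\langle\nabla\Delta f,\nabla f\rangle+\Ric(\nabla f,\nabla f)$, together with two further applications of the same weighted integration-by-parts rule, to obtain
\[
\int_M\Ric(\nabla f,\nabla f)\,d\nu_t=\int_M\bigl[\nabla^2 f(\nabla f,\nabla f)-|\nabla^2 f|^2-\Delta f|\nabla f|^2+(\Delta f)^2\bigr]d\nu_t.
\]
At this stage the left-hand side has become a weighted integral of scalar tensor quantities.

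In parallel I would expand the claimed right-hand side using $\Psi=h+\nabla^2 f-(2\tau)^{-1}g$. Writing $h+\nabla^2 f-df\otimes df=\Psi+(2\tau)^{-1}g-df\otimes df$, the inner product unfolds to $\langle\Psi,h+\nabla^2 f-df\otimes df\rangle=|\Psi|^2+(2\tau)^{-1}\tr\Psi-\Psi(\nabla f,\nabla f)$, and together with $2\tau(\tr\Psi)(|\nabla f|^2-\Delta f)$ and the $\tau\mathcal{D}(\nabla f)$ contribution, everything expands in terms of the basic pieces $|h|^2,\langle h,\nabla^2 f\rangle,|\nabla^2 f|^2,\HH,\Delta f,|\nabla f|^2,h(\nabla f,\nabla f),\nabla^2 f(\nabla f,\nabla f)$. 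Comparing with the previous paragraph, and using the elementary consequence $\int\Delta f\,d\nu_t=\int|\nabla f|^2\,d\nu_t$ of \eqref{eq:conjugate integration2} to clean up the purely scalar terms, both sides coincide.

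The main obstacle is purely organizational: keeping track of the many cross terms produced when expanding $\Psi$ and reassembling them into the specific inner-product combination on the right-hand side. The only nontrivial analytic input is the Bochner formula, which is precisely what allows the $-2\tau\Ric(\nabla f,\nabla f)$ coming from the M\"uller quantity to be repackaged as the $2\tau|\nabla^2 f|^2$ piece hidden inside $|\Psi|^2$; everything else is weighted integration by parts.
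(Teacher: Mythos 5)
Your proposal is correct and takes essentially the same route as the paper's proof: both start from $\square(\tau\HH)=-\HH+\tau\square\HH$, both eliminate $\tau\square\HH$ via the M\"uller quantity identity evaluated at $V=\nabla f$ (you insert $\mathcal{D}(\nabla f)$ at once, while the paper passes through $\mathcal{D}(0)$ and the difference encoded in $(\Div h)(\nabla f)$), and both then clear the divergence and Ricci contributions using the weighted integration by parts \eqref{eq:conjugate integration2} together with the Bochner formula. The only difference is organizational: you expand the two sides into scalar monomials and compare them term by term (absorbing the residual $(n-1)(\Delta f-|\nabla f|^2)$ via $\int\Delta f\,d\nu_t=\int|\nabla f|^2\,d\nu_t$), whereas the paper packages the same cancellations into the intermediate identity \eqref{eq:direct3} and substitutes it directly.
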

\begin{proof}
We set $\nu:=\nu_{(x_0,t_0)}$.
From \eqref{eq:conjugate integration2} we deduce
\begin{align}\label{eq:direct0}
\frac{d}{dt} \int_M \tau \HH \, d\nu_t&=\int_M \square(\tau \HH) \, d\nu_t=\int_M \tau\square \HH-\HH \, d\nu_t\\ \notag
&=\int_M (2 \tau |h|^2+\tau \mathcal{D}(0) - \HH) d\nu_t\\ \notag
&= \int_M \left( 2 \tau \left\langle \Psi,h  \right\rangle- 2 \tau \langle h, \nabla^2 f\rangle +\tau \mathcal{D}(0) \right) d\nu_t.
\end{align}
By direct calculations,
we see
\begin{equation}\label{eq:direct}
\Div (h(\nabla f))=(\Div h)(\nabla f)+\langle h,\nabla^2f \rangle,\quad \Div (\Psi(\nabla f))=(\Div \Psi)(\nabla f)+\langle \Psi,\nabla^2f \rangle.
\end{equation}
The first one in \eqref{eq:direct} and \eqref{eq:conjugate integration2} yield
\begin{align}\label{eq:direct1}
\int_M \langle h, \nabla^2 f\rangle  \, d\nu_t 
&= \int_M \left( h (\nabla f, \nabla f)- (\Div h)(\nabla f) \right) d\nu_t  \\ \notag
&=  \int_M \left( h (\nabla f, \nabla f)- \frac{1}{2}\langle \nabla \HH,\nabla f \rangle  \right) d\nu_t  \\ \notag
&\quad + \int_M \left( \frac{1}{2}(\Ric-h)(\nabla f,\nabla f)-\frac{1}{4}\mathcal{D}(\nabla f)+\frac{1}{4}\mathcal{D}(0) \right)  d\nu_t.
\end{align}
The second one in \eqref{eq:direct}, \eqref{eq:conjugate integration2} and the Bochner formula (see e.g., \cite[Lemma 2.1]{PW}) imply
\begin{align}\label{eq:direct2}
\int_M \langle \Psi, \nabla^2 f\rangle  \, d\nu_t &= \int_M \left( \Psi (\nabla f, \nabla f)- (\Div \Psi)(\nabla f) \right) d\nu_t  \\  \notag
&= \int_M \left( \langle \Psi,df \otimes df\rangle- (\Div h)(\nabla f)-(\Div \nabla^2 f)(\nabla f) \right) d\nu_t  \\  \notag
&= \int_M  \langle \Psi,df \otimes df \rangle d\nu_t  \\ \notag
&\quad + \int_M \left( - \frac{1}{2}\langle \nabla \HH,\nabla f \rangle+\frac{1}{2}(\Ric-h)(\nabla f,\nabla f)-\frac{1}{4}\mathcal{D}(\nabla f)+\frac{1}{4}\mathcal{D}(0) \right) d\nu_t  \\ \notag
&\quad + \int_M \left( -\langle \nabla \Delta f,\nabla f\rangle -\Ric(\nabla f,\nabla f) \right) d\nu_t.
\end{align}
Combining \eqref{eq:direct1} and \eqref{eq:direct2} tells us that
\begin{align}\label{eq:direct3}
&\quad\,\, \int_M \left(\langle h, \nabla^2 f\rangle+\langle \Psi, \nabla^2 f\rangle \right)  \, d\nu_t \\ \notag
&=\int_M \left(\langle \Psi,df \otimes df \rangle - \langle \nabla (\tr \Psi),\nabla f \rangle-\frac{1}{2}\mathcal{D}(\nabla f)+\frac{1}{2}\mathcal{D}(0) \right) d\nu_t.
\end{align}
By substituting \eqref{eq:direct3} into \eqref{eq:direct0},
we obtain
\begin{align*}
\frac{d}{dt} \int_M \tau \HH \, d\nu_t&=2\tau \int_M \left\langle \Psi, h+\nabla^2 f- df \otimes df \right\rangle d\nu_t+2\tau \int_M \langle \nabla (\tr \Psi),\nabla f \rangle  d\nu_t\\
&\quad +\tau \int_M \mathcal{D}(\nabla f)  d\nu_t.
\end{align*}
From integration by parts \eqref{eq:conjugate integration2},
we conclude the desired equation.
\end{proof}

We are now in a position to conclude the following result,
which is new even for Ricci flow.
\begin{prop}\label{prop:almost soliton scal}
For $\kappa>0,\eps \in (0,1)$,
if $\delta \leq \ov\delta_{\kappa, \eps}$, then the following holds:
Let $(M,(g_t)_{t\in I})$ be a super Ricci flow with $\mathcal{D}\geq 0$.
Let $(x_0, t_0) \in M \times I$.
For $r > 0$,
we assume that $(x_0,t_0)$ is $(\delta, r)$-selfsimilar, and $\NN_{(x_0, t_0)} ( r^2) \geq - \kappa$.
Then for all $t_1, t_2 \in [t_0 - \eps^{-1} r^2,t_0 - \eps r^2]$, we have
\begin{equation*}
\left|\int_M \tau \HH\, d\nu_{(x_0,t_0);t_1}-\int_M \tau \HH \, d\nu_{(x_0,t_0);t_2}\right| \leq \eps.
\end{equation*}
\end{prop}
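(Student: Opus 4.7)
The plan is to apply Lemma \ref{lem:formula} to the function
\begin{equation*}
F(t) := \int_M \tau \HH \, d\nu_{(x_0,t_0);t},
\end{equation*}
which expresses $F'(t)$ as the sum of three terms: one linear in $\Psi$, one linear in $\tr\Psi$, and one involving the M\"uller quantity $\mathcal{D}(\nabla f)$. After the usual parabolic normalization $t_0 = 0$ and $r = 1$, I will bound the integral of each of these three terms over $[t_1,t_2] \subset [-\eps^{-1}, -\eps]$ separately and then choose $\delta$ sufficiently small in terms of $\kappa$ and $\eps$.

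The $\mathcal{D}(\nabla f)$ contribution is essentially free, because $\mathcal{D}\geq 0$: assuming $\delta \leq \eps$, the interval $[t_1, t_2]$ is contained in $[-\delta^{-1}, -\delta]$, so the $(\delta, 1)$-selfsimilarity bound \eqref{eq:almost selfsimilar1} yields
\begin{equation*}
\int_{t_1}^{t_2} \int_M \tau \mathcal{D}(\nabla f)\, d\nu_t \, dt \leq 2\delta.
\end{equation*}
For the two $\Psi$-linear terms I apply the Cauchy-Schwarz inequality in the measure $\tau\, d\nu_t\, dt$. The factor involving $|\Psi|^2$, and likewise the factor involving $|\tr\Psi|^2 \leq n|\Psi|^2$, is again controlled by \eqref{eq:almost selfsimilar1} and contributes a factor of size $\sqrt{\delta}$. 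The complementary factors use the pointwise inequalities
\begin{equation*}
|h + \nabla^2 f - df \otimes df|^2 \leq 3\bigl(|h|^2 + |\nabla^2 f|^2 + |\nabla f|^4\bigr),\qquad \bigl||\nabla f|^2 - \Delta f\bigr|^2 \leq 2|\nabla f|^4 + 2n|\nabla^2 f|^2,
\end{equation*}
and are bounded by Proposition \ref{prop:integral1} applied with $\theta = 0$, $\alpha = \eps$, and $A = 1$; this is legitimate because the selfsimilarity inequality \eqref{eq:almost selfsimilar3} gives $\HH(\cdot, -\eps^{-1}) \geq -\delta \geq -1$.

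Combining the three estimates produces
\begin{equation*}
|F(t_2) - F(t_1)| \leq C_{\kappa,\eps}\sqrt{\delta} + 2\delta,
\end{equation*}
and the conclusion follows by choosing $\delta \leq \ov\delta_{\kappa,\eps}$ small enough. The main obstacle is essentially bookkeeping: one must verify that the nested intervals $[-\eps^{-1}, -\eps] \subset [-\delta^{-1}, -\delta]$ permit simultaneous use of the selfsimilarity bound and of Proposition \ref{prop:integral1}, and that the elementary pointwise inequalities above precisely match the integrand controlled by that proposition. Conceptually, the crux is that Lemma \ref{lem:formula} pairs the $\Psi$-smallness coming from the definition of $(\delta,r)$-selfsimilarity with the $L^2$-type curvature and Hessian bounds produced by Proposition \ref{prop:integral1}, while the sign of $\mathcal{D}(\nabla f)$ forces the M\"uller contribution to be small on its own.
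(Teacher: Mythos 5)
Your proposal is correct and follows essentially the same route as the paper's proof: both apply Lemma \ref{lem:formula}, bound the two $\Psi$-linear terms by Cauchy--Schwarz (pairing the $\sqrt\delta$ from \eqref{eq:almost selfsimilar1} against the $L^2$-type bound from Proposition \ref{prop:integral1}), and dispose of the $\mathcal{D}(\nabla f)$ term directly via the selfsimilarity bound. The only difference is that you spell out explicitly the choice $\theta=0,\alpha=\eps,A=1$ in Proposition \ref{prop:integral1} and the elementary pointwise inequalities, which the paper leaves implicit.
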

\begin{proof}
We may assume $t_0 = 0$ and $r=1$.
We set $\nu:=\nu_{(x_0,0)}$.
We may also assume $t_1\leq t_2$.
From Lemma \ref{lem:formula},
it follows that
\begin{align*}
\left|\int_M \tau \HH d\nu_{t_1}-\int_M \tau \HH d\nu_{t_2}\right|&\leq  2\left|\int^{t_2}_{t_1} \int_M \tau\left\langle \Psi, h+\nabla^2 f- df \otimes df \right\rangle d\nu_t dt \right| \\
&\quad +2\left|\int^{t_2}_{t_1}\,\int_M \tau  (\tr \Psi)(|\nabla f|^2-\Delta f) \, d\nu_t dt \right|\\
&\quad +\int^{t_2}_{t_1}\int_M\,\tau\,\mathcal{D}(\nabla f)\,d\nu_t dt.
\end{align*}
By the Cauchy-Schwarz inequality,
we have
\begin{align*}
&\quad\,\, \left|\int^{t_2}_{t_1} \int_M \tau\left\langle \Psi, h+\nabla^2 f- df \otimes df \right\rangle d\nu_t dt \right|\\
&\leq \left(\int^{t_2}_{t_1} \int_M\, \tau  |\Psi|^2 \,d\nu_t dt \right)^{1/2}\left(C\int^{t_2}_{t_1} \int_M\, \tau  (|h|^2+|\nabla^2 f|^2+ |\nabla f|^4) \,d\nu_t dt \right)^{1/2}\leq C_{\kappa,\eps} \,\delta^{1/2}.
\end{align*}
In the same manner,
we obtain
\begin{align*}
&\quad\,\, \left|\int^{t_2}_{t_1}\,\int_M \tau  (\tr \Psi)(|\nabla f|^2-\Delta f) \, d\nu_t dt \right|\\
&\leq \left(\int^{t_2}_{t_1} \int_M\, \tau  (\tr \Psi)^2 \,d\nu_t dt \right)^{1/2}\left(\int^{t_2}_{t_1} \int_M\, \tau  (|\nabla f|^2-\Delta f)^2 \,d\nu_t dt \right)^{1/2}\\
&\leq \left(C\int^{t_2}_{t_1} \int_M\, \tau |\Psi|^2 \,d\nu_t dt \right)^{1/2}\left(C \int^{t_2}_{t_1} \int_M\, \tau  (|\nabla f|^4+|\nabla^2 f|^2) \,d\nu_t dt \right)^{1/2}\leq C_{\kappa,\eps} \,\delta^{1/2}.
\end{align*}
Furthermore,
by the almost selfsimilarity \eqref{eq:almost selfsimilar1},
\begin{equation*}
\int^{t_2}_{t_1}\int_M\,\tau\,\mathcal{D}(\nabla f)\,d\nu_t dt\leq \int^{-\delta}_{-\delta^{-1}}\int_M\,\tau\,\mathcal{D}(\nabla f)\,d\nu_t dt\leq 2\delta.
\end{equation*}
Hence,
we arrive at 
\begin{equation*}
\left|\int_M \tau \HH d\nu_{t_1}-\int_M \tau \HH d\nu_{t_2}\right|\leq C_{\kappa,\eps}\, \delta^{1/2}+2\delta.
\end{equation*}
This completes the proof.
\end{proof}

\begin{rem}
Bamler \cite{B3} has proved an almost monotonicity of this integral quantity along Ricci flow (see \cite[Proposition 7.9]{B3}).
Proposition \ref{prop:almost soliton scal} asserts that
it may be extended to our general setting,
and the reverse direction also holds.
\end{rem}

\subsection{Distance expansion estimate}\label{sec:dist_expansion}

We close this section with the following distance expansion estimate (cf. \cite[Proposition 9.1]{B3}):
\begin{prop}\label{prop:dist expansion} 
For $\kappa, D>0,\alpha \in (0,1)$,
if $\delta \leq \ov\delta_{\kappa,D, \alpha}$,
then the following holds:
Let $(M,(g_t)_{t\in I})$ be a super Ricci flow with $\mathcal{D}\geq 0$.
Let $(x_0, t_0), (x_1, t_1) \in M \times I$.
For $r>0$,
we assume that $(x_0, t_0)$ is $(\delta, r)$-selfsimilar, $\NN_{(x_0, t_0)} ( r^2)\geq - \kappa$ and $0\leq t_1 - t_0 \leq \alpha^{-1} r^2$.
Assume
\begin{equation}\label{eq:dist expansion1}
 W_1(\nu_{(x_0, t_0);s_0}, \nu_{(x_1,t_1);s_0} ) \leq Dr
\end{equation}
for some $s_0 \in [ t_{0}- \alpha^{-1} r^2, t_{0} - \alpha r^2 ]$.
Then we have
\begin{equation*}
W_1(\nu_{(x_0, t_0);s_0+\alpha r^2/4}, \nu_{(x_1,t_1);s_0+\alpha r^2/4}) \leq C_{\kappa, D, \alpha} r.
\end{equation*}
\end{prop}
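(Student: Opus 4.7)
By parabolic rescaling we may normalise $t_0 = 0$ and $r = 1$; set $s_1 := s_0 + \alpha/4$. The plan is to argue via the Kantorovich-Rubinstein duality \eqref{eq:W1}: it suffices to bound the quantity $\int \phi\, d\nu^0_{s_1} - \int \phi\, d\nu^1_{s_1}$ uniformly for bounded $1$-Lipschitz $\phi$ on $(M, d_{s_1})$. Subtracting a constant, we may assume $\phi(z) = 0$ for a center $(z, s_1)$ of $(x_0, 0)$ (Proposition \ref{prop:center}), whence $|\phi(\cdot)| \leq d_{s_1}(z, \cdot)$. The task then splits into the two moment estimates $\int d_{s_1}(z, \cdot)\, d\nu^i_{s_1}$ for $i = 0, 1$. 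The $\nu^0$-moment is immediate from Lemma \ref{lem:W-V} and the definition of center, giving the bound $\sqrt{H_n/\alpha} \leq C_\alpha$. The bulk of the argument lies in the second moment estimate $I := \int d_{s_1}(z, \cdot)\, d\nu^1_{s_1} \leq C_{\kappa, D, \alpha}$.

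For $I$, I would first exploit the scalar curvature lower bound \eqref{eq:almost selfsimilar3} coming from the $(\delta, 1)$-selfsimilarity of $(x_0, 0)$ (valid once $\delta \leq \ov\delta_\alpha$) together with Lemma \ref{lem:useful} to transfer the Nash lower bound to $(x_1, t_1)$: $\NN_{(x_1, t_1)}(1) \geq - \kappa - C_{D, \alpha}$. This puts both base points in the non-collapsing regime, with all estimates of Section \ref{sec:noncollapse} — in particular the heat-kernel comparison Proposition \ref{prop:inheriting} — available with constants depending only on $\kappa, D, \alpha$.

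I would then apply Proposition \ref{prop:inheriting} (with the roles of $(x_0, 0)$ and $(x_1, t_1)$ interchanged, admissible thanks to the above Nash bound, and with a small parameter gap $\theta_0 < \theta_1$) to obtain a density comparison of the form
\begin{equation*}
 d\nu^1_{s_1} \leq C_{\kappa, D, \alpha}\, e^{\theta_1 f_0 - \theta_0 f_1}\, d\nu^0_{s_1}.
\end{equation*}
Substituting this into $I$, bounding $e^{-\theta_0 f_1}$ by a constant via Lemma \ref{lem:lower poteintial}, and applying the Cauchy-Schwarz inequality together with the variance bound $\Var(\delta_z, \nu^0_{s_1}) \leq H_n/\alpha$ and the exponential moment estimate of Lemma \ref{lem:integral2} yields $I \leq C_{\kappa, D, \alpha}$, whence the claim.

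The main obstacle will be arranging the hypotheses of Proposition \ref{prop:inheriting} so that the density comparison can be asserted precisely at time $s_1$. Proposition \ref{prop:inheriting} takes as input a $W_1$-bound at a time $t$ strictly \emph{later} than the time $s$ at which the density comparison is concluded, whereas our hypothesis provides a $W_1$-bound only at the earlier time $s_0 < s_1$, and the monotonicity of $W_1$ (Proposition \ref{prop:monotonicity_W1}) supplies no upper bound at later times. The resolution will invoke the almost-selfsimilar structure of $(x_0, 0)$ through Propositions \ref{prop:almost soliton estimate} and \ref{prop:integral1}, which force $\nu^0_t$ to have a near-Gaussian profile for all $t \in [s_0, 0]$; together with the conjugate heat kernel semigroup identity \eqref{eq:semigroup}, this yields an auxiliary $W_1$-bound at a time close to $\min\{t_0, t_1\}$ sufficient to activate Proposition \ref{prop:inheriting} with density comparison at $s_1$. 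Careful tuning of the parameters $\theta_0, \theta_1$ against the time configuration is the technical heart of the proof.
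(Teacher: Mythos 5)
Your overall architecture (bound $W_1$ by moment estimates around a center, then try to compare $\nu^0$ and $\nu^1$ via a density inequality) is plausible-sounding, but the central step is circular and does not close. Proposition~\ref{prop:inheriting} concludes a density comparison at a time $s$ from a $W_1$ bound at a \emph{strictly later} time $t>s$. To invoke it at $s_1=s_0+\alpha r^2/4$ you would need a $W_1$ bound at some time $>s_1$, but the hypothesis only supplies a $W_1$ bound at the earlier time $s_0$, and $W_1$-monotonicity goes the wrong way. That missing later-time $W_1$ bound is \emph{exactly} what Proposition~\ref{prop:dist expansion} is asserting. Indeed, the role of Proposition~\ref{prop:dist expansion} in this paper is precisely to bootstrap the hypothesis $W_1(\nu^0_{s_0},\nu^1_{s_0})\leq Dr$ into $W_1$ bounds at later times so that Proposition~\ref{prop:inheriting} can subsequently be applied (see, e.g., the proofs of Theorem~\ref{thm:almost static} and Lemma~\ref{lem:splitting lem1}, which iterate Propositions~\ref{prop:monotonicity_W1} and \ref{prop:dist expansion} \emph{before} touching Proposition~\ref{prop:inheriting}). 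You acknowledge the obstacle, but the proposed ``resolution'' --- obtaining an auxiliary $W_1$-bound at a time close to $\min\{t_0,t_1\}$ via the almost-selfsimilar structure and \eqref{eq:semigroup} --- is not an argument; it is a restatement of the conclusion, and since nothing about $(x_1,t_1)$ is assumed beyond the $W_1$ bound at $s_0$, no such auxiliary bound can be extracted without proving the proposition itself.

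The paper's actual proof avoids Proposition~\ref{prop:inheriting} entirely and works in the opposite direction. It first steps \emph{backward} in time to $s_1=s_0-\alpha r^2/4$ (where the $W_1$ hypothesis still controls the two measures) and produces a set $\Omega$ on which both densities $f,f_1$ are bounded above while $\nu^1_{s_1}(\Omega)$ is bounded below; the nontrivial ingredient here is propagating a lower bound on $\nu^0_{s_1}(\Omega_0)$ to a lower bound on $\nu^1_{s_1}(\Omega_0)$ by solving the heat equation with initial data $\chi_{\Omega_0}$ and using the sharp gradient estimate (Theorem~\ref{thm:gradient_estimate}). It then constructs a pseudo-heat-kernel $u$ out of the density $f$ of $(x_0,t_0)$, and uses the almost-selfsimilarity of $(x_0,t_0)$ (via Proposition~\ref{prop:almost soliton estimate}) to show that $\square u$ is small in an integrated sense, so that $\int u\,d\nu^1_t$ stays bounded below as $t$ advances from $s_1$ to the target time $s=s_0+\alpha r^2/4$. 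The resulting lower bound on $\int e^{-f-f_1}\,dm_s$ forces the two centers of $\nu^0_s$ and $\nu^1_s$ to be at bounded distance via the elementary concentration estimate \eqref{eq:elementary concentration}, and $W_1$ closeness follows by the triangle inequality and Lemma~\ref{lem:W-V}. So the structure of the argument is not a density comparison at all, but a quantitative overlap estimate pushed forward in time by an approximate heat-flow invariance. If you want to salvage your approach, you would need to replace the appeal to Proposition~\ref{prop:inheriting} with this forward-propagation mechanism, which is the actual technical heart and is not supplied by the tools you cite.
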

\begin{proof}
We may assume $r = 1$.
For $i=0,1$,
we set $\nu^i := \nu_{(x_i, t_i)}$.
Let $\delta \leq \alpha/2$.
By Lemma \ref{lem:useful} we have $\NN_{(x_1, t_1)} (1)\geq -C_{\kappa,D,\alpha}$.
From Lemma \ref{lem:lower poteintial},
we conclude
\begin{equation}\label{eq:dist expansion2}
 f, f_1 \geq - C_{\kappa, D, \alpha}
\end{equation}
on $[t_{0} - \alpha^{-1}, t_{0})$,
where $f,f_1$ are the densities for $(x_0,t_0),(x_1,t_1)$, respectively.
Moreover,
we will denote by $\tau,\tau_1$ the parameters for $(x_0,t_0),(x_1,t_1)$, respectively.

Set $s_1 := s_0 -  \alpha/4$.
We first prove that
there exists $\Omega \subset M$ such that
\begin{equation}\label{eq:dist expansion15}
f(\cdot,s_1),f_1(\cdot,s_1)\leq C_{\kappa,D,\alpha},\quad  \nu^1_{s_1}(\Omega) \geq C_{\kappa,D,\alpha}
\end{equation}
on $\Omega$.
Let $(z,s_1)$ be a center of $(x_0, t_0)$ (see Proposition \ref{prop:center}).
By Lemma \ref{lem:concentration1} and Proposition \ref{prop:upper volume},
\begin{equation}\label{eq:dist expansion3}
\nu^0_{s_1 } (B) \geq \frac12,\quad m_{s_1}(B)\leq C_{\alpha} (t_0 - s_1)^{n/2},
\end{equation}
where $B:=B(z,s_1,\sqrt{2\Cn(t_0 - s_1)}$.
For $a >0$, we define $\Omega_0 := \{ f(\cdot, s_1) \leq a \} \cap B$.
In virtue of \eqref{eq:dist expansion3},
we possess
\begin{equation*}\label{eq:dist expansion5}
\nu^0_{s_1}(\Omega_0) \geq \frac12 -  \int_{B \setminus \Omega_0} (4\pi (t_0 - s_1 ))^{-n/2} e^{-f} dm_{s_1}\geq \frac12 - C e^{-a} (t_0-s_1)^{-n/2} m_{s_1}(B) \geq \frac12 - C_{\alpha} e^{-a};
\end{equation*}
in particular,
if $a \geq \underline{a}_{\alpha}$,
then \eqref{eq:dist expansion2} implies
\begin{equation}\label{eq:dist expansion6}
\nu^0_{s_1}(\Omega_0) \geq \frac14.
\end{equation}
Let us verify
\begin{equation}\label{eq:dist expansion7}
 \nu^1_{s_1 }(\Omega_0)\geq C_{\kappa,D,\alpha}. 
\end{equation}
Define a function $\phi : M \times [s_1,s_0] \to [0,1]$ by $\phi(y,s):= \nu_{(y, s); s_1}(\Omega_0)$,
and set $\psi:=\phi(\cdot,s_0)$.
Now, \eqref{eq:semigroup} and \eqref{eq:dist expansion6} imply
\begin{equation}\label{eq:dist expansion7_1}
\int_M \psi \, d\nu^0_{s_0}=\nu^0_{s_1}(\Omega_0)  \geq \frac14.
\end{equation}
Let $(y_0, s_0), (y_1, s_0)$ be centers of $(x_0,t_0), (x_1, t_1)$, respectively.
Lemma \ref{lem:W-V} implies
\begin{align}\label{eq:inheriting10000}
 d_{s_0} (y_0, y_1) &\leq W_1(\delta_{y_0}, \nu^0_{s_0}) + W_1(\nu^0_{s_0}, \nu^1_{s_0}) + W_1(\nu^1_{s_0}, \delta_{y_1})\\ \notag
&\leq \sqrt{\Cn (t_0 - s_0)} + D + \sqrt{\Cn (t_1 - s_0)} \leq C_{D,\alpha}. 
\end{align}
Furthermore,
by Lemma \ref{lem:concentration1},
\begin{equation}\label{eq:conc3}
\nu^0_{s_0} \left( M\setminus B(y_0, s_0, \sqrt{8 H_n  (t_0-s_0)   } ) \right)\leq \frac{1}{8},\quad \nu^1_{s_0} \left( B(y_1, s_0, \sqrt{2 H_n  (t_1-s_0)   } ) \right)\geq \frac{1}{2}.
\end{equation}
By \eqref{eq:dist expansion7_1}, \eqref{eq:conc3} and $\psi \leq 1$,
we see
\begin{equation*}
\int_{B(y_0, s_0,\sqrt{8 H_n  (t_0-s_0)   } )}  \psi \, d\nu^0_{s_0} \geq \frac{1}{4} - \nu^0_{s_0} \left( M \setminus   B(y_0, s_0, \sqrt{8 H_n  (t_0-s_0)   } ) \right) \geq \frac{1}{8};
\end{equation*}
in particular,
$\psi \geq 1/8$ at a point in $B(y_0, s_0, \sqrt{8 H_n  (t_0-s_0)   } )$.
By Theorem \ref{thm:gradient_estimate},
the function $\Phi^{-1}_{\alpha/ 4}\circ \psi$ is $1$-Lipschitz,
here $\Phi$ is defined as \eqref{eq:Gaussain};
in particular,
\eqref{eq:inheriting10000} yields $\psi \geq C_{\kappa,D,\alpha}$ on $B(y_1, s_0, \sqrt{2\Cn (t_1 - s_0)})$. 
Now,
\eqref{eq:semigroup} and \eqref{eq:conc3} lead us to
\begin{equation*}
\nu^1_{s_1}(\Omega_0) =\int_M \psi \, d\nu^1_{s_0} \geq \int_{B(y_1, s_0, \sqrt{2\Cn (t_1 - s_0)})} \psi \, d\nu^1_{s_0} \geq C_{\kappa,D,\alpha}.
\end{equation*}
This proves \eqref{eq:dist expansion7}.
We define $\Omega := \{ f_1(\cdot, s_1) \leq a \} \cap \Omega_0$.
With the help of \eqref{eq:dist expansion7} and \eqref{eq:dist expansion3},
\begin{align*}
\nu^1_{s_1}(\Omega) &\geq C_{\kappa,D,\alpha} -  \int_{\Omega_0 \setminus \Omega} (4\pi (t_1 - s_1 ))^{-n/2} e^{-f_1} dm_{s_1}\\
&\geq C_{\kappa,D,\alpha} - C e^{-a}  (t_0-s_1)^{-n/2}m_{ s_1 }(B) \geq C_{\kappa,D,\alpha} - C_{\alpha} e^{-a};
\end{align*}
in particular,
if $a \geq \underline{a}_{\kappa,D, \alpha}$, then we conclude \eqref{eq:dist expansion15}.

For $s:=s_0+\alpha/4$ and $s_2 := s_1 - \alpha/4$,
we define $u \in C^\infty (M \times [s_1, s])$ by
\begin{equation*}
u := \frac{1}{(4\pi (t - s_2))^{n/2}} \exp \left( {-  \frac{\tau (f  -\mathrm{N} )}{t-s_2}} \right),
\end{equation*}
where $\mathrm{N}  := \NN_{(x_0, t_0)}(1)$.
Note that
for every $t \in [s_1, s]$, we have $\tau\in [3\alpha/4,\alpha^{-1}+\alpha/4],t-s_2\in [\alpha/4,3\alpha/4]$, and
\begin{equation*}
1\leq \frac{\tau}{t-s_2}\leq \frac{\alpha^{-1} + \alpha/4}{\alpha/4}.
\end{equation*}
By \eqref{eq:dist expansion2} and \eqref{eq:dist expansion15},
we also see
\begin{equation}\label{eq:dist expansion9}
 \int_M u\, d\nu^1_{s_1} \geq \int_{\Omega} u \, d\nu^1_{s_1}  \geq C_{\kappa,D, \alpha},\quad u \leq C_{\kappa, D,\alpha} (4\pi \tau)^{-n/2} e^{-f}.
\end{equation}
By direct calculations,
\begin{align*}
 \square u &= -\left\{ \frac{1}{t - s_2} \left( \square (\tau f)  + \frac{n}2 + \mathrm{N}  \right)- \frac{\tau}{(t-s_2)^2} \left( - \tau( |\nabla  f|^2 +  \HH) +f  -  \mathrm{N}  \right) - \frac{\tau^2}{(t-s_2)^2}  \HH \right \} u \\
 &\geq - C_{\kappa,D, \alpha}  \left( \left|  \square (\tau f)  + \frac{n}2 + \mathrm{N}  \right| + \tau \left| - \tau( |\nabla  f|^2 +  \HH)  +f - \mathrm{N}   \right|+ \delta \right)u.
\end{align*}
This together with \eqref{eq:conjugate integration2}, \eqref{eq:dist expansion2} and \eqref{eq:dist expansion9} implies
\begin{align*}
&\quad\,\, \frac{d}{dt} \int_M  u \, d\nu^1_t = \int_M \square u  \, d\nu^1_t \\
&\geq - C_{\kappa, D,\alpha} \int_M   \left( \left|  \square (\tau f)  + \frac{n}2 + \mathrm{N}  \right| + \tau \left| - \tau( |\nabla  f|^2 +  \HH)  +f - \mathrm{N}   \right| + \delta \right)  (4\pi \tau)^{-n/2} e^{-f} d\nu^1_t \\
&= - C_{\kappa,D, \alpha} \int_M  \left( \left|  \square (\tau f)  + \frac{n}2 + \mathrm{N}  \right| + \tau \left| - \tau( |\nabla  f|^2 +  \HH)  +f - \mathrm{N}   \right| + \delta \right)   (4\pi \tau_1)^{-n/2} e^{-f_1} d\nu^0_t \\
&\geq - C_{\kappa, D,\alpha} \int_M  \left( \left|  \square (\tau f)  + \frac{n}2 + \mathrm{N}  \right| + \tau \left| - \tau( |\nabla  f|^2 +  \HH)  +f - \mathrm{N}   \right|  + \delta \right)  d\nu^0_t.
\end{align*}
Fix $\eta \in (0,1)$.
By Proposition \ref{prop:almost soliton estimate} and \eqref{eq:dist expansion9},
if $\delta \leq \ov\delta_{\kappa, D,\alpha,\eta}$ and $\eta \leq \ov{\eta}_{\kappa,D,\alpha}$, then
\begin{equation}\label{eq:dist expansion20}
\int_M u \, d\nu^1_{s} \geq \int_M u \, d\nu^1_{s_1 } -\eta\geq C_{\kappa,D,\alpha}.
\end{equation}
From \eqref{eq:dist expansion9} and \eqref{eq:dist expansion20},
it follows that
\begin{equation}\label{eq:dist expansion12}
 \int_M (4\pi)^{-n} (\tau \tau_1)^{-n/2} e^{-f-f_1} dm_{s}= \int_M (4\pi \tau)^{-n/2} e^{-f} d\nu^1_{s}  \geq C_{\kappa,D,\alpha} \int_M u \, d\nu^1_{s}  \geq C_{\kappa,D,\alpha}. 
\end{equation}

Let $(z_0, s),(z_1, s)$ be centers of $(x_0, t_0),(x_1, t_1)$, respectively.
We put $d:=d_{s} (z_0, z_1)$.
From \eqref{eq:dist expansion2} and \eqref{eq:dist expansion12},
we deduce
\begin{align*}
&\quad\,\, \nu^0_{s} ( M \setminus  B(z_0, s, d/2) ) + \nu^1_{s} ( M \setminus B(z_1, s, d/2))\\ \notag
&\geq C_{\kappa,D,\alpha} \left( \int_{M \setminus B(z_0, s, d/2)}  (4\pi)^{-n} (\tau \tau_1)^{-n/2} e^{-f-f_1} dm_{s } + \int_{M \setminus B(z_1, s, d/2)}  (4\pi)^{-n} (\tau \tau_1)^{-n/2} e^{-f-f_1} dm_{s }\right)\\ \notag
&\geq C_{\kappa,D,\alpha}\int_M (4\pi)^{-n} (\tau \tau_1)^{-n/2} e^{-f-f_1} dm_{s}\geq C_{\kappa,D,\alpha}. 
\end{align*}
This and \eqref{eq:elementary concentration} tell us that
\begin{align*}
C_{\kappa,D,\alpha}&\leq  \nu^0_{s} ( M \setminus  B(z_0, s, d/2) ) + \nu^1_{s} ( M \setminus B(z_1, s, d/2))\leq  \frac{\Cn(t_0 - s) +  \Cn(t_1 - s)}{(d/2)^2} \leq \frac{C_\alpha}{d^2};
\end{align*}
in particular,
$d \leq C_{\kappa,D, \alpha}$.
Lemma \ref{lem:W-V} implies
\begin{equation*}
W_1(\nu^0_{s}, \nu^1_{s})\leq W_1(\nu^0_{s}, \delta_{z_0} ) + d + W_1(\delta_{z_1}, \nu^1_{s} )\leq \sqrt{\Cn (t_0 - s)} + d+ \sqrt{\Cn (t_1 - s)} \leq C_{\kappa,D,\alpha}.
\end{equation*}
Thus,
we complete the proof.
\end{proof}

\section{Almost static points}\label{sec:almost_st}

For $\eps\in (0,1),r>0$,
a point $(x_0,t_0)\in M \times I$ is called \textit{$(\eps, r)$-static} if the following holds:
\begin{enumerate}\setlength{\itemsep}{+1.0mm}
\item $[t_0 - \eps^{-1} r^2, t_0 ] \subset I$;
\item we have
\begin{equation*}
r^2 \int_{t_0 - \eps^{-1} r^2}^{t_0 - \eps r^2} \int_M   |h|^2  d\nu_{(x_0, t_0); t} dt \leq \eps;
\end{equation*}
\item for all $t \in [t_0 - \eps^{-1} r^2, t_0 - \eps r^2]$, we have
\begin{equation*}
r^2\int_M \HH  \, d\nu_{(x_0, t_0);t} \leq \eps;
\end{equation*}
\item $\HH \geq -\eps r^{-2}$ on $M \times [t_0 - \eps^{-1} r^2, t_0 - \eps r^2]$.
\end{enumerate}
Our first main result is the following almost static cone splitting theorem,
which has been formulated by Bamler \cite{B3} for Ricci flow (cf. \cite[Proposition 10.1]{B3}):
\begin{thm}\label{thm:almost static}
For $\kappa, D>0,\alpha,\eps \in (0,1)$,
if $\delta \leq \ov\delta_{\kappa, D, \alpha, \eps}$, then the following holds:
Let $(M,(g_t)_{t\in I})$ be a super Ricci flow with $\mathcal{D}\geq 0$.
Let $(x_0, t_0), (x_1, t_1)  \in M \times I$.
For $r>0$,
we assume that $(x_0, t_0), (x_1, t_1)$ are $(\delta, r)$-selfsimilar, $\NN_{(x_0, t_0)}(r^2) \geq - \kappa$ and $\alpha r^2 \leq t_1 - t_0 \leq \alpha^{-1} r^2$.
If there exists $s_0 \in [t_0 - \alpha^{-1} r^2, t_0-\alpha r^2]$ such that
\begin{equation*}
W_1(\nu_{(x_0, t_0);s_0}, \nu_{(x_1,t_1);s_0}) \leq Dr,
\end{equation*}
then $(x_0,t_0)$ is $(\eps, r)$-static.
\end{thm}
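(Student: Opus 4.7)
By parabolic rescaling and time translation we may assume $r=1$ and $t_0=0$, so that $t_1\in[\alpha,\alpha^{-1}]$. The pointwise bound $\HH\geq -\eps$ on $M\times[-\eps^{-1},-\eps]$ required for the $(\eps,1)$-static condition is then immediate from item~(4) of the $(\delta,1)$-selfsimilar hypothesis on $(x_0,t_0)$ once $\delta\leq\eps$, together with Remark~\ref{rem:scal2}.

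To produce the two integral estimates I first propagate the Wasserstein closeness forward in time. Lemma~\ref{lem:useful} applied to the hypothesis yields $\NN_{(x_1,t_1)}(1)\geq -C_{\kappa,D,\alpha}$, so $(x_1,t_1)$ itself satisfies a Nash entropy lower bound. Iterating Proposition~\ref{prop:dist expansion} along a chain of times shifted by $\alpha/4$ then gives $W_1(\nu_{(x_0,t_0);t},\nu_{(x_1,t_1);t})\leq C_{\kappa,D,\alpha}$ for every $t\in[-\eps^{-1},-\eps]$ (after tightening constants by a further smallness requirement on $\delta$).

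The core step is to show $\int_M\HH\,d\nu_{(x_0,t_0);t}\leq\eps$ on $[-\eps^{-1},-\eps]$. With $\tau:=-t$ and $\tau_1:=t_1-t$, set $F(t):=\int_M\tau\,\HH\,d\nu_{(x_0,t_0);t}$ and $F_1(t):=\int_M\tau_1\,\HH\,d\nu_{(x_1,t_1);t}$. By Proposition~\ref{prop:almost soliton scal}, $F(t)=c_0+o(1)$ and $F_1(t)=c_1+o(1)$ uniformly on the relevant intervals as $\delta\to 0$. To pin down the two constants I compare $\int\HH\,d\nu_{(x_0,t_0);t}$ with $\int\HH\,d\nu_{(x_1,t_1);t}$ at a common time $t$ through the density measure comparison of Proposition~\ref{prop:inheriting}---applicable by the propagated Wasserstein bound together with the Nash entropy lower bounds at both points---combined with the weighted integral identity
\[
\int_M\bigl|-\tau(|\nabla f|^2+\HH)+f-\mathrm{N}\bigr|\,e^{\theta f}\,d\nu_{(x_0,t_0);t}\ \longrightarrow\ 0
\]
from Proposition~\ref{prop:almost soliton estimate} and its counterpart at $(x_1,t_1)$. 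Together these give $c_0/\tau\approx c_1/(\tau+T)$ with $T:=t_1-t_0\in[\alpha,\alpha^{-1}]$ for a nontrivial range of $\tau$; since $T>0$ is a fixed constant while $\tau$ varies, the only consistent solution is $c_0,c_1\to 0$. Dividing $F(t)\to 0$ by $\tau\geq\eps$ yields the desired bound.

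The integrated $|h|^2$ estimate then drops out almost for free from the identity $\square\HH=2|h|^2+\mathcal{D}(0)\geq 2|h|^2$. Integrating against $d\nu_{(x_0,t_0);t}$ via~\eqref{eq:conjugate integration2} gives
\[
\frac{d}{dt}\int_M\HH\,d\nu_{(x_0,t_0);t}\ \geq\ 2\int_M|h|^2\,d\nu_{(x_0,t_0);t},
\]
and integrating this over $[-\eps^{-1},-\eps]$, combined with the just-established upper bound at $t=-\eps$ and the pointwise lower bound $\HH\geq-\delta$ at $t=-\eps^{-1}$, yields $\int_{-\eps^{-1}}^{-\eps}\!\int_M|h|^2\,d\nu_{(x_0,t_0);t}\,dt\leq\eps$ for $\delta$ small enough. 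The main obstacle is the cross-point comparison in the preceding paragraph: $\HH$ enjoys no a priori Lipschitz control, so Kantorovich duality is not directly applicable and the comparison must instead be routed through the inherited density estimates and the weighted soliton-type integral identities, mirroring Bamler's strategy for the analogous Ricci flow statement in~\cite[Proposition~10.1]{B3}.
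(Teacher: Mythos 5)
Your high-level strategy --- propagate Wasserstein closeness forward via Proposition~\ref{prop:dist expansion}, exploit the almost constancy of $\int_M \tau\HH\,d\nu_{(x_0,t_0);t}$ from Proposition~\ref{prop:almost soliton scal} together with the scaling mismatch between $\tau$ and $\tau_1$, and close the $|h|^2$ estimate via $\square\HH=2|h|^2+\mathcal{D}(0)$ --- is the same as the paper's, and the preliminary reductions (Lemma~\ref{lem:useful} for the Nash lower bound at $(x_1,t_1)$, the pointwise $\HH\geq-\eps$, the final integrated $|h|^2$ step) are all correct.

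The gap is in your ``core step.'' You posit a two-sided comparison $\int_M\HH\,d\nu_{(x_0,t_0);t}\approx\int_M\HH\,d\nu_{(x_1,t_1);t}$ and solve $c_0/\tau\approx c_1/(\tau+T)$. But Proposition~\ref{prop:inheriting} only supplies the one-sided, exponentially weighted domination $\nu_{(x_0,t_0);t}\leq C\,e^{\theta f_1}\nu_{(x_1,t_1);t}$; since $\HH$ is bounded below but not above and has no tail control, a two-sided transfer of $\int\HH\,d\nu$ is not available. What actually works --- and what the paper does --- is an \emph{upper} bound on $\int_M\tau_1\HH\,d\nu^0_t$: one transfers the weighted soliton identity
\[
\int_M\left|{-\tau_1(\HH+|\nabla f_1|^2)+f_1-\mathrm{N}_1}\right|e^{\theta f_1}\,d\nu^1_t\ \approx\ 0
\]
from Proposition~\ref{prop:almost soliton estimate} at $(x_1,t_1)$ to the measure $\nu^0$ via the density domination, obtaining $\int_M\tau_1\HH\,d\nu^0_t\lesssim\int_M f_1\,d\nu^0_t+C$. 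Crucially one must then bound $\int_M f_1\,d\nu^0_t$, which your proposal does not address at all: this requires integrating $\square(\tau_1 f_1)\approx-(n/2+\mathrm{N}_1)$ against $\nu^0$ from time $t_0-r^2$ forward, using the logarithmic form of Proposition~\ref{prop:inheriting} (namely $f_1\leq C+(1-\theta)^{-1}f$ at $t_0-r^2$) and the identity $\int_M f\,d\nu^0=n/2+\NN_{(x_0,t_0)}$. Only after this does one find $t_2$ near $t_0$ with $\int_M\tau_1\HH\,d\nu^0_{t_2}\leq C_{\kappa,D,\alpha}$, whence $\int_M\tau\HH\,d\nu^0_{t_2}=(\tau/\tau_1)\int_M\tau_1\HH\,d\nu^0_{t_2}$ is small because $\tau/\tau_1\leq 2\xi/\alpha$, and Proposition~\ref{prop:almost soliton scal} propagates smallness to all $t$. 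Your scaling-mismatch intuition is sound, but the symmetric comparison you invoke is not achievable, and the auxiliary control of $\int f_1\,d\nu^0$ is an essential missing ingredient.
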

\begin{proof}
We may assume $t_1 = 0$ and $r=1$.
For $i=0,1$,
set $\nu^i:=\nu_{(x_i, t_i)}$.
We also set $\mathrm{N}_1 := \NN_{(x_1, t_1)}(1)$.
Lemma \ref{lem:useful} implies $\mathrm{N}_1 \geq - C_{\kappa, D, \alpha}$.
Let $f,f_1$ be the densities for $(x_0,t_0),(x_1,t_1)$, respectively.
Moreover,
let $\tau,\tau_1$ be the parameters for $(x_0,t_0),(x_1,t_1)$, respectively.
Let $\theta \in (0,\ov\theta]$ be a constant obtained in Proposition \ref{prop:almost soliton estimate}.

We fix $\zeta \in (0,1)$.
By iterating Propositions \ref{prop:monotonicity_W1} and \ref{prop:dist expansion},
if $\delta  \leq \ov\delta_{\kappa, D,\alpha,\zeta}$,
then it holds that $W_1(\nu^0_{t_0-\zeta}, \nu^1_{t_0-\zeta} ) \leq C_{\kappa,D, \alpha,\zeta}$.
We also fix $\xi \in (0,1)$.
If $\zeta \leq \min\{\xi/2, (\mathfrak{C} \theta (1-\theta)^{-1}\xi)/2\}$,
then for every $t\in [t_0-1,t_0-\xi]$, we have $(t_0-\zeta)-t \geq \xi/2$ and 
\begin{align*}
&\HH(\cdot,t)\geq -\delta \geq -\frac{\xi}{2}\geq -((t_0-\zeta)-t)^{-1},\\
&W_1(\nu_{(x_0, t_0);t_0-\zeta}, \nu_{(x_1,t_1);t_0-\zeta} ) \leq  \frac{\sqrt{2}C_{\kappa,D,\alpha,\zeta}}{\sqrt{\xi}}\sqrt{(t_0-\zeta)-t}\leq C_{1,\kappa,D,\alpha,\zeta,\xi}\sqrt{(t_0-\zeta)-t},\\
&t_0-(t_0-\zeta)=\zeta\leq \frac{2\zeta}{\xi}((t_0-\zeta)-t)\leq \mathfrak{C}\frac{\theta}{1-\theta}((t_0-\zeta)-t),\\
&-(t_0-\zeta)\leq \alpha^{-1}+1\leq \frac{2(\alpha^{-1}+1)}{\xi}((t_0-\zeta)-t)\leq C^2_{1,\kappa,D,\alpha,\zeta,\xi}((t_0-\zeta)-t),
\end{align*}
where $\mathfrak{C}$ is a constant obtained in Proposition \ref{prop:inheriting}.
By Proposition \ref{prop:inheriting},
if $\zeta \leq \ov\zeta_{\xi}$, then
\begin{equation}\label{eq:almost static1}
\nu^0_t \leq C_{\kappa,D, \alpha,\zeta,\xi} e^{\theta f_1}\nu^{1}_t,\quad f_1(\cdot, t_0-1)\leq C_{\kappa,D, \alpha}+(1-\theta)^{-1}f(\cdot,t_0-1)
\end{equation}
for every $t\in [t_0-1,t_0-\xi]$.
Fix $\eta \in (0,1)$.
By Proposition \ref{prop:almost soliton estimate},
if $\delta \leq \ov\delta_{\kappa,D,\eta}$,
then
\begin{align}\label{eq:almost static21}
&\int_{t_1 - \eta^{-1}}^{t_1 - \eta} \int_M  \left| \square (\tau_1 f_1) + \frac{n}2 + \mathrm{N}_1 \right| e^{\theta f_1} d\nu^1_{t} dt \leq \eta,\\ \label{eq:almost static22}
&\int_{t_1 - \eta^{-1}}^{t_1 - \eta} \int_M  \left|{ - \tau_1 (\HH+ |\nabla f_1|^2) + f_1  - \mathrm{N}_1} \right| e^{\theta f_1} d\nu^1_{t} dt \leq \eta.
\end{align}

By \eqref{eq:almost static1} and \eqref{eq:almost static22},
if $\eta \leq \ov\eta_{\kappa,D,\alpha,\zeta,\xi}$,
then
\begin{align*}
&\quad\,\, \int_{t_0 - 2\xi}^{t_0 - \xi} \int_M  \left|{ - \tau_1 (\HH+ |\nabla f_1|^2) + f_1  - \mathrm{N}_1} \right|  d\nu^0_{t} dt\\
&\leq C_{\kappa,D,\alpha,\zeta,\xi} \,\int_{t_0 - 2\xi}^{t_0 - \xi} \int_M  \left|{ - \tau_1 (\HH+ |\nabla f_1|^2) + f_1  - \mathrm{N}_1} \right|  e^{\theta f_1} d\nu^1_{t} dt\\
&\leq C_{\kappa,D,\alpha,\zeta,\xi} \,\int_{t_1 - \eta^{-1}}^{t_1 - \eta} \int_M  \left|{ - \tau_1 (\HH+ |\nabla f_1|^2) + f_1  - \mathrm{N}_1} \right|  e^{\theta f_1} d\nu^1_{t} dt\leq C_{\kappa,D,\alpha,\zeta,\xi} \,\eta \leq \xi;
\end{align*}
in particular,
\begin{align}\label{eq:almost static24}
\int_{t_0 - 2\xi}^{t_0 - \xi} \int_M \tau_1  \HH\,d\nu^0_t dt&\leq \xi-\int_{t_0 - 2\xi}^{t_0 - \xi} \int_M (\tau_1|\nabla f_1|^2-f_1+\mathrm{N}_1)\,d\nu^0_t dt\\ \notag
&\leq C_{\kappa,D,\alpha}\, \xi+\int_{t_0 - 2\xi}^{t_0 - \xi} \int_M f_1\,d\nu^0_t dt.
\end{align}
Fix $s\in [t_0-2\xi,t_0-\xi]$.
By \eqref{eq:almost static1} and \eqref{eq:almost static21},
if $\eta \leq \ov\eta_{\kappa,D,\alpha,\zeta,\xi}$, then
\begin{align}\label{eq:almost static25}
&\quad\,\, \int^{s}_{t_0 - 1} \int_M  \left|\square (\tau_1 f_1) + \frac{n}2 + \mathrm{N}_1 \right|  d\nu^0_{t} dt\\ \notag
&\leq C_{\kappa,D,\alpha,\zeta,\xi} \,\int^{s}_{t_0 - 1} \int_M  \left|\square (\tau_1 f_1) + \frac{n}2 + \mathrm{N}_1\right|  e^{\theta f_1} d\nu^1_{t} dt\\ \notag
&\leq C_{\kappa,D,\alpha,\zeta,\xi} \,\int_{t_1 - \eta^{-1}}^{t_1 - \eta} \int_M  \left| \square (\tau_1 f_1) + \frac{n}2 + \mathrm{N}_1 \right|  e^{\theta f_1} d\nu^1_{t} dt\leq C_{\kappa,D,\alpha,\zeta,\xi}\, \eta \leq \xi.
\end{align}
Hence,
\eqref{eq:almost static1} and \eqref{eq:almost static25} together with \eqref{eq:conjugate integration2} imply
\begin{align*}
\int_M \tau_1 f_1 d\nu^0_{s}&=\int_M \tau_1 f_1 d\nu^0_{t_0 -1}+\int^{s}_{t_0 -1} \int_M  \square(\tau_1f_1) d\nu^0_{t} dt\\
&\leq (1-t_0)\left(C_{\kappa,D,\alpha}+(1-\theta)^{-1}\int_M  f d\nu^0_{t_0 -1} \right)+\int^{s}_{t_0 -1} \int_M  \square(\tau_1f_1) d\nu^0_{t} dt\\
&\leq (1+\alpha^{-1})\left(C_{\kappa,D,\alpha}+(1-\theta)^{-1}\frac{n}{2} \right)+\xi-\int^{s}_{t_0 - 1} \int_M  \left( \frac{n}2 + \mathrm{N}_1 \right)  d\nu^0_{t} dt\\
&\leq (1+\alpha^{-1})\left(C_{\kappa,D,\alpha}+(1-\theta)^{-1}\frac{n}{2} \right)+\xi+C_{\kappa,D,\alpha}(s-t_0+1)\leq C_{\kappa,D,\alpha},
\end{align*}
which leads us to
\begin{equation}\label{eq:almost static26}
\int_M f_1 d\nu^0_{s}\leq \frac{C_{\kappa,D,\alpha}}{\tau_1}\leq C_{\kappa,D,\alpha}.
\end{equation}
From \eqref{eq:almost static24} and \eqref{eq:almost static26},
we conclude
\begin{equation*}
\int_{t_0 - 2\xi}^{t_0 - \xi} \int_M \tau_1  \HH\,d\nu^0_t dt\leq C_{\kappa,D,\alpha} \,\xi;
\end{equation*}
in particular,
there exists $t_2 \in [t_0 - 2\xi, t_0 - \xi]$ such that
\begin{equation*}
\int_M \tau_1 \HH \, d\nu^0_{ t_2}  \leq C_{\kappa,D,\alpha}.
\end{equation*}
Note that $\tau \leq 2\xi$ and $\tau_1 \geq \alpha$ at $t = t_2$.
By Proposition \ref{prop:almost soliton scal}, if $\delta \leq \ov\delta_{\kappa, \eps, \xi}$, then
\begin{equation*}\label{eq:almost static6}
  \int_M  \tau \HH \, d\nu^0_t  \leq \int_M  \tau \HH \, d\nu^0_{t_2} + \xi= \frac{\tau}{\tau_1}\int_M  \tau_1 \HH \, d\nu^0_{t_2} + \xi \leq C_{\kappa,D,\alpha}\, \xi
\end{equation*}
for all $t \in [t_0 - \eps^{-1}, t_2]$.
Using this bound for $t = t_0 - \eps$ and $H\geq -\delta$ leads us to
\begin{equation*}\label{eq:almost static5}
 2 \int_{t_0 - \eps^{-1}}^{t_0 - \eps} \int_M |h|^2 d\nu^0_t dt = \int_M \HH \, d\nu^0_{t_0 - \eps} -\int_M \HH \, d\nu^0_{t_0 - \eps^{-1}}- \int_{t_0 - \eps^{-1}}^{t_0 - \eps} \int_M \mathcal{D}(0) d\nu^0_t dt\leq   \frac{C_{\kappa,D,\alpha}\, \xi}{\eps} + \delta. 
\end{equation*}
If $\xi \leq \ov\xi_{\kappa, D, \alpha, \eps}$ and $\delta \leq \ov\delta_{\eps}$,
then we complete the proof.
\end{proof}

\section{Almost splitting}\label{sec:almost_split}

For $\eps\in (0,1),r>0$,
a \textit{$(k,\eps,r)$-splitting map at $(x_0, t_0)\in M\times I$} is a map $\vec y = (y_1, \ldots, y_k) : M \times [t_0 - \eps^{-1} r^2, t_0 - \eps r^2] \to \IR^k$ with the following properties for all $i,j = 1, \ldots, k$:
\begin{enumerate}\setlength{\itemsep}{+1.0mm}
\item $[t_0 - \eps^{-1} r^2, t_0 ] \subset I$;
\item  we have
\begin{equation*}
 r^{-1}\int_{t_0 - \eps^{-1} r^2}^{t_0 - \eps r^2} \int_M |\square y_i  | d\nu_{(x_0, t_0);t} dt \leq \eps;
 \end{equation*}
\item we have
\begin{equation*}
r^{-2}\int_{t_0 - \eps^{-1} r^2}^{t_0 - \eps r^2} \int_M \left|\langle \nabla y_i, \nabla y_j\rangle  - \delta_{ij} \right| d\nu_{(x_0, t_0);t} dt \leq \eps. 
\end{equation*}
\end{enumerate}
We say that $(x_0, t_0)$ is \textit{$(k,  \eps, r)$-split} if
there is a $(k,\eps,r)$-splitting map.
The aim of this section is prove the following almost splitting theorem,
which has been obtained by Bamler \cite{B3} for Ricci flow (cf. \cite[Proposition 10.8]{B3}):
\begin{thm}\label{thm:main almost splitting}
For $\kappa, D>0,\eps,\xi \in (0,1)$,
if $\beta\leq \ov\beta,\mathfrak{D} \geq \underline{\mathfrak{D}}_{\kappa,D}, \mathfrak{N} \geq \underline{\mathfrak{N}}_{\kappa, D}, \delta \leq \ov\delta_{\kappa, D, \eps,\xi}$,
then the following holds:
Let $(M,(g_t)_{t\in I})$ be a super Ricci flow with $\mathcal{D}\geq 0$.
Let $\{(x_i, t_i)\}^{N-1}_{i=0}\subset M \times I$ with $t_0 \leq \cdots \leq t_{N-1}$ and $N \geq \mathfrak{N}\, \xi^{-k}$.
For $r>0$,
we assume $\{(x_i, t_i)\}^{N-1}_{i=0}$ are $(\delta, r)$-selfsimilar, $\NN_{(x_0, t_0)} (r^2) \geq - \kappa$ and $0\leq t_i-t_0 \leq \beta\,\xi^2 r^2$ for all $i$.
Assume the following:
\begin{enumerate}\setlength{\itemsep}{+2.0mm}
\item $W_1(\nu_{(x_0, t_0);t_0 - r^2},\nu_{(x_i, t_i); t_0- r^2}) \leq Dr$ for all $i$;
\item $W_1(\nu_{(x_i, t_i);t_0 - 2\xi^2 r^2}, \nu_{(x_j, t_j); t_0 - 2\xi^2 r^2}) \geq  \mathfrak{D} \xi r$ for all $i\neq j$.
\end{enumerate}
Then $(x_0, t_0)$ is $(k+1, \eps, r)$-split.
\end{thm}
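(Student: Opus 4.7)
The proof adapts Bamler's splitting construction for Ricci flow (\cite[Proposition 10.8]{B3}) to the super Ricci flow setting, with the M\"uller quantity contribution absorbed by the weighted integral estimates of Proposition \ref{prop:almost soliton estimate} and controlled via the heat kernel comparison of Proposition \ref{prop:inheriting}. After parabolic rescaling to $t_0=0$ and $r=1$, I would first apply Lemma \ref{lem:useful} to promote the Nash-entropy bound to $\NN_{(x_i,t_i)}(1)\geq -C_{\kappa,D}$ for every $i$; combined with Proposition \ref{prop:Nash almost} and Lemma \ref{lem:Nash cor}, the quantities $\mathrm{N}_i:=\NN_{(x_i,t_i)}(1)$ are then comparable, with the differences $|\mathrm{N}_i-\mathrm{N}_0|$ contributing only an $o_\delta(1)$ error to the estimates below.

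Writing $f_i,\tau_i$ for the density and parameter of $\nu_{(x_i,t_i)}$, the natural candidate coordinate functions are
\[
 \widetilde y_i := -(\tau_i f_i - \tau_0 f_0),\qquad i=1,\dots,N-1.
\]
Applying Proposition \ref{prop:almost soliton estimate} both at $(x_0,t_0)$ and at each $(x_i,t_i)$, and using Proposition \ref{prop:inheriting} to convert the weighted integrals $\int e^{\theta f_i}\,d\nu_{(x_i,t_i);t}$ into unweighted integrals against $d\nu_{(x_0,0);t}$, one obtains
\[
 \int_{-\eps^{-1}}^{-\eps}\!\int_M|\square\widetilde y_i|\,d\nu_{(x_0,0);t}\,dt \;\leq\; C_{\kappa,D,\eps}\,\delta^{1/2} + |\mathrm{N}_i-\mathrm{N}_0|,
\]
both terms being small. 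For the inner product, the almost-soliton identity implicit in \eqref{eq:almost selfsimilar1} yields $\tau_i\nabla^2 f_i+\tau_i h\approx\tfrac12 g$ in weighted $L^1$, so that the Gram-type matrix
\[
 A_{ij}:=\int_{-\eps^{-1}}^{-\eps}\!\int_M\langle\nabla\widetilde y_i,\nabla\widetilde y_j\rangle\,d\nu_{(x_0,0);t}\,dt
\]
is, up to $\delta$-errors, the Euclidean Gram matrix of ``displacement vectors'' of the $(x_i,t_i)$ relative to $(x_0,0)$. Propagating hypotheses (1)--(2) via Propositions \ref{prop:monotonicity_W1} and \ref{prop:dist expansion} then yields $A_{ii}\lesssim D^2$ and $A_{ii}+A_{jj}-2A_{ij}\gtrsim(\mathfrak{D}\xi)^2$.

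The combinatorial core is a quantitative pigeonhole: the $N\geq\mathfrak{N}\xi^{-k}$ vectors $\nabla\widetilde y_i$, interpreted as points in the abstract inner-product space defined by $A$, lie in a ball of radius $\lesssim D$ and are pairwise $\mathfrak{D}\xi$-separated. A Euclidean volume-packing argument in $\IR^k$ then allows, for $\mathfrak{N},\mathfrak{D}$ large enough depending on $\kappa,D$, the selection of $k$ indices $i_1,\dots,i_k$ whose $k\times k$ Gram submatrix has smallest eigenvalue bounded below by a dimensional constant. Quantitative Gram-Schmidt applied to $\{\widetilde y_{i_a}\}_{a=1}^k$ produces $y_1,\dots,y_k$ meeting both splitting-map conditions, the linear combinations having coefficients controlled in $\kappa,D,\eps$ so that the $\square$-error is not amplified. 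The $(k{+}1)$-th coordinate $y_{k+1}$ is extracted from the selfsimilar structure at $(x_0,0)$ itself: since $\square(\tau_0 f_0)\approx -\tfrac{n}{2}-\mathrm{N}_0$ with small error by Proposition \ref{prop:almost soliton estimate}, an appropriate affine-in-time modification of $\tau_0 f_0$ has $\square\approx 0$, and after rescaling by its gradient norm (which is almost constant in $t$ by the integral formula underlying Proposition \ref{prop:almost soliton scal}) and Gram-Schmidt orthogonalization against $y_1,\dots,y_k$, it provides the extra splitting direction transverse to $\mathrm{span}\{y_j\}$.

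The main obstacle will be the quantitative pigeonhole/Gram-Schmidt step: the selected $k\times k$ Gram submatrix must have smallest eigenvalue bounded below by a constant depending only on $\kappa,D$ (independent of $\xi,\delta$), otherwise orthogonalization would inflate the $\delta^{1/2}$-errors in $|\square\widetilde y_i|$ and $|\langle\nabla\widetilde y_i,\nabla\widetilde y_j\rangle - A_{ij}|$ uncontrollably. This is exactly what the lower bound $N\geq\mathfrak{N}\xi^{-k}$ and the separation $\mathfrak{D}$ buy, by forcing the candidates to realize a nearly maximal $\mathfrak{D}\xi$-packing in a $k$-dimensional configuration. Once the selection and the construction of $y_{k+1}$ are in place, verifying conditions (2) and (3) of the splitting-map definition reduces to direct substitution of the integral estimates from Sections \ref{sec:almost_ss}--\ref{sec:almost_st}.
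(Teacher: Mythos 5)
Your overall plan — use the potential-difference functions as candidate coordinates, transfer the almost-soliton integral estimates between base points via Proposition \ref{prop:inheriting}, and then run a Gram-matrix pigeonhole to extract an orthonormal frame — matches the paper's strategy (which assembles Lemmas \ref{lem:splitting lem1}--\ref{lem:splitting lem4} and then appeals to Bamler's combinatorial argument). Two of your steps, however, are genuinely wrong. First, a miscount: with $N\geq\mathfrak{N}\xi^{-k}$ pairwise $\sim\xi$-separated gradients contained in a ball of radius $C_{\kappa,D}$, the volume-packing argument shows that the $\nabla u_i$ cannot all live in a $k$-dimensional space, i.e.\ they span \emph{at least} $k+1$ directions. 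The correct conclusion is therefore the selection of $k+1$ indices among the $u_i$, and all $k+1$ splitting coordinates come from the potential differences; you should not be left needing a separately constructed $(k+1)$-th map. Second, and more seriously, your proposed $(k+1)$-th coordinate fails the splitting-map definition outright. Even after the affine-in-time correction kills the $\square$-error, its gradient satisfies $|\nabla(\tau_0 f_0)|^2=\tau_0^2|\nabla f_0|^2$, which is a genuinely non-constant function on $M$ (on the Gaussian model it is a quadratic radial function), so condition (3) — $\int_M\bigl|\langle\nabla y_{k+1},\nabla y_{k+1}\rangle-1\bigr|\,d\nu_t\leq\eps$ — cannot be achieved by any rescaling. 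Proposition \ref{prop:almost soliton scal} only controls the \emph{time}-variation of $\int\tau\HH\,d\nu_t$; it says nothing about pointwise near-constancy of $\tau^2|\nabla f_0|^2$.

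A smaller but real issue is your coordinate ansatz $\widetilde y_i=-(\tau_if_i-\tau_0f_0)$: you claim $|\mathrm{N}_i-\mathrm{N}_0|$ is $o_\delta(1)$, but these differences are only bounded by $C_{\kappa,D}$ (Lemma \ref{lem:useful} and Lemma \ref{lem:Nash cor} give $O(1)$ control, not smallness), so $\square\widetilde y_i\approx\mathrm{N}_i-\mathrm{N}_0$ is not small. The paper's functions $u_i=r^{-1}\bigl(\tau_if_i-\tau f-(\mathrm{N}_i-\mathrm{N})\tau\bigr)$ include exactly the affine-in-$\tau$ correction needed to absorb this constant (note $\square\tau=-1$), and since the correction is time-only it does not perturb $\nabla u_i$, so the rest of the Gram/packing argument is unaffected. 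Once you make these two fixes — add the $(\mathrm{N}_i-\mathrm{N})\tau$ term, and take $k+1$ indices rather than $k$ so that the $(k+1)$-th coordinate is another $u_i$ — the argument lines up with Lemmas \ref{lem:splitting lem1}--\ref{lem:splitting lem4} and the Bamler linear-algebra lemma the proof invokes.
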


\subsection{Construction of coordinate functions}

We begin with the following:
\begin{lem}\label{lem:splitting lem1}
For $\kappa, D>0,\eps,\xi \in (0,1)$,
if $\delta \leq \ov\delta_{\kappa, D,\eps,\xi}$, then the following holds:
Let $(M,(g_t)_{t\in I})$ be a super Ricci flow with $\mathcal{D}\geq 0$.
Let $(x_0, t_0), (x_1, t_1)  \in M \times I$.
For $r>0$,
we assume that $(x_0, t_0), (x_1, t_1)$ are $(\delta, r)$-selfsimilar, $\NN_{(x_0, t_0)}(r^2) \geq - \kappa$ and $0 \leq t_1 - t_0 \leq \xi^2 r^2$.
We assume
\begin{equation}\label{eq:W1assumption}
W_1(\nu_{(x_0, t_0);t_0-r^2}, \nu_{(x_1,t_1);t_0-r^2}) \leq D r.
\end{equation}
Let $\tau,\tau_1$ be the parameters for $(x_0,t_0),(x_1,t_1)$, respectively.
Moreover,
let $f,f_1$ be the densities for $(x_0,t_0),(x_1,t_1)$, respectively.
We define a function
\begin{equation}\label{eq:linear_static}
u := r^{-1}(\tau_1 f_1  - \tau f  - (\mathrm{N}_1 - \mathrm{N} ) \tau), 
\end{equation}
where $\mathrm{N}:=\NN_{(x_0, t_0)}(r^2)$ and $\mathrm{N}_1:=\NN_{(x_1,t_1)}(r^2)$.
Then we have
\begin{equation}\label{eq:splitting lem1_goal}
\int_{t_0-\eps^{-1}r^2}^{t_0-\xi^2 r^2} \int_M \tau^{-1/2}|\square u|   d\nu_{(x_0,t_0);t} dt \leq \eps,\quad \int_{t_0-\eps^{-1}r^2}^{t_0-\xi^2 r^2} \int_M   |\nabla^2 u|^2 d\nu_{(x_0,t_0);t} dt \leq \eps.
\end{equation}
\end{lem}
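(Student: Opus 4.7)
After normalising $t_0=0$ and $r=1$ by translation and parabolic rescaling, the function $u$ is constructed so that both $\square u$ and $\nabla^2 u$ vanish identically at an exact shrinker; the task is to quantify the error via almost selfsimilarity. Using $\square \tau = -1$ and the shrinker identity $\tau_i \nabla^2 f_i = \tfrac{1}{2}g - \tau_i h + \tau_i \Psi_i$, where $\Psi_i := h + \nabla^2 f_i - \tfrac{1}{2\tau_i}g$, together with the relation $\tau_1 - \tau = t_1 - t_0$, direct computation yields
\begin{equation*}
\square u = \bigl[\square(\tau_1 f_1) + \tfrac{n}{2} + \mathrm{N}_1\bigr] - \bigl[\square(\tau f) + \tfrac{n}{2} + \mathrm{N}\bigr], \qquad \nabla^2 u = \tau_1 \Psi_1 - \tau\Psi - (t_1-t_0)\,h.
\end{equation*}
Each bracket in $\square u$ is exactly the quantity controlled by Proposition \ref{prop:almost soliton estimate} (third bullet), and each $\Psi_i$ is controlled by its first bullet.

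The first inequality in \eqref{eq:splitting lem1_goal} then follows by the triangle inequality. The $(x_0,t_0)$-bracket integrates directly against $d\nu_{(x_0,t_0);t}$ via Proposition \ref{prop:almost soliton estimate}, while the $(x_1,t_1)$-bracket is first bounded against $e^{\theta f_1}d\nu_{(x_1,t_1);t}$ (also by Proposition \ref{prop:almost soliton estimate}) and then transferred back to $d\nu_{(x_0,t_0);t}$ via the heat-kernel-measure comparison of Proposition \ref{prop:inheriting}. To apply this comparison with a sufficiently small $\theta\in(0,\bar\theta]$, one chooses an intermediate time $t_*$ just below $t_0$ so that $t_0 - t_* \leq \mathfrak{C}_A \tfrac{\theta}{1-\theta}(t_*-s)$ holds uniformly for $s \in [-\eps^{-1},-\xi^2]$. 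The remaining hypotheses — Wasserstein smallness at $t_*$, a lower bound on $\HH$, and a lower Nash bound — follow respectively from \eqref{eq:W1assumption} combined with Proposition \ref{prop:monotonicity_W1}, from almost selfsimilarity, and from $\NN_{(x_0,t_0)}(1)\geq -\kappa$ via Lemma \ref{lem:Nash_more}. The factor $\tau^{-1/2} \leq \xi^{-1}$ is then absorbed into a $\xi$-dependent constant, yielding the first bound once $\delta \leq \bar\delta_{\kappa,D,\eps,\xi}$.

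For the second inequality, expand
\begin{equation*}
|\nabla^2 u|^2 \leq 3\bigl(\tau^2|\Psi|^2 + \tau_1^2|\Psi_1|^2 + (t_1-t_0)^2|h|^2\bigr)
\end{equation*}
and treat each piece. The first is controlled by Proposition \ref{prop:almost soliton estimate} (first bullet) multiplied by the extra factor $\tau \leq \eps^{-1}$. The second is the analogous object at $(x_1,t_1)$, handled identically after transferring to $d\nu_{(x_0,t_0);t}$ via Proposition \ref{prop:inheriting}. For the third term, rewrite $h = \Psi - \nabla^2 f + \tfrac{1}{2\tau}g$ so that $|h|^2 \leq C(|\Psi|^2 + |\nabla^2 f|^2 + \tau^{-2})$; the three summands are then bounded respectively by Proposition \ref{prop:almost soliton estimate}, by Proposition \ref{prop:integral1} (with weight $e^{2\theta f}$), and by direct integration in $t$, while the small factor $(t_1-t_0)^2 \leq \xi^4$ compensates for the $\tau^{-1} \leq \xi^{-2}$ growth over the test interval. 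The main technical obstacle is the bookkeeping: the parameters $\theta$, the intermediate time $t_*$, and $\delta$ must be chosen in sequence so that the heat-kernel-measure comparisons and the weighted integral estimates combine, for each of the three pieces, into constants that depend only on $(\kappa, D, \eps, \xi)$ without circular dependencies.
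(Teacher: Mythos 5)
Your decomposition of $\square u$ and $\nabla^2 u$ is correct and matches the paper exactly, and your treatment of $\square u$ and of the two $\Psi$-pieces of $|\nabla^2 u|^2$ (direct application of Proposition~\ref{prop:almost soliton estimate}, then transfer of the $(x_1,t_1)$-piece via Proposition~\ref{prop:inheriting} after a distance-expansion bootstrap) is sound. However, your handling of the cross term $(t_1-t_0)^2|h|^2$ does not close.

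You substitute $h = \Psi - \nabla^2 f + \tfrac{1}{2\tau}g$ and claim the factor $(t_1-t_0)^2 \le \xi^4$ compensates the $\xi^{-2}$ coming from the $t$-integral. Check the budget: Proposition~\ref{prop:integral1} gives $\int\int \tau|\nabla^2 f|^2 e^{2\theta f}\,d\nu\,dt \le C_{\kappa,\eps,\xi}$, hence $\int\int |\nabla^2 f|^2 \le C_{\kappa,\eps,\xi}\,\xi^{-2}$, so $(t_1-t_0)^2\int\int|\nabla^2 f|^2 \le C_{\kappa,\eps,\xi}\,\xi^2$. Similarly $\int_{\xi^2}^{\eps^{-1}}\tau^{-2}\,d\tau \approx \xi^{-2}$, so the $\tfrac{1}{2\tau}g$ piece contributes $\approx n\,\xi^2/4$. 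Both of these are fixed constants depending only on $(\kappa,\eps,\xi)$: they cannot be made $\le \eps$ by shrinking $\delta$, and the lemma does not assume $\xi^2\lesssim\eps$. So the bound fails whenever $\xi$ is, say, of order $1$ and $\eps$ is tiny.

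The missing ingredient is Theorem~\ref{thm:almost static} together with a dichotomy on $t_1-t_0$. If $t_1-t_0 \le \alpha$ for $\alpha$ small enough (depending on $\kappa,\eps,\xi$), the crude bound $\int\int |h|^2 \le C_{\kappa,\xi}\,\xi^{-2}$ from Proposition~\ref{prop:integral1} suffices since $(t_1-t_0)^2\le\alpha^2$ absorbs everything. If instead $t_1-t_0\ge\alpha$, the hypotheses of Theorem~\ref{thm:almost static} are satisfied with that $\alpha$ (after iterating Propositions~\ref{prop:monotonicity_W1} and~\ref{prop:dist expansion} to propagate the Wasserstein bound), so $(x_0,t_0)$ is $(\eta,1)$-static and $\int\int |h|^2\,d\nu^0_t\,dt \le \eta$ \emph{itself} is small — no loss of $\xi^{-2}$ at all. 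Only this split makes the $(t_1-t_0)^2|h|^2$ term genuinely $\le\eps$ under the stated hypotheses.
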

\begin{proof}
We may assume $t_0 = 0$ and $r=1$.
For $i=0,1$,
we set $\nu^i:=\nu_{(x_i, t_i)}$.
Let $\theta \in (0,\ov\theta]$ be a constant obtained in Proposition \ref{prop:almost soliton estimate}.

Fix $\zeta \in (0,1)$.
By iterating Propositions \ref{prop:monotonicity_W1} and \ref{prop:dist expansion},
if $\delta  \leq \ov\delta_{\kappa, D,\zeta}$,
then $W_1(\nu^0_{-\zeta}, \nu^1_{-\zeta} ) \leq C_{\kappa,D,\zeta}$.
If $\zeta \leq \ov\zeta_\xi$,
then for every $t\in [-\eps^{-1},-\xi^2]$ we see $-\zeta-t \geq \xi^2-\zeta$, and 
\begin{align*}
&\HH(\cdot,-\zeta)\geq -\delta \geq -(-\zeta-t)^{-1},\\
&W_1(\nu^0_{-\zeta}, \nu^1_{-\zeta} ) \leq  \frac{C_{\kappa,D, \zeta}}{\sqrt{\xi^2-\zeta}}\sqrt{-\zeta-t}\leq C_{1,\kappa,D,\xi,\zeta}\sqrt{-\zeta-t},\\
&\zeta\leq \frac{\zeta}{\xi^2-\zeta}(-\zeta-t)\leq \mathfrak{C}\frac{\theta}{1-\theta}(-\zeta-t),\\
&t_1-(-\zeta)\leq \xi^2+\zeta\leq \frac{\xi^2+\zeta}{\xi^2-\zeta}(-\zeta-t)\leq C^2_{1,\kappa,D,\xi,\zeta}(-\zeta-t),
\end{align*}
Therefore,
by Proposition \ref{prop:inheriting},
if $\zeta \leq \ov\zeta_\xi$, then we have
\begin{equation}\label{eq:splitting lem11}
\nu^0_t \leq C_{\kappa,D, \xi,\zeta} e^{\theta f_1}\nu^{1}_t
\end{equation}
for every $t\in [-\eps^{-1},-\xi^2]$.
We fix $\eta \in (0,1)$.
By Proposition \ref{prop:almost soliton estimate},
if $\delta \leq \ov\delta_{\kappa,D,\eta}$,
then
\begin{align}\label{eq:splitting lem12}
&\int_{- \eta^{-1}}^{- \eta} \int_M  \tau \left| h + \nabla^2 f - \frac1{2\tau} g \right|^2 d\nu^0_{t} dt \leq \eta,\\ \label{eq:splitting lem13}
&\int_{ - \eta^{-1}}^{- \eta} \int_M  \left| \square (\tau f) + \frac{n}2 + \mathrm{N} \right| d\nu^0_{t} dt \leq \eta,\\ \label{eq:splitting lem14}
&\int_{t_1- \eta^{-1}}^{t_1- \eta} \int_M  \tau_1 \left| h + \nabla^2 f_1 - \frac1{2\tau_1} g \right|^2 e^{\theta f_1} d\nu^1_{t} dt \leq \eta,\\ \label{eq:splitting lem15}
&\int_{t_1 - \eta^{-1}}^{t_1 - \eta} \int_M  \left| \square (\tau_1 f_1) + \frac{n}2 + \mathrm{N}_1 \right| e^{\theta f_1} d\nu^1_{t} dt \leq \eta.
\end{align}
By \eqref{eq:splitting lem11}, \eqref{eq:splitting lem13}, \eqref{eq:splitting lem15},
if $\eta \leq \ov\eta_{\kappa,D,\eps,\xi,\zeta}$,
then we obtain
\begin{align*}
 \int_{-\eps^{-1}}^{-\xi^2} \int_M \tau^{-1/2}|\square u|   d\nu^0_t dt &\leq C_{\kappa,D, \xi,\zeta}  \int_{-\eps^{-1}}^{-\xi^2} \int_M \tau^{-1/2}\left| \square (\tau_1 f_1 ) + \frac{n}2 + \mathrm{N}_1 \right|  e^{\theta f_1}  d\nu^1_t dt  \\
&\quad  + \int_{-\eps^{-1}}^{-\xi^2} \int_M \tau^{-1/2}\left| \square (\tau f ) + \frac{n}2 + \mathrm{N} \right|  d\nu^0_t dt\leq  C_{\kappa,D, \xi,\zeta}\, \eta.
\end{align*}
This proves the first estimate in \eqref{eq:splitting lem1_goal}.

We next show the second estimate in \eqref{eq:splitting lem1_goal}.
If $\eta \leq \ov{\eta}_{\xi}$, then
\begin{align*}
&\quad\,\,  \int_{-\eps^{-1}}^{-\xi^2} \int_M    |\nabla^2 u|^2 d\nu^0_t dt \\
&= \int_{-\eps^{-1}}^{-\xi^2} \int_M   \left| \tau_1 \left( h+\nabla^2 f_1 - \frac1{2\tau_1} g \right) -  \tau \left( h+\nabla^2 f - \frac1{2\tau} g \right) + (\tau - \tau_1) h  \right|^2 d\nu^0_t dt \\
&\leq C_{\kappa,D, \xi,\zeta} \int_{-\eps^{-1}}^{-\xi^2} \int_M  \tau_1^2  \left| h+\nabla^2 f_1 - \frac1{2\tau_1} g \right|^2 e^{\theta f_1} d\nu^1_t dt \\
&\quad +  C\int_{-\eps^{-1}}^{-\xi^2} \int_M  \tau^2  \left|h+ \nabla^2 f - \frac1{2\tau} g \right|^2  d\nu^0_t dt + C t_1^2 \int_{-\eps^{-1}}^{-\xi^2} \int_M |h|^2 d\nu^0_t dt \\
&\leq C_{\kappa,D, \xi,\zeta} \,\eta + C t^2_1\int_{-\eps^{-1}}^{-\xi^2} \int_M |h|^2 d\nu^0_t dt.
\end{align*}
For a fixed $\alpha \in (0,1)$,
we first consider the case of $t_1\in [0,\alpha]$.
Proposition \ref{prop:integral1} tells us that if $\alpha \leq \ov{\alpha}_{\kappa,\eps,\xi}$ and $\eta \leq \ov{\eta}_{\kappa,D,\eps,\xi,\zeta}$, then
\begin{equation*}
\int_{-\eps^{-1}}^{-\xi^2} \int_M   |\nabla^2 u|^2 d\nu^0_t dt \leq C_{\kappa,D, \xi,\zeta} \,\eta + C_{\kappa,  \xi} \alpha^2\leq \eps.
\end{equation*}
On the other hand,
in the case of $t_1 \geq \alpha$, Theorem \ref{thm:almost static} tells us that if $\delta\leq \ov{\delta}_{\kappa,D,\alpha,\eta}$, then
\begin{equation*}
\int_{-\eta^{-1}}^{-\eta} \int_M |h|^2 d\nu^0_t dt \leq \eta.
\end{equation*}
Therefore,
if $\eta \leq \ov{\eta}_{\kappa,D, \eps,\xi,\zeta}$, then
\begin{equation*}
\int_{-\eps^{-1}}^{-\xi^2} \int_M    |\nabla^2 u|^2 d\nu^0_t dt \leq C_{\kappa,D,\xi,\zeta}\,\eta \leq \eps.
\end{equation*}
This completes the proof of the second estimate in \eqref{eq:splitting lem1_goal}.
\end{proof}

\begin{rem}
Along the line of the above proof,
by using Proposition \ref{prop:integral1} with \eqref{eq:splitting lem11},
we also obtain the following (cf. \cite[Claim 10.26]{B3}):
Under the same setting as in Lemma \ref{lem:splitting lem1},
\begin{equation}\label{eq:uniform}
\int_{t_0-\eps^{-1} r^2}^{t_0-\xi^2 r^2} \int_M \left(\tau^{-5}u^8+\tau^{-1}|\nabla u|^4+(\partial_t u)^2 \right) d\nu_{(x_0,t_0);t} dt \leq C_{\kappa,D,\eps,\xi}.
\end{equation}
\end{rem}

For functions in Lemma \ref{lem:splitting lem1},
we further see the following (cf. \cite[Claim 10.39]{B3}):
\begin{lem}\label{lem:splitting lem2}
For $\kappa, D>0,\xi \in (0,1)$,
if $\delta \leq \ov\delta_{\kappa, D}$, then the following holds:
Let $(M,(g_t)_{t\in I})$ be a super Ricci flow with $\mathcal{D}\geq 0$.
Let $(x_0, t_0), (x_1, t_1)  \in M \times I$.
For $r>0$,
we assume that $(x_0, t_0), (x_1, t_1)$ are $(\delta, r)$-selfsimilar, $\NN_{(x_0, t_0)}(r^2) \geq - \kappa$ and $0 \leq t_1 - t_0 \leq \xi^2 r^2$.
We further assume \eqref{eq:W1assumption}.
Let $u$ be a function defined as \eqref{eq:linear_static}.
Then we have
\begin{equation*}
\int_{t_0-2\xi^2 r^2}^{t_0-\xi^2 r^2} \int_M \tau^{-1}|\nabla u|^2 d\nu_{(x_0,t_0);t} dt \leq C_{\kappa,D}.
\end{equation*}
\end{lem}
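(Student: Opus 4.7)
After the usual reductions (rescale so that $r=1$ and translate so that $t_0 = 0$, and set $\nu^i := \nu_{(x_i, t_i)}$), I would start by recording the baseline lower bounds. Lemma \ref{lem:useful} gives $\NN_{(x_1, t_1)}(1) \geq -C_{\kappa, D}$, and Lemma \ref{lem:lower poteintial} then yields $f, f_1 \geq -C_{\kappa, D}$ on the relevant part of the parabolic cylinder. The pointwise identity $\nabla u = \tau_1 \nabla f_1 - \tau \nabla f$ gives $|\nabla u|^2 \leq 2(\tau_1^2 |\nabla f_1|^2 + \tau^2 |\nabla f|^2)$. On $t \in [-2\xi^2, -\xi^2]$ we have $\tau \in [\xi^2, 2\xi^2]$ and $\tau_1 - \tau = t_1 - t_0 \leq \xi^2 \leq \tau$, so $\tau_1 \leq 2\tau$ and consequently $\tau^{-1}|\nabla u|^2 \leq 8\tau(|\nabla f|^2 + |\nabla f_1|^2)$.

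The $\tau|\nabla f|^2$ part against $\nu^0_t$ is immediate from Lemma \ref{lem:Nash basic} applied at $(x_0, t_0)$: pointwise in $t$ one has $\int \tau|\nabla f|^2 \, d\nu^0_t \leq n/2 + A\tau \leq C$, and integrating over the interval of length $\xi^2$ contributes at most $C\xi^2 \leq C_{\kappa, D}$.

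The genuine work is to control the $\tau|\nabla f_1|^2$ part, which requires transferring $\nu^0_t$ to $\nu^1_t$. I would first iterate Proposition \ref{prop:dist expansion} (with a parameter $\alpha$ chosen to depend only on $\kappa, D$) to propagate the assumption $W_1(\nu^0_{-1}, \nu^1_{-1}) \leq D$ forward and obtain $W_1(\nu^0_\sigma, \nu^1_\sigma) \leq C_{\kappa, D}$ on a time range preceding the integration window. Proposition \ref{prop:inheriting}, applied with fixed $\theta_0 = 0$ and a fixed small $\theta_1 \in (0, 1)$ and with $s, t$ chosen so that the time-separation hypotheses hold with $\xi$-independent parameters, then supplies $d\nu^0_s \leq C_{\kappa, D} \, e^{\theta_1 f_1} \, d\nu^1_s$ at the relevant times. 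Since $\tau \leq 2\tau_1$, it remains to bound $\iint \tau_1 |\nabla f_1|^2 e^{\theta_1 f_1} \, d\nu^1 \, dt$ over the window by $C_{\kappa, D}$. Integration by parts on $\nu^1$ gives the identity
\begin{equation*}
(1-\theta_1)\int |\nabla f_1|^2 e^{\theta_1 f_1}\, d\nu^1 = -\int e^{\theta_1 f_1} \Delta f_1 \, d\nu^1.
\end{equation*}
Cauchy--Schwarz first in space and then in time (against the weight $\tau_1$) reduces the estimate to controlling $\iint \tau_1 e^{2\theta_1 f_1} \, d\nu^1 \, dt$ via Lemma \ref{lem:integral2}, and $\iint \tau_1 |\nabla^2 f_1|^2 \, d\nu^1 \, dt$ via Proposition \ref{prop:integral1} applied at $(x_1, t_1)$, after using the lower bound on $f_1$ to absorb the missing factor $e^{-2\theta_1 f_1}$.

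The principal obstacle is parameter tracking: choosing the $\alpha$ in Propositions \ref{prop:dist expansion} and \ref{prop:integral1}, and the $\theta_1$, $s$, $t$ in Proposition \ref{prop:inheriting}, so that every intermediate constant depends only on $\kappa, D$ and not on $\xi$. In particular, any apparent $\xi^{-1}$ growth arising from Proposition \ref{prop:inheriting} (whose Wasserstein hypothesis is normalized by $\sqrt{t-s}$, of order $\xi$ here) must be offset by the small scales $\tau, \tau_1, t_1 - t_0 \asymp \xi^2$ that enter the subsequent integration; after this balancing, the final bound is indeed $\xi$-free.
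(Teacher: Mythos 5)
Your decomposition matches the paper's: bound $\tau^{-1}|\nabla u|^2$ by $C\big(\tau|\nabla f|^2 + \tau|\nabla f_1|^2\big)$, dispose of the $f$-part directly (Lemma \ref{lem:Nash basic} is a clean way to do this; the paper reads it off Proposition \ref{prop:integral1}), and transfer the $f_1$-part from $\nu^0$ to $e^{\theta f_1}\nu^1$ via Propositions \ref{prop:dist expansion} and \ref{prop:inheriting}, closing with Proposition \ref{prop:integral1} and Lemma \ref{lem:integral2}. The gap is exactly the step you flag in your last paragraph, and the way you propose to resolve it does not work.

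To obtain the density comparison at a time $s$ in the window $[t_0 - 2\xi^2 r^2,\, t_0 - \xi^2 r^2]$, the intermediate time $t$ in Proposition \ref{prop:inheriting} must satisfy $s < t \leq \min\{t_0,t_1\} = t_0$, so $t-s \leq 2\xi^2 r^2$: there is \emph{no} choice of $s,t$ making the time-separation hypotheses ``$\xi$-independent''. Consequently $\sqrt{t-s} = O(\xi r)$, and the hypothesis $W_1(\nu^0_t, \nu^1_t) \leq D'\sqrt{t-s}$ requires $W_1(\nu^0_t,\nu^1_t) = O(\xi r)$ if $D'$ is to be free of $\xi$. Your iteration of Proposition \ref{prop:dist expansion} with $\kappa,D$-dependent $\alpha$ only gives $W_1 = O(r)$, so the effective $D'$ is $\gtrsim \xi^{-1}$, and your claim that the resulting growth is ``offset'' by the small scales in the subsequent integration is false: the $D$-dependence in the constant of Proposition \ref{prop:inheriting} enters through Gaussian concentration estimates (cf.\ \eqref{eq:inheriting10}, \eqref{eq:inheriting100}) and is exponential, so $D' \sim \xi^{-1}$ produces a factor $\gtrsim e^{c\xi^{-2}}$, which no polynomial gain from the $\xi^2 r^2$-short time window can absorb. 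What is actually needed, and what the paper's proof invokes in writing $W_1(\nu^0_{t_0-\zeta\xi^2 r^2}, \nu^1_{t_0-\zeta\xi^2 r^2}) \leq C_{\kappa,D,\zeta}\,\xi r$, is a Wasserstein estimate carrying the extra factor $\xi$ --- i.e.\ scaling with the parabolic radius $\xi r$ of the window rather than with the original scale $r$. Producing that sharper bound requires propagating Proposition \ref{prop:dist expansion} down to scales comparable to $\xi r$, using the selfsimilarity hypothesis at those smaller scales, not merely a bounded number of applications at scale $r$; this is the substantive ingredient missing from your argument.
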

\begin{proof}
We may assume $t_0 = 0$ and $r=1$.
Set $\nu^i:=\nu_{(x_i, t_i)}$ for $i=0,1$.
Let $\theta \in (0,\ov\theta]$ be a constant obtained in Proposition \ref{prop:integral1}.
Fix $\zeta \in (0,1)$.
Using Proposition \ref{prop:dist expansion},
if $\delta  \leq \ov\delta_{\kappa, D,\zeta}$,
then $W_1(\nu^0_{-\zeta \xi^2}, \nu^1_{-\zeta \xi^2} ) \leq C_{\kappa,D,\zeta}\xi$.
By Proposition \ref{prop:inheriting},
if $\zeta \leq \ov\zeta$, then $\nu^0_t \leq C_{\kappa,D, \zeta} e^{\theta f_1}\nu^{1}_t$ for every $t\in [-2\xi^2,-\xi^2]$.
Proposition \ref{prop:integral1} yields
\begin{align*}
 \int_{-2\xi^2}^{-\xi^2} \int_M \tau^{-1} |\nabla u  |^2 d\nu^0_t dt &\leq  C \int_{-2\xi^2}^{-\xi^2} \int_M \tau^{-1}\tau^2_1 |\nabla f_{1}  |^2 d\nu^0_t dt + C \int_{-2\xi^2}^{-\xi^2} \int_M \tau\,|\nabla f  |^2 d\nu^0_t dt \\
 &\leq  C_{\kappa,D,\zeta} \int_{-2\xi^2}^{-\xi^2} \int_M  |\nabla f_{1}  |^2 e^{\theta f_1} d\nu^1_t dt  + C \int_{-2\xi^2}^{-\xi^2} \int_M  |\nabla f  |^2 d\nu^0_t dt \leq C_{\kappa, D,\zeta}.
\end{align*}
This completes the proof.
\end{proof}

\subsection{Proof of Theorem \ref{thm:main almost splitting}}

We have the following (cf. \cite[Claim 10.31]{B3}):
\begin{lem}\label{lem:splitting lem3}
For $\kappa, D>0,\eps,\xi \in (0,1)$,
if $\delta \leq \ov\delta_{\kappa, D,\eps,\xi}$, then the following holds:
Let $(M,(g_t)_{t\in I})$ be a super Ricci flow with $\mathcal{D}\geq 0$.
Let $(x_0, t_0), (x_1, t_1),(x_2,t_2)  \in M \times I$ with $t_0\leq t_1\leq t_2$.
For $r>0$,
we assume that $(x_0, t_0), (x_1, t_1),(x_2,t_2)$ are $(\delta, r)$-selfsimilar, $\NN_{(x_0, t_0)}(r^2) \geq - \kappa$ and $0 \leq t_i - t_0 \leq \xi^2 r^2$ for $i=1,2$.
For $i=1,2$,
we further assume
\begin{equation*}
W_1(\nu_{(x_0, t_0);t_0- r^2}, \nu_{(x_i,t_i);t_0- r^2}) \leq D r.
\end{equation*}
Let $\tau,\tau_1,\tau_2$ be the parameters for $(x_0,t_0),(x_1,t_1),(x_2,t_2)$, respectively.
Moreover,
let $f,f_1,f_2$ be the densities for $(x_0,t_0),(x_1,t_1),(x_2,t_2)$, respectively.
For $i=1,2$,
we define
\begin{equation}\label{eq:linear_static2}
u_i := r^{-1}(\tau_i f_i  - \tau f  - (\mathrm{N}_i - \mathrm{N} ) \tau),
\end{equation}
where $\mathrm{N}:= \NN_{(x_0, t_0)}(r^2), \,\mathrm{N}_{i}:=\NN_{(x_i,t_i)}(r^2)$.
Then there are constants $\mathrm{c}_1,\mathrm{c}_2,\mathrm{c}_3\in \mathbb{R}$ such that
\begin{align}\label{eq:almost constant1}
\int_{t_0-\eps^{-1}r^2}^{t_0-\xi^2 r^2} \int_M \tau^{-1} \left| |\nabla (u_1+u_2)|^2 - \mathrm{c}_1 \right|   d\nu_{(x_0,t_0);t} dt \leq \eps,\\ \label{eq:almost constant2}
\int_{t_0-\eps^{-1}r^2}^{t_0-\xi^2 r^2} \int_M  \tau^{-1}\left| |\nabla (u_1-u_2)|^2 - \mathrm{c}_2 \right|   d\nu_{(x_0,t_0);t} dt \leq \eps,\\ \label{eq:almost constant3}
\int_{t_0-\eps^{-1}r^2}^{t_0-\xi^2 r^2} \int_M \tau^{-1}\left| \langle \nabla u_1,\nabla u_2 \rangle - \mathrm{c}_3 \right|   d\nu_{(x_0,t_0);t} dt \leq \eps.
\end{align}
\end{lem}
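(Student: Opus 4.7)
By polarization, $\langle \nabla u_1, \nabla u_2\rangle = \frac{1}{4}(|\nabla(u_1+u_2)|^2 - |\nabla(u_1-u_2)|^2)$, so \eqref{eq:almost constant3} follows from \eqref{eq:almost constant1} and \eqref{eq:almost constant2} with $\mathrm{c}_3 = (\mathrm{c}_1 - \mathrm{c}_2)/4$. I therefore fix $v := u_1 + u_2$ or $v := u_1 - u_2$; by parabolic rescaling I assume $t_0 = 0$ and $r = 1$. By linearity, $v$ inherits the conclusions of Lemma \ref{lem:splitting lem1}: both $\int\int \tau^{-1/2}|\square v|\,d\nu\,dt$ and $\int\int |\nabla^2 v|^2\,d\nu\,dt$ are at most $C\eps'$, with $\eps'$ shrinking in $\delta$. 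Set $\bar w(t) := \int_M |\nabla v|^2\,d\nu_{(x_0,0);t}$ and decompose $|\nabla v|^2 - \mathrm{c} = (|\nabla v|^2 - \bar w(t)) + (\bar w(t) - \mathrm{c})$.

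For the first summand I apply Proposition \ref{prop:Poincare} with $p=1$ to $F := |\nabla v|^2$, for which $|\nabla F| \leq 2|\nabla^2 v||\nabla v|$ and $\bar F = \bar w(t)$:
\begin{equation*}
\int_M \bigl| |\nabla v|^2 - \bar w(t) \bigr|\,d\nu_t \leq 2\sqrt{\pi}\,\sqrt{\tau}\,\Bigl(\int_M |\nabla^2 v|^2\,d\nu_t\Bigr)^{1/2}\Bigl(\int_M |\nabla v|^2\,d\nu_t\Bigr)^{1/2}.
\end{equation*}
Dividing by $\tau$, integrating in $t$, and applying Cauchy-Schwarz in time yields $C\,(\int\int |\nabla^2 v|^2\,d\nu\,dt)^{1/2}(\int\int \tau^{-1}|\nabla v|^2\,d\nu\,dt)^{1/2}$; the first factor is $O(\sqrt{\eps'})$. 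For the second factor, use $\tau_i \leq 2\tau$ throughout the interval (since $t_i - t_0 \leq \xi^2 r^2 \leq \tau$), write $|\nabla u_i|^2 \leq C(\tau_i^2 |\nabla f_i|^2 + \tau^2|\nabla f|^2)$, and control the $\nabla f$-part through Lemma \ref{lem:Nash basic} and the $\nabla f_i$-part through the inheriting inequality (Proposition \ref{prop:inheriting}) combined with Proposition \ref{prop:integral1}, exactly as in the proof of Lemma \ref{lem:splitting lem2}. This produces $\int\int \tau^{-1}|\nabla v|^2\,d\nu\,dt \leq C_{\kappa,D,\eps,\xi}$, so the first summand contributes at most $C_{\kappa,D,\eps,\xi}\sqrt{\eps'}$.

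For the second summand I fix $t_\star \in [-2\xi^2, -\xi^2]$ and set $\mathrm{c} := \bar w(t_\star)$, which is bounded by $C_{\kappa,D}$ via Lemma \ref{lem:splitting lem2}. Using $\bar w'(t) = \int \square|\nabla v|^2\,d\nu_t$, the Bochner identity
\begin{equation*}
\square|\nabla v|^2 = 2\langle \nabla v, \nabla\square v\rangle - 2|\nabla^2 v|^2 - 2(\Ric - h)(\nabla v, \nabla v),
\end{equation*}
the integration-by-parts identity \eqref{eq:conjugate integration2}, and the super Ricci flow bound $\Ric \geq h$, I obtain $\bar w'(t) = A(t) - 2B(t) - 2C(t)$ with
\begin{equation*}
A(t) := 2\int (\square v)(\langle\nabla v, \nabla f\rangle - \Delta v)\,d\nu_t,\quad B(t) := \int|\nabla^2 v|^2\,d\nu_t,\quad C(t) := \int(\Ric - h)(\nabla v,\nabla v)\,d\nu_t \geq 0.
\end{equation*}
Cauchy-Schwarz combined with the previous paragraph's bounds, Lemma \ref{lem:Nash basic} for $|\nabla f|^2$, and $|\Delta v|^2 \leq n|\nabla^2 v|^2$ give $\int |A|\,dt \leq C_{\kappa,D,\eps,\xi}\sqrt{\eps'}$.

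The main obstacle is the non-negative term $C(t)$, which vanishes in pure Ricci flow ($\Ric = h$) but in super Ricci flow can a priori force $\bar w$ to drop sharply in forward time. My plan is to extract $\int\int C\,dt = O(\sqrt{\eps'})$ from the almost-selfsimilarity hypotheses through the M\"uller decomposition
\begin{equation*}
2(\Ric - h)(V, V) = \mathcal{D}(V) - \mathcal{D}(0) - 4(\Div h)(V) + 2\langle\nabla \HH, V\rangle
\end{equation*}
applied at $V = \nabla v$: expanding $\nabla v$ in terms of $\tau \nabla f$ and $\tau_i \nabla f_i$, the resulting integrals reduce to $\int\int \tau\mathcal{D}(\nabla f)\,d\nu\,dt$ (small by \eqref{eq:almost selfsimilar1}), its counterpart at $(x_i,t_i)$ transferred via Proposition \ref{prop:inheriting}, and weighted integrals of $\HH$, $|h|^2$, and $\nabla^2 f$ that are controlled by Propositions \ref{prop:almost soliton estimate} and \ref{prop:almost soliton scal}. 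Combined with the finite weight $\int_{-\eps^{-1}}^{-\xi^2} \tau^{-1}\,dt = \log(\xi^{-2}\eps^{-1})$ and a Fubini rearrangement that bounds $\int \tau^{-1}|\bar w(t) - \mathrm{c}|\,dt$ by a logarithmic multiple of $\int(|A| + B + C)\,dt$, the second summand is also $\leq C_{\kappa,D,\eps,\xi}\sqrt{\eps'}$, and choosing $\delta$ sufficiently small makes the total less than $\eps$.
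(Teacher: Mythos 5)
The first two paragraphs of your proposal are sound and are in fact what the paper does: polarization to reduce \eqref{eq:almost constant3} to the other two, Poincar\'e with $p=1$ applied to $|\nabla v|^2$ (with $|\nabla|\nabla v|^2|\le 2|\nabla v||\nabla^2 v|$), and control of $\int\!\!\int\tau^{-1}|\nabla v|^2\,d\nu\,dt$ by transferring through Proposition \ref{prop:inheriting} as in Lemma \ref{lem:splitting lem2}. The serious problem is the last paragraph. You correctly identify that the Bochner formula for $\square|\nabla v|^2$ produces the non-negative term $C(t)=\int(\Ric-h)(\nabla v,\nabla v)\,d\nu_t$, which vanishes only on Ricci flow, and you need $\int \tau^{-1}C(t)\,dt$ to be small. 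The proposed M\"uller decomposition does not deliver this. Writing $\mathcal{D}(V)=\mathcal{D}(0)+b(V)+2(\Ric-h)(V,V)$ with $b$ the linear part, the identity $2(\Ric-h)(\nabla v,\nabla v)=\mathcal{D}(\nabla v)-\mathcal{D}(0)-b(\nabla v)$ requires an \emph{upper} bound on $\mathcal{D}(\nabla v)$; but the selfsimilarity hypothesis \eqref{eq:almost selfsimilar1} only bounds $\int\!\!\int\tau\,\mathcal{D}(\nabla f)\,d\nu\,dt$, for the specific vector field $\nabla f$, and $\nabla v=\tau_1\nabla f_1-\tau\nabla f$ (or its sum/difference variants) is a very different field. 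Trying to expand $\mathcal{D}(\nabla v)$ through bilinearity re-introduces cross-terms $(\Ric-h)(\nabla f_i,\nabla f_j)$ of exactly the type you are trying to bound, so the argument is circular. Moreover, the individual quantity $\int\!\!\int\tau(\Ric-h)(\nabla f,\nabla f)\,d\nu\,dt$ is genuinely not $o(1)$ in $\delta$: even though $\int\!\!\int\tau\,\mathcal{D}(\nabla f)\le 2\delta$, the linear term $b(\nabla f)$ has indefinite sign, and $\mathcal{D}(0)$ and $(\Ric-h)(\nabla f,\nabla f)$ are merely both non-negative, so no individual smallness follows -- only the $O(1)$ bound from Proposition \ref{prop:integral1}. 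Propositions \ref{prop:almost soliton estimate} and \ref{prop:almost soliton scal} give $o(1)$-smallness of integral quantities built from $f$, $\HH$, and $\Psi$, but not of $(\Ric-h)$ contracted with arbitrary gradients.

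The paper circumvents the Bochner formula entirely by following Bamler's strategy from \cite[Claim 10.31, Lemma 10.10]{B3}. Instead of differentiating $\bar w(t)=\int|\nabla v|^2\,d\nu_t$, one differentiates $\int_M \tau^{-1}v^2(f-\tfrac{n}{2}-\mathrm{N})\,d\nu_t$ and shows it is almost constant: the resulting derivative contains only $\square v$, $\square(\tau f)$, $|\nabla v|^2$, $\langle\nabla v,\nabla f\rangle$, and $\NN_{(x_0,0)}(\tau)-\mathrm{N}$, every one of which is controlled by \eqref{eq:almost selfsimilar2}, Proposition \ref{prop:almost soliton estimate}, Lemma \ref{lem:splitting lem1}, Proposition \ref{prop:Nash almost}, and Proposition \ref{prop:integral1} (via a Young-inequality splitting) -- no curvature appears. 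A separate static integration-by-parts (Pohozaev-type) identity then relates $\int|\nabla v|^2\,d\nu_t$ to $\tfrac12\int\tau^{-1}v^2(f-\tfrac{n}{2}-\mathrm{N})\,d\nu_t$ modulo similarly controllable errors. This is the step you would need to replace your Bochner-based second summand with to close the argument on a general super Ricci flow.
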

\begin{proof}
We may assume $t_0 = 0$ and $r=1$.
We set $\nu:=\nu_{(x_0,0)}$.
Let us prove \eqref{eq:almost constant1}.
We define $u:=u_1+u_2$ and fix $\eta \in (0,1)$.
By Propositions \ref{prop:Nash almost}, \ref{prop:almost soliton estimate} and Lemma \ref{lem:splitting lem1},
if $\delta \leq \ov\delta_{\kappa,D,\eta,\xi}$,
then
\begin{align}\label{eq:almost constant11}
&\int_{- \eta^{-1}}^{- \xi^2} \int_M \left|\tau (2\Delta f - |\nabla f|^2 + \HH )+f-n-\mathrm{N} \right| \,  d\nu_t dt\leq \eta,\\ \label{eq:almost constant12}
&\int_{- \eta^{-1}}^{- \xi^2} \int_M \tau\left|\HH + \Delta f -  \frac{n}{2\tau} \right| \, d\nu_t\leq \eta,\\ \label{eq:almost constant13}
&\int_{- \eta^{-1}}^{- \xi^2} \int_M \tau^{-1/2}|\square u|  \,  d\nu_t dt\leq \eta,\\ \label{eq:almost constant14}
&\int_{- \eta^{-1}}^{- \xi^2} \int_M  |\nabla^2 u|^2    d\nu_t dt\leq \eta,\\ \label{eq:almost constant15}
&\left|\mathcal{N}_{(x_0,0)}(\tau)-\mathrm{N}  \right|\leq \eta
\end{align}
for all $t\in [-\eta^{-1},-\eta]$.

We first prove that
there is a constant $\mathrm{c}\in \mathbb{R}$ such that
for all $t\in [-\eps^{-1},-\xi^2]$,
\begin{equation}\label{eq:almost constant_step1}
\left| \int_M \tau^{-1} u^2 \left( f- \frac{n}2 - \mathrm{N} \right) d\nu_{t}-\mathrm{c} \right|\leq C_{\kappa,D,\eps,\xi} \,\eta^{1/4}.
\end{equation}
Using \eqref{eq:conjugate integration2} and \eqref{eq:potential_enjoy},
we obtain the following (cf. \cite[Lemma 10.10]{B3}):
\begin{align*}
\frac{d}{dt}  \int_M  \tau^{-1} u^2 \left( f- \frac{n}2 - \mathrm{N} \right) d\nu_t&= \int_M \tau^{-2} u^2  \left(\tau (2\Delta f - |\nabla f|^2 + \HH )+f-n-\mathrm{N}  \right) d\nu_t  \\
&\quad - 2\int_M \tau^{-1} u^2  \left(\HH + \Delta f -  \frac{n}{2\tau} \right) d\nu_t  \\ 
&\quad +2 \int_M \tau^{-1} u \square u \left( f  - \frac{n}{2} - \mathrm{N}   \right) d\nu_t \\
 &\quad - 2 \int_M \tau^{-1} \left( |\nabla u|^2 - \int_M |\nabla u|^2 d\nu_t \right) \left( f  - \frac{n}{2} - \mathrm{N}   \right) d\nu_t \\
 &\quad - 2\tau^{-1}  \left( \int_M |\nabla u|^2 d\nu_t \right) \int_M  \left( f  - \frac{n}{2} - \mathrm{N}   \right) d\nu_t.
\end{align*}
For $[s_1, s_2] \subset [- \eps^{-1}, - \xi^2 ]$,
it holds that
\begin{align}\label{eq:each term}
 &\quad\,\, \left| \int_M \tau^{-1}u^2 \left( f- \frac{n}2 - \mathrm{N}  \right) d\nu_{s_2}-\int_M \tau^{-1} u^2 \left( f- \frac{n}2 - \mathrm{N}  \right) d\nu_{s_1} \right| \\ \notag
 &\leq  \xi^{-4} \int^{-\xi^2}_{-\eps^{-1}} \int_M u^2  \left|\tau (2\Delta f - |\nabla f|^2 + \HH )+f-n-\mathrm{N}  \right| d\nu_t  dt \\ \notag
 &\quad + 2\xi^{-4}\int^{-\xi^2}_{-\eps^{-1}} \int_M u^2  \tau \left|\HH + \Delta f -  \frac{n}{2\tau} \right| d\nu_t dt\\ \notag
 &\quad +2\xi^{-1} \int^{-\xi^2}_{-\eps^{-1}}\int_M \tau^{-1/2}|\square u| |u|  \left| f  - \frac{n}{2} - \mathrm{N}   \right| d\nu_t dt \\ \notag
 &\quad +2\xi^{-2}\int^{-\xi^2}_{-\eps^{-1}} \int_M \left| |\nabla u|^2 - \int_M |\nabla u|^2 d\nu_t \right| \left| f  - \frac{n}{2} - \mathrm{N}   \right| d\nu_t dt \\ \notag
 &\quad  + 2 \xi^{-2} \int^{-\xi^2}_{-\eps^{-1}} \left( \int_M |\nabla u|^2 d\nu_t \right) \left|\mathcal{N}_{(x_0,0)}(\tau)-\mathrm{N}  \right| dt.
\end{align}
Let us estimate each term in the right hand side of \eqref{eq:each term}.
Let $a>0$.
For the first term,
\eqref{eq:almost constant11}, Proposition \ref{prop:integral1} and \eqref{eq:uniform} imply
\begin{align}\label{eq:first term}
&\quad\,\,  \int_{- \eps^{-1}}^{- \xi^2} \int_M u^2  \left|\tau (2\Delta f - |\nabla f|^2 + \HH )+f-n-\mathrm{N}  \right| \, d\nu_t dt\\ \notag
&\leq a^{-1} \int_{- \eps^{-1}}^{- \xi^2} \int_M \left|\tau (2\Delta f - |\nabla f|^2 + \HH )+f-n-\mathrm{N}  \right| \,  d\nu_t dt\\ \notag
&\quad  + a \int_{- \eps^{-1}}^{- \xi^2} \int_M \left(\tau (2\Delta f - |\nabla f|^2 + \HH )+f-n-\mathrm{N}  \right)^2 \,  d\nu_t dt + a \int_{- \eps^{-1}}^{- \xi^2} \int_M  u^8\, d\nu_t dt \\ \notag
&\leq a^{-1} \eta + a \,C_{\kappa,D,\eps,\xi}.
\end{align}
For the second term,
\eqref{eq:almost constant12}, Proposition \ref{prop:integral1} and \eqref{eq:uniform} lead us to
\begin{align}\label{eq:second term}
\int_{- \eps^{-1}}^{- \xi^2} \int_M u^2  \tau\left|\HH + \Delta f -  \frac{n}{2\tau} \right| \, d\nu_t dt &\leq a^{-1} \int_{- \eps^{-1}}^{- \xi^2} \int_M \tau \left|\HH + \Delta f -  \frac{n}{2\tau} \right| \,  d\nu_t dt\\ \notag
&\quad  + a \int_{- \eps^{-1}}^{- \xi^2} \int_M \tau^2\left(\HH + \Delta f -  \frac{n}{2\tau} \right)^2 \,  d\nu_t dt\\ \notag
&\quad + a \int_{- \eps^{-1}}^{- \xi^2} \int_M  u^8\, d\nu_t dt \\ \notag
&\leq a^{-1} \eta + a \,C_{\kappa,D,\eps,\xi}.
\end{align}
For the third term,
\eqref{eq:almost constant13}, Proposition \ref{prop:integral1} and \eqref{eq:uniform} yield
\begin{align*}
\int_{- \eps^{-1}}^{- \xi^2} \int_M \tau^{-1/2} |\square u| |u|\left| f  - \frac{n}{2} - \mathrm{N}   \right| \, d\nu_t dt &\leq a^{-1} \int_{- \eps^{-1}}^{- \xi^2} \int_M \tau^{-1/2}|\square u|  \,  d\nu_t dt\\
&\quad  + a \int_{- \eps^{-1}}^{- \xi^2} \int_M  \tau^{-1} (\square u)^2\,  d\nu_t dt\\
&\quad + a \int_{- \eps^{-1}}^{- \xi^2} \int_M  u^4 \left( f  - \frac{n}{2} - \mathrm{N}   \right)^4 d\nu_t dt \\
&\leq a^{-1} \eta + a \,C_{\kappa,D,\eps,\xi}.
\end{align*}
For the fourth term,
\eqref{eq:almost constant14}, the Kato inequality, Propositions \ref{prop:integral1} and \ref{prop:Poincare} with $p=1$ and \eqref{eq:uniform} imply
\begin{align*}
&\quad\,\, \int^{-\xi^2}_{-\eps^{-1}} \int_M \left| |\nabla u|^2 - \int_M |\nabla u|^2 d\nu_t \right| \left| f  - \frac{n}{2} - \mathrm{N}   \right| d\nu_t dt\\
&\leq a^{-1}\int_{- \eps^{-1}}^{- \xi^2} \int_M  \left| |\nabla u|^2 - \int_M |\nabla u|^2 d\nu_t \right|   d\nu_t dt\\
&\quad +a\int_{- \eps^{-1}}^{- \xi^2} \int_M  \left( |\nabla u|^2 - \int_M |\nabla u|^2 d\nu_t \right)^2   d\nu_t dt +a\int_{- \eps^{-1}}^{- \xi^2} \int_M  \left( f  - \frac{n}{2} - \mathrm{N}   \right)^4   d\nu_t dt\\
&\leq C a^{-1}\int_{- \eps^{-1}}^{- \xi^2} \int_M  |\nabla |\nabla u|^2 |     d\nu_t dt+a\, C_{\kappa,D,\eps,\xi}\\
&\leq C a^{-1}\int_{- \eps^{-1}}^{- \xi^2} \int_M  |\nabla u||\nabla^2 u|    d\nu_t dt+a\, C_{\kappa,D,\eps,\xi}\\
&\leq C a^{-1}\left(\int_{- \eps^{-1}}^{- \xi^2} \int_M  |\nabla u|^2    d\nu_t dt \right)^{1/2}\left(\int_{- \eps^{-1}}^{- \xi^2} \int_M  |\nabla^2 u|^2    d\nu_t dt \right)^{1/2}+a \,C_{\kappa,D,\eps,\xi}\\
&\leq a^{-1}C_{\kappa,D,\eps,\xi} \eta^{1/2} + a \,C_{\kappa,D,\eps,\xi}.
\end{align*}
For the last term,
\eqref{eq:almost constant15}, Proposition \ref{prop:integral1} and \eqref{eq:uniform} tell us that
\begin{equation*}
\int^{-\xi^2}_{-\eps^{-1}} \left( \int_M |\nabla u|^2 d\nu_t \right) \left|\mathcal{N}_{(x_0,0)}(\tau)-\mathrm{N} \right| dt\leq C_{\kappa,D,\eps,\xi} \,\eta.
\end{equation*}
Combining them with \eqref{eq:each term},
and choosing $a=\eta^{1/4}$,
we obtain
\begin{align*}
&\quad \,\,\left| \int_M \tau^{-1} u^2 \left( f- \frac{n}2 - \mathrm{N}  \right) d\nu_{s_2}-\int_M \tau^{-1} u^2 \left( f- \frac{n}2 - \mathrm{N}  \right) d\nu_{s_1} \right|\\
&\leq C_{\kappa,D,\eps,\xi}\left(a^{-1} \eta+ a^{-1} \eta^{1/2}+a  +\eta \right)\leq C_{\kappa,D,\eps,\xi}\, \eta^{1/4}.
\end{align*}
We arrive at \eqref{eq:almost constant_step1}.

Using \eqref{eq:conjugate integration2},
for every $t \in [- \eps^{-1}, - \xi^2 ]$ we also see the following (cf. \cite[Lemma 10.10]{B3}):
\begin{align*}
&\quad\,\, \int_M \left\{   |\nabla u|^2-\frac{1}{2}\tau^{-1}  u^2 \left(f- \frac{n}2 - \mathrm{N}  \right)  \right\} d\nu_t\\
&= -  \frac{1}{2}\int_M \tau^{-1} u^2 \left\{\tau (2\Delta f - |\nabla f|^2 + \HH )+f-n-\mathrm{N}  \right\} d\nu_t  \\ 
 &\quad +  \frac{1}{2}\int_M u^2 \left( \HH + \Delta f - \frac{n}{2\tau} \right) d\nu_t -\int_M u \Delta u \, d\nu_t.
\end{align*}
Let $b>0$.
By \eqref{eq:first term}, \eqref{eq:second term} and \eqref{eq:almost constant14},
we have
\begin{align*}
&\quad \,\,\int^{-\xi^2}_{-\eps^{-1}}\,\left| \int_M \,\left\{   |\nabla u|^2-\frac{1}{2}\tau^{-1}  u^2 \left(f- \frac{n}2 - \mathrm{N}  \right)  \right\} d\nu_t \right|\,dt\\
&\leq C \xi^{-2}\int^{-\xi^2}_{-\eps^{-1}}\,\int_M u^2 \left|\tau (2\Delta f - |\nabla f|^2 + \HH )+f-n-\mathrm{N}  \right| d\nu_t \, dt\\ 
&\quad + C\xi^{-2} \int^{-\xi^2}_{-\eps^{-1}}\,\int_M u^2\tau \left| \HH + \Delta f - \frac{n}{2\tau} \right| d\nu_t \,dt+ \int^{-\xi^2}_{-\eps^{-1}}\,\int_M |u| |\Delta u| \, d\nu_t \,dt\\
&\leq C_{\kappa,D,\eps,\xi} \left(b^{-1} \eta +b\right)+C \left(\int^{-\xi^2}_{-\eps^{-1}}\,\int_M\, u^2 \, d\nu_t \,dt \right)^{1/2}\left(\int^{-\xi^2}_{-\eps^{-1}}\,\int_M |\nabla^2 u|^2 \, d\nu_t \,dt \right)^{1/2}\\
&\leq C_{\kappa,D,\eps,\xi} \left(b^{-1} \eta +b+\eta^{1/2} \right)\leq C_{\kappa,D,\eps,\xi} \,\eta^{1/2},
\end{align*}
where we choose $b=\eta^{1/2}$.
It follows that
\begin{align*}
\int_{-\eps^{-1}}^{-\xi^2} \int_M \tau^{-1} \left| |\nabla u|^2 - \frac{\mathrm{c}}{2} \right|   d\nu_{t} dt
&\leq \int_{-\eps^{-1}}^{-\xi^2} \int_M \tau^{-1} \left| |\nabla u|^2 - \int_{M}\,|\nabla u|^2\,d\nu_t \right|   d\nu_{t} dt\\
&\quad +\int^{-\xi^2}_{-\eps^{-1}}\,\tau^{-1}\left| \int_M \,\left\{   |\nabla u|^2-\frac{1}{2}\tau^{-1}  u^2 \left(f- \frac{n}2 - \mathrm{N} \right)  \right\} d\nu_t \right|\,dt\\
&\quad +\frac{1}{2}\int^{-\xi^2}_{-\eps^{-1}}\,\tau^{-1}\left| \int_M \,\left\{ \tau^{-1}  u^2 \left(f- \frac{n}2 - \mathrm{N} \right)-\mathrm{c}  \right\} d\nu_t \right|\,dt\\
&\leq C_{\kappa,\eps,\xi} \,\eta^{1/4}.
\end{align*}
This proves \eqref{eq:almost constant1}.
Once we obtain \eqref{eq:almost constant1},
the estimate \eqref{eq:almost constant2} can be derived from the same calculation.
Moreover,
\eqref{eq:almost constant3} follows from \eqref{eq:almost constant1} and \eqref{eq:almost constant2}.
We complete the proof.
\end{proof}

We also prove the following (cf. \cite[Claim 10.32]{B3}):
\begin{lem}\label{lem:splitting lem4}
For $\kappa, D>0,\xi\in (0,1)$,
if $\beta \leq \ov{\beta},\mathfrak{D} \geq \underline{\mathfrak{D}}_{\kappa,D},\delta \leq \ov\delta_{\kappa, D,\xi}$,
then the following holds:
Let $(M,(g_t)_{t\in I})$ be a super Ricci flow with $\mathcal{D}\geq 0$.
Let $(x_0, t_0), (x_1, t_1),(x_2,t_2)  \in M \times I$ with $t_0\leq t_1\leq t_2$.
For $r>0$,
we assume that $(x_0, t_0), (x_1, t_1),(x_2,t_2)$ are $(\delta, r)$-selfsimilar, $\NN_{(x_0, t_0)}(r^2) \geq - \kappa$ and $0 \leq t_i - t_0 \leq \beta \xi^2 r^2$ for $i=1,2$.
Suppose the following:
\begin{enumerate}\setlength{\itemsep}{+2.0mm}
\item $W_1(\nu_{(x_0, t_0);t_0 - r^2},\nu_{(x_i, t_i); t_0- r^2}) \leq D r$ for $i=1,2$;
\item $W_1(\nu_{(x_1, t_1);t_0 - 2\xi^2 r^2}, \nu_{(x_2, t_2); t_0 - 2\xi^2 r^2}) \geq  \mathfrak{D} \xi r$.
\end{enumerate}
For $i=1,2$,
let $u_i$ be a function defined as \eqref{eq:linear_static2}.
Then we have
\begin{equation*}
\int^{t_0-\xi^2 r^2}_{t_0-2\xi^2 r^2} \int_M \tau^{-1} |\nabla (u_1 - u_2) |^2 d\nu_{(x_0,t_0);t} dt \geq  \xi^2.
\end{equation*}
\end{lem}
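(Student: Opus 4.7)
The plan is a proof by contradiction, following the strategy of \cite[Claim 10.32]{B3}. After the normalization $t_0 = 0$, $r = 1$, suppose
\[
I := \int_{-2\xi^2}^{-\xi^2}\!\int_M \tau^{-1}\bigl|\nabla(u_1-u_2)\bigr|^2 \, d\nu_{(x_0,0);t}\,dt < \xi^2;
\]
we aim to derive a contradiction once $\mathfrak{D}$ is large enough depending only on $\kappa$ and $D$. The Wasserstein triangle inequality together with hypothesis (1) yields $W_1(\nu_{(x_1,t_1);s},\nu_{(x_2,t_2);s}) \leq 2D$, and Lemma \ref{lem:useful} then gives $\NN_{(x_i,t_i)}(1) \geq -C_{\kappa,D}$ for $i=1,2$. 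Thus Lemma \ref{lem:splitting lem3} applies to the triple $(x_0,0), (x_1,t_1), (x_2,t_2)$ and produces a constant $\mathrm{c}_2$ such that $|\nabla(u_1-u_2)|^2$ is arbitrarily close to $\mathrm{c}_2$ in $L^1(\tau^{-1}\,d\nu_{(x_0,0);t}\,dt)$ on $[-\eps^{-1},-\xi^2]$. Since $\int_{-2\xi^2}^{-\xi^2}\tau^{-1}dt = \log 2$, the contradiction hypothesis forces $\mathrm{c}_2 < \xi^2/\log 2 + o(1)$.

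The task reduces to proving the reverse bound $\mathrm{c}_2 \gtrsim \mathfrak{D}^2$. The Euclidean model gives the heuristic: with each $\nu_{(x_i,t_i);t}$ a Gaussian centered at $x_i$, the $u_i$ become affine $-x_i\cdot y/2 + \mathrm{const}$, so $|\nabla(u_1-u_2)|^2 \equiv |x_1-x_2|^2/4$ and $W_1(\nu_{(x_1,t_1);t},\nu_{(x_2,t_2);t}) = |x_1-x_2|$, giving $\mathrm{c}_2 = W_1^2/4$. In the general setting, at fixed $t \in [-2\xi^2,-\xi^2]$ I will combine three ingredients. First, a direct computation using $\int f_i\,d\nu_{(x_i,t_i);t} = \NN_{(x_i,t_i)}(\tau_i) + n/2$ together with $|\tau_1-\tau_2| \leq \beta\xi^2$ gives
\[
\int_M (u_1-u_2)\bigl(d\nu_{(x_2,t_2);t} - d\nu_{(x_1,t_1);t}\bigr) = \tau\bigl(H(\nu_{(x_1,t_1);t}\,|\,\nu_{(x_2,t_2);t}) + H(\nu_{(x_2,t_2);t}\,|\,\nu_{(x_1,t_1);t})\bigr) + O_{\kappa,D}(\beta\xi^2).
\]
Second, a Talagrand $T_1$-type inequality bounds $W_1(\nu_{(x_1,t_1);t},\nu_{(x_2,t_2);t})^2 \leq C\tau\, H(\nu_{(x_1,t_1);t}\,|\,\nu_{(x_2,t_2);t})$; this should be available from a Perelman-type log-Sobolev inequality for conjugate heat kernel measures in super Ricci flow with $\mathcal{D}\geq 0$, itself a standard consequence of Lemma \ref{lem:Nash}. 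Third, a Benamou--Brenier/Cauchy--Schwarz bound gives $|\int(u_1-u_2)(d\nu_{(x_2,t_2);t}-d\nu_{(x_1,t_1);t})| \leq C'\sqrt{\tau\mathrm{c}_2}\cdot W_2(\nu_{(x_1,t_1);t},\nu_{(x_2,t_2);t})$, using the uniform $L^2(\nu_{(x_0,0);t})$ control on $\tau^{-1/2}\nabla(u_1-u_2)$ from Lemma \ref{lem:splitting lem2}. By Proposition \ref{prop:monotonicity_W1}, the initial bound $W_1 \geq \mathfrak{D}\xi$ propagates forward to all admissible $t$; chaining the three ingredients yields $W_1^2 \lesssim \tau\mathrm{c}_2$, equivalently $\mathrm{c}_2 \gtrsim \mathfrak{D}^2\xi^2/\tau \geq \mathfrak{D}^2/2$ for $\tau \leq 2\xi^2$, contradicting the upper bound on $\mathrm{c}_2$ once $\mathfrak{D}$ is large enough.

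\emph{The main obstacle} is step two above: one needs a $T_1$-type inequality $W_1^2 \lesssim \tau H$ for conjugate heat kernel measures in super Ricci flow with a constant depending only on $\kappa$ and $D$. The Perelman-type $\mathcal{W}$-entropy machinery combined with $\mathcal{D}\geq 0$ should deliver it, but tracking all constants to remain dimension-free is delicate. A secondary point is that the Benamou--Brenier pairing in step three naturally involves a geodesic interpolation between $\nu_{(x_1,t_1);t}$ and $\nu_{(x_2,t_2);t}$ rather than the reference measure $\nu_{(x_0,0);t}$, so one must bridge via Proposition \ref{prop:inheriting}, absorbing the resulting exponential weights $e^{\theta f_i}$ using Proposition \ref{prop:integral1}. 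Throughout, the errors from the almost-selfsimilarity assumption and from the almost-constancy provided by Lemma \ref{lem:splitting lem3} must be kept quantitatively smaller than the target $\xi^2$, which forces the smallness parameter $\delta$ to depend on $\xi$ in the final choice.
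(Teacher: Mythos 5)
Your strategy is genuinely different from the paper's, and the central piece that would make it work is missing. The paper never uses relative entropy, Talagrand inequalities, or Benamou--Brenier duality at any point. Instead, its proof of this lemma is a localization argument: after obtaining a constant $\mathrm{c}$ from Lemma~\ref{lem:splitting lem3} and a good time slice $s$, it transfers the almost-selfsimilar estimates onto $\nu^2_s$ via Proposition~\ref{prop:inheriting}, fixes a ball $B$ around the $\Cn$-center $(y_2,s)$ of $(x_2,t_2)$, and uses the lower bound $d_s(y_1,y_2)\gtrsim\mathfrak{D}\xi$ to show $\nu^2_s(B)\geq 1/2$ while $\nu^1_s(B)\lesssim 1/\mathfrak{D}^2$ (via \eqref{eq:elementary concentration}). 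It then argues: if $\mathrm{c}<3\xi^2$, then on a high-$\nu^2_s$-measure subset $\Omega\subset B$ the densities $f_1,f_2$ are pointwise comparable, which forces $\nu^2_s(\Omega)\lesssim\nu^1_s(B)\lesssim 1/\mathfrak{D}^2$, a contradiction for $\mathfrak{D}$ large. No global functional inequality is used.

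The gap in your argument is the one you yourself flag: you invoke a $T_1$ inequality $W_1^2\lesssim \tau\, H(\nu^1|\nu^2)$ for conjugate heat kernel measures along super Ricci flow, with a constant depending only on $\kappa$ (and $n$). Nothing in this paper provides such an inequality: only the $L^p$-Poincar\'e inequality (Proposition~\ref{prop:Poincare}) is stated, and passing from Poincar\'e to log-Sobolev to Talagrand is not free. In particular, log-Sobolev for conjugate heat kernel measures requires more than the variance bound one extracts from the Nash entropy; it would need to be derived from the $\Phi$-gradient estimate (Theorem~\ref{thm:gradient_estimate}) in the style of Hein--Naber, and then one would still need to verify that the resulting constant is governed by the Nash entropy lower bound $\NN_{(x_2,t_2)}(r^2)\geq -C_{\kappa,D}$ rather than by pointwise curvature data. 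A secondary, independent gap is the step-three Benamou--Brenier/Kuwada pairing: the natural $L^2$ norm of $\nabla(u_1-u_2)$ appearing there is with respect to a $W_2$-geodesic interpolant between $\nu^1_t$ and $\nu^2_t$, whereas Lemma~\ref{lem:splitting lem2} controls the $L^2(\nu^0_t)$ norm; Proposition~\ref{prop:inheriting} compares $\nu^0,\nu^1,\nu^2$ at a fixed time but does not by itself control displacement interpolants between them, so the ``bridge'' you gesture at is not actually available from the paper's toolkit. Until these two pieces are supplied, the proposal does not close; the paper's concentration argument circumvents both obstructions entirely by working with sublevel sets of the densities rather than with entropy functionals.
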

\begin{proof}
We may assume $t_0=0$ and $r=1$.
Set $\nu:=\nu_{(x_0,0)}$ and $\nu^i:=\nu_{(x_i,t_i)}$ for $i=1,2$.
Fix $\eta \in (0,1)$.
By Proposition \ref{prop:almost soliton estimate} and Lemma \ref{lem:splitting lem3},
if $\delta \leq \ov\delta_{\kappa, D,\eta,\xi}$,
then there exists a constant $\mathrm{c}\in \mathbb{R}$ such that
\begin{equation*}
\int^{-\xi^2}_{-2\xi^2} \left\{ \int_M \tau^{-1}\left| |\nabla (u_1 - u_2) |^2 -\mathrm{c} \right| d\nu_t+\sum^2_{i=1}\int_M \left| - \tau_i (  |\nabla f_i|^2 + \HH) + f_i  - \mathrm{N}_i \right|  d\nu^i_{t}       \right\} dt  \leq 3 \eta;
\end{equation*}
in particular,
there exists $s \in [-2\xi^2, -\xi^2]$ such that
\begin{align}\label{eq:main almost splitting7.100}
&\quad\,\, \int_M  \tau^{-1}\left| | \nabla ( \tau_1 f_{1} - \tau_2 f_{2}) |^2 -\mathrm{c} \right| d\nu_{s}+\sum^{2}_{i=1} \int_M \left| - \tau_i (  |\nabla f_i|^2 + \HH) + f_i  - \mathrm{N}_i \right|  d\nu^i_{s}\\ \notag
&=\int_M \tau^{-1} \left|  |\nabla (u_1 - u_2) |^2 -\mathrm{c} \right| d\nu_{s}+\sum^{2}_{i=1}\int_M \left| - \tau_i (  |\nabla f_i|^2 + \HH) + f_i  - \mathrm{N} _i \right|  d\nu^i_{s}\leq C\eta \,\xi^{-2}.
\end{align}
Let $\zeta\in (0,1)$.
Due to Propositions \ref{prop:monotonicity_W1} and \ref{prop:dist expansion},
if $\delta  \leq \ov\delta_{\kappa, D,\xi,\zeta}$,
then $W_1(\nu^j_{-\zeta \xi^2}, \nu^2_{-\zeta \xi^2} ) \leq C_{\kappa,D, \xi,\zeta}$ for $j=0,1$. 
If $\beta\leq (\mathfrak{C} \theta)/2$ and $\zeta \leq \ov{\zeta}_{\beta}$,
then
\begin{align*}
&\HH(\cdot,s)\geq -\delta \geq -(1-\zeta)^{-1} \xi^{-2}\geq -(-\zeta \xi^2-s)^{-1},\\
&W_1(\nu^j_{-\zeta \xi^2}, \nu^2_{-\zeta \xi^2} ) \leq C_{1,\kappa,D,\xi,\zeta}\sqrt{-\zeta \xi^2-s},\\
&t_2+\zeta \xi^2 \leq (\beta+\zeta) \xi^2 \leq \mathfrak{C}\theta (1-\zeta) \xi^2 \leq \mathfrak{C}\theta (-\zeta \xi^2-s),\\
&\zeta \xi^2 \leq t_1+\zeta \xi^2\leq (\beta+\zeta) \xi^2\leq C^2_{1,\kappa,D,\xi,\zeta}(1-\zeta) \xi^2,
\end{align*}
where $\theta \in (0,\ov\theta]$ and $\mathfrak{C}$ are constants obtained in Propositions \ref{prop:almost soliton estimate} and \ref{prop:inheriting},
respectively.
Proposition \ref{prop:inheriting} implies $e^{-\theta f_2}\nu^2_s \leq C_{\kappa,D,\xi} \,\nu^j_s$ for $j=0,1$;
in particular, \eqref{eq:main almost splitting7.100} yields
\begin{equation}\label{eq:main almost splitting7.4}
\int_M \left( \tau^{-1}\left| | \nabla ( \tau_1 f_{1} - \tau_2 f_{2}) |^2 -\mathrm{c} \right|+\sum^{2}_{i=1} \left| - \tau_i (  |\nabla f_i|^2 + \HH) + f_i  - \mathrm{N}_i \right|  \right)\,e^{-\theta f_2}\,d\nu^2_{s}\leq  C_{\kappa,D,\xi}\,\eta.
\end{equation}

For each $i= 1,2$,
let $(y_i, s)$ be a center of $(x_i, t_i)$ (see Proposition \ref{prop:center}).
By Lemma \ref{lem:W-V} and Proposition \ref{prop:monotonicity_W1},
if $\mathfrak{D} \geq \underline{\mathfrak{D}}$, then
\begin{align}\label{eq:main almost splitting7.5}
 d_s (y_1 , y_2)& \geq W_1(\nu^1_{-2\xi^2}, \nu^2_{-2\xi^2}) -\sqrt{\Var(\delta_{y_1}, \nu^1_{s})} - \sqrt{\Var (\delta_{y_2}, \nu^2_{s})} \\ \notag
                        &\geq \left(\mathfrak{D} - 2 \sqrt{2\Cn} \right) \xi \geq \frac12 \mathfrak{D} \xi.
\end{align}
From \eqref{eq:main almost splitting7.5}, Lemma \ref{lem:concentration1} and \eqref{eq:elementary concentration},
it follows that
\begin{equation}\label{eq:main almost splitting9}
 \nu^2_{s} (  B ) \geq \frac12,\quad \nu^1_{s} (  B ) \leq \nu^1_{s} \left( M \setminus B(y_1,s, \mathfrak{D} \xi/4) \right) \leq   \frac{\Var(\nu^1_{s}, \delta_{y_1})}{(\mathfrak{D} \xi/4)^2}  \leq   \frac{32 \Cn}{\mathfrak{D}^2},
\end{equation}
where $B := B(y_2,s,2 \sqrt{ \Cn } \xi)$.
For $a>0$, we set $\Omega_{0} := \{ f_2(\cdot,s) \leq a \} \cap B$.
By Proposition \ref{prop:upper volume},
if $a \geq \underline{a}$,
then
\begin{equation}\label{eq:main almost splitting12}
\nu^2_{s} ( B \setminus \Omega_{0} ) \leq C \xi^{-n} e^{-a} m_{s}(B)\leq C  e^{-a} \leq \frac18.
\end{equation}
By \eqref{eq:main almost splitting7.4},
if $\eta\leq \ov{\eta}_{\kappa,D,\xi}$,
then there is $\Omega \subset \Omega_{0}$ with $\nu^2_{s} \left( \Omega_{0} \setminus \Omega \right) \leq 1/8$ such that
for $i =1,2$,
\begin{equation*}
\tau^{-1}\left| | \nabla ( \tau_1 f_{1} - \tau_2 f_{2}) |^2 -\mathrm{c} \right|+\sum^{2}_{i=1} \left| - \tau_i (  |\nabla f_i|^2 + \HH) + f_i  - \mathrm{N}_i \right| \leq 1
\end{equation*}
on $\Omega$ at time $s$.
Note that $\tau \in [\xi^2,2\xi^2],\tau_i \in [\xi^2,3\xi^2]$.
We also notice that
if $\mathrm{c}<3\xi^2$,
then
\begin{align}\label{eq:main almost splitting12}
&\tau_2 |\nabla f_2 |^2 \leq  \left|  \tau_2 (  |\nabla f_2|^2 + \HH) - f_2  + \mathrm{N}_2  \right|-\tau_2 \HH+ f_2-\mathrm{N}_2 \leq C_{\kappa, D,a},\\ \notag
&\tau_1 |\nabla f_1 |^2 \leq  2 \xi^{-2} \left(  | \nabla ( \tau_1 f_{1} - \tau_2 f_{2})|^2+\tau_2^2 |\nabla f_2 |^2 \right) \leq 2  \xi^{-2} \left( \mathrm{c}+\tau+C_{\kappa,D,a} \xi^2 \right) \leq C_{\kappa,D,a},\\ \notag
&\left| \tau_1 |\nabla f_1| - \tau_2 |\nabla f_2| \right| \leq (\mathrm{c}+\tau)^{1/2}\leq C\xi
\end{align}
on $\Omega$.
In view of \eqref{eq:main almost splitting9} and \eqref{eq:main almost splitting12},
we also possess $\nu^2_{s} (\Omega)\geq 1/4$.

Let us show $\mathrm{c}\geq 3\xi^2$.
We suppose $\mathrm{c} < 3\xi^2$.
From \eqref{eq:main almost splitting12},
we derive
\begin{align*}
\tau_2 f_2 - \tau_1 f_1 &\geq (\tau_2^2 - \tau_1^2) \HH -\sqrt{2} \left| \tau_1 |\nabla f_1| - \tau_2 |\nabla f_2| \right|\left(\sum^{2}_{i=1}\tau^2_i |\nabla f_i|^2\right)^{1/2}-\sum^2_{i=1}\tau_i|\mathrm{N}_i|\\
&\quad\,\, -\sum^2_{i=1}\tau_i \left| - \tau_i (  |\nabla f_i|^2 + \HH) + f_i  - \mathrm{N}_i  \right|\geq  -C_{\kappa,D,a}\, \xi^2
\end{align*}
on $\Omega$.
It follows that $f_1 \leq C_{\kappa,D,a}$ on $\Omega$.
Therefore, \eqref{eq:main almost splitting9} leads us to
\begin{equation*}
\frac{1}{4} \leq \nu^2_{s} (\Omega) =  \int_{\Omega} (4\pi \tau_2)^{-n/2} e^{-f_2} dm_{s}\leq  C_{\kappa,D,a} \int_{\Omega} (4 \pi \tau_1)^{-n/2} e^{-f_1} dm_{s}= C_{\kappa,D,a}\, \nu^1_{s} (B) \leq \frac{C_{\kappa,D,a} }{\mathfrak{D}^2}.
\end{equation*}
If $\mathfrak{D} \geq \underline{\mathfrak{D}}_{\kappa,D,a}$,
then this yields the contradiction,
and hence $\mathrm{c} \geq 3\xi^2$.
We conclude
\begin{align*}
\int^{-\xi^2}_{-2\xi^2} \int_M \tau^{-1} |\nabla (u_1 - u_2) |^2 d\nu_t dt &\geq \frac{\mathrm{c}}{2}-\int^{-\xi^2}_{-2\xi^2} \int_M \tau^{-1}\left| |\nabla (u_1 - u_2) |^2 -\mathrm{c} \right| d\nu_t dt\geq \xi^2
\end{align*}
if $\eta \leq \xi^2/2$.
We complete the proof.
\end{proof}

We are now in a position to conclude Theorem \ref{thm:main almost splitting}:
\begin{proof}[Proof of Theorem \ref{thm:main almost splitting}]
Once we obtain Lemmas \ref{lem:splitting lem1}, \ref{lem:splitting lem2}, \ref{lem:splitting lem3}, \ref{lem:splitting lem4}, 
Theorem \ref{thm:main almost splitting} follows from the same argument as in the proof of \cite[Proposition 10.8]{B3} together with \cite[Lemma 10.23]{B3}.
Thus,
we complete the proof.
\end{proof}

\section{Quantitative stratification}\label{sec:stratification}

\subsection{Parabolic balls}\label{sec:parabolic_nbd}

For $(x_0, t_0) \in M \times I$ and $r>0$ (with $t_0-r^2\in I$),
the \textit{parabolic ball} is defined by
\begin{equation*}
P(x_0, t_0; r):=\left\{\, (x,t)\in M\times I \,\mid\, t \in [t_0-r^2, t_0+r^2],~ W_1(\nu_{(x_0, t_0); t_0 - r^2}, \nu_{(x,t);  t_0 - r^2}) < r \, \right\}.
\end{equation*}

We recall the following basic property (see \cite[Proposition 9.4]{B1}, \cite[Proposition 4.25]{B3}):
\begin{lem}[\cite{B1}]\label{lem:parab_nbhd}
For $(x_0, t_0), (x_1, t_1) \in M \times I$ and $r>0$,
if $P (x_0, t_0;r) \cap P (x_1, t_1; r) \neq \emptyset$, then $P (x_0, t_0; r) \subset P (x_1, t_1; 3r)$.
\end{lem}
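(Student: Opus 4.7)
The plan is a direct application of the triangle inequality for $W_1$ together with the Wasserstein monotonicity stated in Proposition~\ref{prop:monotonicity_W1}, all evaluated at a single sufficiently early time $\sigma$ chosen so that every monotonicity step is legitimate.

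First, pick any $(y,s)\in P(x_0,t_0;r)\cap P(x_1,t_1;r)$ witnessing the nonempty intersection. From $|s-t_0|\leq r^2$ and $|s-t_1|\leq r^2$ I get $|t_0-t_1|\leq 2r^2$. Now take an arbitrary $(x,t)\in P(x_0,t_0;r)$; the time component of membership in $P(x_1,t_1;3r)$ is immediate from
\begin{equation*}
|t-t_1|\leq|t-t_0|+|t_0-t_1|\leq r^2+2r^2=3r^2\leq 9r^2.
\end{equation*}

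Next, set $\sigma:=t_1-9r^2$. The bounds above give $\sigma\leq t_0-7r^2<t_0-r^2\leq\min\{s,t\}$, and obviously $\sigma<t_1-r^2$, so all four conjugate heat kernel measures $\nu_{(x_0,t_0);\sigma}$, $\nu_{(x_1,t_1);\sigma}$, $\nu_{(y,s);\sigma}$, $\nu_{(x,t);\sigma}$ are defined, and the monotonicity of $W_1$ is available between the three ``natural'' evaluation times ($t_1-r^2$, $t_0-r^2$, $t_0-r^2$) and $\sigma$. Inserting the intermediate base points $(y,s)$ and $(x_0,t_0)$ and then using Proposition~\ref{prop:monotonicity_W1} yields
\begin{align*}
W_1(\nu_{(x_1,t_1);\sigma},\nu_{(x,t);\sigma})
&\leq W_1(\nu_{(x_1,t_1);\sigma},\nu_{(y,s);\sigma})+W_1(\nu_{(y,s);\sigma},\nu_{(x_0,t_0);\sigma})\\
&\quad+W_1(\nu_{(x_0,t_0);\sigma},\nu_{(x,t);\sigma})\\
&\leq W_1(\nu_{(x_1,t_1);t_1-r^2},\nu_{(y,s);t_1-r^2})+W_1(\nu_{(y,s);t_0-r^2},\nu_{(x_0,t_0);t_0-r^2})\\
&\quad+W_1(\nu_{(x_0,t_0);t_0-r^2},\nu_{(x,t);t_0-r^2})\\
&<r+r+r=3r,
\end{align*}
the last inequality being the three defining estimates of the parabolic balls containing $(y,s)$ (twice) and $(x,t)$. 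Therefore $(x,t)\in P(x_1,t_1;3r)$, which is what we wanted.

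There is no substantive obstacle here; the only item requiring a little care is verifying $\sigma\leq\min\{t_0,t_1,s,t\}$ so that monotonicity applies at every step, and this is precisely what the $9r^2$ (equivalently, the factor $3$ in $P(x_1,t_1;3r)$) is built to guarantee. A constant smaller than $3$ would not suffice because $|t_0-t_1|$ can already be as large as $2r^2$.
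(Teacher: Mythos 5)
Your argument is correct and is essentially the one used by Bamler for \cite[Proposition 9.4]{B1}, which the paper cites rather than reproving: decompose via the triangle inequality for $W_1$ through the witness $(y,s)$ and the centre $(x_0,t_0)$, and push all three comparisons back to the common time $\sigma=t_1-9r^2$ using Proposition~\ref{prop:monotonicity_W1}. The bookkeeping you do to verify $\sigma\leq t_0-r^2\leq\min\{s,t\}$ and $\sigma\leq t_1-r^2$ is precisely the reason the radius is inflated to $3r$, and you have checked it correctly; the argument implicitly uses that the flow is a super Ricci flow (so that $W_1$-monotonicity applies), which is the standing assumption in the paper.
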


We first show the following volume estimate for time-slices (cf. \cite[Theorem 9.8]{B1}):
\begin{lem}\label{lem:time_slice}
Let $(M,(g_t)_{t\in I})$ be a super Ricci flow with $\mathcal{D}\geq 0$.
Let $(x_0,t_0)\in M\times I$.
For $r,\rho,R>0$,
we assume $[t_0 - (R^2+\rho) r^2, t_0] \subset I$.
Then for every $t \in [t_0 - R^2 r^2, t_0 + R^2r^2]$,
\begin{equation*}
 m_t(P (x_0, t_0; R r) \cap \left( M \times \{ t \} \right)) \leq C_{\rho,R} \exp ( \NN_{(x_0, t_0)} (\rho r^2/2) ) r^n.
\end{equation*}
\end{lem}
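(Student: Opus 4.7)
My plan is to follow the strategy of Proposition \ref{prop:upper volume}, replacing spatial closeness with the $W_1$-closeness that defines a parabolic ball. After rescaling to $t_0=0$, $r=1$, I would set $s:=-R^2-\rho/2$, which lies in $I$ by hypothesis, and use Remark \ref{rem:scal2} with starting time $-R^2-\rho\in I$ to obtain the uniform scalar lower bound $\HH\geq -2n/\rho$ on $[-R^2-3\rho/4,\infty)\cap I$. I would then pick a center $(z,s)$ of $(x_0,0)$ via Proposition \ref{prop:center}, which by Lemma \ref{lem:W-V} satisfies $W_1(\delta_z,\nu_{(x_0,0);s})\leq \sqrt{H_n(R^2+\rho/2)}$. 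For each $x$ in the time-slice $\Omega:=P(x_0,0;R)\cap (M\times\{t\})$, Proposition \ref{prop:monotonicity_W1} propagates the $W_1$ bound from time $-R^2$ back to $s$, giving $W_1(\nu_{(x_0,0);s},\nu_{(x,t);s})<R$; a triangle inequality then yields $W_1(\delta_z,\nu_{(x,t);s})\leq C_{R,\rho}$, and Markov's inequality produces $L$ depending only on $R,\rho$ with $\nu_{(x,t);s}(B^*)\geq 1/2$ uniformly in $x\in\Omega$, where $B^*:=B(z,s,L)$.

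Writing $\nu_{(x,t);s}(B^*)=\int_{B^*}G(x,t;y,s)\,dm_s(y)$, integrating over $\Omega$, and exchanging order of integration yields
\begin{equation*}
\tfrac{1}{2}m_t(\Omega)\leq \int_{B^*}\Bigl(\int_M G(\cdot,t;y,s)\,dm_t\Bigr)dm_s(y).
\end{equation*}
For fixed $y$, the inner integral equals $1$ at $t'=s$ and satisfies $\tfrac{d}{dt'}\int_M G\,dm_{t'}=-\int_M \HH\,G\,dm_{t'}\leq (2n/\rho)\int_M G\,dm_{t'}$, so a Gr\"onwall argument bounds it by $C_{R,\rho}$. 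Hence $m_t(\Omega)\leq C_{R,\rho}\,m_s(B^*)$.

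To bound $m_s(B^*)$, I would apply Proposition \ref{prop:upper volume} at the base $(z,s)$ with scale $r'':=\sqrt{\rho/4}$: the hypotheses $[s-(r'')^2,s]\subset I$ and $\HH(\cdot,s-(r'')^2)\geq -A/(r'')^2$ (with $A=n/2$) hold because $s-(r'')^2=-R^2-3\rho/4\in [-R^2-\rho,0]$ and Remark \ref{rem:scal2} gives $\HH\geq -2n/\rho$ there. This yields $m_s(B^*)\leq C_{R,\rho}\exp(\NN_{(z,s)}(\rho/4))$. To transfer the Nash entropy to $(x_0,0)$, I would apply Lemma \ref{lem:Nash cor} with reference past time $s':=-R^2-3\rho/4$, evaluation time $t=s$, and $W_1(\delta_z,\nu_{(x_0,0);s})\leq C_{R,\rho}$, obtaining $\NN_{(z,s)}(\rho/4)\leq \NN_{(x_0,0)}(R^2+3\rho/4)+C_{R,\rho}$. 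Monotonicity of the Nash entropy in $\tau$ (formula \eqref{eq:Nash3.5}) then gives $\NN_{(x_0,0)}(R^2+3\rho/4)\leq \NN_{(x_0,0)}(\rho/2)$, and assembling everything and un-rescaling yields the lemma. The main obstacle is the bookkeeping near the left endpoint of $I$: Remark \ref{rem:scal2}'s scalar lower bound degenerates there, preventing the cleaner choice $r''=\sqrt{\rho/2}$; I instead work at $\sqrt{\rho/4}$ and absorb the discrepancy via Lemma \ref{lem:Nash cor} and the monotonicity of $\NN$.
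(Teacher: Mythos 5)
Your proof is correct and follows essentially the same strategy as the paper's: pick a center of $(x_0,t_0)$ at a past reference time, show each time-slice of the parabolic ball concentrates in a fixed ball about that center, transfer the volume bound to time $t$ by a Gr\"onwall estimate using the scalar lower bound, and estimate the ball's volume at the reference time via Proposition~\ref{prop:upper volume} together with the Nash-entropy comparison of Lemma~\ref{lem:Nash cor} and monotonicity~\eqref{eq:Nash3.5}. The only difference is cosmetic: the paper anchors the center at time $t_0 - R^2 r^2$ (rather than $t_0-(R^2+\rho/2)r^2$) and then applies Proposition~\ref{prop:upper volume} at scale $\sqrt{\rho/2}\,r$, which leaves room of $(\rho/2)r^2$ to the left endpoint of $I$ and avoids the $\sqrt{\rho/4}$ workaround you mention.
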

\begin{proof}
We may assume $t_0=R^2$ and $r=1$.
Lemma \ref{lem:scal} implies $\HH(\cdot,-\rho/2)\geq -n/\rho$ (see Remark \ref{rem:scal2}).
Let $(z_0,0)$ be a center of $(x_0,R^2)$ (see Proposition \ref{prop:center}).
For $t \in [0,2R^2]$ and $x \in S_t$,
let $(z, 0)$ be a center of $(x,t)$,
where $S_t := P (x_0, t_0; R) \cap \left( M \times \{ t \} \right)$. 
By Lemma \ref{lem:W-V},
\begin{align*}
 d_0 (z_0,z)&\leq W_1(\delta_{z_0}, \nu_{(x_0, R^2); 0})+W_1(\nu_{(x_0,R^2);0}, \nu_{(x,t);0})+W_1(\nu_{(x,t);0}, \delta_{z}) \\
                 &\leq \sqrt{\Cn} R +R + \sqrt{\Cn t} \leq C_{1}R
\end{align*}
for some $C_1>1$;
in particular,
$B(z, 0, \sqrt{2\Cn t})$ is contained in $B:=B(z_0,0, C_{1}R)$.
Proposition \ref{prop:concentration2} tells us that
\begin{equation}\label{eq:time_slice3}
 \nu_{(x,t);0} (B) \geq \nu_{(x,t);0} \left( B(z, 0, \sqrt{2\Cn t} ) \right) \geq \frac12. 
\end{equation}

Let $u \in L^\infty(M \times [0, 2R^2]) \cap C^\infty(M \times (0,2R^2])$ be the solution to the heat equation with initial condition $u(\cdot, 0) = \chi_B$.
By Lemma \ref{lem:scal},
we have
\begin{equation*}
\frac{d}{dt} \int_M u \, dm_t = - \int_M u \HH \, dm_t \leq  \frac{n}{\rho} \int_M u \, dm_t.
\end{equation*}
In virtue of \eqref{eq:time_slice3},
we possess $u \geq 1/2$ on $S_t$,
and hence
\begin{equation}\label{eq:time_slice4}
 \frac12 m_t(S_t) \leq \int_M u(\cdot, t) dm_t \leq e^{ \frac{n t}{\rho}} m_0(B)\leq C_{\rho} m_0(B). 
\end{equation}
Using Proposition \ref{prop:upper volume}, 
we obtain
\begin{equation}\label{eq:time_slice5}
m_0(B)\leq C_{\rho,R} \exp(\NN_{(z_0,0)}(\rho/2)).
\end{equation}
Also,
by \eqref{eq:Nash3.5} and Lemmas \ref{lem:Nash cor} and \ref{lem:W-V},
\begin{align}\label{eq:time_slice6}
\NN_{(z_0,0)}(\rho/2)-\NN_{(x_0,R^2)}(\rho/2)&\leq \NN_{(z_0,0)}(\rho/2)-\NN_{(x_0,R^2)}(\rho/2+R^2)\\ \notag
&= \NN_{-\rho/2}(z_0,0)-\NN_{-\rho/2}(x_0,R^2)\\ \notag
&\leq C_{\rho,R}( W_1(\delta_{z_0} , \nu_{(x_0,R^2);0})+1)\leq C_{\rho,R}.
\end{align}
From \eqref{eq:time_slice4}, \eqref{eq:time_slice5} and \eqref{eq:time_slice6},
we conclude the desired estimate.
\end{proof}

Based on Lemma \ref{lem:time_slice},
we present the following (cf. \cite[Theorem 9.11]{B1}):
\begin{prop}\label{prop:covering}
Let $(M,(g_t)_{t\in I})$ be a super Ricci flow with $\mathcal{D}\geq 0$.
Let $(x_0, t_0) \in M \times I$.
For $r ,\rho,R> 0$,
we assume $[t_0 - 2(R^2+\rho)r^2, t_0] \subset I$. 
Let $S \subset P(x_0, t_0; Rr)$ and $\xi \in (0,\sqrt{\rho}]$.
Then there exists $\{(x_i, t_i)\}^{N}_{i=1}\subset S$ such that
\begin{equation}\label{eq:covering1}
S \subset \bigcup_{i=1}^N P(x_i, t_i; \xi r), \quad N \leq C_{\rho,R} \,\xi^{-n-2}. 
\end{equation}
\end{prop}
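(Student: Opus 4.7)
The proof follows a standard Vitali covering strategy, combined with the spacetime volume estimate of Lemma \ref{lem:time_slice} and the lower volume bound of Proposition \ref{prop:lower volume}. To begin, by a greedy construction (or Zorn's lemma), extract a maximal family $\{(x_i, t_i)\}_{i=1}^N \subset S$ for which the parabolic balls $\{P(x_i, t_i; \xi r/3)\}_{i=1}^N$ are pairwise disjoint. Maximality forces, for every $(x, t) \in S$, an intersection $P(x, t; \xi r/3) \cap P(x_i, t_i; \xi r/3) \neq \emptyset$ for some $i$, and Lemma \ref{lem:parab_nbhd} then delivers $(x, t) \in P(x_i, t_i; \xi r)$, which proves the covering assertion.

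For the cardinality estimate, I would next confine the disjoint subfamily to a single enlarged parabolic ball around $(x_0, t_0)$. Combining the triangle inequality for $W_1$ with the Wasserstein monotonicity (Proposition \ref{prop:monotonicity_W1}), and using $\xi \leq \sqrt{\rho}$ together with $S \subset P(x_0, t_0; Rr)$, one verifies that $P(x_i, t_i; \xi r/3) \subset P(x_0, t_0; C_{\rho, R}\, r)$ for a suitable constant $C_{\rho, R}$. The assumption $[t_0 - 2(R^2 + \rho) r^2, t_0] \subset I$ is precisely what is required in order for Lemma \ref{lem:time_slice} to apply on the enlarged ball; integrating the time-slice bound over its time extent and invoking disjointness yields
\begin{equation*}
\sum_{i=1}^N \int m_t\bigl(P(x_i, t_i; \xi r/3) \cap (M \times \{t\})\bigr)\, dt \;\leq\; C_{\rho, R}\, \exp\bigl(\NN_{(x_0, t_0)}(\rho r^2/2)\bigr)\, r^{n+2}.
\end{equation*}

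The decisive step is a matching lower bound on the individual spacetime volumes. By Wasserstein monotonicity, the time-$t_i$ spatial slice of $P(x_i, t_i; \xi r/3)$ contains the metric ball $B(x_i, t_i, \xi r/3)$, so it suffices to lower bound $m_{t_i}(B(x_i, t_i, \xi r/3))$. This is obtained from Proposition \ref{prop:lower volume} (invoked at a suitably chosen base point, possibly after a small backward time shift to satisfy its hypotheses), combined with Lemma \ref{lem:Nash cor}, which converts $\NN_{(x_i, t_i)}$ into $\NN_{(x_0, t_0)}$ at the common base time $s = t_0 - 2(R^2 + \rho) r^2$ up to a constant $C_{\rho, R}$. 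Propagating this estimate over a time window of length $\sim (\xi r)^2$ around $t_i$ produces
\begin{equation*}
\int m_t\bigl(P(x_i, t_i; \xi r/3) \cap (M \times \{t\})\bigr)\, dt \;\geq\; C_{\rho, R}^{-1}\, \exp\bigl(\NN_{(x_0, t_0)}(\rho r^2/2)\bigr)\, (\xi r)^{n+2},
\end{equation*}
and dividing the upper by the lower cancels the common Nash entropy factor and produces $N \leq C_{\rho, R}\, \xi^{-n-2}$.

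The principal obstacle is exactly this individual lower bound: the spatial slice of $P(x_i, t_i; \xi r/3)$ at its natural base time $t_i - (\xi r/3)^2$ need not contain any metric ball around a center $z_i$, since by the concentration property $W_1(\nu_{(x_i,t_i);t_i - (\xi r/3)^2}, \delta_{z_i})$ is only of order $\sqrt{H_n}(\xi r/3)$, which exceeds the parabolic radius $\xi r/3$ (as $H_n>1$). The remedy is to extract the ball at the forward slice $t = t_i$, where Wasserstein monotonicity directly gives a metric ball inside the slice, and to use Lemma \ref{lem:scal} to relate the $t_i$-time volume to an earlier-time volume where Proposition \ref{prop:lower volume} applies directly; care is needed because $t_i$ may lie near the upper endpoint of $I$, so the time shift must be performed backwards rather than forwards.
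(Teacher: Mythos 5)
Your first half — the Vitali-type maximal selection of $\{(x_i,t_i)\}$ with $\{P(x_i,t_i;\xi r/3)\}$ pairwise disjoint, followed by Lemma \ref{lem:parab_nbhd} to obtain the covering — is exactly the paper's argument and is correct.

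The cardinality estimate, however, contains a genuine gap, and it is precisely at the point you flag as the principal obstacle. Your proposed remedy does not work. You note correctly that the time-$t_i$ slice of $P(x_i,t_i;\xi r/3)$ contains the metric ball $B(x_i,t_i,\xi r/3)$ (by Wasserstein monotonicity), but then you need a lower bound on $m_{t_i}(B(x_i,t_i,\xi r/3))$. Your plan is to ``use Lemma \ref{lem:scal} to relate the $t_i$-time volume to an earlier-time volume.'' But Lemma \ref{lem:scal} gives only a \emph{lower} bound on $\HH$. Since $\partial_t(dm_t)=-\HH\,dm_t$, a lower bound $\HH\geq -C$ yields $\frac{d}{dt}\log m_t(B)\leq C$, i.e.\ an \emph{upper} bound on the volume at a later time in terms of an earlier time — the opposite of what you need. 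Super Ricci flow has no upper bound on $\HH$, so the volume at the terminal slice $t=t_i$ can in principle collapse arbitrarily fast relative to the volume at the earlier slice where Proposition \ref{prop:lower volume} actually gives information (namely $t_i-r'^2$, around a center). No combination of Lemma \ref{lem:scal} and Proposition \ref{prop:lower volume} produces the lower bound
\begin{equation*}
\int m_t\bigl(P(x_i,t_i;\xi r/3)\cap(M\times\{t\})\bigr)\,dt\;\geq\;C_{\rho,R}^{-1}\exp\bigl(\NN_{(x_0,t_0)}(\rho r^2/2)\bigr)(\xi r)^{n+2}
\end{equation*}
that your spacetime-volume argument requires.

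The paper sidesteps this forward-propagation problem entirely. Instead of integrating over time, it uses a pigeonhole argument to select a sub-collection $\mathcal{I}\subset\{1,\dots,N\}$ with $|\mathcal{I}|\geq\lfloor C_{\zeta,R}\,\xi^2 N\rfloor$ together with a single past time $s$ satisfying $s\in[t_i-2\zeta\xi^2 r^2,t_i-\zeta\xi^2 r^2]$ for all $i\in\mathcal{I}$. For each such $i$ it takes a center $(z_i,s)$ of $(x_i,t_i)$ and shows — via the $W_1$ triangle inequality, Lemma \ref{lem:W-V}, and the disjointness of the $P(x_i,t_i;\xi r/3)$ — that the balls $B(z_i,s,\xi r/8)$ are pairwise disjoint \emph{at the common time $s$} and all lie inside $P(x_0,t_0;(R+\xi/4)r)\cap(M\times\{s\})$. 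Lemma \ref{lem:time_slice} then bounds the total volume from above, while Proposition \ref{prop:lower volume} (applied with parabolic radius $\sqrt{t_i-s}\sim\xi r$, so the ball lives \emph{at the past time $s$}, which is where that proposition actually gives information) bounds each individual ball volume from below after converting $\NN_{(x_i,t_i)}$ to $\NN_{(x_0,t_0)}$ via Lemmas \ref{lem:Nash cor} and \ref{lem:Nash_more}. This gives $|\mathcal{I}|\leq C_{\rho,R}\,\xi^{-n}$, and the pigeonhole density factor $\xi^{-2}$ promotes this to $N\leq C_{\rho,R}\,\xi^{-n-2}$. The crucial structural idea you are missing is to localize to a single common past time slice so that Proposition \ref{prop:lower volume} can be applied where it lives, rather than trying to push volume bounds forward to the terminal time.
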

\begin{proof}
We may assume $t_0=0$ and $r = 1$.
Let $\{(x_i, t_i)\}^{N}_{i=1} \subset S$ be a maximal collection of points such that $\{P (x_i, t_i; \xi/3)\}^{N}_{i=1}$ are pairwise disjoint.
Let $(x, t) \in S$.
By the maximality we see $P(x, t; \xi/3) \cap P(x_i, t_i; \xi/3) \neq \emptyset$ for some $i$.
From Lemma \ref{lem:parab_nbhd}, we deduce $(x, t) \in P (x_i, t_i;\xi)$,
which proves the inclusion in \eqref{eq:covering1}.

Let us derive an upper bound on $N$ in \eqref{eq:covering1}.
We fix $\zeta \in (0, 1/2]$.
If $\zeta \leq \ov\zeta_{\rho,R}$,
then there exist $\mathcal{I} \subset \{ 1, \ldots, N \}$ and $s \in [-R^2 - 2\zeta \xi^2,R^2 - \zeta \xi^2]$ such that
\begin{equation}\label{eq:covering2}
|\mathcal{I}| \geq \lfloor C_{ \zeta,R} \,\xi^2 \,N \rfloor,\quad s \in [t_i -2\zeta \xi^2, t_i - \zeta\xi^2]
\end{equation}
for all $i \in \mathcal{I}$.
For each $i \in \mathcal{I}$,
let $(z_i, s)$ be a center of $(x_i, t_i)$ (see Proposition \ref{prop:center}).
By Lemma \ref{lem:W-V} and Proposition \ref{prop:monotonicity_W1},
if $\zeta \leq \ov\zeta$,
then for all $i,j \in \mathcal{I}$ with $i \neq j$, it holds that
\begin{align*}
 d_{s} (z_i, z_j) &\geq W_1(\nu_{(x_i, t_i);s},\nu_{(x_j, t_j);s}) - W_1(\delta_{z_i}, \nu_{(x_i, t_i);s}) - W_1(\delta_{z_j}, \nu_{(x_j, t_j);s}) \\
&\geq W_1(\nu_{(x_i, t_i);t_i -  (\xi/3)^2},\nu_{(x_j, t_j);t_i -   (\xi/3)^2})   - 2 \sqrt{2\Cn \zeta}\xi \geq \frac{\xi}{3}  - 2 \sqrt{2\Cn \zeta} \xi \geq \frac{\xi}{4};
\end{align*}
in particular,
$\{B(z_i, s,\xi/8)\}_{i \in \mathcal{I}}$ are pairwise disjoint.
For each $i\in \mathcal{I}$,
Lemma \ref{lem:W-V} and Proposition \ref{prop:monotonicity_W1} lead us to
\begin{align*}
&\quad \,\,W_1(\nu_{(x_0,0); -R^2-2\zeta \xi^2}, \nu_{(z_i,s); -R^2-2\zeta \xi^2})\\
&\leq W_1(\nu_{(x_0,0); -R^2-2\zeta \xi^2}, \nu_{(x_i,t_i); -R^2-2\zeta \xi^2})+W_1(\nu_{(x_i,t_i); -R^2-2\zeta \xi^2}, \nu_{(z_i,s); -R^2-2\zeta \xi^2})\\
   &\leq W_1(\nu_{(x_0,0); -R^2}, \nu_{(x_i,t_i); -R^2})+W_1(\nu_{(x_i,t_i); s}, \delta_{z_i})\\
   &\leq R+\sqrt{H_n(t_i-s)}\leq R+\sqrt{2H_n \zeta} \xi<R+\frac{ \xi}{8}
\end{align*}
if $\zeta \leq \ov{\zeta}$,
and hence $(z_i,s)\in P(x_0,0;R+ \xi/8)\cap (M\times \{s\})$.
For every $y\in B(z_i,s,\xi/8)$,
\begin{align*}
&\quad \,\,W_1(\nu_{(x_0,0);-(R+\xi/8)^2},\nu_{(z_i,s);-(R+\xi/8)^2})\\
&\leq W_1(\nu_{(x_0,0);-(R+\xi/8)^2},\nu_{(y,s);-(R+\xi/8)^2})+d_s(y,z_i)\leq R+\frac{\xi}{4},
\end{align*}
and thus
$B(z_i,s,\xi/8)\times \{s\}$ is contained in $P(x_0,0;R+\xi/4)\cap (M\times \{s\})$.
Summarizing the above observations,
from Lemma \ref{lem:time_slice} and \eqref{eq:Nash3.5},
we derive
\begin{align}\label{eq:covering4}
|\mathcal{I}| \,\inf_{i\in \mathcal{I}}m_s(B(z_i, s,\xi/8)) &\leq \sum_{i\in \mathcal{I}}m_s(B(z_i, s,\xi/8))\\ \notag
&\leq m_s(P(x_0,0;R+\xi/4)\cap (M\times \{s\})\\ \notag
&\leq C_{\rho,R} \exp ( \NN_{(x_0, 0)} (\rho/2 )).
\end{align}
On the other hand,
by Lemma \ref{lem:scal} and Proposition \ref{prop:lower volume}, 
if $\zeta \leq \ov{\zeta}$,
then 
\begin{align}\label{eq:covering3}
 m_{s}(B(z_i, s,\xi/8)) &\geq m_{s}(B(z_i, s,2\sqrt{H_n \zeta}\xi)) \geq   m_s(B(z_i,s, \sqrt{2H_n(t_i-s)}))\\ \notag
 &\geq C_{\rho,R} \exp ( \NN_{(x_i, t_i)} (t_i-s ) ) (t_i-s)^{n/2}\geq C_{\rho,R} \exp ( \NN_{(x_i, t_i)} ( \rho/2 ) ) \xi^n.
\end{align}
Moreover, \eqref{eq:Nash3.5}, Lemmas \ref{lem:Nash cor} and \ref{lem:Nash_more} yield
\begin{equation}\label{eq:covering5}
\NN_{(x_i,t_i)} ( \rho/2 )\geq \NN_{(x_i,t_i)}(t_i+R^2+\rho/2)\geq \NN_{(x_0,0)}(R^2+\rho/2)-C_{\rho,R}\geq \NN_{(x_0,0)}(\rho/2)-C_{\rho,R}.
\end{equation}
Here,
we used $\HH(\cdot, -R^2-\rho/2)\geq -n/\rho$ by Lemma \ref{lem:scal} (see Remark \ref{rem:scal2}).
Combining \eqref{eq:covering4},  \eqref{eq:covering3}, \eqref{eq:covering5} implies $| \mathcal{I} | \leq C_{\rho,R} \xi^{-n}$.
This together with \eqref{eq:covering2} proves the claim.
\end{proof}

\subsection{Effective strata}\label{sec:eff_strat}

Due to Theorem \ref{thm:main almost splitting},
we have the following (cf. \cite[Lemma 11.3]{B3}):
\begin{prop}\label{prop:pre_stratification}
For $\kappa>0,\eps,\xi \in (0,1)$,
if $\delta \leq \ov\delta_{\kappa,  \eps, \xi}$,
then the following holds:
Let $(M,(g_t)_{t\in I})$ be a super Ricci flow with $\mathcal{D}\geq 0$.
Let $(x_0,t_0)\in M\times I$.
For $r>0$,
we assume $\NN_{(x_0, t_0)} (r^2) \geq - \kappa$.
For a subset $S\subset M\times I$,
we assume that every point in $S$ is $(\delta, r)$-selfsimilar, and none of the following two properties hold:
\begin{enumerate}\setlength{\itemsep}{+1.0mm}
\item $(k+1, \eps, r)$-split;
\item $(k-1, \eps,  r)$-split and $(\eps, r)$-static.
\end{enumerate}
Then there exists $\{(x_i,t_i)\}^{N}_{i=1}\subset S \cap P (x_0, t_0; r)$ such that
\begin{equation}\label{eq:pre_stratification1}
 S \cap P (x_0, t_0; r) \subset \bigcup_{i=1}^N P (x_i, t_i; \xi r),\quad N \leq C_{\kappa}\, \xi^{-k}. 
\end{equation}
\end{prop}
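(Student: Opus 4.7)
Without loss of generality, set $r=1$. The strategy is to adapt Bamler's proof of \cite[Lemma 11.3]{B3} for Ricci flow, substituting our Theorems \ref{thm:main almost splitting} and \ref{thm:almost static} for the corresponding Ricci flow results. First I would construct a candidate cover: pick a maximal collection $\{(x_i, t_i)\}_{i=1}^{N} \subset S \cap P(x_0, t_0; 1)$ with $\{P(x_i, t_i; \xi/3)\}$ pairwise disjoint. By Lemma \ref{lem:parab_nbhd}, the enlarged balls $\{P(x_i, t_i; \xi)\}$ cover $S \cap P(x_0, t_0; 1)$, reducing the claim to the combinatorial bound $N \leq C_\kappa \xi^{-k}$. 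Since $(x_i, t_i), (x_j, t_j) \in P(x_0, t_0; 1)$, the Wasserstein triangle inequality combined with Proposition \ref{prop:monotonicity_W1} and Lemma \ref{lem:useful} (with monotonicity \eqref{eq:Nash3.5} handling the case $t_i < t_0$) yields, for each $i$, a uniform Nash-entropy lower bound $\NN_{(x_i, t_i)}(1) \geq -\kappa'$ with $\kappa' = \kappa + C$, and pairwise $W_1(\nu_{(x_i, t_i); t_0 - 1}, \nu_{(x_j, t_j); t_0 - 1}) \leq 2$.

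Next I would argue by contradiction via pigeonholing in time. Assume $N > C_\kappa \xi^{-k}$ for a large constant to be chosen, and partition $[t_0 - 1, t_0 + 1]$ into buckets of width $\beta \xi^2$, where $\beta$ is the small constant from Theorem \ref{thm:main almost splitting} applied with the parameters $\kappa'$ and $D = 2$. Within a bucket $I_j$, let $(x_{i_0}, t_{i_0})$ be the point with smallest time; using the parabolic-ball disjointness together with Proposition \ref{prop:monotonicity_W1} (after inflating the initial disjointness scale by a factor depending on $\mathfrak{D}$ and subsequently re-covering by Proposition \ref{prop:covering}), I would extract a maximal sub-collection of $\{(x_i,t_i) : t_i \in I_j\}$ pairwise $\mathfrak{D}\xi$-separated in $W_1$ at time $t_{i_0} - 2\xi^2$. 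If this sub-collection has size $\geq \mathfrak{N}\xi^{-k}$, Theorem \ref{thm:main almost splitting} applied at $(x_{i_0}, t_{i_0})$ forces it to be $(k+1, \eps, 1)$-split, contradicting hypothesis (1); otherwise, the remaining bucket points lie in $W_1$-balls of radius $\mathfrak{D}\xi$ around extracted ones and can be recovered by a bounded number of parabolic balls of radius $\xi$ via Proposition \ref{prop:covering}.

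Summing these per-bucket counts naively yields only $N \lesssim_\kappa \xi^{-(k+2)}$, falling short of $\xi^{-k}$ by a factor $\xi^{-2}$ arising from the number of time buckets. The main obstacle, and the crux of the argument, is to absorb this factor by means of Theorem \ref{thm:almost static}: if the temporal spread $\max_i t_i - \min_i t_i$ exceeds a fixed threshold $\alpha$, then a pair $(x_i, t_i), (x_j, t_j) \in S \cap P(x_0, t_0; 1)$ with $\alpha \leq t_j - t_i \leq \alpha^{-1}$ and $W_1 \leq 2$ satisfies the hypotheses of Theorem \ref{thm:almost static}, making $(x_i, t_i)$ an $(\eps, 1)$-static point; rerunning the bucket argument at $(x_i, t_i)$ with $k$ replaced by $k-1$ then produces a $(k-1, \eps, 1)$-splitting (the extra temporal direction being absorbed into the static condition), which violates hypothesis (2). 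Hence the time-spread of $\{t_i\}$ must be of order $\xi^2$, only finitely many buckets contribute, and $N \leq C_\kappa \xi^{-k}$ follows. The delicate part is carefully tracking the dependencies of the constants $\beta$, $\mathfrak{D}$, $\mathfrak{N}$ on $\kappa$ across the bootstrap and verifying that the implicit recursion in $k$ closes with exactly the exponent $-k$ and not a weaker power.
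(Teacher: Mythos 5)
Your overall strategy — maximal disjoint packing, pigeonhole in time, apply Theorem~\ref{thm:main almost splitting} when time-concentrated, apply Theorem~\ref{thm:almost static} when time-spread — matches the paper's in outline, but the final contradiction step has a genuine gap. You write that ``if the temporal spread exceeds a threshold $\alpha$, then [one finds a static point]; rerunning the bucket argument at $(x_i,t_i)$ with $k$ replaced by $k-1$ then produces a $(k-1,\eps,1)$-splitting\ldots\ Hence the time-spread of $\{t_i\}$ must be of order $\xi^2$.'' This conclusion does not follow from what precedes it: locating one static point in one bucket does not, by itself, yield a time-concentrated sub-collection of size $\gtrsim \xi^{-(k-2)}$ needed to trigger a $(k-1)$-splitting via Theorem~\ref{thm:main almost splitting}, nor is it true that the time-spread of the packing must be $O(\xi^2)$ --- the contradiction the paper derives is not a constraint on the overall time-spread.

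The paper's argument has a more careful structure that you are missing. Sorting $t_1\leq\cdots\leq t_N$ and assuming $N\geq 10 C_{1,\kappa}C_{2,\kappa}^{-1}\xi^{-k}$, one first isolates the key claim (applying Theorem~\ref{thm:main almost splitting} at the rescaled parameter $\xi/100\mathfrak{D}$, with $D=2$, and with a general exponent $l$): any sub-collection of size $\geq \mathfrak{N}(100\mathfrak{D})^l\xi^{-l}$ concentrated within a time window of length $\beta(100\mathfrak{D})^{-2}\xi^2$ forces the earliest point to be $(l+1,\eps,1)$-split. Applying this with $l=k$ shows that no subset of size $\geq N/2$ can have time-spread $\leq C_{2,\kappa}\xi^2$; equivalently, $t_j - t_i \geq C_{2,\kappa}\xi^2$ whenever $j-i\geq N/2$. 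Pairing each of the first $\lfloor N/2\rfloor$ points with a later one and invoking Theorem~\ref{thm:almost static} makes the \emph{entire first half} $(\eps,1)$-static. Only then does pigeonholing within this static half (over $\sim\xi^{-2}$ time buckets) produce a concentrated subset $\mathcal{J}$ of size $\gtrsim \xi^{-(k-2)}$, and applying the key claim with $l=k-2$ shows its earliest point is both static and $(k-1,\eps,1)$-split, contradicting hypothesis~(2). Your sketch omits both this two-level application of the key claim (with $l=k$ and $l=k-2$) and the step that promotes a full half of the collection to being static before re-pigeonholing, which is exactly what recovers the lost factor $\xi^{-2}$.
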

\begin{proof}
We may assume $t_0 = 0$ and $r=1$.
We first notice that
by \eqref{eq:Nash3.5}, Lemmas \ref{lem:Nash cor} and \ref{lem:Nash_more},
for every $(x,t)\in P(x_0,0;1)$ we have
\begin{equation*}
\NN_{(x,t)}(1)\geq \NN_{(x,t)}(t+2)\geq \NN_{(x_0,0)}(2)-C\geq \NN_{(x_0,0)}(1)-C\geq  -C_{\kappa}.
\end{equation*}
Let $\{(x_i, t_i)\}^{N}_{i=1} \subset S \cap P(x_0, 0; 1)$ be a maximal collection of points such that $\{P (x_i, t_i; \xi/10 )\}^{N}_{i=1}$ are pairwise disjoint.
By the maximality and Lemma \ref{lem:parab_nbhd}, every point in $S \cap P(x_0, 0; 1)$ belongs to $P (x_i, t_i; 3 \xi/10)$ for some $i$,
which shows the inclusion in \eqref{eq:pre_stratification1}.

We derive an upper bound on $N$.
We may assume $t_1 \leq \ldots \leq t_N$.
We verify the following:
If $\delta\leq \ov{\delta}_{\kappa,\eps,\xi},N \geq \mathfrak{N}(100 \mathfrak{D})^{l} \xi^{-l}$,
and if $0 \leq t_i-t_1 \leq \beta (100\mathfrak{D})^{-2}\xi^2$ for all $i$,
then $(x_1, t_1)$ is $(l+1, \eps,1)$-split,
where $\beta\leq \ov\beta,\mathfrak{D} \geq \underline{\mathfrak{D}}_{\kappa}$ and $\mathfrak{N} \geq \underline{\mathfrak{N}}_{\kappa}$ are constants obtained in Theorem \ref{thm:main almost splitting}.
Indeed,
for every $i$,
Proposition \ref{prop:monotonicity_W1} yields
\begin{align*}
&\quad \,\,W_1(\nu_{(x_1, t_1); t_1-1},\nu_{(x_i,t_i);t_1-1}) \\
&\leq W_1(\nu_{(x_1,t_1);-1}, \nu_{(x_0,0); -1} )+W_1(\nu_{(x_0,0);-1}, \nu_{(x_i, t_i); -1} )\leq  2.
\end{align*}
Furthermore,
for $i \neq j$, the fact that $(x_j, t_j) \not\in P (x_i, t_i; \xi/10)$ and Proposition \ref{prop:monotonicity_W1} imply
\begin{align*}
&\quad\,\, W_1(\nu_{(x_i,t_i); t_1 - 2 (\xi/100 \mathfrak{D})^2}, \nu_{(x_j, t_j); t_1 - 2 (\xi/100 \mathfrak{D})^2} )\\
&\geq W_1(\nu_{(x_i,t_i); t_i -  (\xi/10)^2}, \nu_{(x_j, t_j); t_i -  (\xi/10)^2} )\geq \frac{\xi}{10} \geq \mathfrak{D} \left( \frac{\xi}{100 \mathfrak{D}}\right)
\end{align*}
if $\mathfrak{D}\geq \underline{\mathfrak{D}}$.
We apply Theorem \ref{thm:main almost splitting} at $(x_1, t_1)$ with $\xi$ replaced with $\xi / 100 \mathfrak{D}$ and $D=2$,
and it follows that $(x_1, t_1)$ is $(l+1, \eps,1)$-split.
We set $C_{1,\kappa}:=2\mathfrak{N}(100 \mathfrak{D})^{2n}$ and $C_{2,\kappa}:=\beta (100\mathfrak{D})^{-2}$.

Suppose $N \geq 10\,C_{1,\kappa}\,C^{-1}_{2,\kappa} \xi^{-k}$.
By the claim in the above paragraph,
there is no $\mathcal{I} \subset \{ 1, \ldots, N \}$ with $|\mathcal{I}| \geq N/2$ such that $|t_i - t_j| \leq C_{2,\kappa}\xi^2$ for all $i,j \in \mathcal{I}$.
Hence,
for all $i,j = 1, \ldots, N$ with $j - i \geq N/2$, we see $|t_i - t_j| \geq  C_{2,\kappa}\xi^2$.
By Theorem \ref{thm:almost static},
if $\delta \leq \ov\delta_{\kappa, \xi,  \eps}$, then $\{(x_i, t_i)\}^{\lfloor N/2 \rfloor}_{i=1}$ are $(\eps, 1)$-static;
in particular,
they are not $(k-1, \eps, 1)$-split.
Let $\mathcal{J} \subset \{ 1, \ldots, \lfloor N/2 \rfloor \}$ be a subset such that $|t_i  - t_j | \leq C_{2,\kappa}\xi^2$ for all $i,j \in \mathcal{J}$,
and
\begin{equation*}
|\mathcal{J} | \geq \frac{C_{2,\kappa}}{10} \xi^2 \lfloor N/2 \rfloor \geq   \frac{C_{1,\kappa}}{2}\, \xi^{-(k-2)}.
\end{equation*}
Applying the above claim to $\{(x_i, t_i)\}_{i \in \mathcal{J}}$ for $l = k-2$, we arrive at the contradiction.
\end{proof}

For $\eps \in (0,1)$ and $r_1,r_2>0$ with $r_1 < r_2$,
the \textit{effective strata $\SS^{\eps, k}_{r_1, r_2}$} are defined by the set of all points in $M\times I$ such that
for all $r \in (r_1, r_2)$ none of the following two properties hold:
\begin{enumerate}\setlength{\itemsep}{+2.0mm}
\item $(\eps, r)$-selfsimilar and $(k+1, \eps, r)$-split;
\item $(\eps, r)$-selfsimilar, $(k-1, \eps, r)$-split and $(\eps, r)$-static.
\end{enumerate}

We conclude the following quantitative stratification result,
which has been established by Bamler \cite{B3} for Ricci flow (cf. \cite[Proposition 11.2]{B3}):
\begin{thm}\label{thm:stratification}
Let $(M,(g_t)_{t\in I})$ be a super Ricci flow with $\mathcal{D}\geq 0$.
Let $(x_0, t_0) \in M \times I$.
For $r,\kappa > 0$ we assume $[t_0 - 2 r^2, t_0] \subset I$ and $\NN_{(x_0,t_0)} (r^2) \geq - \kappa$.
Then for all $\eps \in (0,1)$ and $\sigma \in (0, \eps)$,
there exists $\{(x_i, t_i)\}^{N}_{i=1} \subset \SS^{\eps,k}_{\sigma r, \eps r} \cap P (x_0, t_0; r)$ such that
\begin{equation*}
\SS^{\eps, k}_{\sigma r, \eps r} \cap P (x_0, t_0; r) \subset \bigcup_{i=1}^N P (x_i, t_i; \sigma r),\quad N \leq C_{\kappa, \eps} \,\sigma^{-k-\eps}.
\end{equation*}
\end{thm}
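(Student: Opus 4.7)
The plan is to run a dyadic refinement scheme in the spirit of Cheeger--Naber, following the Ricci-flow version of Bamler. Fix a scale ratio $\xi = \xi(\kappa, \eps) \in (0, 1)$ and a selfsimilarity threshold $\delta = \delta(\kappa, \eps, \xi)$, both to be chosen at the end, and consider the scales $r_j := \xi^j r$ for $j = 0, 1, \dots, J$, where $J$ is the smallest integer with $\xi^J r \leq \sigma r$. I will inductively build, for each $j$, a covering $\{ P(x_i, t_i; r_j) \}$ of $\SS^{\eps, k}_{\sigma r, \eps r} \cap P(x_0, t_0; r)$ whose centers lie on the stratum.

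A preliminary observation is that the Nash entropy remains uniformly bounded below along the iteration. Indeed, any descendant center $(x, t)$ is $W_1$-close to its predecessor at the previous scale, so iterating Lemma~\ref{lem:useful} (combined with Lemma~\ref{lem:Nash_more} and the monotonicity in Lemma~\ref{lem:Nash}) yields $\NN_{(x, t)}(r_j^2) \geq -\kappa'$ for some $\kappa' = \kappa'(\kappa, \eps)$. This is essential because Proposition~\ref{prop:pre_stratification} requires such a lower bound at the center.

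At each level $j$ and each current center $(x, t)$, I branch on whether $(x, t)$ is $(\delta, r_j)$-selfsimilar. In the \emph{good} case, provided $\delta \leq \eps$ and $r_j \in (\sigma r, \eps r)$, the point is automatically $(\eps, r_j)$-selfsimilar, and since it belongs to $\SS^{\eps, k}_{\sigma r, \eps r}$, neither of the two forbidden configurations in the stratum definition can hold; Proposition~\ref{prop:pre_stratification} then produces a refinement into at most $C_{\kappa'} \xi^{-k}$ parabolic balls of radius $r_{j+1}$. In the \emph{bad} case, I use Proposition~\ref{prop:covering} as a trivial refinement, yielding at most $C \xi^{-n-2}$ sub-balls, but Proposition~\ref{prop:Nash almost} shows that failure of $(\delta, r_j)$-selfsimilarity forces a definite drop $\NN_{(x, t)}(\delta r_j^2) - \NN_{(x, t)}(\delta^{-1} r_j^2) > \delta$. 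Combined with the uniform lower bound on $\NN$ and the almost-monotonicity, this bounds the number of bad scales along any descending chain of centers by some $K_0 = K_0(\kappa, \eps)$.

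The final count is then $N \leq \binom{J}{K_0} (C \xi^{-n-2})^{K_0} (C_{\kappa'} \xi^{-k})^{J - K_0}$, which by choosing $\xi$ small enough in terms of $\eps$ and $K_0$ (so that the factor $\xi^{-(n+2-k) K_0}$ absorbs into $\sigma^{-\eps}$) yields $N \leq C_{\kappa, \eps}\, \sigma^{-k - \eps}$. The main obstacle is the intricate parameter balance: $\delta$ must be small in terms of $\eps$ and $\xi$ to trigger Proposition~\ref{prop:pre_stratification}; $\xi$ must then be small enough in terms of $K_0$ to produce $\sigma^{-\eps}$ rather than a larger power; and $K_0$ itself depends on $\kappa, \eps$ through Proposition~\ref{prop:Nash almost}. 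One must also carefully verify that successive refinement preserves the $W_1$-closeness hypothesis of Lemma~\ref{lem:useful}, so that the Nash entropy lower bound truly propagates down the tree of centers.
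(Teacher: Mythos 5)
Your proposal follows the right high-level template (a dyadic refinement with a ``good/bad scale'' dichotomy and a final binomial count, exactly as in the paper), but as written it contains a genuine gap at the point where you trigger Proposition~\ref{prop:pre_stratification}. That proposition requires \emph{every point} of the set $S$ being covered to be $(\delta,r)$-selfsimilar, not merely the center of the parabolic ball. Your scheme branches on whether the \emph{current center} $(x,t)$ is $(\delta,r_j)$-selfsimilar and then applies Proposition~\ref{prop:pre_stratification} to refine; but the points inside $P(x,t;r_j)$ need not share the center's selfsimilarity at scale $r_j$, so the hypothesis of the proposition is not verified. The paper resolves this by decomposing $P(x_0,t_0;r)$ into at most $\lambda^{C_2}$ ``pattern classes'' $S(\{o_j\})$ in which \emph{all} points have the same prescribed good/bad scale structure; within a fixed class the choice between Proposition~\ref{prop:covering} and Proposition~\ref{prop:pre_stratification} is predetermined by $o_l$, and the selfsimilarity hypothesis holds for the whole class whenever $o_l=0$.

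The second, related weakness is the justification of the bound $K_0$ on bad scales. You phrase it as ``the number of bad scales along any descending chain of \emph{centers} is bounded,'' reasoning that each bad scale forces an entropy drop. But the drops occur in \emph{different} pointed entropies $\NN_{(x_j,t_j)}$ as the center changes from level to level; they cannot be summed into a single telescoping quantity, and passing between $\NN_{(x_j,t_j)}$ and $\NN_{(x_{j+1},t_{j+1})}$ via Lemma~\ref{lem:useful} incurs a fixed additive error that would dominate the $\eta$-sized drops. What is actually true (and what the paper uses) is a pigeonhole bound for each \emph{fixed} point $(x,t)$: by the contrapositive of Proposition~\ref{prop:Nash almost} and monotonicity, $C_\kappa\geq -\NN_{(x,t)}(1)\geq\sum_{j\in\mathcal{J}_{(x,t)}}(\NN_{(x,t)}(\eta\xi^{2j})-\NN_{(x,t)}(\eta^{-1}\xi^{2j}))\geq\eta\,|\mathcal{J}_{(x,t)}|$, giving $|\mathcal{J}_{(x,t)}|\leq C_{2}$. (Note also your displayed inequality should use the smaller auxiliary threshold $\eta\leq\ov\eta_{\kappa,\delta}$, not $\delta$ itself, when invoking the contrapositive of Proposition~\ref{prop:Nash almost} for $(\delta,r_j)$-selfsimilarity.) Once one partitions by pattern, the $\lambda^{C_2}$ factor plays the role of your $\binom{J}{K_0}$, and the remainder of the count and the choice of $\xi$ small enough to absorb $\xi^{-k}$ errors into $\sigma^{-k-\eps}$ proceed as you describe.
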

\begin{proof}
We may assume $t_0=0$ and $r = 1$.
For $\xi \in (0,1)$,
let $\lambda$ stand for an integer determined by $\sigma \in [\xi^{\lambda}, \xi^{\lambda-1}]$. 
Let $\delta \leq \ov\delta_{\kappa,  \eps, \xi}$ denote a constant obtained in Proposition \ref{prop:pre_stratification}.
For each $(x,t)\in P (x_0,0; 1)$,
let $\mathcal{J}_{(x,t)}$ be the set of all integers $j$ such that $(x,t)$ is not $(\delta, \xi^j)$-selfsimilar.
In virtue of Proposition \ref{prop:Nash almost},
if $\eta \leq \ov\eta_{\kappa,\delta}$,
then
\begin{equation*}
C_\kappa \geq -\NN_{(x,t)}(1)\geq \sum_{j\in \mathcal{J}_{(x,t)}}\left( \NN_{(x,t)}\left(\eta\,\xi^{2j}\right)-\NN_{(x,t)}\left(\eta^{-1}\,\xi^{2j}\right)  \right)\geq \eta \,|\mathcal{J}_{(x,t)}|.
\end{equation*}
Hence,
$|\mathcal{J}_{(x,t)}|$ is bounded by $C_2=C_{2,\kappa,\eta}$ (cf. \cite[Section 3]{CN}).
For $\{o_j\}^{\lambda-1}_{j=0} \subset \{ 0, 1 \}$, we set
\begin{equation*}
S\left(\{o_j\}^{\lambda-1}_{j=0} \right) := \left\{ (x,t) \in P (x_0, 0; 1) ~|~\text{$o_j=1$ if and only if $(x,t)$ is not $(\delta,\xi^j)$-selfsimilar} \right\}.
\end{equation*}
Then $P (x_0,0; 1)$ can be written as the union of at most $\lambda^{C_2}$ many non-empty subsets of the form $S\left(\{o_j\}^{\lambda-1}_{j=0} \right)$ since $S\left(\{o_j\}^{\lambda-1}_{j=0} \right) = \emptyset$ if $\sum_{j}o_j > C_{2}$.

Fix $l=0,\dots,\lambda-1$ and $(y, s) \in  P(x_0,0;1)$,
where let $(y, s) := (x_0,0)$ if $l=0$.
Due to Propositions \ref{prop:covering} and \ref{prop:pre_stratification},
there is $\{(y_i, s_i)\}^{N}_{i=1} \subset \SS^{\eps, k}_{\sigma, \eps} \cap S\left(\{o_j\}^{\lambda-1}_{j=0} \right)\cap P (y, s; \xi^{l})$ such that
\begin{equation*}
\SS^{\eps, k}_{\sigma, \eps} \cap S\left(\{o_j\}^{\lambda-1}_{j=0} \right)\cap P(y, s; \xi^{l})  \subset \bigcup_{i=1}^{N} P(y_i, s_i; \xi^{l+1}),\quad 
N\leq \begin{cases} C_{3,\xi} & \text{if $o_l = 1$ or $l =0$,} \\ C_{1,\kappa}\, \xi^{-k} & \text{if $o_l= 0$ and $l \geq 1$,} \end{cases}
\end{equation*}
where the assertion for $l=0$ or $o_{l} = 1$, and that for $l \geq 1$ and $o_{l} = 0$ are consequences of Propositions \ref{prop:covering} and \ref{prop:pre_stratification},
respectively.
By induction,
$\SS^{\eps, k}_{\sigma, \eps} \cap S\left(\{o_j\}^{\lambda-1}_{j=0} \right)\cap P(x_0,0; 1)$ can be covered by at most
\begin{equation*}
C_{3,\xi}^{1+\sum_j o_j} (C_{1,\kappa}\, \xi^{-k} )^{\lambda-1- \sum_j o_j}
\end{equation*}
many parabolic balls of the form $P(y,s; \sigma )$.
It follows that
$\SS^{\eps, k}_{\sigma, \eps}  \cap P(x_0,0; 1)$ can be covered by at most 
\begin{equation*}
\lambda^{C_2} \,C_{3,\xi}^{1+C_2} (C_{1,\kappa} \xi^{-k} )^{\lambda} \leq C_{\kappa,  \xi, \delta} \,\lambda^{C_2} C_{1,\kappa}^\lambda \,\xi^{-\lambda\,k}
\end{equation*}
many such parabolic balls.
If $\xi \leq \ov\xi_{\kappa, \eps}$ such that $\xi^{-\eps/2} \geq C_{1,\kappa}$,
then
\begin{equation*}
C_{\kappa,  \xi, \delta} \,\lambda^{C_2} \,\xi^{-\lambda k - m\eps/2}\leq C_{\kappa,  \xi, \delta}\, \xi^{-\lambda k - m\eps  }\leq C_{\kappa,  \xi, \delta} \,\sigma^{-k-\eps}.
\end{equation*}
Thus,
we complete the proof.
\end{proof}

\subsection*{{\rm Acknowledgements}}
The authors thank the anonymous referees for useful comments.
The first author was supported by JSPS KAKENHI (JP23K03105). 
The second author was supported by JSPS KAKENHI (JP22H04942, JP23K12967).


\end{document}